\documentclass{amsart}

\usepackage{xcolor}
\usepackage{amsfonts,mathrsfs,amssymb,amsthm,mathptm}
\usepackage{eucal}
\usepackage{amsmath,amscd}
\usepackage{amsmath}
\usepackage{mathptm,pslatex}
\usepackage{multirow}
\usepackage{color}
\usepackage[all]{xy}
\usepackage{url}
\usepackage{cite}
\usepackage{exscale}
\usepackage{relsize}
\usepackage{bbm}
\usepackage{enumerate}
\usepackage{paralist}
\usepackage{bm}
\usepackage{hyperref}
\hypersetup{colorlinks=true,citecolor=red,linkcolor=blue,CJKbookmarks=true}
\usepackage{amsrefs}
\usepackage{tikz-cd}
\usepackage{graphicx}
\xyoption{all}

\usepackage{paralist}
\newtheorem{Thm}{Theorem}[section]
\newtheorem{Def}[Thm]{Definition}
\newtheorem{Lem}[Thm]{Lemma}
\newtheorem{Cor}[Thm]{Corollary}
\newtheorem{Prop}[Thm]{Proposition}
\newtheorem{Eg}[Thm]{Example}
\newtheorem{Rem}[Thm]{Remark}

\newtheorem{Nota}[Thm]{Notation}

\newcommand{\Hom}{{\rm{Hom}}}

\newcommand{\Ext}{{\rm{Ext}}}
\newcommand{\End}{{\rm{End}}}
\newcommand{\modn}{{\rm{mod}}}
\newcommand{\Mod}{{\rm{Mod}}}

\newcommand{\ind}{{\rm{ind}}}
\renewcommand{\ker}{{\rm{Ker}}}

\newcommand{\image}{{\rm{Im}}}

\newcommand{\rad}{{\rm{rad~}}}
\newcommand{\soc}{{\rm{soc}}}
\renewcommand{\top}{{\rm{top}}}

\newcommand{\proj}{{\rm{proj }}}
\newcommand{\inj}{{\rm{inj }}}

\newcommand{\gldim}{{\rm{gldim}}}
\newcommand{\add}{{\rm{add}}}

\newcommand{\resdim}{{\rm{resdim}}}
\newcommand{\resLambdaA}{{\rm{^\Lambda_A\pi}}}
\newcommand{\resLambdaB}{{\rm{^\Lambda_B\pi}}}

\newcommand{\longmapsfrom}{\mathrel{\reflectbox{\ensuremath{\longmapsto}}}}

\newcommand{\projdim}{{\rm{proj.dim}}}

\numberwithin{equation}{section}
\allowdisplaybreaks[4]

\begin{document}
\title{$nd\mathbb{Z}$-Cluster tilting Subcategories of $d$-Nakayama Algebras}

\author{Wei Xing}
\address{Uppsala University, Uppsala 75106, Sweden}
\curraddr{}
\email{wei.xing@math.uu.se}
\thanks{}

\subjclass[2010]{Primary ; Secondary }

\date{}
\dedicatory{}

\renewcommand{\thefootnote}{\alph{footnote}}
\setcounter{footnote}{-1} \footnote{Keywords:
Higher Nakayama Algebra; $nd\mathbb{Z}$-Cluster Tilting Subcategory.}


\begin{abstract}
    Jasso and K\"{u}lshammer introduced the class of $d$-Nakayama algebras as a higher dimensional analogue of Nakayama algebras.
    In particular, they are endowed with a distinguished $d\mathbb{Z}$-cluster tilting subcategory.
    In this paper, we investigate which $d$-Nakayama algebras admit an $nd\mathbb{Z}$-cluster tilting subcategory for $n > 1$. The radical square zero case is already covered by results on classical Nakayama algebras due to Herschend–Kvamme–Vaso. For each remaining non-self-injective $d$-Nakayama algebra, we provide a complete classification of its $nd\mathbb{Z}$-cluster tilting subcategories.
    In fact, there exists at most one for a suitable integer $n$.
    A self-injective $d$-Nakayama algebra is determined by two positive integers $m$ and $\ell$. 
    We show that an $nd\mathbb{Z}$-cluster tilting subcategory is only possible if $n|m$ and $n|(\ell-2)$.
    In case $n=\ell-2$, we show that such subcategory does indeed exist by constructing an explicit example.
\end{abstract}

\maketitle

\tableofcontents

\section{Introduction}

Auslander-Reiten theory is  fundamental in the study of representation theory of finite dimensional algebras from the perspective of homological algebra. 
A higher dimensional analogue was introduced in \cite{Iya07a, Iya07b} and has since found  connections to algebraic geometry \cite{IW11,IW13,IW14,HIMO23}, combinatorics \cite{OT12} and symplectic geometry \cite{DJL21}. It also plays a key role in the proof of the Donovan-Weymss conjecture \cite{JKM22}.   
In higher Auslander-Reiten theory, the object of study is a category $\mathcal{A}$,
typically the module category of a finite dimensional algebra or its bounded derived category, 
equipped with a $d$-cluster tilting subcategory $\mathcal{M} \subseteq\mathcal{A}$,
possibly with some additional properties.
Depending on different settings, $d$-cluster tilting subcategories give rise to higher notions in homological algebra.
For example, if $\mathcal{A}$ is abelian and $\mathcal{M}$ is $d$-cluster tilting,
then $\mathcal{M}$ is a $d$-abelian category in the sense of Jasso \cite{Jas16}.
If $\mathcal{A}$ is triangulated, 
$\mathcal{M}\subseteq \mathcal{A}$ is $d$-cluster tilting with the additional property that it is closed under $d$-fold suspension, 
then $\mathcal{M}$ is $(d+2)$-angulated in the sense of Geiss-Keller-Oppermann \cite{GKO13}.

Let $A$ be a finite dimensional algebra and denote by $\modn A$ the category of finitely generated right $A$-modules. The algebra $A$ is called $d$-representation finite if $\gldim A = d$ and there exists a $d$-cluster tilting subcategory $\mathcal{M}\subseteq\modn A$. In this case, $\mathcal{M}$ is unique and canonically induces a $d$-cluster tilting subcategory $\mathcal{U}$ in $\mathcal{D}^b(A)$. More precisely, 
\begin{equation*}
    \mathcal{U} = \add \{M[di]\mid M\in \mathcal{M} \text{ and } i\in \mathbb{Z}\}.
\end{equation*}
This case was intensively studied by many authors, see for example \cite{Iya11, IO11, IO13, HI11a, HI11b}. 
For $A$ with $\gldim A > d$, 
much fewer results are known.
Certain families of such algebras which are not self-injective were found by Vaso \cite{Vas19, Vas20, Vas21, Vas23}.
Large classes of self-injective algebras with a $d$-cluster tilting subcategory were constructed  in \cite{DI20}, including many instances of $n$-fold trivial extension algebras and higher preprojective algebras.
Other notable families of such algebras include Nakayama algebras and higher Nakayama algebras \cite{JK19}.

The notion of $d\mathbb{Z}$-cluster tilting property was introduced by Iyama and Jasso \cite{IJ17}.
This property is significantly stronger than the  $d$-cluster tilting property.
In particular, if an algebra $A$ with $\gldim A< \infty$ is such that  $\modn A$ admits a $d\mathbb{Z}$-cluster tilting subcategory,
then $d$ must divide $\gldim A$.
In the special case where $\gldim A = d$, any $d$-cluster tilting subcategory of $\modn A$ is trivially $d\mathbb{Z}$-cluster tilting.
A key benefit of the $d\mathbb{Z}$-cluster tilting property, as observed by Kvamme \cite{Kva21}, 
is that a $d\mathbb{Z}$-cluster tilting subcategory in the module category canonically induces such a subcategory in its singularity category.
Furthermore, the Auslander-Iyama correspondence introduced by Jasso-Muro \cite{JKM22} established a one-to-one correspondence between triangulated categories with a $d\mathbb{Z}$-cluster tilting object and twisted periodic self-injective algebras up to equivalence.

For the reasons given above, we are motivated to search for algebras that admit $d\mathbb{Z}$-cluster tilting subcategories.
A complete classification of such subcategories for Nakayama algebras was provided in \cite{HKV25}.
As a natural continuation, we restrict our attention in this paper to $d$-Nakayama algebras, since they are homologically complex yet computationally accessible.
By construction, a $d$-Nakayama algebra $A$ has a distinguished $d\mathbb{Z}$-cluster tilting subcategory in its module category \cite{JK19},
and this property forces $\gldim A\in d\mathbb{Z}$.
We further restrict our search to $nd\mathbb{Z}$-cluster tilting subcategories for a positive integer $n>1$. 
More precisely, we give a classification of $nd\mathbb{Z}$-cluster tilting subcategories for non-self-injective $d$-Nakayama algebras, and we construct explicit examples of such subcategories for self-injective $d$-Nakayama algebras satisfying an additional condition.
A necessary condition for $s$-representation finite $d$-Nakayama algebras was given in \cite{Sen23} for some positive integer $s$. The same paper posed the question that whether this necessary condition is also sufficient. As a consequence of our classification result, we provide an affirmative answer to this question.

Certain methods used in this paper are of independent interest, in particular the sink-source gluing of two algebras. Such gluing constructions have been considered in \cite{IPTZ87, Lev08, BCW15, Vas20}. We show that the algebra obtained by the sink-source gluing of algebras admitting a $d$-cluster tilting subcategory (respectively, a $d\mathbb{Z}$-cluster tilting subcategory) itself possesses a canonical $d$-cluster tilting subcategory (respectively, $d\mathbb{Z}$-cluster tilting subcategory) in its module category, which is obtained by gluing the corresponding $d$-cluster tilting subcategories (respectively, $d\mathbb{Z}$-cluster tilting subcategories) of each piece. This technique can be applied to a broader family of algebras to obtain more algebras with $d$-cluster tilting or $d\mathbb{Z}$-cluster tilting subcategories.

Our classification result for higher Nakayama algebras generalizes the classification of $n\mathbb{Z}$-cluster tilting subcategories for Nakayama algebras\cite{HKV25}.
For a $d$-Nakayama algebra $A$ with $\rad^2 A = 0$,
we may view it as a classical Nakayama algebra and apply the classification reault obtained in \cite{HKV25}.
More generally, a characterization of $d$-cluster tilting subcategories and $d\mathbb{Z}$-cluster tilting subcategories for radical square zero algebras given by quivers with relations was given in \cite{Vas23}.
Otherwise, for each non self-injective $d$-Nakayama algebra, we give a classification of $nd\mathbb{Z}$-cluster tilting subcategories for $n>1$.
In this case, such an integer $n$ is unique and the $nd\mathbb{Z}$-cluster tilting subcategory is also unique.
We subdivide the classification result into three cases -- depending on the Kupisch series -- namely, 
acyclic homogeneous, 
acyclic non-homogeneous and cyclic non-homogeneous.

The classification for acyclic homogeneous $d$-Nakayama algebras is given in Theorem \ref{thm_classification_homo_acyclic}.
In this case, all such algebras with $nd\mathbb{Z}$-cluster tilting subcategories are $2$-subhomogeneous (see \cite[Definition 3.1]{Xin25}).
Equivalently, the indecomposable objects in such an $nd\mathbb{Z}$-cluster tilting subcategory consist of projective or injective objects.
The classification result for acyclic non-homogeneous $d$-Nakayama algebras is given in Theorem \ref{thm_classification_nonhomo_acyclic}.
All such algebras are obtained by
sink-source gluing of acyclic homogeneous ones with $nd\mathbb{Z}$-cluster tilting subcategories.
The classification result for cyclic non-homogeneous $d$-Nakayama algebras is given in Theorem \ref{thm_classification_non_selfinj_cyclic_case}.
Such algebras are obtained by self-gluing acyclic non-homogeneous ones with $nd\mathbb{Z}$-cluster tilting subcategories.

For a self-injective $d$-Nakayama algebra, such a classification result seems to be infeasible.
So instead we construct an explicit example for each such algebra with extra conditions.

\section{Preliminaries}
\subsection{Notations and conventions}
Throughout this paper,
we fix positive integers $d$ and $n$.
We work over an arbitrary field $\Bbbk$.
Unless stated otherwise,
all algebras are finite dimensional $\Bbbk$-algebras and all modules are finite dimensional right modules.
We denote by $D$ the $\Bbbk$-duality $\Hom_{\Bbbk}(-, \Bbbk)$.

All subcategories considered are supposed to be full.
Let $F: \mathcal{C}\rightarrow \mathcal{D}$ be a functor,
the essential image of $F$ is the full subcategory of $\mathcal{D}$ given by 
\begin{equation*}
    F\mathcal{C} = \{D\in \mathcal{D} \mid \exists  C\in \mathcal{C} \text{ such that } FC \cong D\}.
\end{equation*}

Let $A$ be a finite dimensional algebra over $\Bbbk$.
Denote by $\Mod A$ the category of right $A$-modules and 
$\modn A$ the subcategory consisting of finitely generated right $A$-modules.
Denote by $\ind A$ the set of isomorphism classes of indecomposable $A$-modules.
For a subcategory $\mathcal{C}\subseteq \modn A$, 
we denote by $\ind \mathcal{C}$ the set of isomorphism classes of indecomposable objects in $\mathcal{C}$. 
We use the notation $\mathcal{J}_{\mathcal{C}}$ to denote the ideal of $\mathcal{C}$ such that for $M, N\in \mathcal{C}$, 
we have 
\begin{equation*}
    \mathcal{J}_{\mathcal{C}}(M,N)=\{f:M\rightarrow N\mid \image f\subseteq \rad N\}.
\end{equation*}
We denote by $\underline{\modn}A$ the projectively stable module category of $A$,
that is the category with the same  objects as $\modn A$ and morphisms given by $\underline{\Hom}_A(M,N) = \Hom_A(M,N)/\mathcal{P}(M,N)$ where $\mathcal{P}(M, N)$ denotes the subspace of morphisms factoring through projective modules.
We denote by $\Omega: \underline{\modn}A\rightarrow \underline{\modn}A$ the syzygy functor defined by $\Omega(M)$ being the kernel of the projective cover $P(M)\twoheadrightarrow M$.
Let $\Omega^0(M) = M$ and $\Omega^{i+1}(M) = \Omega(\Omega^i(M))$ for $i\geq 0$.
The injectively stable module category $\overline{\modn}A$ of $A$ and the cosyzygy functor $\Omega^{-1}: \overline{\modn}A\rightarrow \overline{\modn}A$ are defined dually.
When $A$ is a self-injective algebra, $\modn A$ is Frobenius and thus $\underline{\modn} A$ has a triangulated category structure with the suspension functor $\Omega^{-1}$.
We refer to \cite{Hap88} for more details.

We consider the $d$-Auslander-Reiten translations $\tau_d: \underline{\modn}A\rightarrow \overline{\modn}A$    and $\tau_d^{-1}: \overline{\modn}A\rightarrow \underline{\modn}A$
defined by $\tau_d = \tau\Omega^{d-1}$ and $\tau_d^{-1} = \tau^{-1}\Omega^{-(d-1)}$
where $\tau$ and $\tau^{-1}$ denote the usual Auslander-Reiten translations.

\subsection{$d\mathbb{Z}$-cluster tilting subcategories.}

Let $\mathcal{M}$ be a subcategory of a category $\mathcal{C}$ and let $C\in \mathcal{C}$.
A right $\mathcal{M}$-approximation of $C$ is a morphism $f: M\rightarrow C$ with $M\in \mathcal{M}$ such that all morphisms $g: M'\rightarrow C$ with $M'\in \mathcal{M}$ factor through $f$. We say that 
$\mathcal{M}$ is contravariantly finite in $\mathcal{C}$ if every $C\in \mathcal{C}$ admits a right $\mathcal{M}$-approximation.
The notions of left $\mathcal{M}$-approximation and covariantly finite are defined dually.
We say that $\mathcal{M}$ is functorially finite in $\mathcal{C}$ if $\mathcal{M}$ is both contravariantly finite and covariantly finite.
In particular, if $M\in \modn A$, 
then $\add M$ is functorially finite.
Recall in the case when $\mathcal{C}$ is abelian, $\mathcal{M}$ is called a generating (resp. cogenerating) subcategory if for any object $C\in\mathcal{C}$, there exists an epimorphism $M\rightarrow C$ (resp. monomorphism $C \rightarrow M$) with $M\in \mathcal{M}$. In particular, $\add M\subseteq\modn A$ is generating if and only if $A\in \add M$ and cogenerating if and only if $DA\in \add M$.
In this case, we call $M$ a generator and cogenerator correspondingly.
And $M$ is a generator-cogenerator if it is both a generator and a cogenerator.

\begin{Def}[\cite{Iya11, IY08, IJ17}]
    Let $d$ be a positive integer.
    Let $\mathcal{C}$ be an abelian or a triangulated category,
    and $A$ a finite dimensional $\Bbbk$-algebra.
    \begin{itemize}
        \item [(a)] We call a subcategory $\mathcal{M}$ of $\mathcal{C}$ a $d$-cluster tilting subcategory if it is functorially finite, generating-cogenerating if $\mathcal{C}$ is abelian and 
        \begin{align*}
            \mathcal{M} & = \{ C\in \mathcal{C} \mid \Ext_{\mathcal{C}}^i(C, \mathcal{M}) = 0 \text{ for } 1\leq i\leq d-1\}\\
            & = \{ C\in \mathcal{C} \mid \Ext_{\mathcal{C}}^i(\mathcal{M}, C) = 0 \text{ for } 1\leq i\leq d-1\}.
        \end{align*}
        If moreover $\Ext_{\mathcal{C}}^i(\mathcal{M}, \mathcal{M}) \neq 0$ implies that $i\in d\mathbb{Z}$,
        then we call $\mathcal{M}$ a $d\mathbb{Z}$-cluster tilting subcategory.
        \item [(b)] A module $M\in \modn A$ is called a $d$-cluster tilting module (respectively $d\mathbb{Z}$-cluster tilting module) if $\add M$ is a $d$-cluster tilting subcategory (respectively $d\mathbb{Z}$-cluster tilting subcategory) of $\modn A$.
    \end{itemize}
\end{Def}

We collect certain homological properties of $d$-cluster tilting subcategories in the following proposition.

\begin{Prop}\cite[Theorem 2.8]{Iya08}\cite[Corollary 3.3]{Vas19}\label{prop_indec_equiv}
Let $A$ be an algebra and let $\mathcal{C}$ be a $d$-cluster tilting subcategory of $\modn A$.
Then the following statements hold.
\begin{itemize}
    \item [(a)] $A\in \mathcal{C}$ and $DA\in \mathcal{C}$.
    \item [(b)] Denote by $\mathcal{C}_P$ and $\mathcal{C}_I$ the sets of isomorphism classes of indecomposable non-projective respectively non-injective modules in $\mathcal{C}$. 
    Then $\tau_{d}$ and $\tau_{d}^{-1}$ induce mutually inverse bijections
    \begin{equation*}
        \xymatrix{\mathcal{C}_P\ar@/^/[r]^{\tau_{d}} & \mathcal{C}_I.\ar@/^/[l]^{\tau_{d}^{-1}}\\}
    \end{equation*}
    \item [(c)] $\Omega^iM$ is indecomposable for all $M\in \mathcal{C}_P$ and $0<i<d$. Dually, $\Omega^{-i}N$ is indecomposable for all $N\in \mathcal{C}_I$ and $0<i<d$.
    \item [(d)] The following statements are equivalent.
    \begin{itemize}
        \item [(i)] $\mathcal{C}$ is $d\mathbb{Z}$-cluster tilting.
        \item [(ii)] $\Omega^d(\mathcal{C})\subseteq \mathcal{C}$.
        \item [(iii)] $\Omega^{-d}(\mathcal{C}) \subseteq \mathcal{C}$.
    \end{itemize}
\end{itemize}
\end{Prop}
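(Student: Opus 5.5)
The four parts are standard facts from higher Auslander--Reiten theory, and I would prove them in the order stated, using only the defining equalities of a $d$-cluster tilting subcategory $\mathcal{C}$ together with the Auslander--Reiten formulas.

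For (a), I use that $\mathcal{C}$ is generating. Since $A$ is projective, $\Ext^i_A(A,-)=0$ for all $i\geq 1$. Applying the description $\mathcal{C}=\{X\mid \Ext^i(X,\mathcal{C})=0,\ 1\le i\le d-1\}$ then gives $A\in\mathcal{C}$. Dually, $DA$ is injective, so $\Ext^i(\mathcal{C},DA)=0$ and the second description gives $DA\in\mathcal{C}$. (The generating and cogenerating hypotheses would also give this directly.)

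For (b) and (c), I would follow Iyama's argument. Take $M\in\mathcal{C}_P$. Dimension shifting together with $\Ext^i(M,A)=0$ for $1\le i\le d-1$ shows that $M$ has a minimal projective resolution $P_d\to\cdots\to P_0\to M$ whose $A$-dual is exact in the middle degrees. The $d$-Auslander--Reiten formula
\[
\Ext^d(M,X)\cong D\overline{\Hom}(X,\tau_dM),
\]
combined with $\Ext^i(X,\tau_dM)\cong D\Ext^{d-i}(M,X)$ for $X\in\mathcal{C}$ and $0<i<d$, then yields $\tau_dM\in\mathcal{C}$. The next step is to show that $\tau_dM$ is indecomposable and non-injective, and that $\tau_d^{-1}\tau_dM\cong M$. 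The key input is that $\tau_d$ restricted to $\mathcal{C}$ gives an equivalence $\underline{\mathcal{C}}\to\overline{\mathcal{C}}$, with quasi-inverse $\tau_d^{-1}$; for this I would cite Iyama's theorem rather than reprove it. Part (c) follows in the same spirit. If $\Omega^iM=X\oplus Y$ with $0<i<d$ and both summands nonzero non-projective, then $\tau_dM=\tau\Omega^{d-1}M$ decomposes as well. Indeed, $\Omega^{d-1}M=\Omega^{d-1-i}X\oplus\Omega^{d-1-i}Y$, and the vanishing of $\Ext^j(M,A)$ forces both of these pieces to be nonzero modulo projectives. This contradicts the indecomposability of $\tau_dM$. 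The cosyzygy statement is dual.

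For (d), the key identity is dimension shifting: for $M,N\in\mathcal{C}$ and $j\ge 1$,
\[
\Ext^{j}(\Omega^dM,N)\cong\Ext^{j+d}(M,N).
\]
For (i)$\Rightarrow$(ii), take $M\in\mathcal{C}$ and $1\le j\le d-1$. Then $j+d\notin d\mathbb{Z}$, so the $d\mathbb{Z}$ hypothesis gives $\Ext^{j+d}(M,N)=0$, hence $\Ext^j(\Omega^dM,N)=0$, and therefore $\Omega^dM\in\mathcal{C}$. For (ii)$\Rightarrow$(i), suppose $\Ext^k(\mathcal{C},\mathcal{C})\ne0$ and write $k=qd+r$ with $0<r<d$. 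Shifting down $q$ times gives
\[
\Ext^k(M,N)=\Ext^r(\Omega^{qd}M,N),
\]
which is zero because $\Omega^{qd}M\in\mathcal{C}$; so every nonvanishing degree lies in $d\mathbb{Z}$. The equivalence (i)$\Leftrightarrow$(iii) is dual, using cosyzygies in the second variable.

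The main obstacle is the indecomposability and bijectivity in (b), which requires Iyama's equivalence $\underline{\mathcal{C}}\simeq\overline{\mathcal{C}}$. I would import this equivalence from \cite{Iya08} rather than reprove it.
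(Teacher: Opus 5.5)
Your proposal is correct and follows the same route as the paper, which gives no proof of its own and imports these facts from Iyama and Vaso; in particular, the key input that $\tau_d$ and $\tau_d^{-1}$ induce mutually quasi-inverse equivalences $\underline{\mathcal{C}}\simeq\overline{\mathcal{C}}$ is cited from Iyama exactly as you propose. Your deduction of (c) from the indecomposability of $\tau_dM=\tau\Omega^{d-1}M$ uses $\Ext^j(M,A)=0$ to rule out projective summands of $\Omega^jM$ for $0<j<d$, and your dimension-shifting proof of (d) checks out; both are the standard arguments behind the cited results.
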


Recall that $M\in \modn A$ is $d$-rigid for some positive integer $d$ if 
\begin{equation*}
    \Ext_A^i(M, M) = 0, \text{ for all } 0<i<d.
\end{equation*}
The following characterization of $d$-cluster tilting modules from \cite[Lemma 2.12]{Iya08} will be useful for us. 

\begin{Lem}(\cite[Lemma 2.12]{Iya08})\label{lem_characterisation_of_d_ct_module}
    Let $A$ be an algebra and let $M\in \modn A$ be  a $d$-rigid generator-cogenerator.
    Then the following conditions are equivalent.
    \begin{itemize}

        \item [(a)] $M$ is a $d$-cluster tilting module.
        \item [(b)] $\gldim \End_A(M)\leq d+1$.
        \item [(c)] For all indecomposable object $X\in \add M$,
        there exists an exact sequence 
        \begin{equation*}
            \xymatrix@C=1em@R=1em{
            0\ar[r] & M_{d+1}\ar[r]^{f_{d+1}} & M_d\ar[r]^{f_d} & \cdots\ar[r]^{f_1} & M_1\ar[r]^{f_1} & X 
            }
        \end{equation*}
        with $M_i\in \add M$ for all $1\leq i\leq d+1$ such that the following sequence is exact.
        \begin{equation*}
            \xymatrix@C=1em@R=1em{
            0\ar[r] & \Hom_{A}(M, M_{d+1})\ar[r]^-{f_{d+1}^{\ast}} & \Hom_{A}(M, M_{d})\ar[r]^-{f_{d}^{\ast}} & \cdots\ar[r]^-{f_2^{\ast}} & \Hom_{A}(M, M_{1})\ar[r]^-{f_{1}^{\ast}} & \mathcal{J}_A(M, X)\ar[r] & 0.
            }
        \end{equation*}
    \end{itemize}
\end{Lem}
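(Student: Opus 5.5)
The strategy is to pass through the endomorphism algebra $\Gamma=\End_A(M)$ and the Yoneda functor $F=\Hom_A(M,-)\colon \modn A\to\modn \Gamma$. Since $M$ is a generator, $F$ is fully faithful and restricts to an equivalence $\add M\simeq \proj\Gamma$. For indecomposable $X\in\add M$, the simple $\Gamma$-module at the vertex of $X$ is $S_X=F(X)/\mathcal{J}_A(M,X)$. Hence $\gldim\Gamma\le d+1$ holds if and only if $\projdim_\Gamma \mathcal{J}_A(M,X)\le d$ for every such $X$. A projective resolution of $\mathcal{J}_A(M,X)$ of length at most $d$ by $\add F(M)$ lifts, by full faithfulness, to a complex in $\add M$ as in (c). Conversely, the complex in (c) gives exactly such a resolution. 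So (b)$\Leftrightarrow$(c) is essentially formal. The one point to check in lifting is that the sequence of $A$-modules in (c) is itself exact. This follows from exactness after applying $F$, because $A\in\add M$ and $\Hom_A(A,-)$ is the identity.

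For (a)$\Rightarrow$(c), fix $X$ and take a right $\add M$-approximation of $\rad X$. Because $M$ is a generator, its composite with the inclusion $\rad X\hookrightarrow X$ gives $f_1\colon M_1\to X$ with $F(M_1)\to\mathcal{J}_A(M,X)$ surjective. Let $K_1=\ker f_1$, which sits in $0\to K_1\to M_1\to\rad X\to0$. Iterate: take right $\add M$-approximations $M_{i+1}\to K_i$, which are surjective since $M$ is a generator, and let $K_{i+1}$ be their kernels. Applying $\Hom_A(M,-)$ gives the required exactness up to $F(M_1)$ by construction. The real content is showing $K_d\in\add M$, so that one may take $M_{d+1}=K_d$. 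By dimension shifting along $0\to K_{i+1}\to M_{i+1}\to K_i\to 0$, which stays exact under $F$ by the approximation property, together with $d$-rigidity of $M$, one gets
\[
\Ext^j_A(M,K_d)\cong \Ext^{j-d+1}_A(M,K_1)
\]
for suitable ranges of $j$. One then needs $\Ext^{j}_A(M,K_1)=0$ for small $j$, and this follows from the long exact sequence of $0\to K_1\to M_1\to\rad X\to 0$, where $F(M_1)\to F(\rad X)$ is surjective. The conclusion is $\Ext^i_A(M,K_d)=0$ for $1\le i\le d-1$, so the $d$-cluster tilting condition (a) forces $K_d\in\add M$.

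For (c)$\Rightarrow$(a), equivalently (b)$\Rightarrow$(a), let $C\in\modn A$ satisfy $\Ext^i_A(M,C)=0$ for $1\le i\le d-1$. Take a minimal right $\add M$-resolution $\cdots\to M_1\to M_0\to C\to 0$, which is exact because $M$ is a generator. Applying $F$ gives a projective resolution of $F(C)$ over $\Gamma$. Since $\gldim\Gamma\le d+1$, and in fact $F(C)$ has depth at least $2$ as $F$ is left exact, $F(C)$ has projective dimension at most $d-1$. The syzygies $K_i$ then satisfy
\[
\Ext^1_A(M,K_i)\cong \Ext^{i+1}_A(M,C)=0
\]
in the relevant range, because the vanishing of the Ext groups lets the resolution splice exactly after applying $F$. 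Hence the last syzygy is a summand of an object of $\add M$, so the resolution splits back and $C\in\add M$. The dual description of $\add M$ follows symmetrically using cogenerators, or by the left–right symmetry of $\gldim\Gamma$. Functorial finiteness is automatic for $\add M$.

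I expect the main obstacle to be this last step, namely the precise bookkeeping that $\projdim F(C)\le d-1$ combined with the Ext vanishing forces splitting. I would handle it first by reducing to the standard fact that a $\Gamma$-module of the form $F(C)$ with $\Ext^{1..d-1}_A(M,C)=0$ has projective dimension either $0$ or at least $d$.
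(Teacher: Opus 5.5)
The decisive gap is in (b)$\Rightarrow$(a). Left exactness of $F$ together with $DA\in\add M$ only shows that $F(C)$ is a second syzygy. That gives $\projdim_\Gamma F(C)\le d-1$ and $K_{d-1}\in\add M$, but nothing then makes the resolution ``split back''. The obstruction to splitting $0\to K_{d-1}\to M_{d-2}\to K_{d-2}\to 0$ lies in $\Ext^1_A(K_{d-2},K_{d-1})\cong\Ext^{d-1}_A(C,K_{d-1})$, obtained by shifting through the $M_i$ using $d$-rigidity. So splitting needs Ext-vanishing in the \emph{other} variable, which you do not have. Your isomorphism $\Ext^1_A(M,K_i)\cong\Ext^{i+1}_A(M,C)$ also shifts the wrong way: for right $\add M$-approximations, $\Ext^1_A(M,K_{i+1})=0$ holds automatically and $\Ext^j_A(M,K_{i+1})\cong\Ext^{j-1}_A(M,K_i)$ for $2\le j\le d-1$. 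The ``standard fact'' you want to reduce to is unproved.

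The missing idea is to use the hypothesis $\Ext^j_A(M,C)=0$ ($1\le j\le d-1$) to upgrade ``second syzygy'' to ``$(d+1)$-st syzygy''. Apply $F$ to an injective coresolution $0\to C\to I^0\to\cdots\to I^d$. Each $F(I^j)$ is $\Gamma$-projective because $M$ is a cogenerator. The cohomology of $0\to F(C)\to F(I^0)\to\cdots\to F(I^d)$ at $F(I^j)$ is $\Ext^j_A(M,C)$, so this complex is exact through $F(I^{d-1})$. Hence $F(C)$ is a $(d+1)$-st syzygy of the cokernel of $F(I^{d-1})\to F(I^d)$. Now $\gldim\,\Gamma\le d+1$ makes $F(C)$ projective, so $C\in\add M$ by full faithfulness, and no splitting is needed. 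The other orthogonal is dual: use $\Hom_A(-,M)$ (fully faithful since $M$ is a cogenerator), a projective resolution of $C$, and $\gldim\,\Gamma^{\mathrm{op}}=\gldim\,\Gamma$.

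Second, your (b)$\Leftrightarrow$(c) and (a)$\Rightarrow$(c) use two incompatible meanings of $\mathcal{J}_A(M,X)$.

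In (b)$\Leftrightarrow$(c) you need $F(X)/\mathcal{J}_A(M,X)=S_X$. This holds only if $\mathcal{J}_A(M,X)$ is the radical of the category, i.e.\ the non-split epimorphisms $M\to X$, as in Iyama's lemma. With that reading your (b)$\Leftrightarrow$(c) argument is fine.

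In (a)$\Rightarrow$(c), however, you approximate $\rad X$, so $F(f_1)$ only reaches $\Hom_A(M,\rad X)$, which is the literal ``image in $\rad X$'' definition. For non-projective $X$ this misses, for example, the projective cover $P_X\to X$. With that reading, (c) is strictly weaker than (a). For $d=1$, $A$ the path algebra of linear $\mathbb{A}_3$ and $M=A\oplus DA$, literal (c) holds although $\add M\neq\modn A$.

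So fix the categorical meaning and let $f_1$ be a sink map of $X$ in $\add M$, i.e.\ $F(M_1)\to\mathcal{J}_A(M,X)$ a projective cover. For non-projective $X$ and $d\ge 2$ this $f_1$ is surjective and $\Ext^1_A(M,K_1)\cong S_X\neq 0$, so your vanishing claim for $K_1$ fails. Instead one gets, for $1\le i\le d-1$, $\Ext^i_A(M,K_i)\cong S_X$ and $\Ext^j_A(M,K_i)=0$ for $1\le j\le d-1$ with $j\neq i$. Hence $\Ext^j_A(M,K_d)=0$ for $1\le j\le d-1$, and (a) gives $M_{d+1}:=K_d\in\add M$. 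This replaces your formula $\Ext^j_A(M,K_d)\cong\Ext^{j-d+1}_A(M,K_1)$, whose index is nonpositive in the relevant range.
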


Let $\Gamma:=\End_A(M)$ be the endomorphism algebra of $M$.
The global dimension of $\Gamma$, denoted by $\gldim \Gamma$,
by definition is the supremum of projective dimensions of all simple $\Gamma$-modules.
To calculate $\gldim \Gamma$, we recall the following definition.

\begin{Def}
    Let $M\in \modn A$.
    For each $X\in \modn A$, let  $f_X: M_X\rightarrow X$ be the minimal right $\add M$-approximation of $X$.
    We call $\Omega_M(X) := \ker f_X$ the $M$-syzygy of $X$. 

    For each $n\in \mathbb{N}$, 
    define $\Omega_M^n(X) := \Omega_M(\Omega_M^{n-1}(X))$ by iteration,
    where $\Omega_M^0(X) = X$. 
    The $M$-resolution dimension of $X$, denoted by $\resdim_M(X)$, is the minimal natural number $n$ or $\infty$,
    such that $\Omega_M^n(X)\in \add M$.
\end{Def}

Further, we recall the following Lemma which will be used to calculate $\gldim \Gamma$ later.

\begin{Lem}\cite[Lemma 2.3]{CX22}\label{lem_glodim_endomorphism_algebra}
    Let $M = \bigoplus_{1\leq i\leq n}M_i$ with $M_i$ mutually non-isomorphic indecomposable direct summands.
    For each $1\leq i\leq n$, 
    set $S_i=\top \Hom_A(M, M_i)$
    and $M_{\hat{i}}:=\bigoplus_{1\leq j\leq n, j\neq i}M_j$.
    Let $f_i: U_i\rightarrow M_i$ be the minimal right $\add M_{\hat{i}}$-approximation of $M_i$. 
    Then the following statements hold true.
    \begin{itemize}
        \item [(a)] If $M_i$ is projective, 
        then 
        \begin{equation*}
            \projdim(_\Gamma S_i) = \left\{
            \begin{array}{cc}
               0  & \text{ if } \Hom_A(M, \rad M_i) = 0  \\
                \resdim_M(\rad(M_i))+1 & \text{ otherwise. } 
            \end{array}\right.
        \end{equation*}
        \item [(b)] $\Ext_{\Gamma}^1(S_i, S_i)=0$ if and only if $\image(\Hom_A(M_i, f_i)) = \rad (\End_A(M_i)).$
        In this case, 
        \begin{equation*}
            \projdim(_\Gamma S_i) =\left\{
            \begin{array}{cc}
                0 & \text{ if } U_i = 0; \\
                1 & \text{ if } U_i\neq 0, \ker f_i=0;\\
                \resdim_M(\ker f_i) + 2 & \text{ otherwise.}
            \end{array}\right.
        \end{equation*}
    \end{itemize}
\end{Lem}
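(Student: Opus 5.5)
The plan is to translate everything into $\Gamma$-modules through the functor $F=\Hom_A(M,-)\colon \modn A\to\modn\Gamma$. Three properties of $F$ will be used:
\begin{itemize}
\item[(1)] $F$ is left exact.
\item[(2)] $F$ restricts to an equivalence $\add M\simeq\proj\Gamma$, with $P_i:=F(M_i)$ the indecomposable projective with top $S_i$.
\item[(3)] If $f_X\colon M_X\to X$ is a minimal right $\add M$-approximation, then $F(f_X)$ is surjective, and in fact a projective cover of $F(X)$; minimality of $f_X$ corresponds to right minimality of $F(f_X)$.
\end{itemize}
From (1) and (3), each approximation gives a short exact sequence
\[0\to F(\Omega_M X)\to F(M_X)\to F(X)\to 0.\]
Splicing these sequences gives a minimal projective resolution of $F(X)$. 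It stops exactly when some $F(\Omega_M^nX)$ is projective, i.e.\ when $\Omega_M^nX\in\add M$, so $\projdim_\Gamma F(X)=\resdim_M(X)$. For the ``only if'' direction I would argue as follows. If $F(Y)$ is projective, the minimal approximation $M_Y\to Y$ becomes an isomorphism after applying $F$. Its kernel $K$ then satisfies $F(K)=0$, and $K$ is a submodule of a module in $\add M$. In the situations where the lemma is applied, $M$ is a generator, and then $F(K)=0$ forces $K=0$. This is the one step I expect to need care: the lemma as stated does not assume $M$ is a generator, and I would either add that hypothesis or check the $K=0$ step directly for the syzygies that actually occur.

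For (a), I would identify $\rad P_i$ as the space of non-split maps $M\to M_i$. Since $M_i$ is indecomposable projective, hence local, a map from an indecomposable summand $M_j$ to $M_i$ is a radical map iff it is not surjective, iff its image lies in $\rad M_i$. Indeed, any surjection onto $M_i$ splits, which forces $M_j\cong M_i$ and the map to be an isomorphism. Hence $\rad P_i=F(\rad M_i)$. If this is zero, then $S_i=P_i$ and $\projdim S_i=0$. Otherwise $\projdim S_i=\projdim F(\rad M_i)+1=\resdim_M(\rad M_i)+1$ by the paragraph above.

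For (b), $\Ext^1_\Gamma(S_i,S_i)\neq0$ iff the quiver of $\Gamma$ has a loop at $i$. Equivalently, some radical endomorphism of $M_i$ does not lie in $\rad^2$ of $\add M$. Now $\rad P_i$ is spanned by radical maps from the summands $M_j$ with $j\neq i$, which all factor through $f_i$, together with $\rad\End_A(M_i)$. So the absence of loops amounts to $\rad\End_A(M_i)$ being contained in the image of $\Hom_A(M_i,f_i)$, and then $\rad P_i=\image F(f_i)$. Conversely, if $\rad P_i=\image F(f_i)$, then $F(f_i)\colon F(U_i)\to\rad P_i$ is a projective cover whose source has no summand $P_i$, so there is no loop.

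In the loop-free case, left exactness gives the exact sequence
\[0\to F(\ker f_i)\to F(U_i)\to P_i\to S_i\to0.\]
Minimality of $f_i$ makes $F(U_i)\to\rad P_i$ a projective cover. The three cases then follow:
\begin{itemize}
\item If $U_i=0$, then $S_i=P_i$ and $\projdim S_i=0$.
\item If $U_i\neq0$ and $\ker f_i=0$, the resolution has length $1$.
\item Otherwise $\projdim S_i=\projdim F(\ker f_i)+2=\resdim_M(\ker f_i)+2$, by the first paragraph.
\end{itemize}
Here $F(\ker f_i)\neq0$ when $\ker f_i\neq 0$, again by the generator caveat. So the main obstacle is this bookkeeping identifying $\projdim F(X)$ with $\resdim_M(X)$; the rest is routine manipulation of radicals.
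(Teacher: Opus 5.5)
The paper does not prove this lemma; it quotes it from \cite{CX22}. Your argument identifies the minimal projective resolution of $F(X)$ with the iterated minimal right $\add M$-approximations of $X$. That is the standard route, and it is correct once $M$ is assumed to be a generator.

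Your caveat is genuinely needed: as transcribed in the paper, the statement fails without it. Take the path algebra of $\mathbb{A}_3$, oriented so that the indecomposable projective $P$ at the middle vertex has $\rad P\cong S\oplus S'$ with $S\not\cong S'$ simple projective, and put $M=P\oplus S$.
\begin{itemize}
\item Then $\Hom_A(M,\rad P)\neq 0$, so (a) is in its second case.
\item Also $\rad F(P)=F(\rad P)=F(S)$ is projective, so the simple $\Gamma$-module at $P$ has projective dimension $1$.
\item However, the minimal right $\add M$-approximation of $\rad P$ is the inclusion $S\hookrightarrow S\oplus S'$. So $\resdim_M(\rad P)=1$, and (a) would predict $\resdim_M(\rad P)+1=2$.
\end{itemize}
In the paper's only use of the lemma (Lemma \ref{lem_gldim_auslander_alg}), the module $C$ has $A$ as a summand, so the paper's application is unaffected.

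Two small points to tighten:
\begin{itemize}
\item When $F(Y)$ is projective, showing the kernel vanishes is not enough. You also need $f_Y$ to be surjective to conclude $Y\cong M_Y\in\add M$; this again follows from $A\in\add M$.
\item In (b), the step from ``no loop at $i$'' to $\rad\End_A(M_i)\subseteq\image\Hom_A(M_i,f_i)$ needs a Nakayama argument. Let $R=\rad\End_A(M_i)$ and $I=\image\Hom_A(M_i,f_i)$; $I$ is a two-sided ideal of $\End_A(M_i)$ because $f_i$ is an approximation. The absence of a loop gives $R=\rad^2_{\add M}(M_i,M_i)\subseteq I+R^2$, since compositions through $M_j$ with $j\neq i$ factor through $f_i$. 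Nilpotence of $R$ then forces $R\subseteq I$.
\end{itemize}
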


We introduce the definition of partial $d$-cluster tilting subcategories.
They are $d$-rigid but not 'maximal' compared to $d$-cluster tilting subcategories.
Additionally they interact with $\tau_d^{\pm 1}$ in a nice way which will be useful for our classification.

\begin{Def}\label{def_weakly_d_ct}
    Let $A$ be an algebra and let $\mathcal{C}\subseteq \modn A$ be a subcategory. 
    We call $\mathcal{C}$ a partial $d$-cluster tilting subcategory if the following conditions hold.
    \begin{itemize}
        \item [(a)] $A, DA\in \mathcal{C}$.
        \item [(b)] $\Ext_A^i(\mathcal{C}, \mathcal{C})=0$ for all $0<i<d$.
        \item [(c)] Denote by $\mathcal{C}_p$ and $\mathcal{C}_I$ the sets of isomorphism classes of indecomposable non-projective respectively non-injective modules in $\mathcal{C}$. 
        Then $\tau_d$ and $\tau_d^{-1}$ induce mutually inverse bijections
        \begin{equation*}
            \xymatrix{\mathcal{C}_P\ar@/^/[r]^{\tau_{d}} & \mathcal{C}_I.\ar@/^/[l]^{\tau_{d}^{-1}}\\}
        \end{equation*}
        \item [(d)] $\Omega^i M$ is indecomposable for all $M\in \mathcal{C}_P$ and $0<i<d$. 
        Dually, $\Omega^{-i}N$ is indecomposable for all $N\in \mathcal{C}_I$ and $0<i<d$.
    \end{itemize}
    We call $\mathcal{C}$ a partial $d\mathbb{Z}$-cluster tilting subcategory if additionally, 
    \begin{itemize}
        \item [(e)] $\Omega^{d}(\mathcal{C}) \subseteq \mathcal{C}$ and $\Omega^{-d}(\mathcal{C}) \subseteq \mathcal{C}$.
    \end{itemize} 
\end{Def}

\begin{Rem}\label{rem_partial_dZ_ct_def_implies_dZ_rigidity}
    Assume $\mathcal{C}$ is a partial $d\mathbb{Z}$-cluster tilting subcategory. 
    Then Definition \ref{def_weakly_d_ct} (e) implies the following.
    \begin{itemize}
        \item [(e')] $\Ext_A^i(\mathcal{C}, \mathcal{C})\neq 0$ implies $i\in d\mathbb{Z}$.
    \end{itemize}
    Notice that when $\mathcal{C}$ is $d\mathbb{Z}$-cluster tilting, (e) and (e') are equivalent. 
    However, this fails for partial $d\mathbb{Z}$-cluster tilting subcategories as they lack the maximality property.
\end{Rem}

\begin{Rem}
    In \cite{Vas19}, it was shown that $d$-cluster tilting subcategories and partial $d$-cluster tilting subcategories coincide for representation-directed algebras.
    In general, $d$-cluster tilting implies partial $d$-cluster tilting, which can be seen directly from Proposition \ref{prop_indec_equiv}.
    However partial $d$-cluster tilting subcategories are not necessarily $d$-cluster tilting subcategories. For example, if we take $A$ to be a self-injective algebra,
    then $\add A$ is trivially partial $d$-cluster tilting for any $d>0$ but it is not $d$-cluster tilting unless $A$ is semisimple.
\end{Rem}

\begin{Prop}\label{prop_d_ct_implies_weakly_d_ct}
    Let $A$ be an algebra and let $\mathcal{C}\subseteq \modn A$ be a $d$-cluster tilting   subcategory. 
    Then $\mathcal{C}$ is partial $d$-cluster tilting.
\end{Prop}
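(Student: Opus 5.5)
The plan is to verify the four conditions (a)--(d) of Definition \ref{def_weakly_d_ct} one at a time for a $d$-cluster tilting subcategory $\mathcal{C}\subseteq\modn A$. Almost all of them are restatements of the corresponding parts of Proposition \ref{prop_indec_equiv}, so I will simply match each condition with its source.

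\textbf{Condition (a).} This is Proposition \ref{prop_indec_equiv}(a), which gives $A, DA\in\mathcal{C}$. Alternatively, it follows from the definition: $\mathcal{C}$ is generating and cogenerating, so the projective cover of $A$ (respectively the injective envelope of $DA$) splits through an object of $\mathcal{C}$.

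\textbf{Condition (b).} By definition,
\begin{equation*}
\mathcal{C}=\{X\in\modn A\mid \Ext_A^i(X,\mathcal{C})=0 \text{ for } 0<i<d\}.
\end{equation*}
Every object $X$ of $\mathcal{C}$ therefore satisfies $\Ext^i_A(X,\mathcal{C})=0$ for $0<i<d$. Since $\mathcal{C}$ is closed under finite direct sums, this gives $\Ext_A^i(\mathcal{C},\mathcal{C})=0$ for all $0<i<d$.

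\textbf{Conditions (c) and (d).} These are verbatim Proposition \ref{prop_indec_equiv}(b) and (c). The first says that $\tau_d$ and $\tau_d^{-1}$ give mutually inverse bijections between $\mathcal{C}_P$ and $\mathcal{C}_I$. The second says that $\Omega^iM$ is indecomposable for $M\in\mathcal{C}_P$ and $0<i<d$, with the dual statement for $\Omega^{-i}N$, $N\in\mathcal{C}_I$.

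The only point needing care is the notation in (c) and (d). In Definition \ref{def_weakly_d_ct}, $\mathcal{C}_P$ and $\mathcal{C}_I$ are again the sets of isomorphism classes of indecomposable non-projective, respectively non-injective, modules in $\mathcal{C}$. This matches Proposition \ref{prop_indec_equiv} exactly, so the cited statements apply without modification. No step presents a real obstacle, and the proof consists of these citations.
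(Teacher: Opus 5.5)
Your proposal is correct and follows the paper's argument: the remark just before the proposition says the claim "can be seen directly from Proposition \ref{prop_indec_equiv}". Its parts (a)--(c) give conditions (a), (c) and (d) of Definition \ref{def_weakly_d_ct}, and condition (b) is immediate from the defining $\Ext$-vanishing of a $d$-cluster tilting subcategory, exactly as you verify.
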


Since we are interested in $nd\mathbb{Z}$-cluster tilting subcategories of an algebra which admits a $d\mathbb{Z}$-cluster tilting subcategory, 
we compare the functors $\tau_d^{\pm}$, $\tau_{nd}^{\pm}$ and $\Omega^{\pm d}$.

\begin{Prop}\label{prop_tau_nd_and_tau_d} 
    We have $\tau_{nd} = \tau_d \Omega^{(n-1)d}$ and $\tau_{nd}^{-1} = \tau_d^{-1}\Omega^{-(n-1)d}$.
\end{Prop}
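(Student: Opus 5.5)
This is a direct unwinding of the definitions $\tau_k=\tau\Omega^{k-1}$ and $\tau_k^{-1}=\tau^{-1}\Omega^{-(k-1)}$, combined with the fact that syzygies compose additively on the stable categories. The only care needed is that every composite is taken between the correct stable categories.

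First I record that $\Omega:\underline{\modn}A\to\underline{\modn}A$ is a well-defined additive functor, so the iterates satisfy $\Omega^{a}\Omega^{b}=\Omega^{a+b}$ for $a,b\ge 0$. This holds up to natural isomorphism, because the $\Omega^i$ are defined by iteration. The same is true for $\Omega^{-1}$ on $\overline{\modn}A$.

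Next I write
\[
\tau_{nd}=\tau\Omega^{nd-1}=\tau\Omega^{(d-1)+(n-1)d}=\tau\Omega^{d-1}\Omega^{(n-1)d}=\tau_d\,\Omega^{(n-1)d}.
\]
Here $\Omega^{(n-1)d}:\underline{\modn}A\to\underline{\modn}A$ is followed by $\tau_d:\underline{\modn}A\to\overline{\modn}A$, so the composite has the same source and target as $\tau_{nd}$. The dual computation gives
\[
\tau_{nd}^{-1}=\tau^{-1}\Omega^{-(nd-1)}=\tau^{-1}\Omega^{-(d-1)}\Omega^{-(n-1)d}=\tau_d^{-1}\,\Omega^{-(n-1)d},
\]
using additivity of cosyzygy iterates on $\overline{\modn}A$.

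The only point needing attention is the identity $\Omega^{a+b}=\Omega^a\Omega^b$. At the level of objects this is immediate up to projective summands, and a minimal projective cover can be chosen at each step, so this is standard and I expect no real obstacle. If one prefers to work on objects of $\modn A$ rather than stable categories, I would note that $\tau$ kills projective summands, so any projective summands introduced along the way are irrelevant. The statement is therefore an equality of functors up to natural isomorphism, or of objects up to isomorphism.
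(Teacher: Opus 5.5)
Your proof is correct, and it is the intended argument. The paper states this proposition without proof because it follows at once from $\tau_k=\tau\Omega^{k-1}$, $\tau_k^{-1}=\tau^{-1}\Omega^{-(k-1)}$, the identity $nd-1=(d-1)+(n-1)d$, and $\Omega^{a+b}=\Omega^{a}\Omega^{b}$, which holds by the iterative definition of syzygies.
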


The following Proposition indicates that for an algebra $A$  with a $d\mathbb{Z}$-cluster tilting subcategory $\mathcal{M}$  and a partial $nd\mathbb{Z}$-cluster tilting subcategory  $\mathcal{C}$, 
we can always find a partial $nd\mathbb{Z}$-cluster tilting subcategory of $\modn A$ inside $\mathcal{M}$. 
This is critical for our strategy of classification as it will allow us to focus on a specific $\mathcal{M}$ which is much more accessible than all of $\modn A$.

\begin{Prop}\label{prop_C_intersecting_M_gives_weakly_ndZ_ct}
    Let $A $ be an algebra.
    Assume $\mathcal{M}\subseteq \modn A$ is a $d\mathbb{Z}$-cluster tilting subcategory 
    and 
    $\mathcal{C}\subseteq \modn A$ is a partial  $nd\mathbb{Z}$-cluster tilting subcategory.
    Then $\mathcal{C}^{\mathcal{M}} = \mathcal{C}\cap \mathcal{M}\subseteq \modn A$ is partial $nd\mathbb{Z}$-cluster tilting.
\end{Prop}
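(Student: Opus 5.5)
We verify conditions (a)--(e) of Definition \ref{def_weakly_d_ct} for $\mathcal{C}^{\mathcal{M}}=\mathcal{C}\cap\mathcal{M}$, with $d$ replaced by $nd$. Throughout we use two facts. First, Proposition \ref{prop_indec_equiv}(d) for $\mathcal{M}$ gives $\Omega^{\pm d}(\mathcal{M})\subseteq\mathcal{M}$, and iterating, $\Omega^{\pm kd}(\mathcal{M})\subseteq\mathcal{M}$ for all $k\ge 0$. Second, Proposition \ref{prop_indec_equiv}(b) says $\tau_d^{\pm1}$ restrict to mutually inverse bijections between the indecomposable non-projective and non-injective objects of $\mathcal{M}$.

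Conditions (a), (b) and (d) are immediate. For (a), note $A,DA\in\mathcal{C}$ by assumption and $A,DA\in\mathcal{M}$ by Proposition \ref{prop_indec_equiv}(a). For (b), we have $\Ext^i_A(\mathcal{C}^{\mathcal{M}},\mathcal{C}^{\mathcal{M}})\subseteq\Ext^i_A(\mathcal{C},\mathcal{C})=0$ for $0<i<nd$. Condition (d) is a statement about single indecomposable objects, so it is inherited from $\mathcal{C}$, because $(\mathcal{C}^{\mathcal{M}})_P\subseteq\mathcal{C}_P$ and $(\mathcal{C}^{\mathcal{M}})_I\subseteq\mathcal{C}_I$. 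For (e), take $X\in\mathcal{C}^{\mathcal{M}}$. Then $\Omega^{nd}X\in\mathcal{C}$ by (e) for $\mathcal{C}$, and $\Omega^{nd}X\in\mathcal{M}$ by the iterated closure above. The same argument works for $\Omega^{-nd}$.

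The main step is (c). Let $X\in(\mathcal{C}^{\mathcal{M}})_P$. Since $\mathcal{C}$ is partial $nd\mathbb{Z}$-cluster tilting, $\tau_{nd}X\in\mathcal{C}_I$, so it remains to show $\tau_{nd}X\in\mathcal{M}$. By Proposition \ref{prop_tau_nd_and_tau_d}, $\tau_{nd}X=\tau_d\Omega^{(n-1)d}X$. By (d) for $\mathcal{C}$, the module $Y:=\Omega^{(n-1)d}X$ is indecomposable; for $n=1$ there is nothing to prove. Moreover $Y\in\mathcal{M}$ by closure under $\Omega^{d}$, and $Y$ is non-projective, since otherwise $\tau_{nd}X=0$, contradicting the bijection for $\mathcal{C}$. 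Hence $\tau_dY\in\mathcal{M}$ by Proposition \ref{prop_indec_equiv}(b).

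Dually, for $Z\in(\mathcal{C}^{\mathcal{M}})_I$ we use $\tau_{nd}^{-1}Z=\tau_d^{-1}\Omega^{-(n-1)d}Z$ and the same reasoning to get $\tau_{nd}^{-1}Z\in\mathcal{C}^{\mathcal{M}}$, non-projective. Thus the bijections $\tau_{nd}^{\pm1}$ between $\mathcal{C}_P$ and $\mathcal{C}_I$ restrict to maps between $(\mathcal{C}^{\mathcal{M}})_P$ and $(\mathcal{C}^{\mathcal{M}})_I$. These restrictions are still mutually inverse, so they are bijections, and (c) holds.

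The point needing care is the bookkeeping of indecomposability and (non-)projectivity of $\Omega^{(n-1)d}X$ in the argument for (c), since Proposition \ref{prop_indec_equiv}(b) only applies to indecomposable non-projective objects of $\mathcal{M}$. The indecomposability input is exactly condition (d) for $\mathcal{C}$, which is stated up to index $nd-1\ge (n-1)d$.
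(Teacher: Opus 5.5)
Your proposal is correct and follows essentially the same route as the paper's proof. For (c), both use $\tau_{nd}=\tau_d\Omega^{(n-1)d}$, indecomposability of $\Omega^{(n-1)d}X$ from condition (d) for $\mathcal{C}$, closure of $\mathcal{M}$ under $\Omega^{d}$, and the $\tau_d^{\pm1}$-bijection for $\mathcal{M}$. Beyond the paper, you also explicitly verify condition (e), and you justify the non-projectivity of $\Omega^{(n-1)d}X$ via $\tau_{nd}X\neq 0$; the paper leaves (e) implicit and attributes non-projectivity loosely to condition (d).
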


\begin{proof}
    We verify  condition (c) in Definition \ref{def_weakly_d_ct} since condition (a), (b) and (d) are immediate.
    It suffices to show that $\tau_{nd}\mathcal{C}^{\mathcal{M}}_P \subseteq \mathcal{C}^{\mathcal{M}}_I$ and $\tau_{nd}^{-1}\mathcal{C}^{\mathcal{M}}_I \subseteq \mathcal{C}^{\mathcal{M}}_P$.

    Let $M\in \mathcal{C}^{\mathcal{M}}_P = \mathcal{C}_P\cap \mathcal{M}_P$. 
    We have $\tau_{nd}M =  \tau_d\Omega^{(n-1)d}M\in \mathcal{C}_I$.
    Moreover $\Omega^{(n-1)d}M$ is indecomposable and non-projective by Definition \ref{def_weakly_d_ct} (d).
    So $\Omega^{(n-1)d}M\in \mathcal{M}_P$ as $M\in \mathcal{M}$ and $\Omega^d\mathcal{M}\subseteq \mathcal{M}$.
    Hence $\tau_d\Omega^{(n-1)d}M\in \mathcal{M}_I$.
    Thus we have $\tau_{nd}M\in \mathcal{C}_I\cap \mathcal{M}_I = \mathcal{C}^{\mathcal{M}}_I$.

    Arguing similarly we have $\tau_{nd}^{-1}\mathcal{C}^{\mathcal{M}}_I\subseteq \mathcal{C}^{\mathcal{M}}_P$.

    So the inverse bijections 
    \begin{equation*}
            \xymatrix{\mathcal{C}_P\ar@/^/[r]^{\tau_{nd}} & \mathcal{C}_I.\ar@/^/[l]^{\tau_{nd}^{-1}}\\}
    \end{equation*}
    restricts to inverse  bijections 
    \begin{equation*}
            \xymatrix{\mathcal{C}^{\mathcal{M}}_P\ar@/^/[r]^{\tau_{nd}} & \mathcal{C}^{\mathcal{M}}_I.\ar@/^/[l]^{\tau_{nd}^{-1}}\\}
    \end{equation*}
\end{proof}

\begin{Prop}\label{prop_more_modules_from_tau_d_Omega_-1}
    Let $A$ be an algebra.
    Assume $\mathcal{M}\subseteq \modn A$ is a  $d\mathbb{Z}$-cluster tilting subcategory and $\mathcal{C}\subseteq \modn A$ is a partial  $nd\mathbb{Z}$-cluster tilting subcategory.
    Denote by $\mathcal{C}^{\mathcal{M}} = \mathcal{C}\cap \mathcal{M}$ the induced partial  $nd\mathbb{Z}$-cluster tilting.
    Then the following statements hold.
    \begin{itemize}
        \item [(a)] If $M\in \mathcal{C}^{\mathcal{M}}_P$,
        then $\Omega^{-d}\tau_d M\in \mathcal{C}$.
        \item [(b)] If $N\in \mathcal{C}^{\mathcal{M}}_I$,
        then $\Omega^d\tau_d^{-1}N\in \mathcal{C}$.
    \end{itemize}
\end{Prop}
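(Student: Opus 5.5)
The plan is to reduce the statement to a commutation rule between $\tau_d$ and $\Omega^{\pm d}$ on $\mathcal{M}$, and then combine it with the closure of $\mathcal{C}$ under $\Omega^{\pm nd}$ and the bijection $\tau_{nd}$. I prove (a); part (b) is the dual argument.

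Let $M\in\mathcal{C}^{\mathcal{M}}_P$. By Proposition~\ref{prop_C_intersecting_M_gives_weakly_ndZ_ct} and Proposition~\ref{prop_tau_nd_and_tau_d},
\begin{equation*}
N:=\tau_{nd}M=\tau_d\Omega^{(n-1)d}M\in\mathcal{C}^{\mathcal{M}}_I .
\end{equation*}
By Definition~\ref{def_weakly_d_ct}(e) for the partial $nd\mathbb{Z}$-cluster tilting subcategory $\mathcal{C}$, we get $\Omega^{-nd}N\in\mathcal{C}$. It therefore suffices to establish the isomorphism
\begin{equation*}
\Omega^{-nd}\tau_d\Omega^{(n-1)d}M\;\cong\;\Omega^{-d}\tau_d M .
\end{equation*}
Writing $\Omega^{-nd}=\Omega^{-d}\Omega^{-(n-1)d}$, this follows by induction on $n$ from the following key lemma: for every $X\in\mathcal{M}_P$ with $\Omega^dX\in\mathcal{M}_P$,
\begin{equation*}
\Omega^{-d}\tau_d\Omega^dX\cong\tau_dX .
\end{equation*}
The hypotheses of the lemma hold at every stage of the induction. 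The modules $\Omega^{kd}M$ for $0\le k\le n-1$ lie in $\mathcal{M}$ because $\Omega^d\mathcal{M}\subseteq\mathcal{M}$. They are indecomposable and non-projective by Definition~\ref{def_weakly_d_ct}(d) for $\mathcal{C}$, exactly as in the proof of Proposition~\ref{prop_C_intersecting_M_gives_weakly_ndZ_ct}.

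For the key lemma I would use $\tau_d=D\,\mathrm{Tr}\,\Omega^{d-1}$ and exploit the vanishing $\Ext^i_A(X,A)=0$ for $0<i<d$, which holds because $A\in\mathcal{M}$ and $X\in\mathcal{M}$. Under this vanishing, dualising the minimal projective resolution of $X$ gives an exact sequence of projective $A^{op}$-modules. From it one reads off the standard dimension-shift
\begin{equation*}
\mathrm{Tr}\,\Omega^{k}X\cong\Omega^{-k}_{A^{op}}\mathrm{Tr}\,X \quad\text{(up to projective summands), for } 0\le k\le d-1 .
\end{equation*}
Applying $D$ turns this into a relation $\tau_d X\cong\Omega^{d-1}\tau X$ modulo injectives. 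The same computation applied to $\Omega^dX$ uses the vanishing $\Ext^i_A(\Omega^dX,A)=0$ for $0<i<d$, which again holds since $\Omega^dX\in\mathcal{M}$. Comparing the two computations, together with $\Ext^{d}$-shifting $\Ext^{i}_A(\Omega^dX,A)\cong\Ext^{i+d}_A(X,A)$, identifies $\tau_d\Omega^dX$ with $\Omega^d\tau_dX$ in $\overline{\modn}A$. Applying $\Omega^{-d}$ then gives the lemma, provided that no injective summands appear at the relevant steps; this is where the indecomposability of $\Omega^{-i}$ on $\mathcal{M}_I$ from Proposition~\ref{prop_indec_equiv}(c) is used.

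I expect the key lemma to be the main obstacle. The delicate point is the bookkeeping of projective and injective summands when passing between $\underline{\modn}A$ and $\overline{\modn}A$, so that the identification $\Omega^{-d}\tau_d\Omega^dX\cong\tau_dX$ holds for genuine modules rather than only stably. If the direct dimension-shifting argument becomes unwieldy, I would instead use $d$-almost split sequences in $\mathcal{M}$ together with $\Omega^{\pm d}\mathcal{M}\subseteq\mathcal{M}$. The idea is that applying $\Omega^d$ to a $d$-almost split sequence ending in $X$ should, after removing projective summands, yield one ending in $\Omega^dX$, and comparing the left-hand terms gives the lemma.
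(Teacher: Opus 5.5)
There is a genuine gap. Your key lemma is false as stated. The mechanism you propose for it also fails for non-self-injective algebras: you want to identify $\tau_d\Omega^dX$ with $\Omega^d\tau_dX$, either by dimension shifting or by applying $\Omega^d$ to $d$-almost split sequences.

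Here is a counterexample. Take $d=2$ and the acyclic $2$-Nakayama algebra with Kupisch series $\underline{\ell}=(1,2,3,3,2,3,4)$ from the degluing example in Section 2, with its distinguished $2\mathbb{Z}$-cluster tilting subcategory $\mathcal{M}$. A direct computation gives $f(6)=2$, $f(7)=f(8)=4$ and $g(3)=6$.
\begin{itemize}
\item The module $X=M(5,6,8)$ lies in $\mathcal{M}_P$, since $f(8)=4<5$.
\item By Proposition~\ref{Prop_modcomb}(iv), $\Omega^2X=M(4,5,6)$, which is the simple bridge. It again lies in $\mathcal{M}_P$, since $f(6)=2<4$.
\item Then $\tau_2\Omega^2X=M(3,4,5)$ and $\Omega^{-2}M(3,4,5)=M(4,5,g(3))=M(4,5,6)$.
\item On the other hand, $\tau_2X=M(4,5,7)$, so $\Omega^{-2}\tau_2\Omega^2X\not\cong\tau_2X$.
\end{itemize}
In fact $\tau_2X=M(4,5,7)$ is projective, because $f(7)=4$. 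So $\Omega^2\tau_2X=0$, while $\tau_2\Omega^2X\neq 0$ is non-injective. The commutation you aim for fails outright here. All the indecomposability conditions of Proposition~\ref{prop_indec_equiv}(c) hold in this example, so the failure is not a matter of bookkeeping projective or injective summands. The commutation you have in mind is the self-injective phenomenon, where $\tau$ and $\Omega$ commute (cf.\ the proof of Lemma~\ref{lem_cyclic_restriction_n}). Your fallback via $d$-almost split sequences would produce the same false identification.

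The missing idea is that you use only half of condition (c) in Definition~\ref{def_weakly_d_ct}, namely $\tau_{nd}\mathcal{C}_P\subseteq\mathcal{C}_I$. You never use that $\tau_{nd}^{-1}$ is its inverse. The argument then goes as follows.
\begin{itemize}
\item With $N=\tau_{nd}M$ as you define it, you also have $M\cong\tau_{nd}^{-1}N=\tau_d^{-1}\Omega^{-(n-1)d}N$.
\item The module $\Omega^{-(n-1)d}N$ lies in $\mathcal{M}$ because $\Omega^{-d}\mathcal{M}\subseteq\mathcal{M}$. It is indecomposable by (d) for $\mathcal{C}$. It is non-injective, since $\tau_d^{-1}$ of it is $M\neq 0$. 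Hence it lies in $\mathcal{M}_I$.
\item On $\mathcal{M}_I$, $\tau_d\tau_d^{-1}$ is the identity by Proposition~\ref{prop_indec_equiv}(b). Therefore $\tau_dM\cong\Omega^{-(n-1)d}N$.
\item Consequently $\Omega^{-d}\tau_dM\cong\Omega^{-nd}N\in\mathcal{C}$.
\end{itemize}
This yields the isomorphism $\Omega^{-(n-1)d}\tau_d\Omega^{(n-1)d}M\cong\tau_dM$ that your reduction needs, in one step and with no induction. The input is the bijection on $\mathcal{C}$, not a general commutation of $\tau_d$ and $\Omega^d$ on $\mathcal{M}$. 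This is exactly the paper's proof.
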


\begin{proof}
    We prove (a) as (b) is similar.
    As $M\in \mathcal{C}^{\mathcal{M}}_P$, there exists $N\in \mathcal{C}^{\mathcal{M}}_I$ such that 
    \begin{equation*}
        M = \tau_{nd}^{-1}N = \tau_d^{-1}\Omega^{-(n-1)d}N.
    \end{equation*}
    As $N\in \mathcal{C}_I$, $\Omega^{-(n-1)d}N$ is indecomposable non-injective by Definition \ref{def_weakly_d_ct} (d).
    Thus $\Omega^{-(n-1)d}N\in \mathcal{M}_I$ since $\Omega^{-d}\mathcal{M}\subseteq \mathcal{M}$ and $N\in \mathcal{M}_I$.
    So 
    \begin{equation*}
        \tau_d \tau_d^{-1}\Omega^{-(n-1)d}N\cong \Omega^{-(n-1)d}N.
    \end{equation*}
    Thus 
    \begin{equation*}
        \Omega^{-d}\tau_d M = \Omega^{-d}\tau_d\tau_d^{-1}\Omega^{-(n-1)d}N\cong \Omega^{-nd}N\in \mathcal{C}
    \end{equation*}
    as $\Omega^{-nd}\mathcal{C}\subseteq \mathcal{C}$.
\end{proof}

\subsection{The sink-source gluing}\label{preliminary_gluing}

In this section, we introduce the gluing of algebras given by quivers with relations along a sink and a source, which will be called a sink-source gluing. 
Such gluings or pullbacks were considered in \cite{IPTZ87, Lev08, BCW15}.
In \cite{Vas20}, the gluing of representation-directed algebras along left and right abutments of height $h$ for a positive integer $h$ was introduced. 
The sink-source gluing is a specialization of gluing along left and right abutments of height $0$. 
The difference here is that we don't restrict to representation-directed algebras.
To establish notation as well as make the article self-contained,
we give full proof of some results which may be already considered in \cite{IPTZ87, Lev08, BCW15, Vas20}.

\begin{Def}\label{def_glue_two_algs}
    Let $A= \Bbbk Q_A/\mathcal{R}_A$ be an algebra in which there is a sink vertex $a$ in $Q_A$ and $B=\Bbbk Q_B/\mathcal{R}_B$ be an algebra in which there is a source vertex $b$ in $Q_B$.
    The sink-source gluing of $A$ and $B$, denoted by $\Lambda := A\Delta B$, is defined to be the algebra $\Lambda = \Bbbk Q_{\Lambda}/\mathcal{R}_{\Lambda}$ given by a quiver with relations where 
    \begin{align*}
        (Q_{\Lambda})_0 & = ((Q_A)_0 \cup (Q_B)_0) /(a\sim b) \\
        (Q_{\Lambda})_1 & = (Q_A)_1 
        \cup (Q_B)_1.
    \end{align*}
    Note that $Q_A$ and $Q_B$ are full subquivers of $Q_{\Lambda}$. 
    We set $e_A := \sum_{i\in (Q_A)_0}e_i$ for the sum of all primitive idempotents of $\Lambda$ corresponding to the vertices of $Q_A$, and similarly we set $e_B :=\sum_{i\in (Q_B)_0}e_i$.
    The relations $\mathcal{R}_{\Lambda}$ are given as follows.
    \begin{equation*}
        \mathcal{R}_{\Lambda} = \langle \mathcal{R}_A + \mathcal{R}_B + (1-e_A)\Bbbk Q_{\Lambda}(1-e_B)\rangle.
    \end{equation*}
\end{Def}

Let $\Lambda = A\Delta B$. We have a pull-back diagram as follows.
\begin{equation*}
    \xymatrix{
        \Lambda\ar[r]^{\resLambdaB}\ar[d]_\resLambdaA & B\ar[d]\\
            A\ar[r] & \Bbbk. }
\end{equation*} 
Here
\begin{equation*}
    \resLambdaA: \Lambda\rightarrow \Lambda/\langle 1-e_A \rangle \cong A \text{ and }
    \resLambdaB: \Lambda\rightarrow \Lambda/\langle 1-e_B \rangle \cong B 
\end{equation*}
are ring epimorphisms.
Indeed we have the following recollement of module categories induced by the idempotent $1-e_A$  (see \cite{Chr14, CJ14} for more details).
\begin{equation}\label{recollement_modA_modLambda}
    \xymatrix@C=5em{
        \modn A\ar[r]^{^\Lambda_A\pi_{\ast}} & 
        \modn \Lambda \ar@/^2em/[l]^{\resLambdaA^{!}}\ar@/_2em/[l]_{\resLambdaA^{\ast}}\ar[r] & \modn (1-e_A)\Lambda(1-e_A)\ar@/^2em/[l]\ar@/_2em/[l]
        }.
\end{equation}
Here $\resLambdaA_{\ast}$ is a fully faithful embedding which is also exact. 
The left and right adjoints of $\resLambdaA$ are 
\begin{equation*}
    \resLambdaA^{\ast} = -\otimes_{\Lambda}\Lambda/\langle 1-e_A\rangle 
    \text{ and } 
    \resLambdaA^{!} = \Hom_{\Lambda}(A, -).
\end{equation*}

We have a similar recollement induced by the idempotent $1-e_B$.

\begin{Rem}
    Denote by $S_a\in \modn A$ the simple injective $A$-module at vertex $a$ and $S_b\in \modn B$ the simple projective $B$-module at vertex $b$.
    Let $S$ be the simple $\Lambda$-module at vertex $a$, or equivalently $b$. 
    We identify both $\resLambdaA_{\ast}S_a$ and $\resLambdaB_{\ast}S_b$ with $S$ along the isomorphisms
    \begin{equation*}
        S \cong  \resLambdaA_{\ast}S_a \cong  \resLambdaB_{\ast}S_b.
    \end{equation*}
    We call $S$ the simple bridge of the sink-source gluing 
    and we use the notation $ A\Delta_S B = A\Delta B$ to emphasize $S$.
\end{Rem}

In the following Proposition, we study the image of projective objects under the adjoint pairs $(\resLambdaA^{\ast}, \resLambdaA_{\ast})$ and $(\resLambdaB^{\ast}, \resLambdaB_{\ast})$. 
A dual statement is obtained for the injective objects in a similar way. 
As a Corollary, we show the compatibility between the Nakayama functors and the functors $\resLambdaA_{\ast}, \resLambdaB_{\ast}$.

\begin{Prop}\label{prop_image_of_projects_lambda_A_B}
    Let $\Lambda = A\Delta_S B$.
    Then the following statements hold true.
    \begin{itemize}
        \item [(a)] The functors 
        $\resLambdaA^{\ast}$ and $\resLambdaB^{\ast}$ preserve projective objects. 
        More precisely,
        \begin{equation*}
            \resLambdaA^{\ast}(e_i\Lambda)  \cong 
            \left\{
        \begin{array}{cc}
            e_i A & i\in (Q_A)_0 \\
            0 & \text{ otherwise. } 
        \end{array}
        \right.
        \text{ and~~~~ }
        \resLambdaB^{\ast}(e_i\Lambda)  \cong 
        \left\{
        \begin{array}{cc}
            e_i B & i\in (Q_B)_0 \\
            0 & \text{ otherwise. } 
        \end{array}
        \right.
        \end{equation*}
        \item [(b)]
        We have $\resLambdaA_{\ast}(e_iA) \cong e_i\Lambda$ and 
        \begin{equation*}
            \resLambdaB_{\ast}(e_iB) \cong 
            \left\{
        \begin{array}{cc}
            e_i \Lambda & i\neq b \\
            S & i = b 
        \end{array}
        \right.
        \end{equation*}
        \item [(a')] The functors $\resLambdaA^!$ and $\resLambdaB^!$ preserve injective objects. 
        More recisely,
        \begin{equation*}
            \resLambdaA^{!}(D\Lambda e_i)  \cong 
            \left\{
        \begin{array}{cc}
            DAe_i & i\in (Q_A)_0 \\
            0 & \text{ otherwise. } 
        \end{array}
        \right.
        \text{ and~~~~ }
        \resLambdaB^{!}(D\Lambda e_i)  \cong 
        \left\{
        \begin{array}{cc}
            DBe_i & i\in (Q_B)_0 \\
            0 & \text{ otherwise. } 
        \end{array}
        \right.
        \end{equation*}
        \item [(b')]
        We have $\resLambdaB_{\ast}(DBe_i) \cong D\Lambda e_i$ and 
        \begin{equation*}
            \resLambdaA_{\ast}(DAe_i) \cong 
            \left\{
        \begin{array}{cc}
            D\Lambda e_i & i\neq a \\
            S & i = a 
        \end{array}
        \right.
        \end{equation*}
    \end{itemize}
\end{Prop}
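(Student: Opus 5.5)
The plan is to compute everything directly on the level of quiver representations, using the explicit description $\Lambda = \Bbbk Q_\Lambda/\mathcal{R}_\Lambda$. The key structural fact is this: any path of $Q_\Lambda$ that starts outside $Q_A$ and ends outside $Q_B$ is zero in $\Lambda$. Since $a=b$ is a sink of $Q_A$ and a source of $Q_B$, every nonzero path lies entirely in $Q_A$ or entirely in $Q_B$, or it passes through $a$ from the $A$-part into the $B$-part. Such through-paths are killed precisely because they start in $(Q_A)_0\setminus\{a\}$ and end in $(Q_B)_0\setminus\{b\}$. Consequently:
\begin{itemize}
\item for $i\in (Q_A)_0$, $i\neq a$, the right module $e_i\Lambda$ is spanned by paths starting at $i$, all of which stay in $Q_A$;
\item $e_a\Lambda$ is spanned by $e_a$ and the paths in $Q_B$ starting at $b$;
\item for $i\in (Q_B)_0$, $i\neq b$, the paths starting at $i$ stay in $Q_B$.
\end{itemize}
With the paper's right-module/path conventions, I will fix the convention under which paths starting at $i$ span $e_i\Lambda$. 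The statement $\resLambdaA_\ast(e_iA)\cong e_i\Lambda$ for all $i\in(Q_A)_0$ then forces $e_a\Lambda$ to be simple, so the right projectives of $A$ must stay in $A$. I will choose the composition convention to match this, and check it first.

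For (a), I use $\resLambdaA^\ast(e_i\Lambda)=e_i\Lambda\otimes_\Lambda \Lambda/\langle 1-e_A\rangle \cong e_i\Lambda/e_i\Lambda(1-e_A)\Lambda$. If $i\notin (Q_A)_0$, then $e_i\in \langle 1-e_A\rangle$, so the result is $0$. If $i\in(Q_A)_0$, the quotient is $e_i\Lambda/e_i\langle 1-e_A\rangle$. This is exactly $e_i$ times the quotient algebra, and that quotient is $A$ because $\Lambda/\langle 1-e_A\rangle\cong A$ via $\resLambdaA$. Hence the result is $e_iA$. The $B$-case is identical. More abstractly, $\resLambdaA^\ast$ is a left adjoint of an exact functor, so it preserves projectives, and $e_i\Lambda\otimes_\Lambda A\cong e_iA$.

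For (b), I compare dimension vectors, or bases of paths. The module $\resLambdaA_\ast(e_iA)$ is $e_iA$ viewed as a $\Lambda$-module on which the arrows of $Q_B$ act by zero. Using the path description above, $e_i\Lambda$ has basis the nonzero paths starting at $i$. For $i\in(Q_A)_0$ these all lie in $Q_A$, and the relations among them are exactly those in $\mathcal{R}_A$. There is a natural surjection $e_i\Lambda\to e_iA$; I will show it is injective by checking that $\mathcal{R}_\Lambda\cap e_i\Bbbk Q_A = e_i\mathcal{R}_A$. This holds because the generators of $\mathcal{R}_\Lambda$ coming from $\mathcal{R}_B$ and from the gluing ideal cannot produce nonzero combinations of paths inside $Q_A$. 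Since $\mathcal{R}_B$ consists of paths in $Q_B$, a path in $Q_A$ meets $Q_B$ only at $b$, and $b$ is a source of $Q_B$, this is clear.

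For $\resLambdaB_\ast(e_iB)$ with $i\neq b$, the same argument applies: paths from $i$ stay in $Q_B$, since $b$ is a source and cannot be re-entered. For $i=b$, the $\Lambda$-projective $e_b\Lambda$ contains the $Q_A$-paths, which with this convention is where the gluing kills nothing. So I need to get the convention right. Under the correct convention, $e_bB$ is the simple projective $S_b$, so $\resLambdaB_\ast(e_bB)=\resLambdaB_\ast S_b = S$. This is immediate from the identification of the simple bridge.

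Parts (a') and (b') follow by duality. Applying $D$ swaps right and left modules and sends $\Lambda=A\Delta B$ to $\Lambda^{op}=B^{op}\Delta A^{op}$, where $a$ becomes a source of $A^{op}$ and $b$ a sink of $B^{op}$. Also, $\resLambdaA^!$ corresponds to $\resLambdaA^\ast$ under $D$, since $D\Hom_\Lambda(A,DM)\cong M\otimes_\Lambda A$. Applying (a) and (b) to $\Lambda^{op}$ with the roles of $A$ and $B$ swapped gives exactly (a') and (b'), including the exceptional vertex switching from $b$ to $a$.

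The main obstacle is purely bookkeeping: fixing the path-composition convention consistently with right modules, and verifying $\mathcal{R}_\Lambda\cap \Bbbk Q_A=\mathcal{R}_A$, the faithfulness of the gluing on each piece. I would first settle the convention on the smallest example, with $A$ and $B$ each the path algebra of $\bullet\to\bullet$, before writing the general argument.
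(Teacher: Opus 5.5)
Your skeleton matches the paper's: (a) from $e_i\Lambda\otimes_\Lambda \Lambda/\langle 1-e_A\rangle\cong e_i\Lambda/e_i\Lambda(1-e_A)\Lambda$, (b) by a path computation, and (a$'$), (b$'$) by duality. The weak point is the convention you postpone. It is not a free choice, and the path-level claims you wrote down sit on the wrong sides, so your argument for (b) is not correct as written in either convention, although it is easily repaired.

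The convention is forced by the definition of $\mathcal{R}_\Lambda$. Suppose $e_i\Lambda$ were spanned by paths starting at $i$. Then $(1-e_A)\Bbbk Q_\Lambda(1-e_B)$ would consist of paths from $(Q_B)_0\setminus\{b\}$ to $(Q_A)_0\setminus\{a\}$. There are none; this is your ``key structural fact'', which is vacuous. So no through-path would be killed. Hence paths compose right-to-left, and $e_i\Lambda$ is spanned by the paths \emph{ending} at $i$. The gluing ideal is then spanned by the through-paths from $(Q_A)_0\setminus\{a\}$ to $(Q_B)_0\setminus\{b\}$. This convention is also what makes $S_a$ simple injective and $S_b$ simple projective, as the remark preceding the proposition asserts.

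In this convention the roles in your write-up are reversed.
\begin{itemize}
\item For $i\in(Q_A)_0$, no path reaches $i$ from $(Q_B)_0\setminus\{b\}$, so no relation is used at all. In particular $e_a\Lambda=e_aA$ is the projective cover of $S_a$, which is in general not simple. Your inference that $e_a\Lambda$ must be simple is an artifact of the wrong convention; it is $e_bB$ that is simple.
\item For $i\in(Q_B)_0\setminus\{b\}$, the paths ending at $i$ do include through-paths from $(Q_A)_0\setminus\{a\}$. It is precisely the gluing relation that kills them; ``$b$ is a source and cannot be re-entered'' is not the reason.
\end{itemize}

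Once this is fixed, you can skip the check $\mathcal{R}_\Lambda\cap e_i\Bbbk Q_A=e_i\mathcal{R}_A$. Part (a) already gives $e_iA\cong e_i\Lambda/e_i\Lambda(1-e_A)\Lambda$ and $e_iB\cong e_i\Lambda/e_i\Lambda(1-e_B)\Lambda$ as $\Lambda$-modules. So (b) reduces to two vanishing statements, both immediate from the description above:
\begin{itemize}
\item $e_i\Lambda(1-e_A)=0$ for $i\in(Q_A)_0$;
\item $e_i\Lambda(1-e_B)=0$ for $i\in(Q_B)_0\setminus\{b\}$.
\end{itemize}
For $i=b$, the submodule $e_b\Lambda(1-e_B)\Lambda$ is the radical of $e_b\Lambda$, since every nontrivial path ending at $a$ starts in $(Q_A)_0\setminus\{a\}$. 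The quotient is $S$. That is exactly the paper's argument.

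Your duality step for (a$'$), (b$'$), via $\Lambda^{op}\cong B^{op}\Delta A^{op}$ with the roles of $A$ and $B$ swapped, is correct and makes the paper's ``dually'' precise.
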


\begin{proof}
    Part (a) follows from the properties of the recollement. 
    We prove (b).

    \begin{equation*}
        \resLambdaA_{\ast}(e_iA) \cong (e_i\Lambda)/(e_i\Lambda(1-e_A)\Lambda) \cong e_i\Lambda
    \end{equation*}
    as $e_i\Lambda (1-e_A) = 0$ for $i\in (Q_A)_0$ by definition. 
    Similarly, 
    \begin{equation*}
        \resLambdaB_{\ast}(e_iB) \cong (e_i\Lambda)/(e_i\Lambda(1-e_B)\Lambda) \cong 
        \left\{
        \begin{array}{cc}
           e_i\Lambda  & i\neq b \\
           S  & i=b. 
        \end{array}
        \right.
    \end{equation*}
    Dually, statement (a') and (b') can be proved similarly.
\end{proof}

\begin{Cor}\label{cor_nakayama_functor_lambda_A_B}
    Let $\Lambda = A\Delta_S B$. We have 
    \begin{equation*}
        \nu_{\Lambda}(\resLambdaA_{\ast}(e_i A)) \cong \resLambdaA_{\ast}\nu_A(e_i A), i\in (Q_A)_0\backslash \{a\},
    \end{equation*}
    and 
    \begin{equation*}
        \nu_{\Lambda}(\resLambdaB_{\ast}(e_i B)) \cong \resLambdaB_{\ast}\nu_B(e_i B), i\in (Q_B)_0\backslash \{b\}.
    \end{equation*}
    Moreover, 
    \begin{align*}
        \resLambdaA_{\ast}\nu_{A}(e_a A) & \cong S,\\
        \nu_{\Lambda}(\resLambdaB_{\ast}(e_bB)) & \cong \nu_{\Lambda}(S),\\
        \nu_{\Lambda}(\resLambdaA_{\ast}(e_aA)) & \cong \resLambdaB_{\ast}\nu_B(e_b B).
    \end{align*}
\end{Cor}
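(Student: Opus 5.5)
The plan is to compute Nakayama functors directly. I will use Proposition \ref{prop_image_of_projects_lambda_A_B} together with the explicit description $\nu_\Lambda(e_i\Lambda)\cong D\Lambda e_i$, and likewise $\nu_A(e_iA)\cong DAe_i$ and $\nu_B(e_iB)\cong DBe_i$. Each claimed isomorphism then becomes a comparison between an indecomposable injective $\Lambda$-module and the image under $\resLambdaA_{\ast}$ or $\resLambdaB_{\ast}$ of an indecomposable injective $A$- or $B$-module. Parts (b) and (b') of Proposition \ref{prop_image_of_projects_lambda_A_B} already record exactly these comparisons.

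\textbf{Case $i\in (Q_A)_0\setminus\{a\}$.} By (b), $\resLambdaA_{\ast}(e_iA)\cong e_i\Lambda$, so $\nu_\Lambda(\resLambdaA_{\ast}(e_iA))\cong D\Lambda e_i$. On the other side, $\resLambdaA_{\ast}\nu_A(e_iA)\cong \resLambdaA_{\ast}(DAe_i)$, and by (b') this is $D\Lambda e_i$ because $i\neq a$.

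\textbf{Case $i\in (Q_B)_0\setminus\{b\}$.} By (b), $\resLambdaB_{\ast}(e_iB)\cong e_i\Lambda$ since $i\neq b$, so $\nu_\Lambda(\resLambdaB_{\ast}(e_iB))\cong D\Lambda e_i$. Also $\resLambdaB_{\ast}\nu_B(e_iB)=\resLambdaB_{\ast}(DBe_i)\cong D\Lambda e_i$, again by (b').

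\textbf{The three special identities.}
\begin{itemize}
\item $\resLambdaA_{\ast}\nu_A(e_aA)=\resLambdaA_{\ast}(DAe_a)\cong S$ is the case $i=a$ of (b').
\item Since $\resLambdaB_{\ast}(e_bB)\cong S$ by (b), applying $\nu_\Lambda$ gives $\nu_\Lambda(\resLambdaB_{\ast}(e_bB))\cong\nu_\Lambda(S)$ immediately.
\item For the last identity, $\resLambdaA_{\ast}(e_aA)\cong e_a\Lambda$ by (b), so $\nu_\Lambda(\resLambdaA_{\ast}(e_aA))\cong D\Lambda e_a$. Using $a\sim b$, the right-hand side is $\resLambdaB_{\ast}\nu_B(e_bB)=\resLambdaB_{\ast}(DBe_b)\cong D\Lambda e_b=D\Lambda e_a$ by (b').
\end{itemize}

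The only step that needs any care is the justification of (b') itself, in particular $\resLambdaB_{\ast}(DBe_i)\cong D\Lambda e_i$ for every $i\in(Q_B)_0$, including $i=b$. This is taken as given, since it was stated as dual to (b). If I wanted to double-check it, I would observe the following. Any path in $\Lambda$ ending at a vertex of $Q_B$ lies entirely in $Q_B$, because $b$ is a source in $Q_B$ and arrows of $Q_A$ end in $Q_A$. Hence $\Lambda e_i\cong Be_i$ as left modules for $i\in (Q_B)_0$, so $D\Lambda e_i$ is annihilated by $1-e_B$ and coincides with $DBe_i$.
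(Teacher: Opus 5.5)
Your proposal is correct and is exactly the paper's argument: the paper also just uses $\nu(e_i\Lambda)\cong D\Lambda e_i$ (and its analogues for $A$ and $B$) and reads each identity off parts (b) and (b') of Proposition \ref{prop_image_of_projects_lambda_A_B}. There is one slip in your optional check of (b'), and it does not affect the proof: the claim that every path ending at a vertex of $Q_B$ lies in $Q_B$ is false (an arrow of $Q_A$ into $a=b$ is a counterexample), and what is actually needed is that paths \emph{starting} at a vertex of $Q_B$ stay in $Q_B$ --- true because $a$ is a sink of $Q_A$ --- since with the paper's conventions (where $e_bB\cong S_b$) these are the paths spanning $\Lambda e_i$.
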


\begin{proof}
    Since the Nakayama functor maps the indecomposable projective module at each vertex to the corresponding indecomposable injective module,
    the statement follows by applying Proposition \ref{prop_image_of_projects_lambda_A_B}.
\end{proof}

We describe the morphism spaces between objects in $\resLambdaA_{\ast}(\modn A)$ and $\resLambdaB_{\ast}(\modn B)$. 
As a result of sink-source gluing, all such morphisms factor through the simple bridge $S$.
We formulate this fact via left or right $\add S$-approximations.

\begin{Prop}\label{prop_hom_space_of_Lambda}
    Let $\Lambda = A\Delta_S B$. 
    Let $M\in \modn A$ and $N\in \modn B$.
    The following statements hold true.
    \begin{itemize}
        \item [(a)] We have 
        \begin{equation*}
        \Hom_{\Lambda}(\resLambdaB_{\ast}N,\resLambdaA_{\ast}M) \cong \Hom_B(N, S)\otimes_\Bbbk\Hom_A(S, M).
        \end{equation*}
        In particular, 
        $\Hom_{\Lambda}(^{\Lambda}_B\pi_{\ast}N,\resLambdaA_{\ast}M) = 0$ if $S\notin \add M$ or $S\notin \add N$.
        \item [(b)] Similarly,
        \begin{equation*}
        \Hom_{\Lambda}(\resLambdaA_{\ast}M, \resLambdaB_{\ast}N) \cong \Hom_A(M, S)\otimes_\Bbbk\Hom_B(S, N).
        \end{equation*}
        More precisely, 
        \begin{equation*}
            \Hom_{\Lambda}(\resLambdaA_{\ast}M, \resLambdaB_{\ast}N) \cong \Bbbk^{\ell_M}\otimes_{\Bbbk} \Hom_B(S, N) \cong \Hom_A(M, S)\otimes_{\Bbbk} \Bbbk^{r_N}
        \end{equation*}
        where $f_M: M\rightarrow S^{\ell_M}$ is the minimal left $\add S$-approximation of $M$ and $g_N: S^{r_N}\rightarrow N$ is the minimal right $\add S$-approximation of $N$.
        In particular, 
        \begin{equation*}
            \Hom_{\Lambda}(\resLambdaA_{\ast}M, \resLambdaB_{\ast}N) = 0
        \end{equation*}
        if $f_M=0$ or $g_N=0$.
    \end{itemize}
\end{Prop}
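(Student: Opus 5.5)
The plan is to reduce everything to the structure of $\Lambda$-modules in the two essential images, using that every path of $Q_\Lambda$ from a vertex of $Q_B$ other than $b$ to a vertex of $Q_A$ other than $a$ is killed. Write $M' = \resLambdaA_{\ast}M$ and $N' = \resLambdaB_{\ast}N$. Then $M'$ is supported on $(Q_A)_0$, and $a$ is a sink there. Likewise $N'$ is supported on $(Q_B)_0$, and $b$ is a source there. The only vertex the two supports share is $a\sim b$, and every arrow of $Q_\Lambda$ lies either in $Q_A$ or in $Q_B$.

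For (b), take a morphism $\varphi: M'\to N'$. Its components at vertices other than $a=b$ vanish, since one of the two modules is zero there. So $\varphi$ is determined by the single linear map $\varphi_a: M e_a\to N e_b$, and I need to find which such maps give module homomorphisms. Compatibility with arrows of $Q_A$ only concerns arrows ending at the sink $a$. It says that $\varphi_a$ kills the image of the arrows into $a$, i.e.\ $\varphi_a$ factors through $Me_a/\bigl(\sum_{\alpha: i\to a}\image M(\alpha)\bigr)$. Compatibility with arrows of $Q_B$ only concerns arrows leaving the source $b$. It says $\varphi_a$ lands in $\bigcap_{\beta: b\to j}\ker N(\beta)$.

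Next I identify these two spaces with the $\add S$-approximations. Since $a$ is a sink, the quotient $Me_a/\sum\image M(\alpha)$ is the largest quotient of $M$ lying in $\add S$. Its dimension is $\ell_M$, and the projection $M\to S^{\ell_M}$ is the minimal left $\add S$-approximation $f_M$, with $\Hom_A(M,S)\cong \Bbbk^{\ell_M}$. Dually, since $b$ is a source, $\bigcap\ker N(\beta)$ is the largest submodule of $N$ lying in $\add S$, i.e.\ $\soc$ restricted to $b$. Its dimension is $r_N$, with inclusion $g_N$ and $\Hom_B(S,N)\cong\Bbbk^{r_N}$. This yields $\Hom_\Lambda(M',N')\cong \Hom_\Bbbk(\Bbbk^{\ell_M},\Bbbk^{r_N})\cong \Hom_A(M,S)\otimes_\Bbbk\Hom_B(S,N)$, because $\End(S)=\Bbbk$ and everything is finite-dimensional. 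The intermediate forms $\Bbbk^{\ell_M}\otimes\Hom_B(S,N)$ and $\Hom_A(M,S)\otimes\Bbbk^{r_N}$ follow at once. Vanishing when $f_M=0$ or $g_N=0$ is the case $\ell_M=0$ or $r_N=0$. Conceptually, each such $\varphi$ is exactly a composite $M'\xrightarrow{f_M}S^{\ell_M}\to S^{r_N}\xrightarrow{g_N}N'$.

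For (a), take a morphism $\psi: N'\to M'$. It is again determined by one linear map $\psi_a: Ne_b\to Me_a$. Now compatibility with arrows $\beta$ leaving $b$ forces $\psi_a$ to vanish on the images of paths starting at $b$. More directly, $\psi$ must kill everything not supported at $b$. I would argue that the image of $\psi$ is a submodule of $M'$ concentrated at the sink $a$, hence lies in $\add S$. Dually, $\psi$ must factor through the top of $N$ at $b$. So $\psi$ factors as $N'\to S^{k}\to M'$. In this direction the quotient of $N$ concentrated at the source $b$ is governed by $\Hom_B(N,S)$, and the submodule of $M$ concentrated at the sink $a$ by $\Hom_A(S,M)$. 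The same linear-algebra computation then gives $\Hom_\Lambda(N',M')\cong\Hom_B(N,S)\otimes_\Bbbk\Hom_A(S,M)$. If the isomorphism is read via these approximations, it vanishes when either Hom space is zero. The "in particular" phrasing with $S\notin\add M$ would need $S$ to be a summand rather than merely mapping in, so I would check that claim against the intended reading.

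The main obstacle I expect is the bookkeeping in (a), where the arrows of $Q_B$ leave $b$ rather than enter it. I must make sure the relations $(1-e_A)\Bbbk Q_\Lambda(1-e_B)$ do not allow extra compatibility conditions. I would handle it by checking the arrow conditions vertex by vertex, as in (b).
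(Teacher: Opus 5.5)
Your two displayed isomorphisms are correct. However, the last sentence of (a) is left unproved, and you even suspect it fails. The cause is a convention slip in your vertex computation.

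\textbf{The convention slip.} In (b) you let an arrow $\alpha\colon i\to a$ act as $Me_i\to Me_a$. In this paper arrows act contravariantly, as $Me_a\to Me_i$. This is forced by two facts in the paper:
\begin{itemize}
\item the Remark introducing the simple bridge, where $S_a$ is the simple \emph{injective} $A$-module at the sink $a$ and $S_b$ is the simple \emph{projective} $B$-module at the source $b$;
\item $\resLambdaB_{\ast}(e_bB)\cong S$ in Proposition~\ref{prop_image_of_projects_lambda_A_B}.
\end{itemize}
In the paper's convention the constraints you derive in (b) are vacuous. Every linear map $Me_a\to Ne_b$, extended by zero, is $\Lambda$-linear, and $Me_a/\sum\image M(\alpha)$ does not make sense as a quotient of $Me_a$. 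The top/socle constraints occur in (a) instead. Your claim in (a), that arrows leaving $b$ force $\psi_a$ to vanish on their images, only makes sense in the paper's convention, not in the one you used for (b).

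The isomorphisms survive the switch, because $\Hom_A(M,S_a)\cong D(Me_a)$ and $\Hom_B(S_b,N)\cong Ne_b$ when $S_a$ is injective and $S_b$ is projective. But in your covariant convention the ``in particular'' of (a) is genuinely false. Take $Q_\Lambda\colon 1\to a\to 2$, and let $M$, $N$ be the indecomposable length-two modules supported on $\{1,a\}$ and $\{b,2\}$. Then every linear map $Ne_b\to Me_a$ is a $\Lambda$-map, although $S$ is a summand of neither. So your framework cannot deliver that clause.

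\textbf{The missing step.} With the paper's convention it is one line, and it is exactly what the paper does.
\begin{itemize}
\item $S_a$ is injective in $\modn A$, so any nonzero map $S_a\to M$ is a split monomorphism. Hence $\Hom_A(S,M)\neq 0$ if and only if $S\in\add M$.
\item $S_b$ is projective in $\modn B$, so any nonzero map $N\to S_b$ is a split epimorphism. Hence $\Hom_B(N,S)\neq 0$ if and only if $S\in\add N$.
\end{itemize}
Now combine this with the isomorphism.

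\textbf{A convention-free route to the isomorphisms.} You can drop the vertex bookkeeping altogether. Your image argument from (a) works in both directions. The image of a map between an object of $\modn A$ and one of $\modn B$ is supported at the glued vertex, which carries no loop, so it lies in $\add S$. Since $\dim_\Bbbk S=1$, the composition map $\Hom_\Lambda(X,S)\otimes_\Bbbk\Hom_\Lambda(S,Y)\to\Hom_\Lambda(X,Y)$ is injective, being a restriction of $\Hom_\Bbbk(X,S)\otimes_\Bbbk\Hom_\Bbbk(S,Y)\cong\Hom_\Bbbk(X,Y)$. This is the paper's proof of both (a) and (b).

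A minor point: the relation $(1-e_A)\Bbbk Q_\Lambda(1-e_B)$ kills paths from $Q_A\setminus\{a\}$ to $Q_B\setminus\{b\}$. Paths in the direction you name do not exist, though you never use this.
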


\begin{proof}
    As $\resLambdaA_{\ast}(\modn A)\cap \resLambdaB_{\ast}(\modn B) = \add S$,
    all morphisms from $\resLambdaB_{\ast}N$ to $\resLambdaA_{\ast}M$ factor through $S$.
    So we have  
    \begin{equation*}
        \Hom_{\Lambda}(\resLambdaB_{\ast}N, S)\otimes_{\Bbbk} \Hom_{\Lambda}(S, \resLambdaA_{\ast}M)\twoheadrightarrow 
        \Hom_{\Lambda}(\resLambdaB_{\ast}N, \resLambdaA_{\ast}M).
    \end{equation*}
    Since $\dim_{\Bbbk} S=1$, 
    the isomorphism 
    \begin{equation*}
        \Hom_{\Bbbk}(\resLambdaB_{\ast}N, S)\otimes_{\Bbbk} \Hom_{\Bbbk}(S, \resLambdaA_{\ast}M)\cong 
        \Hom_{\Bbbk}(\resLambdaB_{\ast}N, \resLambdaA_{\ast}M)
    \end{equation*}
    restricts to an injective map 
    \begin{equation*}
        \Hom_{\Lambda}(\resLambdaB_{\ast}N, S)\otimes_{\Bbbk} \Hom_{\Lambda}(S, \resLambdaA_{\ast}M)\hookrightarrow 
        \Hom_{\Lambda}(\resLambdaB_{\ast}N, \resLambdaA_{\ast}M).
    \end{equation*}
    Thus
    \begin{align*}
        \Hom_{\Lambda}(\resLambdaB_{\ast}N, \resLambdaA_{\ast}M) & \cong \Hom_{\Lambda}(\resLambdaB_{\ast}N, S)\otimes_{\Bbbk} \Hom_{\Lambda}(S, \resLambdaA_{\ast}M) \\
        & \cong \Hom_B(N, S_b)\otimes_{\Bbbk} \Hom_A(S_a, M)
    \end{align*}
    as $\resLambdaA_{\ast}$ and $\resLambdaB_{\ast}$ are fully faithful embeddings.
    For a similar reason, we have 
    \begin{equation*}
        \Hom_{\Lambda}(\resLambdaA_{\ast}M, \resLambdaB_{\ast}N) \cong \Hom_A(M, S_a)\otimes_{\Bbbk}\Hom_B(S_b, N).
    \end{equation*}
    
    Since $S_b\in \modn B$ is projective,
    $\Hom_B(N, S_b)\neq 0$ if and only if $S_b\in \add N$.
    Similarly, $\Hom_A(S_a, M)\neq 0$ if and only if $S_a\in \add M$ as $S_a\in \modn A$ is injective.
    Thus 
    \begin{equation*}
        \Hom_{\Lambda}(\resLambdaB_{\ast}N, \resLambdaA_{\ast}M) = 0
    \end{equation*}
    if $S_a\notin \add M$ or $S_b\notin \add N$.

    On the other hand, 
    \begin{equation*}
        \Hom_A(M, S_a)\cong \Hom_A(S_a^{\ell_M}, S_a) \cong \Bbbk^{\ell_M}
    \end{equation*}
    as $f_M:M\rightarrow S_a^{\ell_M}$ is the minimal left $\add S_a$-approximation of $M$.
    Similarly, we have 
    \begin{equation*}
        \Hom_B(S_b, N) \cong \Hom_B(S_b, S_b^{r_N}) \cong \Bbbk^{r_N}.
    \end{equation*}
    Thus 
    \begin{equation*}
        \Hom_{\Lambda}(\resLambdaA_{\ast}M, \resLambdaB_{\ast}N) \cong \Bbbk^{\ell_M}\otimes_{\Bbbk} \Hom_B(S_b, N) \cong \Hom_A(M, S_a)\otimes_{\Bbbk} \Bbbk^{r_N}.
    \end{equation*}
    In particular, 
    \begin{equation*}
        \Hom_{\Lambda}(\resLambdaA_{\ast}M, \resLambdaB_{\ast}N) = 0
    \end{equation*}
    if $f_M=0$ or $g_N=0$.

\end{proof}

The following Proposition shows that there are no non-trivial extensions between objects in $\resLambdaA_{\ast}(\modn A)$ and objects in $\resLambdaB_{\ast}(\modn B)$,
implying a certain directedness of the sink-source gluing. 
As a consequence, the objects in $\ind \Lambda$ are obtained as the union of objects in $\resLambdaA_{\ast}(\ind A)$ and that of $\resLambdaB_{\ast}(\ind B)$ with $\resLambdaA_{\ast}(S_a)$ and $\resLambdaB_{\ast}(S_b)$ being identifies with the simple bridge $S$.

\begin{Prop}\label{prop_ext_Amod_Bmod_is_zero}
    Let $\Lambda = A\Delta_S B$. 
    Let $M\in \modn A$ and $N\in \modn B$.
    We have 
    \begin{equation*}
        \Ext_{\Lambda}^i(\resLambdaA_{\ast}M, \resLambdaB_{\ast}N) = 0, \text{ for all } i\geq 1.
    \end{equation*}
    If moreover we assume that $\Omega^j_B(N)$ is indecomposable non-projective for all $0< j<k$ for some integer $k>0$,
    then 
    \begin{equation*}
        \Ext_{\Lambda}^i(\resLambdaB_{\ast}N, \resLambdaA_{\ast}M) = 0, \text{ for all } 1\leq j\leq k.
    \end{equation*}
\end{Prop}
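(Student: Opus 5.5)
The plan is to handle the two vanishing statements separately. In each case I resolve the appropriate argument in $\modn A$ or $\modn B$, push the resolution into $\modn\Lambda$ with the exact, fully faithful functors $\resLambdaA_{\ast}$ and $\resLambdaB_{\ast}$, and then compute the resulting $\Hom$-complexes with Proposition \ref{prop_hom_space_of_Lambda}.

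\emph{First statement.} Take a minimal injective resolution $0\to N\to I^0\to I^1\to\cdots$ in $\modn B$. By Proposition \ref{prop_image_of_projects_lambda_A_B} (b'), $\resLambdaB_{\ast}(DBe_i)\cong D\Lambda e_i$ for every $i\in(Q_B)_0$. Since $\resLambdaB_{\ast}$ is exact, $\resLambdaB_{\ast}I^\bullet$ is therefore an injective resolution of $\resLambdaB_{\ast}N$ in $\modn\Lambda$. Hence $\Ext^i_\Lambda(\resLambdaA_{\ast}M,\resLambdaB_{\ast}N)$ is the cohomology of the complex $\Hom_\Lambda(\resLambdaA_{\ast}M,\resLambdaB_{\ast}I^\bullet)$.

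By Proposition \ref{prop_hom_space_of_Lambda} (b), this complex is isomorphic to $\Hom_A(M,S_a)\otimes_\Bbbk\Hom_B(S_b,I^\bullet)$. I will check that the isomorphism is natural in the second variable; this holds because every morphism factors through $\add S$ and the factorisation is compatible with composition. Since $S_b$ is projective in $\modn B$, the functor $\Hom_B(S_b,-)$ is exact. So $\Hom_B(S_b,I^\bullet)$ is exact in positive degrees, and tensoring over the field $\Bbbk$ preserves this. Therefore $\Ext^i_\Lambda(\resLambdaA_{\ast}M,\resLambdaB_{\ast}N)=0$ for all $i\ge1$.

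\emph{Second statement.} I will argue by dimension shifting along the minimal projective resolution of $N$ in $\modn B$, using the short exact sequences $0\to\Omega^{j+1}_BN\to P_j\to\Omega^j_BN\to0$. Their images under $\resLambdaB_{\ast}$ are exact in $\modn\Lambda$.

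The key observation is the following. For a $B$-module $Y$ with no summand $S_b$, Proposition \ref{prop_hom_space_of_Lambda} (a) gives $\Hom_\Lambda(\resLambdaB_{\ast}Y,\resLambdaA_{\ast}M)=0$. This applies to each $\Omega^j_BN$ with $0<j<k$, since these are indecomposable and non-projective, and $S_b$ is projective. If moreover $P_j$ contains no copy of $e_bB$, then $\resLambdaB_{\ast}P_j$ is projective in $\modn\Lambda$ by Proposition \ref{prop_image_of_projects_lambda_A_B} (b). The long exact sequence then gives
\begin{equation*}
\Ext^1_\Lambda(\resLambdaB_{\ast}\Omega^j_BN,\resLambdaA_{\ast}M)\cong \operatorname{Coker}\bigl(\Hom_\Lambda(\resLambdaB_{\ast}P_j,\resLambdaA_{\ast}M)\to\Hom_\Lambda(\resLambdaB_{\ast}\Omega^{j+1}_BN,\resLambdaA_{\ast}M)\bigr)=0,
\end{equation*}
together with $\Ext^{i}_\Lambda(\resLambdaB_{\ast}\Omega^j_BN,-)\cong\Ext^{i-1}_\Lambda(\resLambdaB_{\ast}\Omega^{j+1}_BN,-)$ for $i\ge2$. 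Iterating these isomorphisms yields $\Ext^i_\Lambda(\resLambdaB_{\ast}N,\resLambdaA_{\ast}M)=0$ for $1\le i\le k$.

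\emph{Main obstacle.} The difficult case is when some $P_j$ contains the summand $e_bB=S_b$. Then $\resLambdaB_{\ast}P_j$ contains $S$, which is not projective over $\Lambda$, and $\Ext^{\ge1}_\Lambda(S,\resLambdaA_{\ast}M)\cong\Ext^{\ge1}_A(S_a,M)$ need not vanish. I would first use minimality to rule this out. A copy of $S_b$ in $P_j$ maps injectively onto a simple submodule of $\Omega^j_BN$ which lies in its top. Since $S_b$ is simple projective at a source, such a copy should split off as a direct summand of $\Omega^j_BN$. That contradicts indecomposability and non-projectivity for $0<j<k$, and for $j=0$ it would force $S_b$ to be a summand of $N$, a case to be treated separately. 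If this splitting argument fails, I would instead compare with $\Lambda$-projective covers directly: replace each summand $S$ of $\resLambdaB_{\ast}P_j$ by $e_a\Lambda=\resLambdaA_{\ast}(e_aA)$, and control the extra terms using the first statement together with the vanishing of $\Hom$ from $\resLambdaB_{\ast}\Omega^{j}_BN$ into $\resLambdaA_{\ast}(\modn A)$.
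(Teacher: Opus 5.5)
Your proof of the first vanishing is correct, and it is dual to the paper's argument. The paper takes the minimal projective resolution $P^\bullet_M$ of $M$ in $\modn A$ and observes that $e_aA\notin\add P^i_M$ for $i\geq 1$, because $S_a$ is never a composition factor of a radical. Hence the complex $\Hom_\Lambda(\resLambdaA_{\ast}P^\bullet_M,\resLambdaB_{\ast}N)$ is already zero in positive degrees. You instead resolve $N$ injectively, using that $\resLambdaB_{\ast}$ preserves injectives, and get acyclicity from the natural factorisation through $S$ and exactness of $\Hom_B(S_b,-)$. Your version needs no minimality. For the second vanishing you follow the paper's route: push the $B$-projective resolution of $N$ into $\modn\Lambda$ and apply Proposition \ref{prop_hom_space_of_Lambda} (a). Here there is a genuine gap.

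Grant $S_b\notin\add N$. Your dimension shift then correctly gives vanishing for $1\leq i\leq k-1$. At $i=k$, however, it gives
\begin{equation*}
\Ext^k_\Lambda(\resLambdaB_{\ast}N,\resLambdaA_{\ast}M)\cong\Hom_\Lambda(\resLambdaB_{\ast}\Omega^k_BN,\resLambdaA_{\ast}M)\cong\Hom_B(\Omega^k_BN,S_b)\otimes_\Bbbk\Hom_A(S_a,M).
\end{equation*}
Your ``$=0$'' in the cokernel step is justified only for $j+1<k$, but reaching $\Ext^k$ needs it at $j=k-1$, and the hypothesis says nothing about $\Omega^k_BN$.

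This gap cannot be closed, because the claim is false at $i=k$. Take $B=\Bbbk\mathbb{A}_2$ with source $b$, and let $N$ be its non-projective simple, so $\Omega_BN=S_b$. With $k=1$ (empty hypothesis) and $M=S_a$, we get $\Ext^1_\Lambda(\resLambdaB_{\ast}N,S)\cong\Ext^1_B(N,S_b)\neq 0$. This uses $S=\resLambdaB_{\ast}S_b$ and that $\resLambdaB_{\ast}$ is a homological embedding.

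The case $S_b\in\add N$, which you defer, fails too. For $N=S_b$ and $k=1$ one has $\Ext^1_\Lambda(S,\resLambdaA_{\ast}M)\cong\Ext^1_A(S_a,M)$. This is nonzero for $M=\Omega_A(S_a)$ whenever $e_aA\neq S_a$. Your fallback cannot help: the extra terms are extensions between $A$-modules. Neither the first statement (which concerns $A$ into $B$) nor the Hom-vanishing from $\modn B$ to $\modn A$ controls them.

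What your argument, and the paper's, actually proves is the following. If $S_b\notin\add N$ and $\Omega^j_BN$ is indecomposable non-projective for $0<j<k$, then the Ext groups vanish for $1\leq i\leq k-1$. At $i=k$ they equal the tensor product displayed above, so they vanish if $S_b\notin\add\Omega^k_BN$ or $S_a\notin\add M$.

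The paper's proof has the same off-by-one error. It asserts $S_b\notin\add Q^i_N$ for $0<i\leq k$, although the hypothesis only gives this for $0<i<k$. It also tacitly needs $S_b\notin\add Q^0_N$ in order to treat $\resLambdaB_{\ast}Q^\bullet_N$ as a $\Lambda$-projective resolution.

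The corrected version with $k=d$ is exactly what Proposition \ref{prop_glue_d_ct_2_d_ct} needs. There $N$ and all $\Omega^j_BN$ with $0<j<d$ are indecomposable non-projective, and only $0<i<d$ is used.
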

\begin{proof}
    Firstly we claim that $S_a$ does not appear as a composition factor of $\rad X$ for any $X\in \modn A$.
    If $S_a$ appears as a submodule of $\rad X/X'$,
    then it is a submodule of $X/X'$ which is a direct summand.
    This gives a split epimorphism $\rad X\twoheadrightarrow S_a$, 
    which factors through $X$,
    a contradiction.
    
    Let $P_M^{\bullet}\rightarrow M$ be the minimal projective resolution of $M\in \modn A$.
    By Proposition \ref{prop_image_of_projects_lambda_A_B} (b) together with the fact that $\resLambdaA_{\ast}$ is exact, 
    $\resLambdaA_{\ast}P_M^{\bullet}\rightarrow \resLambdaA_{\ast}M$ is the minimal projective resolution of $\resLambdaA_{\ast}M\in \modn \Lambda$.

    Following the claim above,  
    $S_a$ doesn't appear as a composition factor of $P_M^i$ for $i\neq 0$ as otherwise $S_a$ would appear as a composition factor of $\rad P_M^{i-1}$.
    This implies that $ P_M^i\rightarrow 0$ is the minimal left $\add S_a$-approximation of $P_M^i$ for $i\neq 0$.
    By Proposition \ref{prop_hom_space_of_Lambda} (b), 
    \begin{equation*}
        \Hom_{\Lambda}(\resLambdaA_{\ast}P_M^i, \resLambdaB_{\ast}N) = 0
    \end{equation*}
    for $i\neq 0$.
    Thus $\Ext_{\Lambda}^i(\resLambdaA_{\ast}M, \resLambdaB_{\ast}N) = 0$ for $i\geq 1$.

    Let $Q_N^{\bullet}\rightarrow N$ be the minimal projective resolution of $N\in \modn B$.
    As $\Omega_B^j(N)$ is indecomposable non-projective for $0< j < k$, $\Omega_B^j(N)\ncong S_b$ which implies $S_b\notin \add Q_N^i$ for $0< i\leq k$.
    By Proposition \ref{prop_hom_space_of_Lambda}, 
    $\Hom_{\Lambda}(\resLambdaB_{\ast}Q_N^i, \resLambdaA_{\ast}M) = 0$ for all $1\leq i\leq k$.
    Therefore 
    \begin{equation*}
        \Ext_{\Lambda}^i(\resLambdaB_{\ast}N, \resLambdaA_{\ast}M) = 0, \text{ for all } 1\leq j\leq k.
    \end{equation*}
\end{proof}

\begin{Prop}\label{prop_ind_lambda}
    Let $\Lambda = A\Delta_S B$.
    We have $\ind \Lambda = \ind A \cup \ind B / \langle S_a \sim S_b\rangle$.
\end{Prop}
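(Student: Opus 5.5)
The plan is to show that every indecomposable $\Lambda$-module $X$ lies in the essential image of either $\resLambdaA_{\ast}$ or $\resLambdaB_{\ast}$. The two images meet exactly in $\add S$, and the functors are fully faithful and exact, so they preserve indecomposability and distinguish non-isomorphic modules. It then follows that $\ind \Lambda$ is the union of $\resLambdaA_{\ast}(\ind A)$ and $\resLambdaB_{\ast}(\ind B)$ with $S_a$ identified with $S_b$.

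First, a module $X\in\modn\Lambda$ lies in $\resLambdaA_{\ast}(\modn A)$ exactly when $X(1-e_A)=0$, since $A\cong\Lambda/\langle 1-e_A\rangle$; similarly for $B$. Next, for an arbitrary $X$ I take the submodule $X' := X e_B\Lambda$, generated by the components of $X$ at vertices of $Q_B$. By the relations $(1-e_A)\Bbbk Q_\Lambda(1-e_B)=0$ and the fact that $a=b$ is a sink of $Q_A$ and a source of $Q_B$, no nonzero path goes from a vertex of $Q_B$ to a vertex of $Q_A\setminus\{a\}$. Hence $X'$ is a $B$-module, i.e. $X'\in\resLambdaB_{\ast}(\modn B)$. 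The quotient $X/X'$ vanishes at every vertex of $Q_B$, and in particular at $b=a$, so it is an $A$-module. This gives a short exact sequence
\begin{equation*}
0\rightarrow \resLambdaB_{\ast}N \rightarrow X \rightarrow \resLambdaA_{\ast}M \rightarrow 0,
\end{equation*}
with $M\in\modn A$, $N\in\modn B$, and $M$ having no composition factor $S_a$.

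The key step is to show that this sequence splits, so that an indecomposable $X$ equals one of the two ends. I would invoke Proposition \ref{prop_ext_Amod_Bmod_is_zero}, which gives $\Ext^1_\Lambda(\resLambdaA_{\ast}M,\resLambdaB_{\ast}N)=0$ unconditionally. The sequence therefore splits and $X\cong\resLambdaA_{\ast}M\oplus\resLambdaB_{\ast}N$. If $X$ is indecomposable, one summand vanishes, so $X$ lies in one of the two images.

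Finally, for uniqueness I note that if $\resLambdaA_{\ast}M\cong\resLambdaB_{\ast}N$ with both indecomposable, then the common module is supported only at the vertex $a=b$. Since $\dim e_a\Lambda e_a=1$ (there are no loops at a sink), it is $S$. Combined with full faithfulness, this yields the stated identification. I expect the main obstacle to be the careful verification that $Xe_B\Lambda$ is really annihilated by $1-e_B$, which requires checking the path structure through $a$. That check is where the sink/source hypothesis is used.
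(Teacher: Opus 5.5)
There is a genuine gap, and it sits exactly at the step you flagged: with the paper's conventions, $X'=Xe_B\Lambda$ is in general \emph{not} a $B$-module. The conventions are forced by the text. $S_a$ is simple injective in $\modn A$ and $S_b$ is simple projective in $\modn B$. Moreover $\resLambdaA_{\ast}(e_aA)\cong e_a\Lambda$ and $\resLambdaB_{\ast}(e_bB)\cong S$ by Proposition \ref{prop_image_of_projects_lambda_A_B}. All of this says that $e_i\Lambda$ is spanned by the paths \emph{ending} at $i$, and that $(1-e_A)\Bbbk Q_\Lambda(1-e_B)$ consists of the paths from $Q_A\setminus\{a\}$ through $a$ into $Q_B\setminus\{b\}$.

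Hence the component of $Xe_k\Lambda$ at a vertex $j$ is $Xe_k\cdot e_k\Lambda e_j$. It can only be nonzero if there is a nonzero path from $j$ to $k$, so submodules spread backwards along arrows. For $k=a$ they spread into $Q_A$, because paths of $Q_A$ ending at the sink $a$ are not killed. Concretely, for $X=e_a\Lambda$ you get $Xe_B\Lambda\supseteq Xe_a\Lambda=X$, which is nonzero at every vertex of $Q_A$ with an arrow to $a$. In $\Lambda=\Bbbk(1\xrightarrow{\alpha}2\xrightarrow{\beta}3)/(\beta\alpha)$ with $a=b=2$, your $X'$ for $X=e_2\Lambda$ is all of $e_2\Lambda$, supported on $\{1,2\}$. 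Your observation that no path goes from $Q_B$ to $Q_A\setminus\{a\}$ is true, but it is the wrong direction for generated submodules. Likewise, the claim that $M$ has no composition factor $S_a$ is an artifact of this choice: for $X=e_a\Lambda$ the $A$-part has top $S_a$.

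The repair is to generate only by the components at $Q_B\setminus\{b\}$, i.e.\ take $X'=X(1-e_A)\Lambda$, as the paper does. A nonzero path ending in $Q_B\setminus\{b\}$ must start in $Q_B$, since those starting in $Q_A\setminus\{a\}$ are precisely the killed ones. So $X'(1-e_B)=0$ and $X'$ is a $B$-module, possibly nonzero at $b$. Also $X/X'$ vanishes on $Q_B\setminus\{b\}$, so it is an $A$-module. The unconditional vanishing of $\Ext^1_\Lambda(\resLambdaA_{\ast}M,\resLambdaB_{\ast}N)$ from Proposition \ref{prop_ext_Amod_Bmod_is_zero} then splits the sequence. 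The rest of your argument goes through: full faithfulness, and identifying the overlap with $S$ via $\dim e_a\Lambda e_a=1$, which is a fine substitute for the paper's use of Proposition \ref{prop_hom_space_of_Lambda} (a).

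Note that the other natural choice, $Xe_A\Lambda$, does give an $A$-submodule with $B$-module quotient. But then you would need $\Ext^1$ in the conditional direction, and that fails: in the example above, $0\to S_2\to e_3\Lambda\to S_3\to 0$ does not split.
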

\begin{proof}
    As $\resLambdaA_{\ast}$ and $\resLambdaB_{\ast}$ are fully faithful embeddings, 
    they preserve indecomposability.
    For $M\in \ind A$ and $N\in \ind B$, 
    $\resLambdaA_{\ast}(M)\cong \resLambdaB_{\ast}(N)$ if and only if $M\cong S_a$ and $N\cong S_b$ by Proposition \ref{prop_hom_space_of_Lambda} (a).
    So we have an injective map $\ind A\cup \ind B/\langle S_a\sim S_b\rangle \hookrightarrow \ind \Lambda$.
    
    To see the surjectivity, consider $L\in \ind \Lambda$ and the following exact sequence.
    \begin{equation*}
        \xymatrix@C=1em@R=1em{
        0\ar[r] & L\langle 1-e_A \rangle \ar[r] & L\ar[r] & L/(L\langle 1-e_A\rangle)\ar[r] & 0.
        }
    \end{equation*}
    We have $ L/(L\langle 1-e_A\rangle)\in \resLambdaA_{\ast}(\modn A)$ as it is supported on $(Q_A)_0$ and $L\langle 1-e_A \rangle\in \resLambdaB_{\ast}(\modn B)$ as it is supported on $(Q_B)_0$. 
    But $\Ext_{\Lambda}^1(L/(L\langle 1-e_A\rangle, L\langle 1-e_A \rangle)=0$ by Proposition \ref{prop_ext_Amod_Bmod_is_zero}. 
    So the above exact sequence splits.
    Thus either $L\cong L/(L\langle 1-e_A\rangle\in \resLambdaA_{\ast}(\ind A)$ or $L\cong L\langle 1-e_A \rangle\in \resLambdaB_{\ast}(\ind B)$ as $L$ is indecomposable.
    
\end{proof}

Now we study the compatibility between the syzygy functors and the embeddings $\resLambdaA_{\ast}, \resLambdaB_{\ast}$.
Moreover, we show that $\resLambdaA_{\ast}$ and $\resLambdaB_{\ast}$ are indeed homological embeddings. 
Recall an exact functor $\iota: \mathcal{A}\rightarrow \mathcal{B}$ between abelian categories is called a homological embedding if 
\begin{equation*}
    \Ext_{\mathcal{A}}^k(X, Y) \xrightarrow[]{\sim} \Ext_{\mathcal{B}}^k(X, Y)
\end{equation*}
for all $k\geq 0$ and $X, Y\in \mathcal{A}$.
We refer to \cite{Chr14, CJ14} for more discussions in this concept.

\begin{Prop}\label{prop_syzygy_A_B_syzygy_lambda_and_homoligical_embeddings}
    Let $\Lambda = A\Delta_S B$.
    Let $M\in \modn A$ and $N\in \modn B$.
    \begin{itemize}
        \item [(a)] 
        We have 
        \begin{equation*}
        \Omega_{\Lambda}^i(\resLambdaA_{\ast}M) \cong \resLambdaA_{\ast}\Omega_A^i(M), \text{ for all } i>0.
        \end{equation*}
        \item [(b)] Write $\Omega_B^i(N) = S_b^{t_i}\oplus N_i$ such that $S_b\notin \add N_i$ and $t_i\geq 0$.
        We have 
        \begin{equation*}
            \Omega_{\Lambda}^{i+1}(\resLambdaB_{\ast} N) = \resLambdaA_{\ast}\Omega_A(S_a)^{t_i} \oplus \resLambdaB_{\ast}\Omega_B(N_i).
        \end{equation*}
        In particular, if $\Omega_B^i(N)$ is indecomposable non-projective for $0<i<k$, 
        then 
        \begin{equation*}
            \Omega_{\Lambda}^j(\resLambdaB_{\ast}N) \cong \resLambdaB_{\ast} \Omega_B^j(N), \text{ for all } 1\leq j\leq k. 
        \end{equation*}
        \item [(c)] 
        The functors  
        $^\Lambda_A\pi_{\ast} : \modn A\rightarrow \modn \Lambda$ and $\resLambdaB_{\ast}: \modn B\rightarrow \modn \Lambda$ are homological embeddings.
    \end{itemize}
\end{Prop}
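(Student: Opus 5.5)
For (a), take the minimal projective resolution $P_M^{\bullet}\to M$ over $A$ and apply $\resLambdaA_{\ast}$. By Proposition \ref{prop_image_of_projects_lambda_A_B} (b), $\resLambdaA_{\ast}(e_iA)\cong e_i\Lambda$ for every $i\in(Q_A)_0$, so $\resLambdaA_{\ast}$ sends projective $A$-modules to projective $\Lambda$-modules. Since $\resLambdaA_{\ast}$ is exact, $\resLambdaA_{\ast}P_M^{\bullet}\to\resLambdaA_{\ast}M$ is a projective resolution over $\Lambda$. This was already used in the proof of Proposition \ref{prop_ext_Amod_Bmod_is_zero}. For minimality we use that $\resLambdaA_{\ast}$ is fully faithful: it preserves radical morphisms, and hence projective covers. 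Reading off kernels then gives $\Omega^i_{\Lambda}(\resLambdaA_{\ast}M)\cong\resLambdaA_{\ast}\Omega^i_A(M)$ for all $i>0$.

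For (b), first note that $\resLambdaB_{\ast}$ maps $e_iB$ to $e_i\Lambda$ for $i\neq b$, but maps $e_bB=S_b$ to the non-projective simple $S$. So we compute one syzygy step at a time. Write $\Omega^i_B(N)=S_b^{t_i}\oplus N_i$. Since $S_b$ is simple projective, it is a summand of no non-trivial radical-containing cover, so the projective cover of $N_i$ in $\modn B$ does not involve $e_bB$. I would justify this by noting that $S_b\notin\add N_i$, that $\top N_i$ contains no $S_b$ (otherwise the projective cover $S_b\to N_i$ would split off, $S_b$ being simple projective), and that $\Hom_B(S_b,-)$ behaves accordingly. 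The projective cover $Q\to N_i$ then lies in $\add\{e_iB: i\neq b\}$, so $\resLambdaB_{\ast}Q$ is the $\Lambda$-projective cover of $\resLambdaB_{\ast}N_i$, giving $\Omega_{\Lambda}(\resLambdaB_{\ast}N_i)\cong\resLambdaB_{\ast}\Omega_B(N_i)$. For the summand $S$, its $\Lambda$-projective cover is $e_a\Lambda=\resLambdaA_{\ast}(e_aA)$, and $S=\resLambdaA_{\ast}S_a$, so by (a), $\Omega_{\Lambda}(S)\cong\resLambdaA_{\ast}\Omega_A(S_a)$. Additivity of $\Omega_{\Lambda}$ gives the formula, where $\Omega^{i+1}_{\Lambda}$ is read as $\Omega_{\Lambda}$ applied to $\resLambdaB_{\ast}\Omega^i_B(N)$. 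The ``in particular'' follows by induction on $j$. If every $\Omega^i_B(N)$ with $0<i<k$ is indecomposable and non-projective, then it is not $S_b$, so $t_i=0$ and $N_i=\Omega^i_B(N)$. The case $i=0$ needs care: either $N$ has no $S_b$ summand, or that summand contributes only $\resLambdaA_{\ast}\Omega_A(S_a)$. I would state the hypothesis as applying to $N$ without $S_b$-summands, matching the usage in Proposition \ref{prop_ext_Amod_Bmod_is_zero}.

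For (c), exactness and full faithfulness are already known. Fix $X,Y\in\modn A$. Computing $\Ext^k_{\Lambda}(\resLambdaA_{\ast}X,\resLambdaA_{\ast}Y)$ with the projective resolution $\resLambdaA_{\ast}P_X^{\bullet}$ from (a), full faithfulness identifies the Hom complex with $\Hom_A(P_X^{\bullet},Y)$, so the comparison map is an isomorphism. For $\resLambdaB_{\ast}$ the projective resolution is not the image of one over $B$, so we argue dually with injective coresolutions. By the dual of Proposition \ref{prop_image_of_projects_lambda_A_B}, (b') gives $\resLambdaB_{\ast}(DBe_i)\cong D\Lambda e_i$ for all $i$, so $\resLambdaB_{\ast}$ preserves injectives. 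Computing $\Ext_{\Lambda}$ via $\resLambdaB_{\ast}I_Y^{\bullet}$ and using full faithfulness then gives the isomorphism.

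The main obstacle is the bookkeeping in (b): verifying that the projective cover of an $S_b$-free $B$-module avoids $e_bB$, and handling the $i=0$ indexing.
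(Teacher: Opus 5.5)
Your proposal is correct and follows the paper's proof essentially step for step. For (a) you push the minimal projective resolution forward along the exact, projective-preserving, fully faithful functor $\resLambdaA_{\ast}$, and the same argument gives the $\Ext$ part of (c) for $A$. For (b) you take one syzygy step at a time, splitting off $S^{t_i}$ and using $\Omega_{\Lambda}(S)\cong\resLambdaA_{\ast}\Omega_A(S_a)$. For $\resLambdaB_{\ast}$ in (c) you use injective coresolutions together with (b').

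Your two caveats are genuine, and the paper's proof tacitly assumes both. First, the formula in (b) only holds when $\Omega^{i+1}_{\Lambda}$ is read as $\Omega_{\Lambda}(\resLambdaB_{\ast}\Omega^i_B(N))$. Second, the ``in particular'' needs $S_b\notin\add N$, since otherwise $\Omega_{\Lambda}(\resLambdaB_{\ast}N)$ already picks up the summand $\resLambdaA_{\ast}\Omega_A(S_a)$. This is harmless in the paper's applications, where $N$ is indecomposable non-projective.
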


\begin{proof}
    Let $P_M^{\bullet}\rightarrow M$ be the minimal projective resolution of $M\in \modn A$.
    As $\resLambdaA_{\ast}$ is exact and preserves projective objects by Proposition \ref{prop_image_of_projects_lambda_A_B} (b), 
    $\resLambdaA_{\ast}P_M^{\bullet}\rightarrow \resLambdaA_{\ast}M$ is the minimal projective resolution of $M$.  
    So (a) follows.

    Let $M'\in \modn A$. For a similar reason together with the fact that $\resLambdaA_{\ast}$ is fully faithful,
    we have 
    \begin{equation*}
        \Ext_A^k(M, M') \cong \Ext_{\Lambda}^k(\resLambdaA_{\ast}M, \resLambdaA_{\ast}M'), \forall i\geq 1,
    \end{equation*}
    which implies that $\resLambdaA_{\ast}$ is a homological embedding.
    
    Now we prove (b).
    Note that $\Omega_{\Lambda}(\resLambdaB_{\ast}N)\cong \resLambdaB_{\ast}\Omega_B(N)$ if $S_b\notin \add N$ by Proposition \ref{prop_image_of_projects_lambda_A_B} (b) together with the fact that $\resLambdaB_{\ast}$ is an exact fully faithful embedding.  
    If moreover we assume $\Omega^i_B(N)$ is indecomposable non-projective for $0<i<k$, then $S_b\notin \add \Omega_B^i(N)$ for all $0<i<k$.
    By induction, we have 
    \begin{equation*}
        \Omega_{\Lambda}^j(\resLambdaB_{\ast}N)\cong \resLambdaB_{\ast}\Omega_B^j(N), \text{ for all } 1\leq j\leq k.
    \end{equation*}

    In general, we may write 
    \begin{equation*}
        \Omega^i_B(N) = S_b^{t_i}\oplus N_i
    \end{equation*}
    for some $t_i\geq 0$ such that $S_b\notin \add N_i$.
    Thus
    \begin{align*}
        \Omega_{\Lambda}^{i+1}(\resLambdaB_{\ast} N) & =\Omega_{\Lambda}(\resLambdaB_{\ast}S_b^{t_i}\oplus \resLambdaB_{\ast}N_i)\\
        & = \Omega_{\Lambda}(S)^{t_i} \oplus 
        \Omega_{\Lambda}(\resLambdaB_{\ast}N_i)\\
        & \cong \resLambdaA_{\ast}\Omega_A(S_a)^{t_i} \oplus \resLambdaB_{\ast} \Omega_B(N_i).
    \end{align*}

    Let $N\rightarrow I_N^{\bullet}$ be the minimal injective resolution of $N\in \modn B$. 
    As $\resLambdaB_{\ast}$ is exact and preserves injective objects by Proposition \ref{prop_image_of_projects_lambda_A_B} (b'),
    we have that $\resLambdaB_{\ast}N\rightarrow \resLambdaB_{\ast}I_N^{\bullet}$ is the minimal injective resolution of $\resLambdaB_{\ast}N\in \modn \Lambda$.
    Moreover $\resLambdaB_{\ast}$ is fully faithful, for $N'\in \modn B$ we have 
    \begin{equation*}
        \Ext_B^k(N', N) \cong \Ext_{\Lambda}^k(\resLambdaB_{\ast}N', \resLambdaB_{\ast}N), \forall i\geq 1,
    \end{equation*}
    which implies that $\resLambdaB_{\ast}$ is a homological embedding. 
\end{proof}

From now on, we identify $\modn A$ with the subcategory $\resLambdaA_{\ast}(\modn A)$ of $\modn \Lambda$.
Similarly, we identify $\modn B$ with $\resLambdaB_{\ast}(\modn B)$.

We show in the following Proposition that the embeddings $\resLambdaA_{\ast}, \resLambdaB_{\ast}$ are compatible with the functors $\tau_{d,A}^{\pm 1}, \tau_{d,B}^{\pm 1}$ and $\tau_{d, \Lambda}^{\pm 1}$.

\begin{Prop}\label{prop_compatibility_embedding_tau_d}
    Let $\Lambda = A\Delta_S B$. 
    \begin{itemize}
        \item [(a)] 
        Let $M\in \underline{\modn} A$ and $N\in \underline{\modn} B$. 
        Assume $\Omega_A^i(M)$ and $\Omega_B^i(N)$ are indecomposable non-projective for all $0<i<d$.
        Then 
        \begin{equation*}
            \tau_{d,\Lambda}(\resLambdaA_{\ast}M)\cong \resLambdaA_{\ast}\tau_{d,A}M,
        \end{equation*}
        and 
        \begin{equation*}
            \tau_{d,\Lambda}(\resLambdaB_{\ast}N)\cong \resLambdaB_{\ast}\tau_{d,B}N.
        \end{equation*}
        \item [(b)] Dually, let $M\in \overline{\modn } A$ and $N\in \overline{\modn } B$.
        Assume that $\Omega_A^{-i}(M)$ and $\Omega_B^{-i}(N)$ are indecomposable non-injective for all $0<i<d$.
        Then 
        \begin{equation*}
            \tau_{d,\Lambda}^{-1}(\resLambdaA_{\ast}M) \cong \resLambdaA_{\ast}\tau_{d, A}M,
        \end{equation*}
        and 
        \begin{equation*}
            \tau_{d,\Lambda}^{-1}(\resLambdaB_{\ast}N) \cong \resLambdaB_{\ast}\tau_{d, B}N.
        \end{equation*}
    \end{itemize}
    
\end{Prop}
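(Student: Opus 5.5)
We write $\tau_{d}=\tau\Omega^{d-1}$ and treat the two factors separately. For (a), we compute $\Omega^{d-1}_\Lambda$ via Proposition \ref{prop_syzygy_A_B_syzygy_lambda_and_homoligical_embeddings}, then show that the Auslander--Reiten translate $\tau_\Lambda$ of a module in the image of $\resLambdaA_{\ast}$ (resp. $\resLambdaB_{\ast}$) agrees with $\resLambdaA_{\ast}\tau_A$ (resp. $\resLambdaB_{\ast}\tau_B$). Part (b) is the formal dual: replace projective presentations by injective copresentations, $\nu$ by $\nu^{-1}$, and swap the roles of $A$ and $B$. We read the statement in (b) as $\tau_{d,A}^{-1}$, $\tau_{d,B}^{-1}$.

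\emph{Step 1 (syzygies).} For $M\in\modn A$, Proposition \ref{prop_syzygy_A_B_syzygy_lambda_and_homoligical_embeddings} (a) gives $\Omega^{d-1}_\Lambda(\resLambdaA_{\ast}M)\cong\resLambdaA_{\ast}\Omega_A^{d-1}M$ unconditionally. For $N\in\modn B$, the hypothesis that $\Omega_B^iN$ is indecomposable non-projective for $0<i<d$ lets us apply part (b) of the same proposition with $k=d-1$. We also need $N$ itself not to have $S_b$ as a summand. Here $N$ is a non-projective object of $\underline{\modn}B$, and $S_b$ is projective in $B$, so we may take $N$ without projective summands; then $\Omega^{d-1}_\Lambda(\resLambdaB_{\ast}N)\cong\resLambdaB_{\ast}\Omega_B^{d-1}N$. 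Put $X=\Omega_A^{d-1}M$ (resp. $Y=\Omega_B^{d-1}N$). This module is indecomposable and non-projective; when $d=1$ we use the minimal representative in the stable category instead.

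\emph{Step 2 ($\tau$ on $A$-modules).} Take a minimal projective presentation $P_1\xrightarrow{p}P_0\to X\to0$ in $\modn A$. By Proposition \ref{prop_image_of_projects_lambda_A_B} (b) and exactness, $\resLambdaA_{\ast}$ carries it to a minimal projective presentation over $\Lambda$. Then $\tau_\Lambda X=\ker\nu_\Lambda(\resLambdaA_{\ast}p)$. By Corollary \ref{cor_nakayama_functor_lambda_A_B}, $\nu_\Lambda\resLambdaA_{\ast}(e_iA)\cong\resLambdaA_{\ast}\nu_A(e_iA)$ for $i\neq a$, while $e_aA$ is sent to $D\Lambda e_a$. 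The module $D\Lambda e_a$ has socle $S$ and quotient $D\Lambda e_a/S$ lying in $\modn B$, and $\resLambdaA_{\ast}\nu_A(e_aA)=S$. So $\nu_\Lambda(\resLambdaA_{\ast}p)$ differs from $\resLambdaA_{\ast}\nu_A(p)$ only by the summands $D\Lambda e_a\supseteq S$ in the target.

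We must check that the kernel does not change. The morphism $\nu_\Lambda(\resLambdaA_{\ast}p)$ factors through $\resLambdaA_{\ast}\nu_AP_0$ plus copies of $S\subseteq D\Lambda e_a$. This holds because its source lies in $\modn A$, and by Proposition \ref{prop_hom_space_of_Lambda} (b) every morphism into the $B$-part factors through $\add S$. Hence the kernels coincide and $\tau_\Lambda X\cong\resLambdaA_{\ast}\tau_AX$. There is one subtlety: $e_aA=S_a$ is simple injective. Since $P_1\subseteq\rad P_0$-generated by minimality, and by the claim in the proof of Proposition \ref{prop_ext_Amod_Bmod_is_zero}, $S_a$ does not occur in $P_1$. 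It can occur in $P_0$ only if $X$ has summand $S_a$, which is excluded because $X$ is indecomposable non-projective and $S_a$ is projective only if $a$ is isolated.

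\emph{Step 3 ($\tau$ on $B$-modules), the main obstacle.} Here the presentation of $Y$ over $B$ may involve $e_bB=S_b$, whose image $\resLambdaB_{\ast}(e_bB)=S$ is not $\Lambda$-projective. We use that, by the hypothesis (with $i=d-1$ when $d\geq2$), $Y$ itself is indecomposable non-projective, hence $S_b\notin\add Y$. Since $S_b$ is simple projective, no morphism from $S_b$ to $\rad(\text{projective})$ is involved in a minimal presentation except as a summand of $P_1$. We then proceed as follows. In $P_1$, replacing $S_b$ by $e_b\Lambda$ gives a $\Lambda$-presentation whose first term has extra $A$-part. The $A$-part maps to zero in $\nu$ restricted to the $B$-support, because $\nu_\Lambda(e_b\Lambda)=\nu_\Lambda(e_a\Lambda)\cong\resLambdaB_{\ast}\nu_B(e_bB)$ by Corollary \ref{cor_nakayama_functor_lambda_A_B}. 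Hence the Nakayama images agree: $\nu_\Lambda$ of the $\Lambda$-presentation equals $\resLambdaB_{\ast}$ of $\nu_B$ of the $B$-presentation. Taking kernels gives $\tau_\Lambda Y\cong\resLambdaB_{\ast}\tau_BY$.

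Combining Steps 1–3 yields (a), and dualizing yields (b). The delicate bookkeeping is tracking the bridge $S$ through the Nakayama functor in Steps 2 and 3. This is exactly where the indecomposability hypotheses are used: they guarantee that no $S$ summand occurs in the relevant syzygy.
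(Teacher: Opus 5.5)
Your proposal is correct and follows essentially the paper's route. You write $\tau_d=\tau\Omega^{d-1}$, transport the syzygies with Proposition~\ref{prop_syzygy_A_B_syzygy_lambda_and_homoligical_embeddings}, and compare $\tau$ through minimal projective presentations using Corollary~\ref{cor_nakayama_functor_lambda_A_B}, with the extra $D\Lambda e_a$ summands controlled by factoring through $S$ via Proposition~\ref{prop_hom_space_of_Lambda}. That factoring is the paper's special case $d=1$, $M=S_a$, which you treat uniformly. You are also more explicit than the paper on the $B$-side, where $e_bB=S_b$ may occur in the second term and is replaced by $e_b\Lambda$.

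One correction to your ``subtlety'' paragraph: $e_aA$ is the projective cover of the simple injective $S_a$, not $S_a$ itself. When $d=1$ it does occur in $P_0$ whenever $S_a$ lies in the top of $X$, not only when $S_a$ is a summand of $X$. This is harmless, because your factoring argument already covers that situation. For $d\geq 2$ it cannot happen at all, since $X=\Omega_A^{d-1}M$ sits inside a radical, where $S_a$ never occurs as a composition factor.
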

\begin{proof}
    We only prove (a) as the proof for (b) is similar.
    Let 
    \begin{equation*}
        P_M^{\bullet}: 
        \xymatrix@C=1em@R=1em{
        P_M^d\ar[r] & P_M^{d-1}\ar[r] & \cdots\ar[r] & P_M^0\ar[r] & 0
        }
    \end{equation*}
    such that $P_M^{\bullet}\rightarrow M$ is the beginning of the minimal projective resolution of $M\in \modn A$.
    We claim that $e_aA\notin \add P_M^i$ for all $0<i<d$.
    Otherwise $\Omega_A^i(M)\cong S_a$ for some $i$ and $S_a\hookrightarrow P_M^{i-1}$ which implies $S_a\in \add P_M^{i-1}$, a contradiction.
    Thus for $d\geq 1$ and $M\ncong S_a$ or $d\geq 2$,
    we have the following commutative diagram by Corollary \ref{cor_nakayama_functor_lambda_A_B}.
    \begin{equation*}
        \xymatrix{
        \resLambdaA_{\ast} \nu_A P_M^d\ar[r]\ar[d]^{\cong} & \resLambdaA_{\ast} \nu_A P_M^{d-1}\ar[d]^{\cong}\\
        \nu_{\Lambda}\resLambdaA_{\ast}P_M^d\ar[r] & \nu_{\Lambda}\resLambdaA_{\ast}P_M^{d-1}
        }
    \end{equation*}
    So $\tau_{d, \Lambda}(\resLambdaA_{\ast}M) \cong \resLambdaA_{\ast}\tau_{d,A}M$.

    If $M=S_a$ and $d=1$. 
    Consider the minimal projective presentation of $S_a\in \modn A$:
    \begin{equation*}
        \xymatrix@C=1em@R=1em{
        Q\ar[r] & e_aA\ar[r] & S_a\ar[r] & 0.
        }
    \end{equation*}
    We have $\resLambdaA_{\ast}\tau_A(S_a) = \ker(\resLambdaA_{\ast}\nu_AQ\rightarrow S)$. 
    On the other hand, we have 
    \begin{equation*}
        \nu_{\Lambda}\resLambdaA_{\ast}Q\cong \resLambdaA_{\ast}\nu_A Q \text{ and } \nu_{\Lambda}\resLambdaA_{\ast} e_aA \cong \resLambdaB_{\ast}\nu_B(e_bB).
    \end{equation*}
    So $\tau_{\Lambda}\resLambdaA_{\ast}S_a = \ker(\resLambdaA_{\ast}\nu_A Q\xrightarrow {f}\resLambdaB_{\ast}\nu_B(e_bB))$.
    Note that by Proposition \ref{prop_hom_space_of_Lambda} (b), $\image f = S$.
    Thus $\resLambdaA_{\ast}\tau_A(S_a) \cong \tau_{\Lambda}\resLambdaA_{\ast}S_a$.

    Arguing similarly, 
    it can be shown that $\resLambdaB_{\ast}\tau_B(N) \cong \tau_{\Lambda}\resLambdaB_{\ast}N$.
\end{proof}

Now assume $\mathcal{A}\subseteq \modn A$ and $\mathcal{B}\subseteq \modn B$ are subcategories such that $S\in \mathcal{A}$ and $S\in \mathcal{B}$. 
We introduce the gluing of $\mathcal{A}$ and $\mathcal{B}$ in $\modn \Lambda$.
A priori the glued category is only an additive subcategory of $\modn \Lambda$.
We will later on restrict to the case when $\mathcal{A}\subseteq \modn A$ and $\mathcal{B}\subseteq \modn B$ are $d$-cluster tilting subcategories so that the glued category is also $d$-cluster tilting. 
This may be viewed as a higher dimensional version of   \cite[Definition 3.1]{Vas21}.

\begin{Def}
    We define the gluing of $\mathcal{A}$ and $\mathcal{B}$ as $\mathcal{C} = \mathcal{A}\Delta_S \mathcal{B} = \add\{\mathcal{A}, \mathcal{B}\}\subseteq \modn \Lambda$.
\end{Def}

\begin{Prop}\label{prop_glue_d_ct_2_d_ct}
    Let $\Lambda = A\Delta_S B$.
    Assume 
    $\mathcal{M}_A = \add
    M_A\subseteq \modn A$ and 
    $\mathcal{M}_B = \add M_B\subseteq \modn B$ are $d$-cluster tilting subcategories. 
    Then  $\mathcal{M} = \mathcal{M}_A\Delta_S \mathcal{M}_B\subseteq \modn \Lambda$ is a $d$-cluster tilting subcategory.
    If moreover, $\mathcal{M}_A$ and $\mathcal{M}_B$ are $d\mathbb{Z}$-cluster tilting, then $\mathcal{M}$ is also $d\mathbb{Z}$-cluster tilting.
\end{Prop}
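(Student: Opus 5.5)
We use the characterization in Lemma \ref{lem_characterisation_of_d_ct_module}. Set $M = M_A\oplus M_B$, viewed in $\modn\Lambda$ via the embeddings $\resLambdaA_{\ast}$ and $\resLambdaB_{\ast}$. Since $S$ is simple injective in $\modn A$ and simple projective in $\modn B$, we have $S\in\mathcal{M}_A\cap\mathcal{M}_B$, so $S$ is the only common summand.

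\emph{Generator-cogenerator.} By Proposition \ref{prop_image_of_projects_lambda_A_B}(b), every indecomposable projective $\Lambda$-module is $\resLambdaA_{\ast}(e_iA)$ for some $i\in(Q_A)_0$, or $\resLambdaB_{\ast}(e_iB)$ for some $i\in(Q_B)_0$ with $i\neq b$. Hence $\Lambda\in\add M$. Dually, (b') gives $D\Lambda\in\add M$.

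\emph{$d$-rigidity.} Ext groups inside $\mathcal{M}_A$, and inside $\mathcal{M}_B$, are computed in $\modn A$, respectively $\modn B$, because Proposition \ref{prop_syzygy_A_B_syzygy_lambda_and_homoligical_embeddings}(c) says the embeddings are homological. So these groups vanish in degrees $0<i<d$. The mixed groups $\Ext^i_\Lambda(\mathcal{M}_A,\mathcal{M}_B)$ vanish for all $i\ge1$ by Proposition \ref{prop_ext_Amod_Bmod_is_zero}.

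For $\Ext^i_\Lambda(N,X)$ with $N\in\ind\mathcal{M}_B$ and $X\in\mathcal{M}_A$, split into two cases.
\begin{itemize}
\item If $N$ is projective in $\modn B$ but $N\neq S$, it is projective in $\Lambda$. If $N=S$, it lies in $\mathcal{M}_A$ and the previous case applies.
\item Otherwise $N$ is non-projective. Proposition \ref{prop_indec_equiv}(c) gives that $\Omega_B^jN$ is indecomposable for $0<j<d$, and each is non-projective because $\Omega_B^{d-1}N\ne0$ is indecomposable, so $\Omega_B^jN$ is indecomposable non-projective for $0<j<d-1$. I expect the hypothesis of Proposition \ref{prop_ext_Amod_Bmod_is_zero} can be met with $k=d-1$; checking that indexing is the main bookkeeping point. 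We need $\Omega^j_BN\neq S_b$ for $j\le d-2$, which holds since $S_b$ is projective. That proposition then yields vanishing for $1\le i\le d-1$.
\end{itemize}

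\emph{Maximality.} I would verify condition (c) of Lemma \ref{lem_characterisation_of_d_ct_module}, or equivalently $\gldim\End_\Lambda(M)\le d+1$.
\begin{itemize}
\item For indecomposable $X\in\mathcal{M}_A$, take the sequence provided by (c) for $M_A$ in $\modn A$. Since $\Hom_\Lambda(M_B,X)$ is nonzero only through $S$ (Proposition \ref{prop_hom_space_of_Lambda}(a)), I expect $\Hom(M,-)$ applied to it to stay exact. The point is that maps from $M_B$ into $\mathcal{M}_A$ factor as $M_B\to S^r\to(-)$ with $S\in\mathcal{M}_A$, so exactness reduces to the $M_A$-exactness.
\item For $X\in\mathcal{M}_B$, $\Hom_\Lambda(M_A,X)$ factors through the left $\add S$-approximation of $M_A$. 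Any map $M_A\to S$ lands in $S\in\mathcal{M}_B$, so the $\Hom(M_B,-)$-exactness from $\modn B$ again suffices, using that $S\in\add M_B$.
\end{itemize}
This factorization bookkeeping for $\Hom$-exactness is the main obstacle; a cleaner alternative is to note $\End_\Lambda(M)$ is itself a sink-source-type gluing of $\End_A(M_A)$ and $\End_B(M_B)$ along the vertex of $S$, and bound its global dimension by those of the pieces (Lemma \ref{lem_glodim_endomorphism_algebra} may help at the glued vertex).

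\emph{$d\mathbb{Z}$ part.} By Proposition \ref{prop_indec_equiv}(d) it suffices to show $\Omega^d_\Lambda\mathcal{M}\subseteq\mathcal{M}$.
\begin{itemize}
\item For $\mathcal{M}_A$ this follows from Proposition \ref{prop_syzygy_A_B_syzygy_lambda_and_homoligical_embeddings}(a).
\item For indecomposable non-projective $N\in\mathcal{M}_B$, Proposition \ref{prop_syzygy_A_B_syzygy_lambda_and_homoligical_embeddings}(b) writes $\Omega^d_\Lambda N$ as $\Omega_B^dN$, possibly with $S_b$ summands replaced by $\Omega_A(S_a)$ summands. Only $\Omega^{d-1}_BN=S_b$ can occur, and then $\Omega^d_\Lambda N=\Omega_A(S_a)$; this case needs checking. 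In that case $\Omega_B^{d-1}N=S_b$ is projective, so $\Omega^d_BN=0$. The conclusion $\Omega_A(S_a)\in\mathcal{M}$ is then not automatic; we would need $\Omega^{d-1}_A(S_a)$ to lie in $\mathcal{M}_A$-compatible form, which should follow from $S\in\mathcal{M}_A$ and $\Omega^d\mathcal{M}_A\subseteq\mathcal{M}_A$ after tracking indices.
\end{itemize}
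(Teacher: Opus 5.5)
\textbf{Gap: the case $\Omega_B^{d-1}N\cong S_b$ is never ruled out.} Your generator--cogenerator and maximality steps are the paper's argument: the $d$-almost split sequences of $\mathcal{M}_A$ and $\mathcal{M}_B$ combined with Proposition \ref{prop_hom_space_of_Lambda}. Your rigidity step also matches the paper except at one point, discussed below. The $d\mathbb{Z}$ part, however, has a genuine gap, caused by a fact you never prove.

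The missing fact is: for $N\in(\mathcal{M}_B)_P$, the module $\Omega_B^{d-1}N$ is non-projective, hence $\Omega_B^{d-1}N\not\cong S_b$. This follows at once from Proposition \ref{prop_indec_equiv}(b). There, $\tau_dN=\tau\Omega_B^{d-1}N$ is a nonzero (indecomposable non-injective) module. So $\Omega_B^{d-1}N$, which is indecomposable by (c), cannot be projective. Granting this, Proposition \ref{prop_syzygy_A_B_syzygy_lambda_and_homoligical_embeddings}(b) gives $\Omega^d_\Lambda N\cong\Omega^d_BN\in\mathcal{M}_B$ directly, which is exactly how the paper finishes.

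The case $\Omega_B^{d-1}N=S_b$ that you leave open cannot be rescued the way you suggest. For $d\geq 2$ and $S$ not projective, $\Omega_A(S_a)\notin\mathcal{M}$. Indeed, $0\to\Omega S\to P(S)\to S\to 0$ is non-split, so $\Ext^1_\Lambda(S,\Omega S)\neq 0$; with $S\in\mathcal{M}$ this would violate $d$-rigidity. That case has to be ruled out, not absorbed.

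\textbf{The same fact carries your rigidity claim in degree $d-1$.} Suppose $\Omega_B^{d-1}N=S_b$ could occur, with $d\geq 2$. Then the projective covers of $N,\Omega_BN,\ldots,\Omega_B^{d-2}N$ contain no summand $S_b$, so they admit no nonzero maps to $S$. Dimension shifting then gives $\Ext^{d-1}_\Lambda(N,S)\cong\Hom_\Lambda(S,S)\neq 0$.

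You invoked Proposition \ref{prop_ext_Amod_Bmod_is_zero} with $k=d-1$ and read off vanishing up to $i=k$. That relies on the endpoint of the statement, which fails unless one also knows $\Omega_B^kN$ has no summand $S_b$. For example, take $B$ the path algebra of $b\to c$, $N=S_c$ and $k=1$; then $\Ext^1_\Lambda(S_c,S)\neq 0$.

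Do as the paper does instead. First establish that $\Omega_B^iN$ is indecomposable non-projective for all $0<i<d$, i.e.\ take $k=d$. Then conclude only that $\Ext^i_\Lambda(N,\mathcal{M}_A)=0$ for $0<i<d$.
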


\begin{proof}
    Firstly note that $\Lambda, D\Lambda\in \mathcal{M}$ by Proposition \ref{prop_image_of_projects_lambda_A_B}.
    
    As $\resLambdaA_{\ast}$ and $\resLambdaB_{\ast}$ are homological embeddings by Proposition \ref{prop_syzygy_A_B_syzygy_lambda_and_homoligical_embeddings} (c), 
    \begin{equation*}
        \Ext_{\Lambda}^i(\mathcal{M}_A, \mathcal{M}_A) = 0 
        \text{ and } 
        \Ext_{\Lambda}^i(\mathcal{M}_B, \mathcal{M}_B) = 0, 
        \text{ for } 0<i<d.
    \end{equation*}

    By Proposition \ref{prop_ext_Amod_Bmod_is_zero},  $\Ext_{\Lambda}^i(\mathcal{M}_A, \mathcal{M}_B) = 0$ for $0<i<d$.

    Let $N\in (\mathcal{M}_B)_P$. 
    Then $\Omega^i_B(N)$ is indecomposable non-projective by Proposition \ref{prop_indec_equiv} (c) for $0<i<d$,  thus 
    \begin{equation*}
        \Ext_{\Lambda}^i(N, \mathcal{M}_A) = 0, \text{ for all } 0<i<d,
    \end{equation*}
    by Proposition \ref{prop_ext_Amod_Bmod_is_zero}.
    This implies that 
    $\Ext_{\Lambda}^i(\mathcal{M}_B, \mathcal{M}_A)=0$ for $0<i<d$.

    Thus $\Ext_{\Lambda}^i(\mathcal{M}, \mathcal{M})=0$ for $0<i<d$.

    Next we show that for any $M\in \ind  \mathcal{M}$, there exists an exact sequence 
    \begin{align*}
        \xymatrix@C=1em@R=1em{
        0\ar[r] & M_{d+1}\ar[r] & \cdots\ar[r] & M_1\ar[r] & M
        } & (\dag)
    \end{align*}
    such that 
    \begin{align*}
        \xymatrix@C=1em@R=1em{
        0\ar[r] & \Hom_{\Lambda}(-,M_{d+1})|_{\mathcal{M}}\ar[r] & \cdots\ar[r] & \Hom_{\Lambda}(-,M_{1})|_{\mathcal{M}}\ar[r] & 
        \mathcal{J}_{\Lambda}(-,M)|_{\mathcal{M}}\ar[r] & 0
        } && (\ddag)
    \end{align*}
    is exact.
    Notice that $\mathcal{M}$ admits an additive generator.
    So by applying Lemma \ref{lem_characterisation_of_d_ct_module}, we conclude that $\mathcal{M}$ is $d$-cluster tilting.

    Assume $M\in \ind \mathcal{M}_A$. 
    We take $(\dag)$ in $ \mathcal{M}_A$ to be the $d$-almost split sequence that ends with $M$.
    It suffices to show 
    \begin{align*}
        \xymatrix@C=1em@R=1em{
        0\ar[r] & \Hom_{\Lambda}(N,M_{d+1})\ar[r] & \cdots\ar[r] & \Hom_{\Lambda}(N,M_{1})\ar[r] & 
        \mathcal{J}_{\Lambda}(N,M)\ar[r] & 0
        } 
    \end{align*}
    is exact for $N\in \ind \mathcal{M}_B\backslash \ind \mathcal{M}_A$.
    This is trivial by Proposition \ref{prop_hom_space_of_Lambda} (a).

    Assume $M\in \ind \mathcal{M}_B$ and take $(\dag)$ in $\mathcal{M}_B$ to be the $d$-almost split sequence that ends with $M$. 
    We need to show that 
    \begin{align*}
        \xymatrix@C=1em@R=1em{
        0\ar[r] & \Hom_{\Lambda}(L,M_{d+1})\ar[r] & \cdots\ar[r] & \Hom_{\Lambda}(L,M_{1})\ar[r] & 
        \mathcal{J}_{\Lambda}(L,M)\ar[r] & 0
        } 
    \end{align*}
    is exact for $L\in \ind \mathcal{M}_A$.

    It holds true if $L\cong S$ since $S \cong S_b\in \mathcal{M}_B$.
    Otherwise, let $f_L: L\rightarrow S^{\ell_L}$ be the minimal left $\add S$-approximation. 
    The statement follows as  
    \begin{equation*}
        \Hom_{\Lambda}(L, M_j) \cong \Hom_{\Lambda}(S^{\ell_L}, M_j)
    \end{equation*}
    for $0\leq j\leq d+1$ where $M_0=M$. 

    Therefore $\mathcal{M}\subseteq \modn \Lambda$ is $d$-cluster tilting.

    If additionally $\mathcal{M}_A\subseteq \modn A$ and $\mathcal{M}_B\subseteq \modn B$ are $d\mathbb{Z}$-cluster tilting.
    We have 
    \begin{equation*}
        \Omega_{\Lambda}^d(\mathcal{M}_A) = \Omega_A^d(\mathcal{M}_A)\subseteq \mathcal{M}_A
    \end{equation*}
    by Proposition \ref{prop_syzygy_A_B_syzygy_lambda_and_homoligical_embeddings} (a).

    Moreover as we have seen before, for $N\in (\mathcal{M}_B)_P$ and $1\leq i\leq d-1$,
    \begin{equation*}
        \Omega_{\Lambda}^i(N) \cong \Omega_B^i(N) \text{ and } 
        \Omega^{d-1}_B(N)\ncong S_b.
    \end{equation*}
    Thus $\Omega^d_{\Lambda}(N)\cong \Omega_B^d(N)\in \mathcal{M}_B \subseteq \mathcal{M}$ by Proposition \ref{prop_syzygy_A_B_syzygy_lambda_and_homoligical_embeddings} (b).
    So we conclude that $\mathcal{M}$ is $d\mathbb{Z}$-cluster tilting.

\end{proof}

\begin{Prop}\label{prop_d_ct_of_lambda_obtained_from_gluing}
    Let $\Lambda = A\Delta_S B$. 
    Suppose $\mathcal{M}\subseteq \modn \Lambda$ is a $d$-cluster tilting subcategory and $S\in \mathcal{M}$. 
    We have that $\mathcal{M} = \mathcal{M}_A\Delta \mathcal{M}_B$ where $\mathcal{M}_A = \resLambdaA^{\ast}\mathcal{M} \subseteq \modn A$ 
    and $\mathcal{M}_B = ^{\Lambda}_B\pi^{\ast}\mathcal{M} \subseteq \modn B$
    are $d$-cluster tilting subcategories.
     If additionally $\mathcal{M}\subseteq \modn \Lambda$ is $d\mathbb{Z}$-cluster tilting, then $\mathcal{M}_A\subseteq \modn A$ and $\mathcal{M}_B\subseteq \modn B$ are $d\mathbb{Z}$-cluster tilting.
\end{Prop}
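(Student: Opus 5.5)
By Proposition \ref{prop_ind_lambda}, every indecomposable $\Lambda$-module lies in $\modn A$ or in $\modn B$ (identified via $\resLambdaA_{\ast}$, $\resLambdaB_{\ast}$). The first step is to check that for an indecomposable $M\in\modn\Lambda$ lying in $\modn A$ we have $\resLambdaA^{\ast}M\cong M$. Indeed, $M$ is supported on $(Q_A)_0$, so $M\langle 1-e_A\rangle=0$. If instead $M$ lies in $\modn B$ but not in $\modn A$, then $\resLambdaA^{\ast}M$ is the top part of $M$ at the vertex $a=b$. Since $b$ is a source of $Q_B$ and $M\in\modn B$, this is a quotient $S^{r}$ of $M$.

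Consequently $\mathcal{M}_A=\resLambdaA^{\ast}\mathcal{M}$ is $\add$ of the indecomposables of $\mathcal{M}$ lying in $\modn A$, together possibly with $S$. Since $S\in\mathcal{M}$ by assumption, we get
\begin{equation*}
\mathcal{M}_A=\add\{X\in\ind\mathcal{M}\mid X\in\modn A\},
\end{equation*}
and similarly $\mathcal{M}_B=\add\{X\in\ind\mathcal{M}\mid X\in\modn B\}$, using $\resLambdaB^{\ast}$. Hence $\mathcal{M}=\add\{\mathcal{M}_A,\mathcal{M}_B\}=\mathcal{M}_A\Delta_S\mathcal{M}_B$, and $S$ lies in both pieces. (Here one should verify that $\resLambdaB^{\ast}$ of an indecomposable in $\modn A\setminus\modn B$ lands in $\add S$ or is zero; dually this uses that $a$ is a sink.)

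The second step is to show that $\mathcal{M}_A\subseteq\modn A$ is $d$-cluster tilting. The argument for $\mathcal{M}_B$ is symmetric. Since $\modn\Lambda$ has finitely many indecomposables in $\mathcal{M}$ (or at least $\mathcal{M}_A$ is a subcategory of $\mathcal{M}$), functorial finiteness is easiest via Lemma \ref{lem_characterisation_of_d_ct_module}, but I would argue directly.
\begin{itemize}
\item \emph{Generator-cogenerator.} $A\in\mathcal{M}_A$ because $\resLambdaA_{\ast}(e_iA)\cong e_i\Lambda\in\mathcal{M}$. For the injectives, $DAe_i\cong D\Lambda e_i$ for $i\neq a$, and $DAe_a=S_a\cong S\in\mathcal{M}$, by Proposition \ref{prop_image_of_projects_lambda_A_B}.
\item \emph{Rigidity.} Since $\resLambdaA_{\ast}$ is a homological embedding (Proposition \ref{prop_syzygy_A_B_syzygy_lambda_and_homoligical_embeddings} (c)), $\Ext^i_A(\mathcal{M}_A,\mathcal{M}_A)=\Ext^i_\Lambda(\mathcal{M}_A,\mathcal{M}_A)=0$ for $0<i<d$.
\item \emph{Maximality.} Let $X\in\ind A$ with $\Ext^i_A(X,\mathcal{M}_A)=0$ for $0<i<d$. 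I want $\Ext^i_\Lambda(X,\mathcal{M})=0$, which forces $X\in\mathcal{M}$, hence $X\in\mathcal{M}_A$. The part $\Ext^i_\Lambda(X,\mathcal{M}_B)=0$ holds by Proposition \ref{prop_ext_Amod_Bmod_is_zero}. The part involving $\mathcal{M}_A$ is given by the homological embedding.
\item \emph{Functorial finiteness.} Right and left approximations in $\modn\Lambda$ by $\mathcal{M}$ can be pushed to $\mathcal{M}_A$ by applying $\resLambdaA^{\ast}$ (respectively $\resLambdaA^{!}$) and using adjunction. Alternatively, the maximality characterization alone suffices because the subcategory is the Ext-perpendicular, which is functorially finite whenever $\mathcal{M}$ is.
\end{itemize}
For $\mathcal{M}_B$ the maximality step is the delicate one. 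Proposition \ref{prop_ext_Amod_Bmod_is_zero} only gives vanishing of $\Ext^i_\Lambda(Y,\mathcal{M}_A)$ under the hypothesis on syzygies. I would therefore use the other characterization,
\begin{equation*}
\mathcal{M}=\{Y\mid \Ext^i(\mathcal{M},Y)=0 \text{ for } 0<i<d\}.
\end{equation*}
For $Y\in\modn B$, $\Ext^i_\Lambda(\mathcal{M}_A,Y)=0$ holds unconditionally, and the $\mathcal{M}_B$-part follows from the homological embedding. So the asymmetry is handled by choosing the correct side of the perpendicular characterization for each piece.

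The third step is the $d\mathbb{Z}$ statement. Assume $\mathcal{M}$ is $d\mathbb{Z}$-cluster tilting. For $X\in\mathcal{M}_A$, Proposition \ref{prop_syzygy_A_B_syzygy_lambda_and_homoligical_embeddings} (a) gives $\Omega^d_A X\cong\Omega^d_\Lambda X\in\mathcal{M}$. Since this module lies in $\modn A$, it lies in $\mathcal{M}_A$, and Proposition \ref{prop_indec_equiv} (d) applies. For $\mathcal{M}_B$ I use cosyzygies instead: $\Omega^{-d}_\Lambda N\cong\Omega^{-d}_B N$ because $\resLambdaB_{\ast}$ preserves injective resolutions (proof of Proposition \ref{prop_syzygy_A_B_syzygy_lambda_and_homoligical_embeddings}), and then criterion (iii) of Proposition \ref{prop_indec_equiv} (d) applies.

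The main obstacle I expect is the first step, i.e. verifying carefully that $\resLambdaA^{\ast}$ and $\resLambdaB^{\ast}$ send indecomposables of $\mathcal{M}$ from the "other side" into $\add S$, so that $\mathcal{M}_A$ and $\mathcal{M}_B$ are exactly the restrictions. This step needs the assumption $S\in\mathcal{M}$.
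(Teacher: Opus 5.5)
Most of your proposal is correct, and parts of it are tidier than the paper. Computing $\resLambdaA^{\ast}$ on indecomposables gives $\mathcal{M}_A=\add(\ind\mathcal{M}\cap\modn A)$ once $S\in\mathcal{M}$, since modules from $\modn B$ become semisimple at $a$. So $\mathcal{M}=\mathcal{M}_A\Delta\mathcal{M}_B$ follows at once from Proposition~\ref{prop_ind_lambda}; the paper only gets this at the end, from maximality. Your adjunction argument for approximations works. For left approximations, note that $\resLambdaA^{!}$ of an indecomposable in $\modn B$ is semisimple at $a$, hence lies in $\add S\subseteq\mathcal{M}_A$. Drop the ``alternatively'' sentence about Ext-perpendiculars; it is not an argument. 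Using cosyzygies for the $d\mathbb{Z}$ claim on $\mathcal{M}_B$ is the right dualization.

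\textbf{The gap is in maximality.} The definition of $d$-cluster tilting requires two equalities:
$\mathcal{M}_A=\{X\mid \Ext^i_A(X,\mathcal{M}_A)=0,\ 0<i<d\}$ and $\mathcal{M}_A=\{X\mid \Ext^i_A(\mathcal{M}_A,X)=0,\ 0<i<d\}$, and the same for $\mathcal{M}_B$. You prove only the first for $\mathcal{M}_A$ and only the second for $\mathcal{M}_B$, then assert that choosing one side per piece suffices. That is true, but it is a theorem (Iyama: for a functorially finite $d$-rigid generator--cogenerator, either equality implies the other). Nothing in your proposal states or proves it. You can close the gap in one of two ways.

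\emph{Prove the criterion.} For $\mathcal{M}_A$, take $X$ with $\Ext^i_A(\mathcal{M}_A,X)=0$. Form $0\to X\to C^0\to\cdots\to C^{d-2}\to Y\to0$ by successive left $\mathcal{M}_A$-approximations; these are monic since $DA\in\mathcal{M}_A$. Call the cokernels $X^j$. Dimension shifting gives $\Ext^i_A(Y,\mathcal{M}_A)=0$, so $Y\in\mathcal{M}_A$ by the side you did prove. Since $\Ext^1_A(\mathcal{M}_A,X^j)\cong\Ext^{j+1}_A(\mathcal{M}_A,X)=0$, the short exact pieces split from the right, and $X\in\mathcal{M}_A$. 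Argue dually for $\mathcal{M}_B$ with right approximations.

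\emph{Or prove the missing sides directly,} as the paper does for $\mathcal{M}_A$. The key input is that $S_b$ is simple projective in $\modn B$, so for $j\geq1$
\[
\Ext^j_B(Y,S_b)\cong\Hom_B(\Omega^j_BY,S_b),
\]
which is nonzero exactly when $S_b$ is a direct summand of $\Omega^j_BY$. Now let $Y\in\ind B$ with $Y\ncong S$ and $\Ext^j_B(Y,S_b)=0$ for $0<j<d$. This holds for $Y\in\ind\mathcal{M}\cap\modn B$ by rigidity of $\mathcal{M}$ and the homological embedding. It also holds for $Y$ in the left perpendicular of $\mathcal{M}_B$, because $S_b\in\mathcal{M}_B$.

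For such $Y$, Proposition~\ref{prop_syzygy_A_B_syzygy_lambda_and_homoligical_embeddings}(b) shows that the terms $P^1,\dots,P^{d-1}$ of the minimal $\Lambda$-projective resolution of $Y$ are sums of $e_i\Lambda=\resLambdaB_{\ast}(e_iB)$ with $i\neq b$. Hence $\Hom_\Lambda(P^j,X)=0$ for every $X\in\modn A$ by Proposition~\ref{prop_hom_space_of_Lambda}(a), and so $\Ext^j_\Lambda(Y,X)=0$ for $0<j<d$.

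This is the vanishing behind the paper's step ``$\Hom_\Lambda(P^i_Y,X)=0$''. It also handles the direction you called delicate for $\mathcal{M}_B$: the syzygy hypothesis in Proposition~\ref{prop_ext_Amod_Bmod_is_zero} is stronger than needed, and $S\in\mathcal{M}$ already rules out $S_b$-summands in the relevant syzygies.
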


\begin{proof}
    We prove $\mathcal{M}_A\subseteq \modn A$ is $d$-cluster tilting. 

    Since $\resLambdaA_{\ast}$ is a homological embedding by Proposition \ref{prop_syzygy_A_B_syzygy_lambda_and_homoligical_embeddings} (c), 
    \begin{equation*}
        \Ext_A^i(M, N) \cong \Ext_{\Lambda}^i(M, N)
    \end{equation*}
    for $M, N\in \mathcal{M}_A$.
    Thus $\Ext_A^i(\mathcal{M}_A, \mathcal{M}_A)=0$ for $0<i<d$.

    Let $X\in \ind A$ and $Y\in \ind \mathcal{M}\backslash \ind\mathcal{M}_A$.
    
    Suppose $\Ext_A^i(X, \mathcal{M}_A)=0$ for $0<i<d$.
    Then 
    \begin{equation*}
        \Ext_{\Lambda}^i(X, \mathcal{M}_A) = 0, \text{ for all } 0<i<d.
    \end{equation*}
    We have 
    \begin{equation*}
        \Ext_{\Lambda}^i(X, Y)=0, \text{ for all }i>0
    \end{equation*}
    by Proposition \ref{prop_ext_Amod_Bmod_is_zero}.
    Since $\mathcal{M}\subseteq \modn \Lambda$ is $d$-cluster tilting, 
    we have $X\in \mathcal{M}$ and thus 
    $X\in \mathcal{M}_A$.

    On the other hand, assume $\Ext_A^i( \mathcal{M}_A, X)=0$ for $0<i<d$. 
    We claim that 
    \begin{equation*}
        \Ext_{\Lambda}^i(Y, X) = 0, \text{ for all } 0<i<d.
    \end{equation*}
    This is true if $X\cong S_a$ as $S\in \mathcal{M}$.
    Otherwise,
    let $P_Y^{\bullet}\rightarrow Y$ be the minimal projective resolution of $Y\in \modn B$.
    By Proposition \ref{prop_hom_space_of_Lambda} (a), We have $\Hom_{\Lambda}(P_Y^i, X)=0$.
    Thus our claim follows.
    We have
    \begin{equation*}
        \Ext_{\Lambda}^i(\mathcal{M}, X)=0, \text{ for all }  0<i<d.
    \end{equation*}
    Thus $X\in \mathcal{M}$ which implies $X\in \mathcal{M}_A$.
    Therefore $\mathcal{M}_A\subseteq \modn A$ is $d$-cluster tilting.

    If additionally $\mathcal{M}\subseteq \modn \Lambda$ is $d\mathbb{Z}$-cluster tilting, 
    we have $\Omega_{\Lambda}^d(\mathcal{M})\subseteq \mathcal{M}$.
    By Proposition \ref{prop_syzygy_A_B_syzygy_lambda_and_homoligical_embeddings} (a), for $X\in \mathcal{M}_A$, we have 
    \begin{equation*}
        \Omega_{A}^d(X) \cong \Omega_{\Lambda}^d(X)\subseteq \mathcal{M} \cap \modn A = \mathcal{M}_A.
    \end{equation*}
    Thus $\mathcal{M}_A\subseteq \modn A$ is $d\mathbb{Z}$-cluster tilting. 

    Arguing similarly we can prove the statement for $\mathcal{M}_B$.

    Lastly as $\mathcal{M}_A\Delta \mathcal{M}_B$ and $\mathcal{M}$ are both $d$-cluster tilting subcategories of $\modn \Lambda$ and $\mathcal{M}_A\Delta \mathcal{M}_B\subseteq \mathcal{M}$,
    we conclude that $\mathcal{M} = \mathcal{M}_A\Delta \mathcal{M}_B$.
\end{proof}

Let  $\Lambda = A\Delta_S B$. 
Denote by 
\begin{equation*}
    d\text{-}CT(A) = \{\mathcal{M}_A=\add M_A\subseteq \modn A : d\text{-cluster tilting}\}
\end{equation*}
the set of $d$-cluster tilting subcategories of $\modn A$ which admit an additive generator. 
Respectively we use $d\text{-}CT(B)$ for $B$.
Let 
\begin{equation*}
    d\text{-}CT_S(\Lambda) = \{\mathcal{M}=\add M\subseteq \modn \Lambda : d\text{-cluster tilting and } S\in \mathcal{M}\}.
\end{equation*}
Combining Proposition \ref{prop_glue_d_ct_2_d_ct} and Proposition \ref{prop_d_ct_of_lambda_obtained_from_gluing},
we obtain the following Corollary.

\begin{Cor}\label{cor_bijection_d_ct_gluing}
    Let $\Lambda = A\Delta_S B$. 
    Then there is a bijection 
    \begin{align*}
        d\text{-}CT(A)\times d\text{-}CT(B) & \longleftrightarrow d\text{-}CT_S(\Lambda)\\
        (\mathcal{M}_A, \mathcal{M}_B) & \longmapsto \mathcal{M}_A\Delta \mathcal{M}_B \\
        (\mathcal{M}\cap \modn A, \mathcal{M}\cap \modn B) & \longmapsfrom \mathcal{M}
    \end{align*}
    Moreover, replacing $d$-CT by $d\mathbb{Z}$-CT, 
    the above bijiection restricts.
\end{Cor}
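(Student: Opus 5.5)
The plan is to check that the two assignments in Corollary \ref{cor_bijection_d_ct_gluing} are well defined and mutually inverse. Throughout, $\modn A$ and $\modn B$ are identified with their images under $\resLambdaA_{\ast}$ and $\resLambdaB_{\ast}$. Proposition \ref{prop_ind_lambda} then gives $\ind\Lambda = \ind A\cup\ind B$, glued along $S$. This identification is what makes intersecting with $\modn A$ and with $\modn B$ meaningful. For $\mathcal{M}\in d\text{-}CT_S(\Lambda)$, the categories $\resLambdaA^{\ast}\mathcal{M}$ and $\mathcal{M}\cap\modn A$ should coincide: indecomposables of $\mathcal{M}$ in $\modn B\setminus\add S$ are sent to modules with no $A$-support, hence to zero, while objects of $\modn A$ are fixed.

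\emph{Forward map.} Take $(\mathcal{M}_A,\mathcal{M}_B)$ with $d$-cluster tilting subcategories $\mathcal{M}_A=\add M_A$ and $\mathcal{M}_B=\add M_B$. By Proposition \ref{prop_indec_equiv}(a), $DA\in\mathcal{M}_A$ and $B\in\mathcal{M}_B$. Since $S_a$ is injective in $\modn A$ and $S_b$ is projective in $\modn B$, $S$ lies in both. Proposition \ref{prop_glue_d_ct_2_d_ct} therefore shows that $\mathcal{M}_A\Delta_S\mathcal{M}_B$ is $d$-cluster tilting and contains $S$. It has the additive generator $M_A\oplus M_B$, so it belongs to $d\text{-}CT_S(\Lambda)$.

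\emph{Backward map.} Take $\mathcal{M}=\add M\in d\text{-}CT_S(\Lambda)$. By Proposition \ref{prop_d_ct_of_lambda_obtained_from_gluing}, $\mathcal{M}\cap\modn A$ and $\mathcal{M}\cap\modn B$ are $d$-cluster tilting, and
\[
\mathcal{M}=(\mathcal{M}\cap\modn A)\Delta(\mathcal{M}\cap\modn B).
\]
Both pieces are additively generated: $M$ has finitely many indecomposable summands, so each piece is the additive closure of the summands lying in $\modn A$, respectively in $\modn B$. The displayed equality says exactly that backward followed by forward is the identity.

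For the other composite, start from $(\mathcal{M}_A,\mathcal{M}_B)$. We must show $(\mathcal{M}_A\Delta\mathcal{M}_B)\cap\modn A=\mathcal{M}_A$, and similarly for $B$. An indecomposable of $\mathcal{M}_A\Delta\mathcal{M}_B$ lies in $\ind\mathcal{M}_A$ or in $\ind\mathcal{M}_B$. If it is in $\modn A$ and comes from $\mathcal{M}_B$, Proposition \ref{prop_ind_lambda} forces it to be $S$, and $S\in\mathcal{M}_A$ as noted above. So the intersection is exactly $\mathcal{M}_A$, and the two maps are mutually inverse.

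For the $d\mathbb{Z}$ refinement, the same two propositions include $d\mathbb{Z}$ statements, so both maps restrict to $d\mathbb{Z}$-cluster tilting subcategories. A bijection restricted to subsets that it maps into each other is again a bijection. The only real subtlety is the bookkeeping at the simple bridge $S$, namely that $S$ lies automatically in every $d$-cluster tilting subcategory of $\modn A$ and of $\modn B$. That is exactly the injective/projective observation used in the forward map.
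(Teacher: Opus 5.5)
Your proposal is correct and follows the paper's route. The paper obtains the corollary simply by combining Proposition \ref{prop_glue_d_ct_2_d_ct} and Proposition \ref{prop_d_ct_of_lambda_obtained_from_gluing}, and you supply the bookkeeping it leaves implicit: $S$ lies automatically in $\add DA\subseteq\mathcal{M}_A$ and in $\add B\subseteq\mathcal{M}_B$, both sides have additive generators, and $(\mathcal{M}_A\Delta\mathcal{M}_B)\cap\modn A=\mathcal{M}_A$ holds via Proposition \ref{prop_ind_lambda}.

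One small correction to your side remark concerns an indecomposable $N\in\modn B$ with $N\not\cong S$. The vanishing of $\resLambdaA^{\ast}N = N/N\langle 1-e_A\rangle$ is not because $N$ lacks $A$-support, since the vertex $a=b$ is shared. The correct reason is that a nonzero quotient of $N$ concentrated at $b$ would be a sum of copies of the projective $S_b$, and would therefore split off.
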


\begin{Prop}\label{prop_glue_parial_d_ct_obtain_partial_d_ct}
    Let $\Lambda = A\Delta_S B$.
    Suppose $\mathcal{C}^A\subseteq\modn A$ and $\mathcal{C}^B\subseteq\modn B$ are partial $d$-cluster tilting subcategories. 
    Then $\mathcal{C} = \mathcal{C}^A\Delta_S\mathcal{C}^B \subseteq \modn \Lambda$ is partial $d$-cluster tilting. 
    If moreover $\mathcal{C}^A$ and $\mathcal{C}^B$ are partial $d\mathbb{Z}$-cluster tilting, then $\mathcal{C}$ is also partial $d\mathbb{Z}$-cluster tilting.
\end{Prop}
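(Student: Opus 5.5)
We check conditions (a)--(e) of Definition \ref{def_weakly_d_ct} for $\mathcal{C} = \mathcal{C}^A\Delta_S\mathcal{C}^B$, following the scheme of Proposition \ref{prop_glue_d_ct_2_d_ct}. Implicitly $S\in\mathcal{C}^A$ and $S\in\mathcal{C}^B$; this is automatic, since $S_a = DAe_a\in\mathcal{C}^A$ and $S_b = e_bB\in \mathcal{C}^B$ by condition (a).

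\textbf{Conditions (a) and (b).} For (a), Proposition \ref{prop_image_of_projects_lambda_A_B} (b),(b') shows that every indecomposable projective and injective $\Lambda$-module is the image of a projective or injective $A$- or $B$-module, so $\Lambda, D\Lambda\in\mathcal{C}$. For (b), we argue exactly as in the proof of Proposition \ref{prop_glue_d_ct_2_d_ct}:
\begin{itemize}
\item the homological embeddings of Proposition \ref{prop_syzygy_A_B_syzygy_lambda_and_homoligical_embeddings} (c) handle $\Ext^i_\Lambda$ between two objects of $\mathcal{C}^A$, and between two objects of $\mathcal{C}^B$;
\item Proposition \ref{prop_ext_Amod_Bmod_is_zero} kills $\Ext^i_\Lambda(\mathcal{C}^A,\mathcal{C}^B)$ for all $i\geq 1$;
\item for $N\in\mathcal{C}^B_P$, condition (d) for $\mathcal{C}^B$ gives that $\Omega_B^j N$ is indecomposable non-projective for $0<j<d$. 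The second half of Proposition \ref{prop_ext_Amod_Bmod_is_zero} then yields $\Ext^i_\Lambda(N,\mathcal{C}^A)=0$ for $0<i<d$. Projective $N$ are trivial.
\end{itemize}

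\textbf{Conditions (c) and (d).} For (d), Proposition \ref{prop_syzygy_A_B_syzygy_lambda_and_homoligical_embeddings} (a),(b) identifies $\Omega^i_\Lambda$ with $\Omega^i_A$ or $\Omega^i_B$ for $0<i<d$, using the indecomposability hypotheses; the cosyzygy case is dual, with the roles of $A$ and $B$ swapped. We also need $\mathcal{C}_P = \mathcal{C}^A_P\cup \mathcal{C}^B_P$, and dually for $\mathcal{C}_I$. The only object whose status changes under the gluing is $S$: it is injective in $A$ but not in $\Lambda$, and projective in $B$ but not in $\Lambda$. So
\begin{equation*}
\mathcal{C}_P = \mathcal{C}^A_P\cup(\mathcal{C}^B_P\cup\{S\}),\qquad \mathcal{C}_I = (\mathcal{C}^A_I\cup\{S\})\cup\mathcal{C}^B_I .
\end{equation*}
For (c), Proposition \ref{prop_compatibility_embedding_tau_d} gives $\tau_{d,\Lambda} = \tau_{d,A}$ on $\mathcal{C}^A_P$ and $\tau_{d,\Lambda} = \tau_{d,B}$ on $\mathcal{C}^B_P$, and similarly for $\tau_{d}^{-1}$. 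The extra point $S\in\mathcal{C}_P$ must then be matched with the extra point $S\in\mathcal{C}_I$ in a compatible way.

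\textbf{Main obstacle.} The crux is pinning down $\tau_{d,\Lambda}$ on the object $S$, viewed as a non-projective $\Lambda$-module. Here $S$ is the image of the projective $e_bB$, which is not in $\mathcal{C}^B_P$. Its $\Lambda$-resolution starts
\begin{equation*}
0\to \Omega_A(S_a)\to e_a\Lambda \to S\to 0,
\end{equation*}
so $\Omega_\Lambda^i S = \Omega_A^i S_a$. Since $S_a$ is injective in $A$, it is not in $\mathcal{C}^A_I$, but it is a module in $\mathcal{C}^A$. If $S_a$ is non-projective in $A$, then $S_a\in\mathcal{C}^A_P$, and $\tau_{d,\Lambda}S=\tau_{d,A}S_a$ by Proposition \ref{prop_compatibility_embedding_tau_d}. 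Dually, if $S_b$ is non-injective in $B$, then $\tau_{d,\Lambda}^{-1} S = \tau_{d,B}^{-1}S_b$.

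I would therefore redo the bookkeeping of the bijection carefully on the sets $\ind\mathcal{C}^A\cup\ind\mathcal{C}^B$ with $S$ identified:
\begin{itemize}
\item $\tau_{d,A}\colon \mathcal{C}^A_P\to\mathcal{C}^A_I$ is a bijection, and it already accounts for $S_a$ when $S_a\in\mathcal{C}^A_P$;
\item $\tau_{d,B}\colon \mathcal{C}^B_P\to\mathcal{C}^B_I$ is a bijection, and it accounts for $S_b$ when $S_b\in\mathcal{C}^B_I$.
\end{itemize}
One checks that the union of these two bijections is precisely $\mathcal{C}_P\to\mathcal{C}_I$. The degenerate cases, where $S_a$ is projective in $A$ or $S_b$ is injective in $B$ (so $A$ or $B$ is $\Bbbk$ locally at that vertex), are handled separately by direct inspection.

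\textbf{Condition (e).} For the $d\mathbb{Z}$ statement, $\Omega^d_\Lambda(\mathcal{C}^A)=\Omega^d_A(\mathcal{C}^A)\subseteq\mathcal{C}$ by Proposition \ref{prop_syzygy_A_B_syzygy_lambda_and_homoligical_embeddings} (a). For $N\in\mathcal{C}^B_P$, part (b) of that proposition gives
\begin{equation*}
\Omega^d_\Lambda N = \Omega^d_B N\in\mathcal{C}^B,
\end{equation*}
since $\Omega^{d-1}_BN\ncong S_b$. For $S$ itself, $\Omega^d_\Lambda S=\Omega_A^dS_a\in\mathcal{C}^A$. The cosyzygy inclusion is dual.
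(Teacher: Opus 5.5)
Your proposal is correct and follows essentially the same route as the paper. You check (a) via Proposition \ref{prop_image_of_projects_lambda_A_B}, rigidity via the homological embeddings together with Proposition \ref{prop_ext_Amod_Bmod_is_zero} (you treat the cross terms more explicitly than the paper does), (c) and (d) via Propositions \ref{prop_syzygy_A_B_syzygy_lambda_and_homoligical_embeddings} and \ref{prop_compatibility_embedding_tau_d} by taking the union of the bijections for $\mathcal{C}^A$ and $\mathcal{C}^B$, and (e) via the syzygy comparison.

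Your worry about $S$ resolves as you found: $S$ is non-projective in $\Lambda$ iff $S_a\in\mathcal{C}^A_P$, and non-injective in $\Lambda$ iff $S_b\in\mathcal{C}^B_I$. So the extra $\{S\}$ in your formula is redundant. The paper's disjoint decompositions $\mathcal{C}_P=\mathcal{C}^A_P\cup\mathcal{C}^B_P$ and $\mathcal{C}_I=\mathcal{C}^A_I\cup\mathcal{C}^B_I$ hold uniformly, and the degenerate cases need no separate treatment.
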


\begin{proof}
    Firstly 
    \begin{align*}
        \add \Lambda & = \add \{A, B/S_b\} \subseteq \mathcal{C}\\
        \add D\Lambda & = \add \{DA/S_a, DB\} \subseteq \mathcal{C}
    \end{align*}
    by Proposition \ref{prop_image_of_projects_lambda_A_B}.

    We have 
    \begin{equation*}
        \Ext_{\Lambda}^i(\mathcal{C}, \mathcal{C}) = 0, \text{ for all } 0<i<d
    \end{equation*}
    as $\resLambdaA_{\ast}$ and $^{\Lambda}_B\pi_{\ast}$ are homological embeddings.

    Let $X\in \mathcal{C}_P$.
    If $X\in  \mathcal{C}^A_P$,
    then $\Omega_{\Lambda}^iX\cong \Omega_A^iX$ by Proposition \ref{prop_syzygy_A_B_syzygy_lambda_and_homoligical_embeddings} (a).
    Thus $\Omega_{\Lambda}^iX$ is indecomposable for $0<i<d$.

    If $X\in \mathcal{C}^B_P$, then $\Omega_B^i(X)$ is indecomposable non-projective for $0<i<d$.
    Thus 
    \begin{equation*}
        \Omega_{\Lambda}^iX\cong \Omega_B^iX, \text{ for all } 0<i<d
    \end{equation*}
    by Proposition \ref{prop_syzygy_A_B_syzygy_lambda_and_homoligical_embeddings} (b) which implies that $\Omega_{\Lambda}^iX$ is indecomposable for $0<i<d$.

    Notice that 
    \begin{equation*}
        \mathcal{C}_P = \mathcal{C}^A_P\cup \mathcal{C}^B_P
        \text{ and } 
        \mathcal{C}_I = 
        \mathcal{C}^A_I\cup \mathcal{C}^B_I.
    \end{equation*}
    By Proposition \ref{prop_compatibility_embedding_tau_d},
    we have the inverse bijections 
    \begin{equation*}
            \xymatrix{\mathcal{C}_P\ar@/^/[r]^{\tau_{d}} & \mathcal{C}_I.\ar@/^/[l]^{\tau_{d}^{-1}}\\}
        \end{equation*}
    Therefore $\mathcal{C}$ is a partial $d$-cluster tilting.

    If additionally $\mathcal{C}^A$ and $\mathcal{C}^B$ are partial $d\mathbb{Z}$-cluster tilting, 
    $\mathcal{C}$ is also partial $d\mathbb{Z}$-cluster tilting by Proposition \ref{prop_syzygy_A_B_syzygy_lambda_and_homoligical_embeddings}.
\end{proof}

\begin{Prop}\label{prop_partial_d_ct_of_lambda_obtained_from_gluing}
    Let $\Lambda=A\Delta_S B$.
    Assume $\mathcal{C}\subseteq \modn \Lambda$ is partial $d$-cluster tilting such that $S\in \mathcal{C}$.
    Then $\mathcal{C}^A = \mathcal{C}\cap \modn A$ and 
    $\mathcal{C}^B = \mathcal{C}\cap \modn B$ are partial $d$-cluster tilting subcategories.
    If moreover $\mathcal{C}$ is partial $d\mathbb{Z}$-cluster tilting, 
    then $\mathcal{C}^A$ and $\mathcal{C}^B$ are also partial $d\mathbb{Z}$-cluster tilting.
\end{Prop}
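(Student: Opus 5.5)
We verify the four conditions of Definition \ref{def_weakly_d_ct} for $\mathcal{C}^A = \mathcal{C}\cap\modn A$; the argument for $\mathcal{C}^B$ is symmetric, with the roles of projectives and injectives exchanged. Throughout we use the identifications $\modn A = \resLambdaA_{\ast}(\modn A)$ and $\modn B = \resLambdaB_{\ast}(\modn B)$. By Proposition \ref{prop_ind_lambda}, every indecomposable object of $\mathcal{C}$ lies in $\mathcal{C}^A$ or in $\mathcal{C}^B$, and the two sets meet exactly in $S$.

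\textbf{Conditions (a) and (b).} For (a), Proposition \ref{prop_image_of_projects_lambda_A_B} (b) gives $e_iA = e_i\Lambda\in\mathcal{C}$ for $i\in(Q_A)_0$. Part (b') gives $DAe_i = D\Lambda e_i\in \mathcal{C}$ for $i\neq a$, while $DAe_a = S\in\mathcal{C}$ by hypothesis; this is the first place the assumption $S\in\mathcal{C}$ enters. Condition (b) follows directly from Proposition \ref{prop_syzygy_A_B_syzygy_lambda_and_homoligical_embeddings} (c), since $\resLambdaA_{\ast}$ is a homological embedding.

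\textbf{Condition (d).} For $M\in\mathcal{C}^A$ we have $\Omega_\Lambda^i M\cong \Omega_A^iM$ by Proposition \ref{prop_syzygy_A_B_syzygy_lambda_and_homoligical_embeddings} (a). Projectivity over $A$ and over $\Lambda$ agree on $\modn A$, so $\mathcal{C}^A_P = \mathcal{C}_P\cap\modn A$, and indecomposability of $\Omega_A^iM$ is inherited from $\mathcal{C}$.

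Injectivity is subtler, because $S$ is injective over $A$ but not over $\Lambda$. Consequently $\mathcal{C}^A_I = (\mathcal{C}_I\cap\modn A)\setminus\{S\}$. For $N\in\mathcal{C}^A_I$ we have $N\not\cong S_a$, so we need the dual of part (a): cosyzygies over $A$ and over $\Lambda$ agree as long as the cosyzygy never hits $S_a$. Since $S$ is injective over $A$, we expect this dual statement (obtained from (b') and exactness) to give $\Omega_\Lambda^{-i}N\cong \Omega_A^{-i}N$, provided the $\Lambda$-injective envelopes stay in $\modn A$. I would check this carefully: an injective hull $D\Lambda e_i$ with $i\in (Q_A)_0\setminus\{a\}$ is an $A$-module, and $D\Lambda e_a$ can only occur if $\soc$ contains $S$. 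Hence the claim holds as long as $S$ does not appear in the socle of an intermediate cosyzygy. I would verify this using indecomposability from $\mathcal{C}$ and the argument, dual to the one in Proposition \ref{prop_ext_Amod_Bmod_is_zero}, that $S_a$ is not a composition factor of $X/\soc X$.

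\textbf{Condition (c).} This is the main obstacle: showing that $\tau_{d,\Lambda}$ maps $\mathcal{C}^A_P$ into $\mathcal{C}^A_I$ and agrees with $\tau_{d,A}$ there. Given the indecomposability established in (d), Proposition \ref{prop_compatibility_embedding_tau_d} (a) gives $\tau_{d,\Lambda}M\cong\tau_{d,A}M$ for $M\in\mathcal{C}^A_P$, so $\tau_{d,\Lambda}M\in\modn A$. It is non-injective over $\Lambda$ and it lies in $\mathcal{C}$. It remains to exclude $\tau_{d,A}M\cong S$; this is immediate, because $\tau_{d,A}M$ is non-injective over $A$ while $S_a$ is injective over $A$. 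Conversely, for $N\in\mathcal{C}^A_I$, Proposition \ref{prop_compatibility_embedding_tau_d} (b) gives $\tau^{-1}_{d,\Lambda}N\cong\tau_{d,A}^{-1}N\in\modn A\cap\mathcal{C}_P$. Since the restrictions are mutually inverse, this yields (c). The symmetric case for $B$ uses the element $S$, which is projective over $B$, in the dual way: one excludes $\tau_{d,B}^{-1}N\cong S$ because it is non-projective over $B$.

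\textbf{The $d\mathbb{Z}$ case.} For $M\in\mathcal{C}^A$, we have $\Omega_A^dM\cong\Omega_\Lambda^dM\in\mathcal{C}\cap\modn A$ by Proposition \ref{prop_syzygy_A_B_syzygy_lambda_and_homoligical_embeddings} (a). For $\Omega_A^{-d}$ on $\mathcal{C}^A$ we use the cosyzygy comparison established in (d), together with the observation that $\Omega_A^{-d}$ applied to $S$ is zero. The case of $B$ is symmetric.
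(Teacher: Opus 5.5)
Your outline is the paper's outline: the same four checks, made via Propositions \ref{prop_image_of_projects_lambda_A_B}, \ref{prop_syzygy_A_B_syzygy_lambda_and_homoligical_embeddings} and \ref{prop_compatibility_embedding_tau_d}. The paper is even terser and simply asserts the bijection $\mathcal{C}^A_P\leftrightarrow\mathcal{C}^A_I$ from ``$\tau_d^{\pm1}(\modn A)\subseteq\modn A$''. You are right that comparing cosyzygies over $A$ and over $\Lambda$ is the delicate point, but your proposed way of settling it fails.

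The statement you cite, that $S_a$ is not a composition factor of $X/\soc(X)$, is false for $A$-modules. $S_a$ is the top of $e_aA$, so it survives in $e_aA/\soc(e_aA)$ whenever $e_aA$ is not simple. Moreover, $A$-cosyzygies can be isomorphic to $S_a$: for example, $\Omega_A^{-1}(\rad e_aA)\cong S_a$ whenever $e_aA$ is injective and not simple. Indecomposability only reduces ``$S\subseteq\soc(\Omega_\Lambda^{-i}N)$'' to ``$\Omega_\Lambda^{-i}N\cong S$'' (since $S$ is $A$-injective), and nothing in your argument rules out the latter. If it occurs for some $1\le i\le d-1$, then $\Omega_\Lambda^{-(i+1)}N\cong\Omega_\Lambda^{-1}S$ lies in $\modn B$. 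The comparison with $\Omega_A^{-(i+1)}N$ then breaks, and with it condition (d) for $\mathcal{C}^A_I$, your application of Proposition \ref{prop_compatibility_embedding_tau_d}(b) in (c), and the $\Omega^{-d}$-closure.

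The missing idea is a second use of $S\in\mathcal{C}$, via rigidity. Suppose $\Omega_\Lambda^{-i}N\cong S$ with $1\le i\le d-1$, and let $I$ be the injective envelope of $\Omega_\Lambda^{-(i-1)}N$. The sequence $0\to\Omega_\Lambda^{-(i-1)}N\to I\to S\to 0$ is non-split, because $\Omega_\Lambda^{-(i-1)}N$ is not injective (its cosyzygy is $S\neq 0$). Dimension shifting then gives $0\neq\Ext^1_\Lambda(S,\Omega_\Lambda^{-(i-1)}N)\cong\Ext^i_\Lambda(S,N)$, which contradicts condition (b) for $\mathcal{C}$ since $S,N\in\mathcal{C}$.

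With this in hand, induct. An indecomposable $X\in\modn A$ with $X\not\cong S$ cannot contain $S$ in its socle, so its $A$- and $\Lambda$-injective envelopes coincide. Hence $\Omega_\Lambda^{-i}N\cong\Omega_A^{-i}N$ for $0\le i\le d$. For $0<i<d$ these modules are indecomposable and $A$-non-injective: they are $\Lambda$-non-injective (otherwise a later cosyzygy or $\tau_d^{-1}N$ would vanish) and not isomorphic to $S$. This is exactly what Proposition \ref{prop_compatibility_embedding_tau_d}(b) and the closure $\Omega_A^{-d}\mathcal{C}^A\subseteq\mathcal{C}^A$ require.

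Your ``symmetric'' treatment of $\mathcal{C}^B$ has the same hole on the syzygy side. Proposition \ref{prop_syzygy_A_B_syzygy_lambda_and_homoligical_embeddings}(b) needs $\Omega_B^iM\not\cong S$, and $\Lambda$-non-projectivity of $\Omega^i_\Lambda M$ does not give this. It is closed the same way, using $\Ext^i_\Lambda(M,S)=0$.
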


\begin{proof}
    We show $\mathcal{C}^A$ is partial $d$-cluster tilting.
    Firstly 
    \begin{align*}
        \add A & = \add \Lambda \cap \modn A  \subseteq \mathcal{C}^A \\
        \add DA & = \add ( (\add D\Lambda \cap \modn A) \cup \{S\} ) \subseteq \mathcal{C}^A.
    \end{align*}

    Next we have 
    \begin{equation*}
        \Ext_A^i(\mathcal{C}^A, \mathcal{C}^A) = 0, \text{ for all } 0<i<d, 
    \end{equation*}
    as $\resLambdaA_{\ast}$ is a homological embedding.

    Let $M\in \mathcal{C}^A_P$. 
    We have that $\Omega^i_A(M)$ is indecomposable for $0<i<d$ as $\Omega^i_A(M)\cong \Omega_{\Lambda}^i(M)$ by Proposition \ref{prop_syzygy_A_B_syzygy_lambda_and_homoligical_embeddings} (a).

    As $\tau_d^{\pm 1}(\modn A) \subseteq \modn A$, 
    we have the following inverse bijections.
    \begin{equation*}
            \xymatrix{\mathcal{C}^A_P\ar@/^/[r]^{\tau_{d}} & \mathcal{C}^A_I.\ar@/^/[l]^{\tau_{d}^{-1}}\\}
    \end{equation*}

    So $\mathcal{C}^A$ is partial $d$-cluster tilting.
    Arguing similarly, we have $\mathcal{C}^B$ is partial $d$-cluster tilting.

    If additionally $\mathcal{C}$ is partial $d\mathbb{Z}$-cluster tilting, 
    then $\mathcal{C}^A$ and $\mathcal{C}^B$ are also partial $d\mathbb{Z}$-cluster tilting by Proposition \ref{prop_syzygy_A_B_syzygy_lambda_and_homoligical_embeddings}.
\end{proof}

\begin{Cor}\label{cor_bijection_partial_d_ct_gluing}
    Let $\Lambda = A\Delta_S B$. 
    We have the following bijection.
     \begin{align*}
        \{(\mathcal{C}^A, \mathcal{C}^B)\mid \mathcal{C}^A\subseteq \modn A, \mathcal{C}^B\subseteq \modn B : \text{partial }
        d\text{-CT}\} & \longleftrightarrow \{\mathcal{C}\subseteq \modn \Lambda : \text{partial } d\text{-CT and } S\in \mathcal{C}\}\\
        (\mathcal{C}^A, \mathcal{C}^B) & \longmapsto \mathcal{C}^A\Delta \mathcal{C}^B \\
        (\mathcal{C}\cap \modn A, \mathcal{C}\cap \modn B) & \longmapsfrom \mathcal{C}
    \end{align*}
    Moreover, replacing partial  $d$-CT by partial $d\mathbb{Z}$-CT,
    the above bijection restricts.
\end{Cor}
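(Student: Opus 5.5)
The plan is to assemble the bijection from Proposition \ref{prop_glue_parial_d_ct_obtain_partial_d_ct} and Proposition \ref{prop_partial_d_ct_of_lambda_obtained_from_gluing}, in the same way Corollary \ref{cor_bijection_d_ct_gluing} was obtained. First I will check that both maps land in the claimed sets. The forward map sends a pair $(\mathcal{C}^A,\mathcal{C}^B)$ of partial $d$-cluster tilting subcategories to $\mathcal{C}^A\Delta_S\mathcal{C}^B$. This is partial $d$-cluster tilting by Proposition \ref{prop_glue_parial_d_ct_obtain_partial_d_ct}. It contains $S$ because $S\cong S_a$ is injective over $A$ and so lies in $\add DA\subseteq\mathcal{C}^A$, by Definition \ref{def_weakly_d_ct} (a). The backward map is well defined by Proposition \ref{prop_partial_d_ct_of_lambda_obtained_from_gluing}, which uses the hypothesis $S\in\mathcal{C}$.

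Next I will show the two maps are mutually inverse; both checks are on indecomposables. Fix $\mathcal{C}$ with $S\in\mathcal{C}$. By Proposition \ref{prop_ind_lambda}, every indecomposable of $\mathcal{C}$ lies in $\modn A$ or in $\modn B$, identified via $\resLambdaA_{\ast}$ and $\resLambdaB_{\ast}$. Hence $\ind\mathcal{C}=\ind(\mathcal{C}\cap\modn A)\cup\ind(\mathcal{C}\cap\modn B)$. Since $\mathcal{C}$ is additive, this gives $\mathcal{C}=(\mathcal{C}\cap\modn A)\Delta_S(\mathcal{C}\cap\modn B)$.

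Conversely, fix a pair $(\mathcal{C}^A,\mathcal{C}^B)$ and set $\mathcal{C}=\add\{\mathcal{C}^A,\mathcal{C}^B\}$. An indecomposable of $\mathcal{C}$ lying in $\modn A$ is either in $\mathcal{C}^A$ already or is an indecomposable of $\mathcal{C}^B$ that also lies in $\modn A$. By Proposition \ref{prop_ind_lambda}, the only indecomposable in both $\modn A$ and $\modn B$ is $S$. Since $S\in\mathcal{C}^A$, this gives $\mathcal{C}\cap\modn A=\mathcal{C}^A$. Symmetrically, $S\cong S_b$ is projective over $B$, so $S\in\mathcal{C}^B$, and hence $\mathcal{C}\cap\modn B=\mathcal{C}^B$.

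Finally, the second clauses of the two propositions show that both maps preserve the partial $d\mathbb{Z}$ condition, so the bijection restricts. The main point requiring care is that $S$ lies in both pieces. Without that, the indecomposable $S$ could be counted on one side only and the two compositions would fail to be identities. The identification of $S$ with the injective $S_a$ and the projective $S_b$, together with condition (a) of Definition \ref{def_weakly_d_ct}, settles this.
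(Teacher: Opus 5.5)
Your proof is correct and follows the paper's route: the paper gives no separate argument and obtains this corollary by combining Proposition~\ref{prop_glue_parial_d_ct_obtain_partial_d_ct} and Proposition~\ref{prop_partial_d_ct_of_lambda_obtained_from_gluing}, just as it does for Corollary~\ref{cor_bijection_d_ct_gluing}. You also check explicitly that the two maps are mutually inverse, using Proposition~\ref{prop_ind_lambda} together with $S\cong S_a$ injective over $A$ and $S\cong S_b$ projective over $B$; this supplies details the paper leaves implicit.
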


\subsection{The orbit construction}\label{preliminary_orbit_construction}

We recall background on orbit categories following \cite{DI20}.

Let $\mathcal{A}$ be a locally bounded  $\Bbbk$-linear Krull-Schmidt category and let $G$ be a group.
A $\Bbbk$-linear $G$-action on the category $\mathcal{A}$ is an assignment $g\mapsto F_g$ of a $\Bbbk$-linear automorphism $F_g:\mathcal{A}\rightarrow \mathcal{A}$ such that $F_g\circ F_h = F_{gh}$ for all $g,h\in G$.
To simplify notations, we write $g(x):= F_g(x)$ for $g\in G$ and $x\in \mathcal{A}$.
A $G$-action is called admissible if $g(x)\ncong x$ for all $x\in \ind{\mathcal{A}}$ and $g\in G\backslash \{1\}$.

Let $\mathcal{A}$ be a locally bounded  $\Bbbk$-linear Krull-Schmidt category and $G$ be a group acting admissibly on $\mathcal{A}$.
The orbit category $\mathcal{A}/G$ is also locally bounded $\Bbbk$-linear Krull-Schmidt which is given by the following data.
\begin{itemize}
    \item [$\bullet$] The objects of $\mathcal{A}/G$ are the objects of $\mathcal{A}$.
    \item [$\bullet$] For $x, y\in \mathcal{A}/G$, 
    \begin{equation*}
        \Hom_{\mathcal{A}/G}(x,y)=\bigoplus_{g\in G}\Hom_{\mathcal{A}}(x, g(y)).
    \end{equation*}
    \item [$\bullet$] For $(a_g)_{g\in G}\in \Hom_{\mathcal{A}/G}(x, y)$ and $(b_g)_{g\in G}\in \Hom_{\mathcal{A}/G}(y,z)$, set 
    \begin{equation*}
        (ba)_g=\sum_{h\in G}h(b_{h^{-1}g})a_h: x\rightarrow g(z),
    \end{equation*}
    so $ba\in \Hom_{\mathcal{A}/G}(x, z)$.
\end{itemize}

The natural functor $F: \mathcal{A}\rightarrow \mathcal{A}/G$ induces an exact functor 
\begin{equation*}
    F^{\ast}: \Mod \mathcal{A}/G \rightarrow \Mod \mathcal{A}
\end{equation*}
called pull-up.
This functor has a left adjoint 
\begin{equation*}
    F_{\ast}: \Mod \mathcal{A}\rightarrow \Mod \mathcal{A}/G
\end{equation*}
called the push-down, which is also exact.
Moreover, $F_{\ast}$ induces a functor $F_{\ast}: \modn \mathcal{A}\rightarrow \modn \mathcal{A}/G$.

Notice that a $G$-action on $\mathcal{A}$ induces a $G$-action on $\modn \mathcal{A}$ defined by $g(M) = M\circ g$ for $M\in \modn \mathcal{A}$ and $g\in G$.
A subcategory $\mathcal{U}\subseteq \modn \mathcal{A}$ is called $G$-equivariant if $g(\mathcal{U})\subseteq \mathcal{U}$ for all $g\in G$.

We recall one of the main results in \cite{DI20}.

\begin{Thm}\cite[Corollary 5.2]{DI20}\label{thm_DI_orbit_construction}
    Let $\mathcal{A}$ be a locally bounded  $\Bbbk$-linear Krull-Schmidt category and $G$ a free abelian group of finite rank acting admissibly on $\mathcal{A}$.
    If $\mathcal{U}\subseteq \modn \mathcal{A}$ is a locally bounded $G$-equivariant $d$-cluster tilting subcategory, 
    then $F_{\ast}(\mathcal{U})\subseteq \modn \mathcal{A}/G$ is a locally bounded $d$-cluster tilting subcategory.
\end{Thm}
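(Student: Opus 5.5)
The plan is to transport the $d$-cluster tilting property along the push-down functor $F_{\ast}$, using its adjunction with the pull-up $F^{\ast}$, as in the Galois covering theory of Gabriel and of Bongartz–Gabriel. Three structural facts about $F_{\ast}$ are needed first.
\begin{itemize}
    \item [(i)] $F_{\ast}$ is exact and sends indecomposable projective and injective $\mathcal{A}$-modules to indecomposable projective and injective $\mathcal{A}/G$-modules. This holds because $F_{\ast}(\mathcal{A}(-,x))\cong (\mathcal{A}/G)(-,x)$, and dually for injectives.
    \item [(ii)] For $M,N\in\modn\mathcal{A}$ there is a natural isomorphism
    \begin{equation*}
        \Hom_{\mathcal{A}/G}(F_{\ast}M,F_{\ast}N)\cong\bigoplus_{g\in G}\Hom_{\mathcal{A}}(M,g(N)).
    \end{equation*}
    It comes from the adjunction together with $F^{\ast}F_{\ast}N\cong\bigoplus_{g\in G}g(N)$. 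The sum is finite because $M$ and $N$ are finitely supported and the action is admissible.
    \item [(iii)] Apply (ii) termwise to the push-down of a projective resolution of $M$; this is legitimate by (i). It yields $\Ext^i_{\mathcal{A}/G}(F_{\ast}M,F_{\ast}N)\cong\bigoplus_{g}\Ext^i_{\mathcal{A}}(M,g(N))$ for all $i\geq 0$.
\end{itemize}

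With these in hand, most of the properties are formal. By (i), $F_{\ast}(\mathcal{U})$ contains all projectives and injectives of $\modn\mathcal{A}/G$, since $\mathcal{U}$ contains those of $\modn\mathcal{A}$. By (iii) and the $G$-equivariance of $\mathcal{U}$, $\Ext^i(F_{\ast}\mathcal{U},F_{\ast}\mathcal{U})=0$ for $0<i<d$. Local boundedness of $F_{\ast}(\mathcal{U})$ follows from that of $\mathcal{U}$ by (ii), and it gives functorial finiteness: for each $X$ only finitely many indecomposables of $F_{\ast}(\mathcal{U})$ have nonzero morphisms to or from $X$.

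It remains to prove maximality. Let $X\in\modn\mathcal{A}/G$ satisfy $\Ext^i(F_{\ast}\mathcal{U},X)=0$ for $0<i<d$; the goal is $X\in F_{\ast}(\mathcal{U})$, and the dual condition is handled symmetrically. The adjunction extends to Ext groups by (i), so $\Ext^i_{\mathcal{A}}(U,F^{\ast}X)\cong\Ext^i_{\mathcal{A}/G}(F_{\ast}U,X)=0$ for all $U\in\mathcal{U}$ and $0<i<d$. The module $F^{\ast}X$ is locally finite-dimensional and $G$-invariant, but in general not finitely generated. I would therefore first show that the $d$-cluster tilting property of the locally bounded $\mathcal{U}$ passes to locally finite modules: such a module lies in $\operatorname{Add}\mathcal{U}$, i.e. is a locally finite direct sum of objects of $\mathcal{U}$. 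To do this I would build $\mathcal{U}$-resolutions of finite-dimensional truncations and take the limit, using local boundedness to keep the construction locally finite.

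\textbf{Main obstacle.} The step I expect to be hardest is the final one: passing from $F^{\ast}X\in\operatorname{Add}\mathcal{U}$ back to $X$. The difficulty is that $X$ need not lie in the image of $F_{\ast}$ at all, since there can be modules "of the second kind". My first attempt would work with the counit $\varepsilon\colon F_{\ast}F^{\ast}X\to X$. Decompose $F^{\ast}X$ as a locally finite direct sum $\bigoplus_j U_j$ of indecomposables in $\mathcal{U}$. Admissibility makes $G$ permute the $U_j$ freely, so we can choose $G$-orbit representatives $U_{j_1},\dots,U_{j_r}$; there are finitely many because $X$ is finite-dimensional. Then show that the induced map $\bigoplus_k F_{\ast}U_{j_k}\to X$ is an isomorphism. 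For this, compare dimensions at each object of $\mathcal{A}/G$, and use that $G$ is free abelian of finite rank so that the $G$-action on the decomposition is free. This orbit-counting argument, and the verification that the truncation limit above yields a $G$-compatible decomposition, is where I expect the real work to lie.
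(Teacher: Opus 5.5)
The paper gives no proof of this statement; it quotes it from \cite{DI20}. Judged on its own, your preliminaries (i)--(iii) and their formal consequences (rigidity, containing projectives and injectives, local boundedness, functorial finiteness) are fine. The maximality argument, which is the real content, has a genuine gap, and it appears earlier than the step you flag.

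You do not prove the claim that every locally finite $Y\in\Mod\mathcal{A}$ with $\Ext^i(\mathcal{U},Y)=0$ for $0<i<d$ lies in $\operatorname{Add}\mathcal{U}$, and the truncation-and-limit sketch cannot prove it. The vanishing hypothesis on $F^{\ast}X$ is not inherited by finite-dimensional truncations. In fact, by the $d$-cluster tilting property of $\mathcal{U}$ itself, any truncation that cuts through an indecomposable summand, and so acquires a summand outside $\mathcal{U}$, has $\Ext^i(\mathcal{U},-)\neq 0$ for some $0<i<d$. Since you do not know the summands in advance, you cannot avoid such truncations, and a limit of $\mathcal{U}$-resolutions of them gives no splitting.

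Your descent step is also incomplete as written. Two points:
\begin{itemize}
\item $G$ permutes only the isoclasses of the $U_j$, not the summands themselves.
\item Equal dimensions do not make $\phi\colon F_{\ast}V\to X$ an isomorphism, where $V=\bigoplus_k U_{j_k}$ is taken with multiplicities.
\end{itemize}
This part can be repaired. Let $\psi\colon X\to F_{\ast}V$ correspond to the projection $F^{\ast}X\to V$ under the adjunction $F^{\ast}\dashv F_{\ast}$, which is valid for finitely supported $V$. Then $\psi\phi$ has identity component $1_V$, and its other components lie in $\Hom(V,gV)$ for $g\neq 1$. These lie in $\rad\End(F_{\ast}V)$, because $gU\not\cong U$ for finite-dimensional indecomposable $U$ and $g\neq 1$. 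That last fact uses torsion-freeness of $G$, not admissibility alone. Hence $\phi$ is a split monomorphism, and the dimension count finishes the argument.

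The missing idea is that you never need to pull $X$ back. Use an intrinsic criterion for $F_{\ast}\mathcal{U}$, in the spirit of Lemma \ref{lem_characterisation_of_d_ct_module} in its locally bounded form.
\begin{itemize}
\item For $U\in\ind\mathcal{U}$, take a sequence $0\to U_{d+1}\to\cdots\to U_1\to U$ in $\mathcal{U}$ inducing a projective resolution of the radical functor $\rad(-,U)|_{\mathcal{U}}$.
\item Push it down. By (ii), by $\rad(F_{\ast}V,F_{\ast}W)\cong\bigoplus_{g}\rad(V,gW)$, and by $G$-equivariance of $\mathcal{U}$, the pushed-down sequence induces a projective resolution of $\rad(-,F_{\ast}U)|_{F_{\ast}\mathcal{U}}$. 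Torsion-freeness ensures each $F_{\ast}U$ is indecomposable.
\item So every simple $F_{\ast}\mathcal{U}$-module has projective dimension at most $d+1$. Since finitely presented $F_{\ast}\mathcal{U}$-modules have finite length, the same bound holds for all of them.
\item Now take any $X\in\modn\mathcal{A}/G$ with $\Ext^i(F_{\ast}\mathcal{U},X)=0$ for $0<i<d$. Apply the restricted Yoneda functor $Z\mapsto\Hom(-,Z)|_{F_{\ast}\mathcal{U}}$ to an injective coresolution $0\to X\to I^0\to\cdots\to I^d$, where all $I^j\in F_{\ast}\mathcal{U}$.
\item This exhibits $\Hom(-,X)|_{F_{\ast}\mathcal{U}}$ as a $(d+1)$-st syzygy of a finitely presented module, so it is projective. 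Since $F_{\ast}\mathcal{U}$ contains the projectives, Yoneda gives $X\in F_{\ast}\mathcal{U}$. The other side is dual.
\end{itemize}
Modules of the second kind are handled this way without ever being identified.
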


\subsection{Higher Nakayama algebras.} \label{preliminaries_higher_NakAlg}
We recall definitions and basic facts about higher Nakayama algebras constructed by Jasso-K\"{u}lshammer \cite{JK19}.

A tuple of non-negative integers $\ell_{\infty} = (\ldots, \ell_{-1}, \ell_0, \ell_{1}, \ldots)$ is called a Kupisch series if $\ell_i\leq \ell_{i-1}+1$ for all $i\in \mathbb{Z}$. 
\begin{itemize}
    \item [$\bullet$] $\ell_{\infty}$ is $\ell$-bounded if $\ell = \max\{\ell_i\mid i\in\mathbb{Z}\}$.
    \item [$\bullet$] $\ell_{\infty}$ is 
    connected if either 
    \begin{itemize}
        \item [(a)] $\ell_r=1$ for a unique $r\in\mathbb{Z}$ and $\ell_i=0$ for $i<r$ or
        \item [(b)] $\ell_i\geq 2$ for all $i\in\mathbb{Z}$.
    \end{itemize}
    \item [$\bullet$] $\ell_{\infty}$ is of width $m$ if $|\{\ell_i \neq 0\mid i\in \mathbb{Z}\}| = m$.
    \item [$\bullet$] $\ell_{\infty}$ is $m$-periodic if $\ell_i = \ell_{i+m}$ for all $i\in \mathbb{Z}$. 
\end{itemize}

Throughout the article, we always assume that
\begin{itemize}
    \item [(i)] $\ell_{\infty}$ is $\ell$-bounded for some integer $\ell$.
    \item [(ii)] $\ell_{\infty}$ is connected of width $m$ or $\ell_{\infty}$ is connected and  $m$-periodic.
\end{itemize}

We recall the definition of ordered sequences $(os_{\ell_{\infty}}^d, \preccurlyeq)$ from \cite{JK19}.
Define 
\begin{equation*}
    os_{\ell_{\infty}}^d :=\{x=(x_1, x_2, \ldots, x_d)\in \mathbb{Z}^d\mid x_1< \cdots < x_d \text{ and } x_d-x_1+1\leq \ell_{x_d-d+1}+d-1\}
\end{equation*}
to be the ordered sequence determined by $\ell_{\infty}$.
Additionally $os_{\ell_{\infty}}^d$ is endowed 
with the relation $\preccurlyeq$ defined as
\begin{equation*}
    x \preccurlyeq y \Longleftrightarrow x_1 \leq y_1 < x_2\leq y_2 < \cdots < x_d \leq y_d
\end{equation*}
for $x=(x_1, \ldots, x_d), y = (y_1, \ldots, y_d) \in os_{\ell_{\infty}}^d$. 

Now we recall the $\Bbbk$-linear category $\mathcal{A}_{\ell_{\infty}}^d=\Bbbk Q_{\ell_{\infty}}^d/I$ defined by $\ell_{\infty}$ and the integer $d$ via quivers with relations.
Let $\{e_i\mid 1\leq i\leq d\}$ be the standard basis of $\mathbb{Z}^d$.
\begin{itemize}
    \item [$\bullet$] The vertices set of the quiver $Q_{\ell_{\infty}}^d$ is given by  $os_{\ell_{\infty}}^d$.
    \item [$\bullet$] There is an arrow $a_i(x) : x\rightarrow x + e_i$ whenever $x + e_i \in os_{\ell_{\infty}}^d$.
    \item [$\bullet$] The ideal $I$ of the path category $\Bbbk Q_{\ell_{\infty}}^d$ is generated by
    \begin{equation*}
        a_i(x + e_j)a_j(x) - a_j(x + e_i)a_i(x), \text{ with } 1\leq i,j\leq d.
    \end{equation*} 
    By convention, $a_i(x) = 0$ whenever $x$ or $x+e_i$ is not in  $os_{\ell_{\infty}}^d$, hence some of the relations are indeed zero relations.
\end{itemize}

By construction in \cite{JK19}, $\mathcal{A}_{\ell_{\infty}}^d$ has a distinguished $d\mathbb{Z}$-cluster tilting subcategory 
\begin{equation*}
    \mathcal{M}_{\ell_{\infty}}^d = \add \{M(x)\mid x\in os_{\ell_{\infty}}^{d+1}\}. 
\end{equation*}
Here as a representation $M(x)$ assigns $k$ to vertex $z\in os_{\ell_{\infty}}^{d}$ if $(x_1, \ldots, x_d) \preccurlyeq z \preccurlyeq (x_2-1, \ldots, x_{d+1}-1)$ and $0$ otherwise.
All arrows $k\rightarrow k$ act as identity, while other arrows act as zero. 
By convention, $M(x)=0$ if $x\notin os_{\ell_{\infty}}^{d+1}$.
Then
\begin{equation*}
    \Hom_{\mathcal{A}_{\ell_{\infty}}^{d}}(M(x),M(y)) \cong \left\{ \begin{array}{ll}
        kf_{yx} &  x \preccurlyeq y\\
        0 & \text{ otherwise.}
    \end{array} 
    \right.
\end{equation*}
Here $f_{yx}$ is given by $k\xrightarrow{1} k$ at vertices $z$ where $M(x)_z = M(y)_z = k$ and $0$ otherwise.
The composition of morphisms in $\mathcal{M}_{\ell_{\infty}}^{d}$ is completely determined by
\begin{equation*}
    f_{zy}\circ f_{yx} =
    \left\{ \begin{array}{ll}
        f_{zx} &  x \preccurlyeq z\\
        0 & \text{ otherwise.}
    \end{array} 
    \right.
\end{equation*}
We call $f_{yx}\neq 0$ an arrow morphism if there is no $z\in os_{\ell_{\infty}}^{d+1}$ such that $f_{yx}=f_{yz}\circ f_{zx}$.

Now assume $\ell_{\infty}$ is $m$-periodic.
Consider the group 
\begin{equation*}
    G = \langle \sigma=(m,m,\ldots, m)\rangle \subseteq \mathbb{Z}^d
\end{equation*}
and the $G$-action on $\mathcal{A}_{\ell_{\infty}}^{d}$ via $F_{\sigma}: x\mapsto x+\sigma$.
We obtain the orbit category $\mathcal{A}_{\ell_{\infty}}^d / G$.
Notice that $\mathcal{M}_{\ell_{\infty}}^d$ is $G$-equivariant. 
Thus by Theorem \ref{thm_DI_orbit_construction},
\begin{equation*}
    \mathcal{M}_{\ell_{\infty}} / G \subseteq \modn \mathcal{A}_{\ell_{\infty}}^d / G
\end{equation*}
is a $d\mathbb{Z}$-cluster tilting subcategory. 
Moreover, we have 
\begin{equation*}
    \Hom_{\mathcal{A}_{\ell_{\infty}}^d / G} (M(x), M(y)) \cong \bigoplus_{i: x\preccurlyeq F_{\sigma}^i(y)} k f_{F_{\sigma}^i(y)x}.
\end{equation*}

From now on we will instead use the notation $\underline{\ell} = (\ell_1, \ldots, \ell_m)$. 
We say $\underline{\ell}$ is of type $\mathbb{A}_m$ if $\ell_1=1$ and $\underline{\ell}$ is of type $\widetilde{\mathbb{A}}_{m-1}$ if it is $m$-periodic following the notations in \cite{JK19}.
In the latter case, we consider $\underline{\ell}$ up to rotation symmetry.

\begin{Def}
    Let $\ell_{\infty}$ be a Kupisch series with the equivalent notation $\underline{\ell} = (\ell_1, \ldots, \ell_m)$ where  $m$ is a positive integer.
    Then
    \begin{itemize}
        \item [(a)] The $d$-Nakayama algebra of type $\mathbb{A}_m$ is given by 
        \begin{equation*}
            A_{\underline{\ell}}^{d} = \mathcal{A}_{\ell_{\infty}}^d \text{ where } \ell_{\infty} = (\ldots, 0, \ell_1, \ldots, \ell_m, 0, \ldots).
        \end{equation*}
        The distinguished $d\mathbb{Z}$-cluster tilting subcategory of $\modn A_{\underline{\ell}}^d$ is given by 
        \begin{equation*}
            \mathcal{M}_{\underline{\ell}}^d = \mathcal{M}_{\ell_{\infty}}^d.
        \end{equation*}
        \item [(b)] The $d$-Nakayama algebra of type $\widetilde{\mathbb{A}}_{m-1}$ is given by 
        \begin{equation*}
            A_{\underline{\ell}}^d = \mathcal{A}_{\ell_{\infty}}^d / G \text{ where } \ell_{\infty} = (\ldots, \ell_m, \ell_1, \ldots, \ell_m, \ldots).
        \end{equation*}
        The distinguished $d\mathbb{Z}$-cluster tilting subcategory of $\modn A_{\underline{\ell}}^d$ is given by 
        \begin{equation*}
            \mathcal{M}_{\underline{\ell}}^d = \mathcal{M}_{\ell_{\infty}}^d / G.
        \end{equation*}
    \end{itemize}
\end{Def}

\begin{Rem}\label{RemCoordNota}
    \begin{itemize}
        \item [(i)] The definition of $os_{\underline{\ell}}^{d}$ is slightly different from that in \cite[Definition 1.9]{JK19}.
        We add $(1,2,\ldots,d)$ to each of the ordered sequence defined in \cite{JK19} to make it strictly increasing.
        \item [(ii)] By definition $A_{\underline{\ell}}^{d}$ is a locally bounded $k$-linear category.
        By abuse of notation,
        we still call it an algebra.
    \end{itemize}
\end{Rem}

\begin{Nota}
    Let $\underline{\ell}=(\ell_1, \ldots, \ell_n)$ be a Kupisch series. 
    We call $\underline{\ell}$ acyclic homogeneous if $\ell_i=i$ for $1\leq i\leq \ell \leq m$ and $\ell_i=\ell$ for $\ell+1\leq i\leq m$ and denote the corresponding $d$-Nakayama algebra by $A_{\ell,m}^{d}$.
    We call $\underline{\ell}$ cyclic homogeneous if $\ell_i=\ell$ for $1\leq i\leq m$ and denote the corresponding $d$-Nakayama algebra by $\widetilde{A}_{\ell, m-1}^{d}$. In this case, $\widetilde{A}_{\ell, m-1}^{d}$ is self-injective (see \cite[Theorem 4.10]{JK19}).
\end{Nota}

\begin{Rem}
    Let $A=A_{\underline{\ell}}^{d}$ be an acyclic $d$-Nakayama algebra.
    Note that $A$ has a unique source $a=(1,2,\ldots,d)$ and a unique sink $b=(m,\ldots,m+d-1)$.
\end{Rem}

Let  $A=A_{\underline{\ell}}^{d}$ be a $d$-Nakayama algebra and let $\mathcal{M} = \mathcal{M}_{\underline{\ell}}^{d}\subseteq \modn A$ be the distinguished $d\mathbb{Z}$-cluster tilting subcategory.
We summarize basic homological properties of $M(x)\in \mathcal{M}$ in the following proposition.

\begin{Prop}\label{Prop_modcomb}(\cite[Proposition 2.22, Proposition 2.25, Theorem 3.16]{JK19}\cite[Lemma 3.5(b)]{DI20})
Let $x\in os_{\ell_{\infty}}^{d+1}$. The following statements hold true.
    \begin{itemize} 
      \item [(i)] ${M}(x)$ is simple if and only if $x = (i, i+1, \ldots, i + d)$ for some integer $i$.  
      \item [(ii)] ${M}(x)$ is projective if and only if $x_1 = \min \{y \mid (y, x_2, \ldots, x_{d+1}) \in os_{\ell_{\infty}}^{d+1}\}$.
      \item [(iii)] 
      ${M}(x)$ is injective if and only if $x_{d+1} = \max\{y \mid (x_1, \ldots, x_d, y)\in os_{\ell_{\infty}}^{d+1}\}$.
      \item [(iv)] If $x_1 > x_{d+1} -\ell_{x_{d+1}-d} - d + 1$, then there exists an exact sequence
          \begin{equation*}
              \xymatrix@=1em@R=1em{
                0\ar[r] & \Omega^d({M}(x))\ar[r] & P^d\ar[r] & \cdots\ar[r] & P^1\ar[r] & {M}(x)\ar[r] & 0
              }
          \end{equation*}
        with $P^i = {M}(x_{d+1} - \ell_{x_{d+1}-d} - d + 1, x_1, \ldots, x_{i-1}, x_{i+1}, \ldots, x_{d+1})$ for $1\leq i\leq d$ and $\Omega^d({M}(x)) = {M}(x_{d+1} - \ell_{x_{d+1}-d} - d + 1, x_1, \ldots, x_d)$. 
      \item [(v)] We use the notation $\tau_d(x) = (x_1 - 1, x_2 - 1, \ldots, x_{d+1} - 1)$.
      Then $\tau_d({M}(x)) = {M}(\tau_d(x))$ for $M(x)$ not projective. 
      \item [(vi)] If $\ell_{\infty}$ is $m$-periodic, then we have 
      \begin{equation*}
          \dim_{\Bbbk}\Ext^d_A(M(y), M(x)) = |\{x\mid \sigma^ix\preccurlyeq \tau_d(y) \text{ for some } i\}|.
      \end{equation*}
      If $\ell_{\infty}$ is of width $m$,
      then 
      \begin{equation*}
          \Ext^d_A(M(y), M(x)) \cong \left\{
          \begin{array}{cc}
              k & x\preccurlyeq \tau_d(y), \\
              0 & \text{otherwise.}
          \end{array}\right.
      \end{equation*}
  \end{itemize}
\end{Prop}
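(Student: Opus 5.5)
This proposition collects results cited from \cite{JK19} and \cite{DI20}, so the plan is to reduce each item to the combinatorics of $os^{d+1}_{\ell_\infty}$ and the interval modules $M(x)$. For (i)--(iii) the support of $M(x)$ is the interval $\{z \mid (x_1,\ldots,x_d)\preccurlyeq z\preccurlyeq (x_2-1,\ldots,x_{d+1}-1)\}$ in $os^d_{\ell_\infty}$. This interval is a single vertex exactly when $x_{i+1}-1=x_i$ for all $i$, which gives (i).

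For (ii), I will identify the indecomposable projective $e_v\mathcal{A}$ at a vertex $v$ with the interval module whose minimal vertex is $v$ and which extends as far as the relations allow. By the commutativity and zero relations, this is $M(x)$ with top $(x_1,\ldots,x_d)=v$ and with $x_{d+1}$ maximal, equivalently $x_1$ minimal in the paper's coordinate convention. I will check this against the defining inequality $x_{d+1}-x_1+1\le \ell_{x_{d+1}-d}+d$ of $os^{d+1}$. Item (iii) is dual.

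For (iv), I will write down the candidate complex $P^d\to\cdots\to P^1\to M(x)$. Its terms are obtained by inserting the minimal admissible first coordinate $x_{d+1}-\ell_{x_{d+1}-d}-d+1$ and deleting one $x_i$, and its maps are the canonical $f_{yx}$. The hypothesis $x_1> x_{d+1}-\ell-d+1$ guarantees that all these tuples are strictly increasing and lie in $os^{d+1}$. Each $P^i$ is then projective by (ii). I will verify exactness vertexwise: at a vertex $z$, the terms whose support contains $z$ form a Koszul-type complex of copies of $k$, indexed by subsets of coordinates, so the complex is exact. This gives $\Omega^d M(x)$ as stated.

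For (v), I will apply $\nu$ to the terms $P^d\to P^{d-1}$ of (iv), using that injectives are the dual intervals from (iii). The resulting kernel is the interval shifted by $-1$ in every coordinate, namely $M(\tau_d x)$. This is the part I expect to need the most care, particularly the boundary cases where the hypothesis of (iv) fails. There I would use instead the general minimal resolution from \cite{JK19} in the truncated form.

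For (vi), I will use $\Ext^d(M(y),M(x))\cong D\overline{\Hom}(M(x),\tau_d M(y))$ (higher Auslander--Reiten duality) together with the Hom formula, $\Hom(M(x),M(y'))\ne0$ iff $x\preccurlyeq y'$. On the $d$-cluster tilting subcategory, the injectively stable quotient does not kill these maps; this is the remaining point to confirm. In the periodic case, the orbit Hom formula sums over the translates $\sigma^i$, giving the stated dimension count.
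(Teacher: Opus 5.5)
The step you leave ``to confirm'' in (vi) is false, and this is where your argument, and the displayed formula, breaks down. Put $g(i)=\max\{j\mid (i,\ldots,j)\in os^{d+1}_{\ell_\infty}\}$, so that $M(x_1,\ldots,x_d,g(x_1))$ is the injective envelope of $M(x)$. Suppose $x\preccurlyeq\tau_d(y)$ and $g(x_1)<y_{d+1}$. Then $x\preccurlyeq(x_1,\ldots,x_d,g(x_1))\preccurlyeq\tau_d(y)$, so by the composition rule $f_{\tau_d(y)x}$ factors through this injective, and $\overline{\Hom}_A(M(x),M(\tau_d(y)))=0$. This happens even with $M(x)$ non-injective and $M(y)$ non-projective. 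Take $d=1$, $\underline{\ell}=(1,2,3,3,3)$, $x=(2,4)$, $y=(4,6)$. Then $x\preccurlyeq\tau_d(y)=(3,5)$ and $g(2)=5<6$. Directly, $\Omega M(4,6)=M(3,4)$ is the simple at $3$, while $M(2,4)$ has simple socle at $2$, so $\Ext^1_A(M(4,6),M(2,4))=0$. In the width-$m$ case, AR duality plus the Hom formula actually give: $\Ext^d_A(M(y),M(x))\cong k$ if and only if $x\preccurlyeq\tau_d(y)$ and $y_{d+1}\le g(x_1)$, equivalently $(x_1,\ldots,x_d,y_{d+1})\in os^{d+1}_{\ell_\infty}$. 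This condition fails automatically when $M(y)$ is projective. In the periodic case you must likewise discard the translates $\sigma^i$ whose map factors through the injective envelope. This is the same factorization the paper uses in the proof of Lemma~\ref{lem_gldim_auslander_alg}. Your argument does give $\Ext^d_A(M(y),M(x))\neq0\Rightarrow x\preccurlyeq\tau_d(y)$, but the converse needs the extra condition.

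There are also concrete errors in (ii), (iv) and (v). In (ii) you have the module structure backwards. The convention forced by (ii)--(v) is right modules with arrows acting contravariantly. Then $M(x)$ has socle at $(x_1,\ldots,x_d)$ and top at $(x_2-1,\ldots,x_{d+1}-1)$, and $e_v\mathcal{A}\cong M(f(v_d+1),v_1+1,\ldots,v_d+1)$ with $f(i)=\min\{j\mid (j,\ldots,i)\in os^{d+1}_{\ell_\infty}\}$. The module you describe, with $x_{d+1}$ maximal for fixed $(x_1,\ldots,x_d)$, is the injective of (iii). Moreover, ``$x_{d+1}$ maximal'' is not equivalent to ``$x_1$ minimal'': the simple projective $M(1,2,\ldots,d+1)$ of an acyclic $A^d_{\underline{\ell}}$ with $m\ge 2$ is not injective. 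With your identification, the claim in (iv) that the $P^i$ are projective ``by (ii)'' is also unjustified.

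In (v), applying $\nu$ to $P^d\to P^{d-1}$ computes $\tau\Omega^{d-2}M(x)$, not $\tau_dM(x)=\tau\Omega^{d-1}M(x)$; for $d=1$ there is no $P^0$ at all. The minimal presentation of $\Omega^{d-1}M(x)$ is $P(\Omega^dM(x))\to P^d$ with $P(\Omega^dM(x))=M(f(x_d),x_1,\ldots,x_d)$. A vertexwise computation of $\ker\bigl(\nu P(\Omega^dM(x))\to\nu P^d\bigr)$ then gives $M(\tau_d(x))$. There are no boundary cases, since non-projectivity of $M(x)$ is exactly the hypothesis $x_1>f(x_{d+1})$ of (iv).

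Finally, the exactness in (iv) is not of Koszul type. At each vertex in the union of the supports, exactly two consecutive terms of $0\to\Omega^dM(x)\to P^d\to\cdots\to P^1\to M(x)\to 0$ are nonzero, joined by the identity. You should also note minimality (indecomposable terms, radical maps) to identify the kernel with $\Omega^dM(x)$.
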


Define the map
\begin{equation*}
    f: \mathbb{Z} \rightarrow \mathbb{Z}
\end{equation*}
by $f(i) = \min\{x\mid (x, \ldots, i)\in os_{\underline{\ell}}^{d+1}\}$.
Note that if $\underline{\ell}$ is acyclic, then
\begin{equation*}
    f(i) =\max\{ i  - \ell_{i-d} - d + 1, 1\}.
\end{equation*} 
And if $\underline{\ell}$ is cyclic, 
$f(i) = i  - \ell_{i-d} - d + 1$.
Dually we define
\begin{equation*}
    g: \mathbb{Z} \rightarrow \mathbb{Z}
\end{equation*}
by $g(i) = \max\{y \mid (i, \ldots, y)\in os_{\underline{\ell}}^{d+1}\}$.

\begin{Rem}\label{f_periodic} The functions $f$ and $g$ have the following properties.
    \begin{itemize}
        \item [(i)] $M(x)$ is projective if and only if $x_1 = f(x_{d+1})$.
        And $M(x)$ is injective if and only if $x_{d+1} = g(x_1)$.
        \item [(ii)] Let $x = (x_1, \ldots, x_{d+1})\in os_{\underline{\ell}}^{d+1}$. 
        If $f(x_{d+1}) < x_1$, 
        we have that $\Omega^d (M(x)) = M(f(x_{d+1}), x_1, \ldots, x_d)$.
        If $x_{d+1} < g(x_1)$, 
        then $\Omega^{-d}(M(x)) = M(x_2, \ldots, x_{d+1}, g(x_1))$.
        \item [(iii)] In the cyclic  case, we have that
        $f(i+m) = f(i)+m$ and $g(i+m) = g(i)+m$. 
        \item [(iv)] $f$ is non-decreasing. 
        We know that $i - \ell_{i-d} \geq (i - 1) - \ell_{i-d-1}$ since $\ell_{i-d} \leq \ell_{i-d-1} + 1$. 
        This implies $f(i) \geq f(i-1)$.
        \item [(v)] $g$ is non-decreasing since $M(i + 1, x_2, \ldots, x_d, g(i))$ is a quotient module of $M(i, x_2, \ldots, x_d, g(i))$ which implies $g(i+1)\geq g(i)$.
    \end{itemize}
    
\end{Rem}

Now we discuss the sink-source gluing  of acyclic $d$-Nakayama algebras. 
Since a $d$-Nakayama algebra is uniquely determined by its Kupisch series,
we firstly define the gluing of Kupisch series.

\begin{Def}
    Let $\underline{\ell_1}=(1, 2, \ldots, \ell_m)$ and $\underline{\ell_2}=(1, 2, \ldots, \ell_n)$ be two Kupisch series.
    We call $\underline{\ell} =  (1,2,\ldots, \ell_m, 2, \ldots, \ell_n)$ the gluing  of $\underline{\ell_1}$ and $\underline{\ell_2}$ and denote it by $\underline{\ell}=\underline{\ell_1}\Delta \underline{\ell_2}$.
\end{Def}

\begin{Prop}\label{prop_glue_acyclic_nak_and_form_of_simple_bridge}

The sink-source gluing of two acyclic $d$-Nakayama algebras is itself an acyclic $d$-Nakayama algebra.
Specifically,
    \begin{equation*}
        A_{\underline{\ell_1}}^{d} \Delta_S A_{\underline{\ell_2}}^{d} = A_{\underline{\ell_1}\Delta \underline{\ell_2}}^{d}
    \end{equation*}
    and the simple bridge is $S = M(m, m+1, \ldots, m+d)$ where $m=|\underline{\ell_1}|$. 
    Moreover the corresponding  distinguished $d\mathbb{Z}$-cluster tilting subcategories are related by gluing. More precisely,
    \begin{equation*}
        \mathcal{M}_{\underline{\ell_1}}^d \Delta_S \mathcal{M}_{\underline{\ell_2}}^d = \mathcal{M}_{\underline{\ell}}^d.
    \end{equation*}
\end{Prop}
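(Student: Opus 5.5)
The plan is to compare quivers with relations directly, then match indecomposables in the distinguished subcategories. Write $A_1 = A_{\underline{\ell_1}}^d$ with $|\underline{\ell_1}| = m$ and $A_2 = A_{\underline{\ell_2}}^d$ with $|\underline{\ell_2}| = n$, and set $\underline{\ell} = \underline{\ell_1}\Delta\underline{\ell_2}$, of length $m+n-1$. The gluing uses the sink $a=(m,\ldots,m+d-1)$ of $A_1$ and the source $b=(1,\ldots,d)$ of $A_2$.

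\textbf{Vertices.} First describe $os_{\underline{\ell}}^d$. Since $\ell_{m+1} = 2$ in the glued series, any $x\in os_{\underline{\ell}}^d$ with $x_d-d+1\geq m+1$ has length condition $x_d - x_1+1 \leq \ell_{x_d-d+1}+d-1$. The Kupisch values after position $m$ are those of $\underline{\ell_2}$, shifted by $m-1$. A direct check shows that every such $x$ satisfies $x_1\geq m$, so $x - (m-1,\ldots,m-1)\in os_{\underline{\ell_2}}^d$. Conversely, points with $x_d-d+1\leq m$ are exactly $os_{\underline{\ell_1}}^d$. The two pieces meet only in $(m,\ldots,m+d-1)$, which is identified with $b$ after the shift. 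This gives the vertex identification $(Q_\Lambda)_0 = (Q_{A_1})_0\cup (Q_{A_2})_0/(a\sim b)$.

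\textbf{Arrows and relations.} An arrow $x\to x+e_i$ in $Q^d_{\underline{\ell}}$ stays inside one piece, because $a$ is the unique common vertex and both endpoints lie in the same region. So the arrow set is the union of the two arrow sets. The commutativity relations restrict to the relations of each piece. The extra relations $(1-e_A)\Bbbk Q(1-e_B)$ kill paths that pass from the $A_1$ part through $a$ into the $A_2$ part while starting outside $A_1$'s sink. These are exactly the zero relations forced by the Kupisch bound: a composite $x\to\cdots\to y$ with $x_1<m$ and $y_d > m+d-1$ would require a vertex of $os^d$ containing both coordinates, which violates the length bound at $\ell_{m+1}=2$. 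Conversely, every commutativity relation mixing the two sides degenerates to such a zero relation. This identifies $\Lambda$ with $A_{\underline{\ell}}^d$.

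\textbf{The simple bridge.} By Proposition \ref{Prop_modcomb}(i), the simple module at the vertex $a$ is $M(m,m+1,\ldots,m+d)$, since its support is exactly $\{(m,\ldots,m+d-1)\}$.

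\textbf{Cluster tilting subcategories.} The strategy mirrors the vertex step, now for $os^{d+1}_{\underline{\ell}}$. Split $os^{d+1}_{\underline{\ell}}$ into $os^{d+1}_{\underline{\ell_1}}$ and the shift of $os^{d+1}_{\underline{\ell_2}}$, overlapping in $(m,\ldots,m+d)$. Check that the representation $M(x)$ over $A_{\underline{\ell}}^d$ equals $\resLambdaA_{\ast}$ (respectively $\resLambdaB_{\ast}$) of the corresponding $M(x)$ over the piece. This holds because the support interval $x\preccurlyeq z\preccurlyeq(x_2-1,\ldots)$ lies inside one piece, and the action of the arrows is the same. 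Then $\mathcal{M}_{\underline{\ell}}^d = \add\{\mathcal{M}_{\underline{\ell_1}}^d,\mathcal{M}_{\underline{\ell_2}}^d\}$ follows. Alternatively, invoke Proposition \ref{prop_d_ct_of_lambda_obtained_from_gluing}: $S\in\mathcal{M}_{\underline{\ell}}^d$, so $\mathcal{M}^d_{\underline{\ell}}$ is the gluing of its restrictions. These restrictions contain the distinguished ones, and by uniqueness of $d\mathbb{Z}$-cluster tilting subcategories inside a $d$-cluster tilting subcategory (maximality) they must coincide.

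\textbf{Expected obstacle.} The main difficulty is the combinatorial check that no element of $os^{d}_{\underline{\ell}}$ or $os^{d+1}_{\underline{\ell}}$ straddles position $m$. This requires careful use of the inequality $x_{d}-x_1+1\leq \ell_{x_d-d+1}+d-1$ together with the values $\ell_{m+1}=2,\ \ell_{m+2}\le 3,\ldots$. I would argue it by induction on $x_d$ from $m+d$, using that $\ell_{m+j}\leq j+1$.
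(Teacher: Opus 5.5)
Your proposal is correct and takes the same route as the paper, whose proof only says that everything follows from a direct verification on the combinatorial data. You carry out that check explicitly: $\ell_{m+j}\le j+1$ forces $x_1\ge m$ on the second piece, and the unique length-two path through the glued vertex is killed because the missing corner $(m-1,m+1,\ldots,m+d-2,m+d)$ violates the length bound at $\ell_{m+1}=2$; your ``alternative'' via Proposition \ref{prop_d_ct_of_lambda_obtained_from_gluing} is superfluous and really rests on the fact that nested $d$-cluster tilting subcategories coincide, not on a uniqueness property of $d\mathbb{Z}$-cluster tilting ones.
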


\begin{proof}
    The statements follow from  a straightforward verification on combinatorial data defining related algebras and categories.
\end{proof}

Conversely we introduce the notion of degluing of an acyclic $d$-Nakayama algebra.

\begin{Def}\label{def_deglue_acyclic_d_NakAlg}
    Let $A = A_{\underline{\ell}}^{d}$ be an acyclic $d$-Nakayama algebra where $\ell_m \geq \ell_{m+1} = 2$ and $ \ell_{m+1} < \ell_{m+2}$ for some integer $m\geq 1$.
    Let $\underline{\ell_1} = (1, \ell_2, \ldots, \ell_{m})$ and $\underline{\ell_2} = (1, \ell_{m+1}, \ldots, \ell_n)$.
    Then $A = A_{\underline{\ell_1}}^{d} \Delta_S A_{\underline{\ell_2}}^{d}$. We call this a degluing of $A$ and $(m, \ldots, m+d-1)$ a deglue point of $A$.
\end{Def}

\begin{Eg}
    Let $\underline{\ell}=(1,2,3,3,2,3,4)$.
    Then $\underline{\ell} = \underline{\ell_1}\Delta\underline{\ell_2}$ where $\underline{\ell_1} = (1,2,3,3)$ and $\underline{\ell_2} = (1,2,3,4)$.
    The quivers $Q_{\underline{\ell}}^2$, $Q_{\underline{\ell_1}}^2$ and $Q_{\underline{\ell_2}}^2$ are given below. 
    The deglue point is given by $p=(4,5)$.
    \begin{equation*}
        \begin{xy}
            0;<20pt,0cm>:<20pt,20pt>::
            (0,0) *+{12} ="12",
            (0,1) *+{13} ="13",
            (0,2) *+{14} ="14",
            (2,0) *+{23} ="23",
            (2,1) *+{24} ="24",
            (2,2) *+{25} ="25",
            (4,0) *+{34} ="34",
            (4,1) *+{35} ="35",
            (6,0) *+{45} ="45",
            (6,1) *+{46} ="46",
            (6,2) *+{47} ="47",
            (6,3) *+{48} ="48",
            (8,0) *+{56} ="56",
            (8,1) *+{57} ="57",
            (8,2) *+{58} ="58",
            (10,0) *+{67} ="67",
            (10,1) *+{68} ="68",
            (12,0) *+{78} ="78",
            (6.5,-1) *+{\text{ The quiver } Q_{\underline{\ell}}^2},
            "12", {\ar "13"},
            "13", {\ar "14"},
            "23", {\ar "24"},
            "24", {\ar "25"},
            "34", {\ar "35"},
            "45", {\ar "46"},
            "46", {\ar "47"},
            "47", {\ar "48"},
            "56", {\ar "57"},
            "57", {\ar "58"},
            "67", {\ar "68"},
             "13", {\ar "23"},
             "14", {\ar "24"},
             "24", {\ar "34"},
             "25", {\ar "35"},
             "35", {\ar "45"},
             "46", {\ar "56"},
             "47", {\ar "57"},
             "48", {\ar "58"},
             "57", {\ar "67"},
             "58", {\ar "68"},
             "68", {\ar "78"},
        \end{xy}
    \end{equation*}
    \begin{equation*}
        \begin{xy}
            0;<20pt,0cm>:<20pt,20pt>::
            (0,0) *+{12} ="12",
            (0,1) *+{13} ="13",
            (0,2) *+{14} ="14",
            (2,0) *+{23} ="23",
            (2,1) *+{24} ="24",
            (2,2) *+{25} ="25",
            (4,0) *+{34} ="34",
            (4,1) *+{35} ="35",
            (6,0) *+{45} ="45a",
            (4,-1) *+{\text{ The quiver } Q_{\underline{\ell_1}}^2},
            (8,0) *+{12} ="45b",
            (8,1) *+{13} ="46",
            (8,2) *+{14} ="47",
            (8,3) *+{15} ="48",
            (10,0) *+{23} ="56",
            (10,1) *+{24} ="57",
            (10,2) *+{25} ="58",
            (12,0) *+{35} ="67",
            (12,1) *+{36} ="68",
            (14,0) *+{45} ="78",
            (12,-1) *+{\text{ The quiver } Q_{\underline{\ell_2}}^2},
            "12", {\ar "13"},
            "13", {\ar "14"},
            "23", {\ar "24"},
            "24", {\ar "25"},
            "34", {\ar "35"},
            "45b", {\ar "46"},
            "46", {\ar "47"},
            "47", {\ar "48"},
            "56", {\ar "57"},
            "57", {\ar "58"},
            "67", {\ar "68"},
             "13", {\ar "23"},
             "14", {\ar "24"},
             "24", {\ar "34"},
             "25", {\ar "35"},
             "35", {\ar "45a"},
             "46", {\ar "56"},
             "47", {\ar "57"},
             "48", {\ar "58"},
             "57", {\ar "67"},
             "58", {\ar "68"},
             "68", {\ar "78"},
        \end{xy}
    \end{equation*}
\end{Eg}

For later use, we also introduce the sink-source self-gluing and self-degluing for acyclic $d$-Nakayama algebras.
Further we compare the corresponding distinguished $d\mathbb{Z}$-cluster tilting subcategories. 

\begin{Def}\label{def_self-glue_and_self-deglue}
    Let $\underline{\ell_1} = (1,2,\ldots, \ell_m)$ be a   Kupisch series. 
    We call $\underline{\ell}^c = (\ell_m, 2,\ldots, \ell_{m-1})$ the self-gluing of $\underline{\ell_1}$.
    Accordingly we call $A^c = A_{\underline{\ell_1}^c}^d$ the self-gluing of $A = A_{\underline{\ell_1}}^d$.
    Conversely if we have $\ell_2 = (\ell_n, 2, \ell_3, \ldots, \ell_{n-1})$ such that $\ell_n\neq  2$,
    then $\underline{\ell_2}^a = (1,2,\ell_3,\ldots,\ell_n)$ is called the self-degluing or $\underline{\ell_2}$.
    Accordingly, $A_2^a = A_{\underline{\ell_2}^a}^d$ is called the self-degluing of $A_2 = A_{\underline{\ell_2}}^d$. And the vertex $(1,2,\ldots,d)$ is called the self-deglue point of $A_2$.
\end{Def}

\begin{Eg}
    Let $\underline{\ell}=(1,2,2,3,4,5,2,2)$.
    Then the self-gluing of $\underline{\ell}$ is given by $\underline{\ell}^c = (2,2,2,3,4,5,2) = (5,2,2,2,2,3,4)$.
    The self-degluing of $\underline{\ell}^c$ is $\underline{\ell'}=(1,2,2,2,2,3,4,5)$.
    Note that $\underline{\ell}\neq \underline{\ell'}$.
\end{Eg}

\begin{Rem}
    
    Let $\underline{\ell}=(2, \ldots, \ell_m, 2, \ldots, \ell_n)$ such that $\ell_m \neq 2 \neq \ell_n$.
    Then
    \begin{equation*}
        \underline{\ell_1} = (1,2,\ldots, \ell_m, 2, \ldots, \ell_n)
    \end{equation*}
    and 
    \begin{equation*}
        \underline{\ell_2} = (1,2,\ldots, \ell_n, 2, \ldots, \ell_m)
    \end{equation*}
    are both self-degluing of $\underline{\ell}$ as $\underline{\ell} = (2, \ldots, \ell_n, 2, \ldots, \ell_m)$ up to rotation symmetry.
    In other words, by our definition, self-degluing of $\underline{\ell}$ are not unique. 
    However once we choose a representative of  $\underline{\ell}$, 
    the self-degluing is indeed unique.
    For this reason, we still refer to the self-degluing of $\underline{\ell}$.
    
\end{Rem}

Let
\begin{equation*}
    \underline{\ell}=(\ell_m, 2,\ell_3, \ldots, \ell_{m-1}),
\end{equation*}
be a Kupisch series such that $\ell_m\neq 2$
and 
\begin{equation*}
    \underline{\ell}^a = (1,2,\ell_3,\ldots,\ell_m)
\end{equation*}
the self-degluing of $\underline{\ell}$.
Denote by $A=A_{\underline{\ell}}^d$ and $A^a = A_{\underline{\ell}^a}^d$ the corresponding $d$-Nakayama algebras. 
We compare the distinguished $d\mathbb{Z}$-cluster tilting subcategories $\mathcal{M}\subseteq \modn A$ and $\mathcal{M}^a\subseteq \modn A^a$.

Observe that $os_{\underline{\ell}^a}^{d+1}\subseteq os_{\underline{\ell}}^{d+1}$.
For $x\in os_{\underline{\ell}}^{d+1}$,
there exists $x'\in os_{\underline{\ell}^a}^{d+1}$ such that $M(x)\cong M(x')$.
We fix the notation  
\begin{equation*}
    \lambda = (1,2,\ldots,d+1) \text{ and } \mu = (m, m+1, \ldots, m+d).
\end{equation*}
Let $x, y\in os_{\underline{\ell}^a}^{d+1}$, 
then $M(x)\cong M(y)$ if and only if $x=y$ or $\{x, y\} = \{\lambda, \mu\}$.
Note that $M^a(\lambda)\ncong M^a(\mu)$.

\begin{Prop}\label{prop_M_and_M^a}
    Let $A$ and $A^a$ be the $d$-Nakayama algebras as above and let $\mathcal{M}$ and $\mathcal{M}^a$ be the corresponding distinguished $d\mathbb{Z}$-cluster tilting subcategories. The following statements hold.
    \begin{itemize}
        \item [(a)] 
        \begin{align*}
            \proj A^a & =\add(\{M^a(x)\mid M(x)\in \proj A \text{ or } x=\lambda\})\\
            \inj A^a & =\add(\{M^a(x)\mid M(x)\in \inj A \text{ or } x=\mu\}).
        \end{align*}
        \item [(b)] For $x\neq \lambda$, 
        \begin{equation*}
            \xymatrix@C=1em@R=1em{
            0\ar[r] & \Omega^d_{A^a}M^a(x)\ar[r] & M^a(x_d)\ar[r] & \cdots\ar[r] & M^a(x_1)\ar[r] & M^a(x)\ar[r] & 0
            }
        \end{equation*}
        is the minimal projective resolution of $M^a(x)$ if 
        \begin{equation*}
            \xymatrix@C=1em@R=1em{
            0\ar[r] & \Omega^d_{A}M(x)\ar[r] & M(x_d)\ar[r] & \cdots\ar[r] & M(x_1)\ar[r] & M(x)\ar[r] & 0
            }
        \end{equation*}
        is the minimal projective resolution of $M(x)$.
        \item [(c)] For $x\neq \lambda$ and $y\neq \mu$, we have 
        \begin{equation*}
            \Ext_{A}^i(M(x), M(y)) = 0, \text{ for all } 0<i<nd, 
        \end{equation*}
        implies that 
        \begin{equation*}
            \Ext_{A^a}^i(M^a(x), M^a(y)) = 0, \text{ for all } 0<i<nd.
        \end{equation*}  
        
        \item [(d)] For $x\neq \lambda$ and $y\neq \mu$, we have 
        \begin{equation*}
            \tau_{nd,A}M(x) \cong M(x') \text{ implies } \tau_{nd,A^a}M^a(x) \cong M^a(x'), 
        \end{equation*}
        and
        \begin{equation*}
            \tau_{nd,A}^{-1}M(y) \cong M(y') \text{ implies } \tau_{nd,A^a}^{-1}M^a(y) \cong M^a(y'),
        \end{equation*}
        for $x', y'\in os_{\underline{\ell}^a}^{d+1}$.
    \end{itemize}
\end{Prop}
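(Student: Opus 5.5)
The plan is to compare the combinatorial data of $A$ and $A^a$ through the functions $f,g$ of Remark \ref{f_periodic}, and to show that the two sets of data differ only at the two ordered sequences $\lambda$ and $\mu$. Write $f^a,g^a$ for the functions attached to $\underline{\ell}^a$. Since $\underline{\ell}^a=(1,2,\ell_3,\ldots,\ell_m)$ differs from one period of $\underline{\ell}$ only in the first entry ($1$ instead of $\ell_m$), I expect $f^a(i)=f(i)$ for every $i$ with $i-d\neq 1$ and with $(\ast,\ldots,i)$ occurring in $os^{d+1}_{\underline{\ell}^a}$. At $i=d+1$ one gets $f^a(d+1)=1$ but $f(d+1)=2-\ell_m<1$, using $\ell_m\neq 2$ (so $\ell_m\geq 3$, the case $\ell_m=1$ being impossible for a connected periodic series). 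The dual computation should give $g^a(j)=g(j)$ except at $j=m$, where $g^a(m)=m+d$ while $g(m)>m+d$.

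\textbf{Part (a).} By Remark \ref{f_periodic}(i), $M^a(x)$ is projective iff $x_1=f^a(x_{d+1})$. For $x\in os^{d+1}_{\underline{\ell}^a}$ with $x_{d+1}\neq d+1$ this condition coincides with $x_1=f(x_{d+1})$. The only element with $x_{d+1}=d+1$ is $x=\lambda$, which is projective in $A^a$ (the simple at the source) but not in $A$. This gives the description of $\proj A^a$, and the injective statement is dual, with $\mu$ and $g$ in place of $\lambda$ and $f$.

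\textbf{Part (b).} For $x\neq\lambda$ we have $f^a(x_{d+1})=f(x_{d+1})$, so the condition in Proposition \ref{Prop_modcomb}(iv) holds in $A$ iff it holds in $A^a$. The explicit terms $P^i=M(f(x_{d+1}),x_1,\ldots,\widehat{x_i},\ldots,x_{d+1})$ and $\Omega^d=M(f(x_{d+1}),x_1,\ldots,x_d)$ are then given by the same sequences, which lie in $os^{d+1}_{\underline{\ell}^a}$. By (a), each $P^i$ is projective in $A^a$ exactly when it is projective in $A$. The only thing to rule out is that some $P^i$ equals $\lambda$; this cannot happen, because its last coordinate is $x_{d+1}\neq d+1$ for $i\leq d$. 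The maps in both resolutions are the canonical maps $f_{yx}$, so minimality transfers.

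\textbf{Part (c).} Both $\mathcal{M}$ and $\mathcal{M}^a$ are $d\mathbb{Z}$-cluster tilting. Hence all $\Ext^i$ with $d\nmid i$ vanish in both algebras, and it suffices to treat $i=kd$ with $0<k<n$. By dimension shift, $\Ext^{kd}(X,Y)\cong\Ext^d(\Omega^{(k-1)d}X,Y)$. I iterate (b): as long as the iterated syzygy is not $\lambda$ and not projective, the syzygies of $M^a(x)$ and $M(x)$ are given by the same sequence. If at some stage the syzygy in $A^a$ becomes $M^a(\lambda)$, which is projective, then all higher $\Ext_{A^a}$ vanish and there is nothing to prove. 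Otherwise I compare $\Ext^d$ via Proposition \ref{Prop_modcomb}(vi). In $A^a$ (width case) it is nonzero iff $z\preccurlyeq\tau_d(y)$ for the relevant sequences $z$. In $A$ it counts the shifts $\sigma^i z\preccurlyeq\tau_d(y)$, and the $i=0$ term already contains the $A^a$ condition. So vanishing in $A$ forces vanishing in $A^a$; the hypothesis $y\neq\mu$ ensures that $M^a(y)$ is not the extra injective, so $\tau_d(y)$ is read off identically in both.

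\textbf{Part (d) and the main obstacle.} By Proposition \ref{prop_tau_nd_and_tau_d}, $\tau_{nd}=\tau_d\Omega^{(n-1)d}$. I apply (b) $n-1$ times and then Proposition \ref{Prop_modcomb}(v), $\tau_d(z)=z-(1,\ldots,1)$, which is the same formula in both algebras. The step I expect to be hardest is the bookkeeping that no intermediate syzygy equals $\lambda$ in $A^a$. If $\Omega^{jd}M(x)=M(\lambda)=S$ for some $j<n-1$, then further syzygies in $A$ involve first coordinates $f(\cdot)\leq 0$. The plan is to show that the final sequence $x'$ would then leave $os^{d+1}_{\underline{\ell}^a}$ (some coordinate $\leq 0$), contradicting $x'\in os^{d+1}_{\underline{\ell}^a}$, using the monotonicity of $f$ from Remark \ref{f_periodic}(iv). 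The statement for $\tau_{nd}^{-1}$ is dual, using cosyzygies, $g$, and $\mu$.
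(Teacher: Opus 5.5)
\textbf{Parts (a)--(c).} These follow the paper's proof. You compare $f$ with $f^a$, compare the resolutions of Proposition~\ref{Prop_modcomb}(iv), and for (c) reduce to $\Ext^{kd}$ with $0<k<n$ by dimension shift and Proposition~\ref{Prop_modcomb}(vi). Your observation that an $A^a$-syzygy equal to the projective $M^a(\lambda)$ makes all higher $\Ext_{A^a}$ vanish is something the paper leaves implicit.

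There is one slip in (c). Proposition~\ref{Prop_modcomb}(vi) says $\Ext^d(M(z),M(y))\neq 0$ iff $y\preccurlyeq\tau_d(z)$, so $\tau_d$ is applied to the syzygy $z$ of $x$, not to $y$. The comparison via the summand $i=0$ is unaffected. The role you give to $y\neq\mu$ is misplaced: the conclusion of (c) is trivial when $x=\lambda$ or $y=\mu$, since $M^a(\lambda)$ is projective and $M^a(\mu)$ is injective over $A^a$.

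\textbf{Part (d): your plan cannot be completed.}
\begin{itemize}
\item Your last step applies Proposition~\ref{Prop_modcomb}(v) ``with the same formula in both algebras''. But (v) requires non-projectivity, and you only treat $\Omega^{jd}_AM(x)\cong M(\lambda)$ for $j<n-1$. When $j=n-1$, $M^a(\lambda)$ is projective, so $\tau_{d,A^a}$ kills it, while $\tau_{d,A}M(\lambda)\neq 0$.
\item For $j<n-1$ the contradiction you aim for does not exist. Over the cyclic algebra $A$ a sequence is only determined up to translation by the period $m-1$. As noted just before the proposition, every $M(x)$ has a representative in $os^{d+1}_{\underline{\ell}^a}$, so the hypothesis $x'\in os^{d+1}_{\underline{\ell}^a}$ rules nothing out.
\end{itemize}

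\textbf{Counterexample to (d).} Take $n=2$ and $x=(2,3,\ldots,d+2)$. Since $\ell_2=2$, we have $f(d+2)=f^a(d+2)=1$, so $\Omega^dM(x)$ is the simple module $M(\lambda)$ in both algebras. Over $A^a$ it is projective, hence $\tau_{2d,A^a}M^a(x)=0$. Over $A$ it is not projective, because $f(d+1)=2-\ell_m<1$. Hence $\tau_{2d,A}M(x)\cong M(0,1,\ldots,d)\cong M(m-1,\ldots,m+d-1)$, and $(m-1,\ldots,m+d-1)\in os^{d+1}_{\underline{\ell}^a}$.

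For instance, with $d=1$, $\underline{\ell}^a=(1,2,3)$ and $\underline{\ell}=(3,2)$, this says $\tau_{2,A}S_2\cong S_2$ while $\tau_{2,A^a}S_2=0$. Dually, $y=(m-1,\ldots,m+d-1)$ violates the second half of (d).

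The paper's own proof of (d) just iterates (b) and overlooks this case too. So (d) needs an additional hypothesis, for example $\Omega^{jd}_AM(x)\ncong M(\lambda)$ for $1\le j\le n-1$, and dually $\Omega^{-jd}_AM(y)\ncong M(\mu)$. This holds, for instance, when $M(x)$ and $S=M(\lambda)$ lie in a common $nd$-rigid subcategory, since $\Omega^{jd}_AM(x)\cong S$ forces $\Ext^{jd}_A(M(x),S)\neq 0$.
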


\begin{proof}
    Let $x=(x_1, \ldots, x_{d+1})\in os_{\underline{\ell}^a}^{d+1}$.
    We have $M(x)\in \proj A$ if and only if $f(x_{d+1}) = x_1$.
    By comparing $\underline{\ell}$ with $\underline{\ell}^a$,
    we find that the above condition is equivalent to $f^a(x_{d+1}) = x_1$,
    which precisely means that $M^a(x)\in \proj A^a$.
    Moreover, $M^a(\lambda)\in \proj A^a$ thus we obtain $\proj A^a$ as in (a).
    Arguing similarly we get $\inj A^a$ is of the given form.

    To see (b), comparing the minimal projective resolution of $M(x)$ and  $M^a(x)$ given in Proposition \ref{Prop_modcomb} (iv) together with (a),
    the statement follows.
    In particular,
    \begin{equation*}
        \Omega_{A}^dM(x) = M(x') \text{ implies } \Omega_{A^a}^dM^a(x) = M^a(x').
    \end{equation*}

    Now we prove (c).
    As $M^a(x), M^a(y)\in \mathcal{M}^a$,
    it suffices to show
    \begin{equation*}
        \Ext_{A^a}^{kd}(M^a(x), M^a(y)) = 0 \text{ for } 1\leq j\leq n-1.
    \end{equation*}
    By Proposition \ref{Prop_modcomb} (vi), 
    $\Ext_{A^a}^d(M^a(x), M^a(y))\neq 0$ implies $y\preccurlyeq \tau_d(x)$.
    This gives 
    \begin{equation*}
        \Ext_A^d(M(x), M(y))\neq 0,
    \end{equation*}
    a contradiction.

    For $1<k<n$, notice that  
    \begin{equation*}
        \Ext_{A^a}^{kd}(M^a(x), M^a(y)) \cong 
        \Ext_{A^a}^{d}(\Omega_{A^a}^{(k-1)d}M^a(x), M^a(y))
    \end{equation*}
    together with (b),
    a similar argument as above applies and we conclude 
    $\Ext_{A^a}^{kd}(M^a(x), M^a(y)) = 0$.

    Lastly we prove (d). 
    Assume $\Omega_A^{(n-1)d}M(x)\cong M(z)$ then 
    $\Omega_{A^a}^{(n-1)d}M^a(x)\cong M^a(z)$ by (b).
    Thus
    \begin{equation*}
        \tau_{nd,A}M(x) = \tau_{d,A}M(z) \cong M(\tau_{d,A}(z)) = M(x')
    \end{equation*}
    implies 
    \begin{equation*}
        \tau_{nd,A^a}M^a(x) = \tau_{d,A^a}M^a(z) \cong M(\tau_{d,A^a}(z)) = M^a(x').
    \end{equation*}
    Arguing similarly we obtain the statement for $y$.
\end{proof}

\begin{Lem}\label{lem_partial_ndZ_ct_of_A_gives_that_of_A^a}
    Let $A$ and $A^a$ be $d$-Nakayama algebras as above.
    Assume $\modn A$ admits a partial $nd\mathbb{Z}$-cluster tilting subcategory. 
    Then $\modn A^a$ also admits a partial  $nd\mathbb{Z}$-cluster tilting subcategory.
\end{Lem}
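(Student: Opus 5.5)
Given a partial $nd\mathbb{Z}$-cluster tilting subcategory $\mathcal{C}\subseteq\modn A$, we first replace it by $\mathcal{C}^{\mathcal{M}}=\mathcal{C}\cap\mathcal{M}$. By Proposition \ref{prop_C_intersecting_M_gives_weakly_ndZ_ct} this is still partial $nd\mathbb{Z}$-cluster tilting, so we may assume $\mathcal{C}\subseteq\mathcal{M}$. Then every indecomposable of $\mathcal{C}$ has the form $M(x)$ with $x\in os_{\underline{\ell}^a}^{d+1}$, and the representative is unique except that $M(\lambda)\cong M(\mu)$. Now define
\begin{equation*}
\mathcal{C}^a=\add\bigl(\{M^a(x)\mid x\in os_{\underline{\ell}^a}^{d+1},\ M(x)\in\mathcal{C}\}\bigr)\subseteq\mathcal{M}^a,
\end{equation*}
so that if $M(\lambda)\cong M(\mu)\in\mathcal{C}$ we put both $M^a(\lambda)$ and $M^a(\mu)$ into $\mathcal{C}^a$. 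We will check the conditions of Definition \ref{def_weakly_d_ct} for $\mathcal{C}^a$ one at a time.

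Condition (a) follows from Proposition \ref{prop_M_and_M^a}(a). Indeed, $M(\lambda)$ is projective-injective-type data in $A$: being the simple bridge-like module at the glued vertex, it lies in $\add DA\cap\add A$ of the relevant side, hence in $\mathcal{C}$. So $M^a(\lambda)$ and $M^a(\mu)$ lie in $\mathcal{C}^a$, which gives $A^a,DA^a\in\mathcal{C}^a$. For (b) we must show $\Ext^i_{A^a}(\mathcal{C}^a,\mathcal{C}^a)=0$ for $0<i<nd$. Since $M^a(\lambda)$ is projective and $M^a(\mu)$ is injective, only pairs with first argument $x\neq\lambda$ and second argument $y\neq\mu$ matter, and Proposition \ref{prop_M_and_M^a}(c) covers exactly these. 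The same argument, applied to all multiples of $d$, gives rigidity in degrees $kd$ for $k\notin n\mathbb{Z}$. Condition (e), closure under $\Omega^{\pm nd}$, follows from Proposition \ref{prop_M_and_M^a}(b) and its dual by iterating. The one special case is a syzygy equal to $M(\lambda)\cong M(\mu)$; there the corresponding $A^a$-syzygy is $M^a(\lambda)$ or $M^a(\mu)$, and both lie in $\mathcal{C}^a$ by construction. Condition (d), indecomposability of $\Omega^i$ for $0<i<nd$, transfers because the minimal resolutions in $A$ and $A^a$ have the same terms by Proposition \ref{prop_M_and_M^a}(b). The intermediate syzygies for $i$ not divisible by $d$ are computed inside the $d$-step resolution of Proposition \ref{Prop_modcomb}(iv), and these also agree.

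The main obstacle is condition (c), the bijection $\tau_{nd}\colon\mathcal{C}^a_P\to\mathcal{C}^a_I$. For $x\neq\lambda$ non-projective, Proposition \ref{prop_M_and_M^a}(d) gives $\tau_{nd,A^a}M^a(x)\cong M^a(x')$ whenever $\tau_{nd,A}M(x)\cong M(x')$. The difficulty is that $x'$ is determined only up to the identification $\lambda\sim\mu$. We must show that when $\tau_{nd,A}M(x)\cong M(\lambda)\cong M(\mu)$, the $A^a$-value is the non-injective one, namely $M^a(\lambda)$, since $M^a(\mu)$ is injective. We plan to settle this with the explicit formula $\tau_d(z)=z-(1,\ldots,1)$ from Proposition \ref{Prop_modcomb}(v), which shows that $\tau_{d,A^a}$ lands on a coordinate tuple and therefore cannot equal $\mu$ when $\mu$ is injective.

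A second point to check is that $M^a(\mu)$ is not projective in $A^a$, while $M(\mu)$ may be non-projective in $A$. We must verify that $M^a(\mu)\in\mathcal{C}^a_P$ and that $M^a(\lambda)\in\mathcal{C}^a_I$ match up consistently, with $\tau_{nd,A^a}M^a(\mu)$ equal to the $A^a$-version of $\tau_{nd,A}M(\lambda)$, and dually. This amounts to applying the same coordinate computation once in each direction. Once these are in place, the inverse maps match by the second half of Proposition \ref{prop_M_and_M^a}(d), which completes the proof.
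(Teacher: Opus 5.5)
Your strategy is the paper's: pass to $\mathcal{C}\cap\mathcal{M}$, transport along $M(x)\mapsto M^a(x)$, and verify Definition \ref{def_weakly_d_ct} using Proposition \ref{prop_M_and_M^a}. Your checks of (b)--(e) are fine \emph{once} $M(\lambda)\in\mathcal{C}$ is known. The gap is that you never establish this. Your argument for (a), that $M(\lambda)$ is projective or injective in $A$ and therefore lies in $\mathcal{C}$, is false.

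In the cyclic algebra $A$, the first entry of $\underline{\ell}=(\ell_m,2,\ell_3,\ldots,\ell_{m-1})$ is $\ell_m\geq 3$. Hence $f(d+1)=2-\ell_m<1=\lambda_1$, so $M(\lambda)$ is not projective. The second entry is $2$, so $(1,2,\ldots,d,d+2)\in os_{\underline{\ell}}^{d+1}$. Hence $g(1)\geq d+2>\lambda_{d+1}$, so $M(\lambda)$ is not injective either. Only in $A^a$ do the two lifts become projective ($M^a(\lambda)$) and injective ($M^a(\mu)$). Your own later remark that $M(\mu)$ may be non-projective in $A$ contradicts your claim in (a). So with your definition, $\mathcal{C}^a$ may miss $M^a(\lambda)\in\add A^a$ and $M^a(\mu)\in\add DA^a$. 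Your matching of $M^a(\mu)\in\mathcal{C}^a_P$ with $M^a(\lambda)\in\mathcal{C}^a_I$ in the discussion of (c) rests on the same unproved membership.

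The paper sidesteps (a) by putting $M^a(\lambda)$ and $M^a(\mu)$ into $\mathcal{C}^a$ by definition. That does not remove the need for $M(\lambda)\in\mathcal{C}$. Your reduction of (b) to Proposition \ref{prop_M_and_M^a}(c) would then require $\Ext^i_A(M(x),M(\lambda))=0$ for all $M(x)\in\mathcal{C}$ and $0<i<nd$. This is not automatic; for instance $\Ext^d_{A^a}(M^a(2,\ldots,d+2),M^a(\lambda))\neq 0$.

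What is missing is an argument forcing $M(\lambda)\in\mathcal{C}$. It is the cyclic analogue of the step $S\in\mathcal{C}$ in Proposition \ref{prop_A_is_piecewise_homogenous_simple_bridges_in_C}. When $\ell_3=3$, it comes from Lemma \ref{lem_moremodules}(a), which applies because $\ell\geq\ell_m\geq 3$. Take $x=(1,2,\ldots,d,d+2)$. Then $f(d+2)=3-\ell_2=1$ and $f(d+3)=4-\ell_3=1$, so the lemma gives $M(z)\in\mathcal{C}$ for $z=(x_1,x_1+1,x_2+1,\ldots,x_d+1)=\lambda$. When $\ell_3\neq 3$ you would need a different argument for $M(\lambda)\in\mathcal{C}$.

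With $M(\lambda)\in\mathcal{C}$ in place, your outline works. You also need one more observation for (d) and (e). Since $M(\lambda)$ is non-projective, $nd$-rigidity of $\mathcal{C}$ rules out $\Omega^{kd}_AM(x)\cong M(\lambda)$ for $M(x)\in\mathcal{C}$ and $0<k<n$. This guarantees that the iterated use of Proposition \ref{prop_M_and_M^a}(b) never passes through the excluded tuple $\lambda$, so no $A^a$-syzygy collapses early to the projective $M^a(\lambda)$.
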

\begin{proof}
    Assume $\mathcal{C}\subseteq \modn A$ is partial $nd\mathbb{Z}$-cluster tilting. 
    It follows from Proposition \ref{prop_M_and_M^a} that 
    \begin{equation*}
        \mathcal{C}^a = \add \{M^a(x)\mid M(x)\in \mathcal{C}\cap \mathcal{M} \text{ or } x=\lambda, \mu\}
    \end{equation*}
    is partial $nd\mathbb{Z}$-cluster tilting in $\modn A^a$.
\end{proof}

\begin{Eg}
    Let $\underline{\ell}=(4,2,3,3,2,3)$ be a periodic  Kupisch series.
    Then $\underline{\ell}^a = (1,2,3,3,2,3,4)$. 
    Let $A = A_{\underline{\ell}}^2$ and $A^a = A_{\underline{\ell}^a}^2$ be the correponding $2$-Nakayama algebras.
    The quivers $Q_{\underline{\ell}}^2$, $Q_{\underline{\ell}^a}^2$ are shown below. 
    Notice that the vertices $12$ on the left and on the right in $Q_{\underline{\ell}}^2$ should be identified.
    \begin{equation*}
        \begin{xy}
            0;<20pt,0cm>:<20pt,20pt>::
            (0,0) *+{12} ="12",
            (0,1) *+{13} ="13",
            (0,2) *+{14} ="14",
            (2,0) *+{23} ="23",
            (2,1) *+{24} ="24",
            (2,2) *+{25} ="25",
            (4,0) *+{34} ="34",
            (4,1) *+{35} ="35",
            (6,0) *+{45} ="45",
            (6,1) *+{46} ="46",
            (6,2) *+{47} ="47",
            (6,3) *+{48} ="48",
            (8,0) *+{56} ="56",
            (8,1) *+{57} ="57",
            (8,2) *+{58} ="58",
            (10,0) *+{67} ="67",
            (10,1) *+{68} ="68",
            (12,0) *+{12} ="78",
            (6.5,-1) *+{\text{ The quiver } Q_{\underline{\ell}}^2},
            "12", {\ar "13"},
            "13", {\ar "14"},
            "23", {\ar "24"},
            "24", {\ar "25"},
            "34", {\ar "35"},
            "45", {\ar "46"},
            "46", {\ar "47"},
            "47", {\ar "48"},
            "56", {\ar "57"},
            "57", {\ar "58"},
            "67", {\ar "68"},
             "13", {\ar "23"},
             "14", {\ar "24"},
             "24", {\ar "34"},
             "25", {\ar "35"},
             "35", {\ar "45"},
             "46", {\ar "56"},
             "47", {\ar "57"},
             "48", {\ar "58"},
             "57", {\ar "67"},
             "58", {\ar "68"},
             "68", {\ar "78"},
        \end{xy}
    \end{equation*}
    \begin{equation*}
        \begin{xy}
            0;<20pt,0cm>:<20pt,20pt>::
            (0,0) *+{12} ="12",
            (0,1) *+{13} ="13",
            (0,2) *+{14} ="14",
            (2,0) *+{23} ="23",
            (2,1) *+{24} ="24",
            (2,2) *+{25} ="25",
            (4,0) *+{34} ="34",
            (4,1) *+{35} ="35",
            (6,0) *+{45} ="45",
            (6,1) *+{46} ="46",
            (6,2) *+{47} ="47",
            (6,3) *+{48} ="48",
            (8,0) *+{56} ="56",
            (8,1) *+{57} ="57",
            (8,2) *+{58} ="58",
            (10,0) *+{67} ="67",
            (10,1) *+{68} ="68",
            (12,0) *+{78} ="78",
            (6.5,-1) *+{\text{ The quiver } Q_{\underline{\ell}^a}^2},
            "12", {\ar "13"},
            "13", {\ar "14"},
            "23", {\ar "24"},
            "24", {\ar "25"},
            "34", {\ar "35"},
            "45", {\ar "46"},
            "46", {\ar "47"},
            "47", {\ar "48"},
            "56", {\ar "57"},
            "57", {\ar "58"},
            "67", {\ar "68"},
             "13", {\ar "23"},
             "14", {\ar "24"},
             "24", {\ar "34"},
             "25", {\ar "35"},
             "35", {\ar "45"},
             "46", {\ar "56"},
             "47", {\ar "57"},
             "48", {\ar "58"},
             "57", {\ar "67"},
             "58", {\ar "68"},
             "68", {\ar "78"},
        \end{xy}
    \end{equation*}
\end{Eg}

\section{Classification}

In this section, we study the existence and classification of $nd\mathbb{Z}$-cluster tilting subcategories for each $d$-Nakayama algebra $A=A_{\underline{\ell}}^d$ and $n>1$.
In Section \ref{sec_necessary_conditions}, we obtain certain necessary conditions on $\underline{\ell}$ such that there exists a partial $nd\mathbb{Z}$-cluster tilting subcategory for some $n>1$.
In the case when $A$ is non-self-injective, we obtain a classification of its $nd\mathbb{Z}$-cluster tilting subcategories in Section \ref{sec_non-self-inj_case}. 
The classification is completely determined by the arithmetic properties of $\underline{\ell}$.
In fact, except for the case where $\rad^2 A = 0$, there exists at most one $nd\mathbb{Z}$-cluster tilting subcategory for a suitable integer $n$.
In the case when $A = \widetilde{A}_{m-1, \ell}^d$ is self-injective, 
we show that an $nd\mathbb{Z}$-cluster tilting subcategory is only possible if $n|m$ and $n|(\ell-2)$.
In case $n=\ell-2$, we show that such subcategory does exist by constructing an explicit example.

\subsection{Necessary conditions}\label{sec_necessary_conditions}

Let $A=A_{\underline{\ell}}^d$ be a non-self-injective $d$-Nakayama algebra.
In this section, we show that $\modn A$ admits a partial $nd\mathbb{Z}$-cluster tilting subcategory only if $\underline{\ell}$ is piecewise homogeneous, i.e. $\underline{\ell}= \underline{\ell_1} \Delta \cdots \Delta \underline{\ell_t}$ where each $\underline{\ell_i}$ is homogenous for $1\leq i\leq t$, or $\underline{\ell}$ is the self-glue of a piecewise homogenous Kupisch series. 

We show by the following Lemma that if $P\hookrightarrow Q$ for indecomposable $P, Q\in \add A$, 
then more modules have to be included  in any partial $nd\mathbb{Z}$-cluster tilting subcategory $\mathcal{C}\subseteq \modn A$.
In particular we show $\soc P\in \mathcal{C}$. 
Such simple modules will play the role of simple bridges in the degluing procedure later on.
A dual statement for injective modules hold true as well.

\begin{Lem}\label{lem_moremodules}
   Assume $\ell\geq 3$.
   Let $\mathcal{C}\subseteq \modn A$ be a partial $nd\mathbb{Z}$-cluster tilting subcategory and $x = (x_1, \ldots, x_{d+1})\in os_{\underline{\ell}}^{d+1}$.
    \begin{itemize}
        \item [(a)] If $f(x_{d+1}) = f(x_{d+1}+1) = x_1$,
        then $M(y)\in \mathcal{C}$ where $y = (x_1, \ldots, x_{d+1}+i)$ with $i = -1, 0, 1$ and $M(z)\in \mathcal{C}$ where $z=(x_1, x_1+1, \ldots, x_d+1)$ for $x_d < x_{d+1}$ whenever $y, z\in os_{\underline{\ell}}^{d+1}$.
        \item [(b)] If $g(x_1 - 1) = g(x_1) = x_{d+1}$,
        then $M(y)\in \mathcal{C}$ where $y = (x_1 + i, \ldots, x_{d+1})$ with $i = -1, 0, 1$ and $M(z)\in \mathcal{C}$ where $z=(x_1, x_1+1, \ldots, x_d+1)$ for $x_d < x_{d+1}$ whenever $y, z\in os_{\underline{\ell}}^{d+1}$.
    \end{itemize}
\end{Lem}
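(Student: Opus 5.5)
The plan is to produce each required module from modules already known to lie in $\mathcal{C}$, using the closure properties of partial $nd\mathbb{Z}$-cluster tilting subcategories. The main tool is Proposition \ref{prop_more_modules_from_tau_d_Omega_-1}, applied to $\mathcal{C}^{\mathcal{M}} = \mathcal{C}\cap\mathcal{M}$ with $\mathcal{M}=\mathcal{M}_{\underline{\ell}}^d$; this intersection is partial $nd\mathbb{Z}$-cluster tilting by Proposition \ref{prop_C_intersecting_M_gives_weakly_ndZ_ct}. The explicit formulas of Proposition \ref{Prop_modcomb} and Remark \ref{f_periodic} will identify the resulting modules. I only treat (a); part (b) is the dual statement, obtained with $g$, injectives and part (a) of Proposition \ref{prop_more_modules_from_tau_d_Omega_-1}.

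The cases $i=0,1$ are immediate. Since $f(x_{d+1})=x_1$, the module $M(x)$ is projective by Remark \ref{f_periodic} (i). Since $f(x_{d+1}+1)=x_1$, the module $M(x_1,\ldots,x_d,x_{d+1}+1)$, whenever it is defined, is also projective. Both therefore lie in $\mathcal{C}$ by Definition \ref{def_weakly_d_ct} (a).

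For $z$ I proceed as follows. $M(x)$ is a proper submodule of the indecomposable projective $M(x_1,\ldots,x_d,x_{d+1}+1)$. Hence $M(x)$ is not injective: if it were, the inclusion would split, contradicting indecomposability of the larger module. So $M(x)\in\mathcal{C}^{\mathcal{M}}_I$. Proposition \ref{prop_more_modules_from_tau_d_Omega_-1} (b) then gives $\Omega^d\tau_d^{-1}M(x)\in\mathcal{C}$. By Proposition \ref{Prop_modcomb} (v) we have $\tau_d^{-1}M(x)=M(x_1+1,\ldots,x_{d+1}+1)$, and this module is not projective because $f(x_{d+1}+1)=x_1<x_1+1$. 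Remark \ref{f_periodic} (ii) then yields
\begin{equation*}
\Omega^d M(x_1+1,\ldots,x_{d+1}+1)=M(f(x_{d+1}+1),x_1+1,\ldots,x_d+1)=M(x_1,x_1+1,\ldots,x_d+1)=M(z).
\end{equation*}
The hypothesis $x_d<x_{d+1}$ is used only to guarantee $z\in os^{d+1}_{\underline{\ell}}$ and to avoid degenerate coincidences.

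The main obstacle is $y=(x_1,\ldots,x_d,x_{d+1}-1)$, which need not be projective. If $f(x_{d+1}-1)=x_1$ it is projective and we are done, so assume $f(x_{d+1}-1)<x_1$. My first attempt is to realise $M(y)$ through Proposition \ref{prop_more_modules_from_tau_d_Omega_-1} (a), in the form $\Omega^{-d}\tau_d M(w)$ for some $M(w)\in\mathcal{C}^{\mathcal{M}}_P$. Here $\tau_d$ shifts $w$ by $-1$, and $\Omega^{-d}$ is computed by Remark \ref{f_periodic} (ii): it drops the first coordinate and appends $g$ of it. Matching coordinates, the natural candidate is $M(z)$, or a module obtained from it by one further application of the bijection $\tau_{nd}^{\pm1}$. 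I would compute $\tau_d M(z)=M(x_1-1,x_1,\ldots,x_d)$ and $\Omega^{-d}$ of it, and check whether the value $g(x_1-1)$ can be pinned down from $f(x_{d+1})=f(x_{d+1}+1)=x_1$.

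The hypothesis $\ell\geq 3$ should enter exactly here, ensuring that $z$ is neither simple projective nor injective so that the relevant $\tau$'s are defined. If this coordinate matching fails, the fallback is to use Definition \ref{def_weakly_d_ct} (d): indecomposability of $\Omega^i$ of $\tau_{nd}^{-1}M(x)$ for $0<i<nd$ should force one of these syzygies, computed via the resolution in Proposition \ref{Prop_modcomb} (iv), to be $M(y)$, which then lies in $\mathcal{C}$ by $\Omega^{\pm nd}$-closure.
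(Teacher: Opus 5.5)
Your treatment of $i=0,1$ and of $M(z)$ (non-injectivity of $M(x)$, then $\Omega^d\tau_d^{-1}M(x)=M(x_1,x_1+1,\ldots,x_d+1)$ via Proposition \ref{prop_more_modules_from_tau_d_Omega_-1} (b)) is exactly the paper's argument. Your ``natural candidate'' for $i=-1$ is also the paper's route: apply Proposition \ref{prop_more_modules_from_tau_d_Omega_-1} (a) to $M(z)$.

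\textbf{The gap.} The case $i=-1$ is the only non-routine step, and there you stop at ``check whether'', so it is not proved. Your hints for closing it also point the wrong way. Two facts are needed. Both come from your case assumption $f(x_{d+1}-1)<x_1$, together with $x_d+1\le x_{d+1}-1$ (which you may assume, since otherwise $M(y)=0$). They do not come from $\ell\geq 3$, and they do not follow from $f(x_{d+1})=f(x_{d+1}+1)=x_1$ alone.
\begin{enumerate}
\item $M(z)$ must be non-projective to lie in $\mathcal{C}^{\mathcal{M}}_P$. By monotonicity of $f$ (Remark \ref{f_periodic} (iv)), $f(x_d+1)\le f(x_{d+1}-1)\le x_1-1$. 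Hence $z_1=x_1\neq f(x_d+1)=f(z_{d+1})$, so $M(z)$ is not projective by Remark \ref{f_periodic} (i).
\item $g(x_1-1)=x_{d+1}-1$. The case assumption gives $(x_1-1,\ldots,x_{d+1}-1)\in os^{d+1}_{\underline{\ell}}$, so $g(x_1-1)\geq x_{d+1}-1$. For every $b\geq x_{d+1}$ we have $f(b)\geq f(x_{d+1})=x_1>x_1-1$, so $(x_1-1,\ldots,b)\notin os^{d+1}_{\underline{\ell}}$. This forces $g(x_1-1)<x_{d+1}$.
\end{enumerate}
With these, $\tau_dM(z)=M(x_1-1,x_1,\ldots,x_d)$ is non-injective, because $x_d<x_{d+1}-1=g(x_1-1)$. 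Remark \ref{f_periodic} (ii) then gives $\Omega^{-d}\tau_dM(z)=M(x_1,\ldots,x_d,x_{d+1}-1)=M(y)\in\mathcal{C}$. Without the case assumption, $g(x_1-1)$ is not determined. For instance, if $f(x_{d+1}-1)=x_1$, then $g(x_1-1)<x_{d+1}-1$ and the same computation yields a different module. So the case split, not the hypothesis $f(x_{d+1})=f(x_{d+1}+1)=x_1$, is what pins the coordinate down. The argument never uses $\ell\geq 3$.

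\textbf{The fallback.} Your fallback would not work. A partial $nd\mathbb{Z}$-cluster tilting subcategory is only closed under $\Omega^{\pm nd}$. Suppose you exhibit $M(y)$ as $\Omega^{i}$ of an object of $\mathcal{C}$ with $0<i<nd$, which is the range where Definition \ref{def_weakly_d_ct} (d) gives indecomposability. That says nothing about whether $M(y)$ lies in $\mathcal{C}$. The direct computation above is the argument you need.
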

\begin{proof}
    We only prove (a) as (b) is similar.
    Since 
    \begin{equation*}
        f(x_{d+1}+1) = f(x_{d+1}) = x_1,
    \end{equation*}
    we have that $M(x_1, \ldots, x_{d+1}+1)$ and $M(x_1, \ldots, x_{d+1})$ are projective, hence in $\mathcal{C}$.
    For the same reason, $M(x_1, \ldots, x_{d+1})$ is non-injective, so $M(x_1, \ldots, x_{d+1})\in \mathcal{C}_I\cap \mathcal{M}_I$.
    By Proposition \ref{prop_more_modules_from_tau_d_Omega_-1} (b),
    \begin{equation*}
        \Omega^d\tau_d^{-1}M(x_1, \ldots,x_d,  x_{d+1}) \cong \Omega^dM(x_1+1, \ldots, x_d+1, x_{d+1}+1) \cong M(x_1, x_1+1, \ldots, x_d + 1) \in \mathcal{C}.
    \end{equation*}
    
    We need to prove that $M(x_1, \ldots, x_d, x_{d+1}-1)\in \mathcal{C}$. 
    We may assume $x_d<x_{d+1}-1$ or else $M(x_1,\ldots, x_d, x_{d+1}-1)=0$ and there is nothing to show.
    If $f(x_{d+1}-1) = x_1$, 
    then $M(x_1, \ldots, x_{d+1}-1)$ is projective hence in $\mathcal{C}$.
    Otherwise, $f(x_{d+1}-1)\leq x_1 - 1$ and $M(x_1-1, \ldots, x_{d+1}-1)\in \mathcal{M}$.
    Note that $M(x_1-1, \ldots, x_{d+1})\notin \mathcal{M}$ since $f(x_{d+1})=x_1>x_1-1$.
    Thus we have $g(x_1-1)=x_{d+1}-1$.
    
    Since $f(x_d+1)\leq f(x_{d+1}-1)\leq x_1-1$,  we have that $M(x_1, x_1+1, \ldots, x_d + 1)\in \mathcal{C}_P$.
    So Proposition \ref{prop_more_modules_from_tau_d_Omega_-1} (a) yields that 
    \begin{equation*}
        \Omega^{-d}\tau_dM(x_1, x_1+1, \ldots, x_d + 1) \cong \Omega^{-d}M(x_1-1, x_1, \ldots, x_d) \cong M(x_1, \ldots, x_d, x_{d+1}-1)\in \mathcal{C}.
    \end{equation*}
\end{proof}

As a corollary of the above Lemma, the following Lemma poses certain restrictions on the shape of $\underline{\ell}$ if $\modn A$ admits a partial $nd\mathbb{Z}$-cluster tilting subcategory.

\begin{Lem}\label{lem_necessaryshape}
    The algebra 
    $A = A_{\underline{\ell}}^{d}$ does not admit a partial  $nd\mathbb{Z}$-cluster tilting subcategory for $n\geq 2$ in the following cases.
    \begin{itemize}
        \item [(a)] $2 < \ell_{j-1} = \ell_j < \ell_{j+1}$ for some $j$.
        \item [(b)] $\ell_j > \ell_{j+1} > 2$ for some $j$.
    \end{itemize}
\end{Lem}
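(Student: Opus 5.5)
Both cases will be handled by contradiction. Suppose $\mathcal{C}\subseteq\modn A$ is a partial $nd\mathbb{Z}$-cluster tilting subcategory with $n\geq 2$. By Proposition \ref{prop_C_intersecting_M_gives_weakly_ndZ_ct} we may replace $\mathcal{C}$ by $\mathcal{C}^{\mathcal{M}}=\mathcal{C}\cap\mathcal{M}$, so all modules are of the form $M(x)$ with $x\in os^{d+1}_{\underline{\ell}}$. Lemma \ref{lem_moremodules} applies since $\ell\geq 3$ in both cases. The strategy is:
\begin{itemize}
\item use Lemma \ref{lem_moremodules} to force two specific modules $M(y),M(z)$ into $\mathcal{C}$;
\item compute with Proposition \ref{Prop_modcomb} (vi) and (iv) that $\Ext^{kd}_A(M(y),M(z))\neq 0$ for some $1\leq k\leq n-1$;
\item conclude this contradicts Definition \ref{def_weakly_d_ct} (b), since $0<kd<nd$. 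The simplest target is $k=1$, i.e.\ $z\preccurlyeq\tau_d(y)$.
\end{itemize}

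\textbf{Case (a).} Here $2<\ell_{j-1}=\ell_j<\ell_{j+1}$. Since $f(i)=i-\ell_{i-d}-d+1$ (away from the boundary), $\ell_{j-1}=\ell_j$ gives consecutive indices $i=j+d-1$ and $i+1=j+d$ with $f(i)=f(i+1)-1$. Also $\ell_j<\ell_{j+1}$ gives $f(i+2)=f(i+1)$. Set $x_1=f(i+1)$ and $x_{d+1}=i+1$, and choose $x_2<\dots<x_d$ in between; this is possible because $\ell_j\geq 3$ leaves room. Then $f(x_{d+1})=f(x_{d+1}+1)=x_1$, so Lemma \ref{lem_moremodules} (a) puts into $\mathcal{C}$:
\begin{itemize}
\item $M(x_1,\dots,x_d,x_{d+1}-1)$, which is non-projective since $f(x_{d+1}-1)<x_1$;
\item $M(x_1,x_1+1,\dots,x_d+1)$;
\item the projectives $M(x_1,\dots,x_{d+1}+\epsilon)$ for $\epsilon=0,1$.
\end{itemize}
I then apply $\tau_d$ to the non-projective $M(x_1,\dots,x_d,x_{d+1}-1)$ and compare with one of the forced modules. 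The intended choice is $x_2=x_1+1,\dots$, so that the sequences are nearly "simple-like" and the relation $\preccurlyeq$ is easy to check. The aim is to arrange
\[
(x_1,x_1+1,\dots,x_d+1)\preccurlyeq \tau_d(x_1,\dots,x_d,x_{d+1}-1)=(x_1-1,\dots,x_d-1,x_{d+1}-2),
\]
or a variant of it. If no $\Ext^d$ appears directly, I would instead compute $\Omega^{d}$ of a forced module via Remark \ref{f_periodic} (ii). That module stays in $\mathcal{C}$ by condition (e), and the aim is then to find a nonzero $\Ext^d$ between it and another forced module.

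\textbf{Case (b).} Here $\ell_j>\ell_{j+1}>2$. This is the dual situation: a drop in $\ell$ yields $g(x_1-1)=g(x_1)$ for a suitable $x_1$. I then use Lemma \ref{lem_moremodules} (b) and the dual argument with $\tau_d^{-1}$ and $\Omega^{-d}$.

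The main obstacle is the bookkeeping: translating the conditions on $\ell$ into exact values of $f$ (respectively $g$), and choosing the middle coordinates so that every sequence involved lies in $os^{d+1}_{\underline{\ell}}$ and the $\preccurlyeq$ inequality holds strictly. I expect to treat small $d$, for instance $d=1$, first to identify the right modules. I would then generalise by taking the middle coordinates consecutive.
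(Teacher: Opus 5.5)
Your setup matches the paper's: in its notation $i:=x_1=f(j')$ and $j':=x_{d+1}=j+d$. However, the step that is supposed to produce the nonzero $\Ext$ has a genuine gap.

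\textbf{Why your comparisons cannot work.} Every module that Lemma \ref{lem_moremodules}(a) forces at this $x_1$ and $x_{d+1}$ has first coordinate $x_1$, whatever middle coordinates you choose. The comparison you set up, $v\preccurlyeq\tau_d(y)$, forces $v_1\le y_1-1$. So no two of these modules can be compared this way, and your displayed target already fails at the first entry ($x_1\le x_1-1$). Passing to $\Ext^{kd}$ via $\Omega^{(k-1)d}$ does not help, since $\Omega^dM(u)=M(f(u_{d+1}),u_1,\dots,u_d)$ only lowers the first coordinate.

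Your fallback misreads condition (e) of Definition \ref{def_weakly_d_ct}. A partial $nd\mathbb{Z}$-cluster tilting subcategory is closed under $\Omega^{\pm nd}$, not under $\Omega^{\pm d}$. In fact, for $M\in\mathcal{C}_P$ the module $\Omega^dM$ is nonzero by (d), and the syzygy sequence gives $\Ext^d_A(M,\Omega^dM)\neq 0$. Hence $\Omega^dM\notin\mathcal{C}$ as soon as $n\geq 2$.

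\textbf{The missing ingredient.} You need Proposition \ref{prop_more_modules_from_tau_d_Omega_-1}(a): for $M\in\mathcal{C}^{\mathcal{M}}_P$, also $\Omega^{-d}\tau_dM\in\mathcal{C}$, since it is isomorphic to $\Omega^{-nd}\tau_{nd}M$. This is what produces a module with a larger first coordinate. The paper's argument runs as follows.
\begin{itemize}
\item It uses $f(j'-1)=i-1$, i.e.\ $g(i-1)=j'-1$, together with $i+d+1<j'$; the latter inequality is exactly $\ell_j>2$.
\item It takes the non-projective $M(z)$ with $z=(i,i+2,i+3,\dots,i+d,j'-1)$, which Lemma \ref{lem_moremodules}(a) puts into $\mathcal{C}$.
\item It computes $\Omega^{-d}\tau_dM(z)=M(w)$ with $w=(i+1,\dots,i+d-1,j'-2,j'-1)$.
\item It compares $M(w)$ with the forced module $M(w')$, where $w'=(i,i+1,\dots,i+d-1,j'-2)$, again from Lemma \ref{lem_moremodules}(a).
\item Since $w'\preccurlyeq\tau_d(w)=(i,\dots,i+d-2,j'-3,j'-2)$, one gets $\Ext^d_A(M(w),M(w'))\neq 0$, contradicting $0<d<nd$.
\end{itemize}
The gap $z_2=i+2$ is essential. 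With your consecutive middle coordinates, $\Omega^{-d}\tau_dM(z)$ again has first coordinate $i$, and the obstruction above returns. Case (b) needs the dual tool, $\Omega^d\tau_d^{-1}$, from part (b) of the same proposition.
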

\begin{proof}
    Assume towards a contradiction that $\mathcal{C}$ is a partial  $nd\mathbb{Z}$-cluster tilting subcategory of $\modn A$ for some $n\geq 2$.

    Suppose (a) holds true.
    Since by definition $\ell_{j+1} \leq \ell_j + 1$,
    we have $\ell_{j+1} = \ell_j + 1$.
    So $f(j') = f(j'+1) = i$ for some integer $i$ and $j'$. 
    Indeed, $j' = j+d$. 
    Moreover, $g(i-1) = j'-1$ since $(i-1, \ldots, j')\notin os_{\underline{\ell}}^{d+1}$.
    The condition $\ell_j > 2$ implies that $i+d+1 < j'$.
    So instead, we construct a contradiction under the following condition
    \begin{equation*}
        (a')~~~ f(j') = f(j'+1) = i \text{ and } g(i-1) = j'-1 \text{ for some } i, j'\in \mathbb{Z} \text{ such that } i+d+1 < j'.
    \end{equation*}
    Since $i+d+1 < j'$, 
    \begin{equation*}
        z = (i, i+2, i+3, \ldots, i+d, j'-1)\in os_{\underline{\ell}}^{d+1}.
    \end{equation*}
    By Lemma \ref{lem_moremodules} (a), $ M(z)\in\mathcal{C}$.
    Since $g(i - 1) = j' - 1$ implies $f(j'-1) < i$, we have $M(z)$ is not projective.
    By Proposition \ref{prop_more_modules_from_tau_d_Omega_-1} (a),
    \begin{align*}
        \Omega^{-d}\tau_d M(z) & \cong M(i + 1, i + 2, \ldots, i+d-1, j'-2, j'-1) = M(w)\in \mathcal{C}.
    \end{align*}

Let $w' = (i, i+1, \ldots, i+d-2, i+d-1, j'-2)$.
By Lemma \ref{lem_moremodules} (a), $M(w')\in \mathcal{C}$.
Notice that $w'\preccurlyeq \tau_d(w)$.
By Proposition \ref{Prop_modcomb} (vi),
$\Ext_A^d(M(w), M(w')) \neq 0$
which contradicts with $\mathcal{C}$ being partial $nd\mathbb{Z}$-cluster tilting.
    
Dually, assumption (b) implies the following condition 
\begin{equation*}
    (b')~~~ g(i) = g(i+1) = j \text{ and } f(j+1) = i+2 \text{ for some } i,j\in\mathbb{Z} \text{ such that } i+d+1 < j.
\end{equation*}
We can prove similarly that $\modn A$ does not admit an $nd\mathbb{Z}$-cluster tilting subcategory under condition $(b')$.
\end{proof}

Recall that we have defined a deglue point for an acyclic Kupisch series in Definition \ref{def_deglue_acyclic_d_NakAlg} as well as a self-deglue point for a cyclic Kupisch series in Definition \ref{def_self-glue_and_self-deglue}.

\begin{Cor}\label{cor_necessary_shape_of_Kupisch_series}
    Suppose $\modn A$ admits a partial  $nd\mathbb{Z}$-cluster tilting subcategory. 
    Then $\underline{\ell}$ must be one  of the following shapes.
    \begin{itemize}
        \item [(a)] $\underline{\ell}$ is cyclic homogeneous.
        \item [(b)] $\underline{\ell}$ is acyclic homogeneous.
        \item [(c)] $\underline{\ell}$ is cyclic with a self-deglue point.
        \item [(d)] 
        $\underline{\ell}$ is acyclic with a deglue point.
    \end{itemize}
\end{Cor}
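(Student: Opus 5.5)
The corollary should follow from Lemma \ref{lem_necessaryshape} by a purely combinatorial analysis of Kupisch series that avoid patterns (a) and (b) there. I would first dispose of the small case $\ell\leq 2$: if $\ell\le 2$ then either all $\ell_i\le 2$, in which case the series is homogeneous (acyclic $(1,2,\ldots,2)$ or cyclic $(2,\ldots,2)$), so we are in case (a) or (b). So assume $\ell\geq 3$. By Lemma \ref{lem_necessaryshape}, the series satisfies two constraints. First, no $\ell_j>\ell_{j+1}>2$: every descent lands on a value $\le 2$, and since in the acyclic case $\ell_i\ge 2$ for $i\ge 2$ (connectedness), every descent lands exactly at $2$ (for $i\geq 2$). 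Second, no $2<\ell_{j-1}=\ell_j<\ell_{j+1}$: once a plateau at a value $>2$ starts, the series can never increase again until it drops.

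Next I would use these two constraints to describe the shape between consecutive occurrences of the value $2$. Since $\ell_{i+1}\le \ell_i+1$, starting from a $2$ the series rises by steps of $+1$; as long as the value exceeds $2$, it can either keep rising by $1$, stay constant, or drop, and the drop must go to $2$. By the second constraint, once constant at a value $>2$ it cannot rise again, and any drop goes to $2$. Hence each maximal segment starting at a $2$ (or at $\ell_1=1$ in the acyclic case) and ending just before the next drop to $2$ has the form $(2,3,\ldots,h,h,\ldots,h)$ (or $(1,2,\ldots,h,\ldots,h)$), i.e. it is homogeneous in the sense of the Notation, possibly with $h=2$. A small care point is a segment of value $2$ followed by $2$: I would treat consecutive $2$'s by noting $(\ldots,2,2,\ldots)$ belongs to a homogeneous piece with $h=2$ or with plateau; the precise segmentation is chosen by cutting exactly at positions $j$ with $\ell_j\ge \ell_{j+1}=2<\ell_{j+2}$, matching Definition \ref{def_deglue_acyclic_d_NakAlg}.

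Finally I would split into the acyclic and cyclic cases. Acyclic: if there is no position with $\ell_m\ge\ell_{m+1}=2<\ell_{m+2}$, the above shows $\underline{\ell}$ is a single homogeneous piece, case (b); otherwise such a position is a deglue point, case (d). Cyclic: if all $\ell_i$ are equal we have case (a); otherwise some $\ell_i>2$ and, since the series is not constant and descents land at $2$, there is a drop to $2$ followed (after rotating and possibly passing a run of $2$'s) by a rise; choosing the rotation representative $(\ell_m,2,\ell_3,\ldots)$ with $\ell_m\neq 2$ at that drop gives a self-deglue point as in Definition \ref{def_self-glue_and_self-deglue}, case (c). The main obstacle is the bookkeeping of runs of $2$'s and boundary indices (e.g. whether the cyclic series might be non-constant yet never have $\ell_m\ne 2$ immediately before a $2$ — impossible since some value exceeds $2$ and must eventually descend, and that descent lands at $2$), which I would handle by the explicit choice of where to cut rather than by further homological input.
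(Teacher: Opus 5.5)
Your route is the same as the paper's: everything is extracted combinatorially from Lemma~\ref{lem_necessaryshape}, which needs $n\geq 2$ (implicit in the statement). The two consequences you draw from the Lemma are correct. The cyclic case is also correct: a non-constant periodic series has a descent, that descent lands on $2$ from a value $\geq 3$, and rotating there gives exactly the form in Definition~\ref{def_self-glue_and_self-deglue}. That definition imposes no condition on what follows the $2$, so your remark about a subsequent rise is not needed there.

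The acyclic case, however, rests on a false step: ``if there is no position with $\ell_m\geq\ell_{m+1}=2<\ell_{m+2}$, the above shows $\underline{\ell}$ is a single homogeneous piece''. Your segmentation cuts at every drop to $2$. The cut rule you then adopt, the strict condition $\ell_{m+1}<\ell_{m+2}$ of Definition~\ref{def_deglue_acyclic_d_NakAlg}, only detects drops or runs of $2$'s that are eventually followed by a rise. A final descent to $2$ followed only by $2$'s is invisible to it. Concretely, $\underline{\ell}=(1,2,3,2)$, $(1,2,3,3,2,2)$, and in general $(1,2,\ldots,h,h,\ldots,h,2,\ldots,2)$ with $h\geq 3$:
\begin{itemize}
\item avoid both patterns of Lemma~\ref{lem_necessaryshape};
\item are not homogeneous;
\item have no deglue point in the strict sense.
\end{itemize}

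More combinatorics cannot fix this, because the statement read with the strict definition is false. The paper's own Example~\ref{example_glued_2nakalg}, $A^2_{3,8}\Delta A^2_{2,6}$ with $\underline{\ell}=(1,2,3,3,3,3,3,3,2,2,2,2,2)$, has a $10\mathbb{Z}$-cluster tilting subcategory by Theorem~\ref{thm_classification_nonhomo_acyclic}, yet has exactly this shape.

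So (d) must be read with the weaker notion ``$\ell_m\geq\ell_{m+1}=2$ for some $m\geq 2$''. This means a nontrivial decomposition $\underline{\ell}=\underline{\ell_1}\Delta\underline{\ell_2}$ in which $\underline{\ell_2}=(1,2,\ldots,2)$ is allowed. The paper's two-line proof tacitly uses this reading: in the case $\ell_{i-1}=\ell_i>\ell_{i+1}$ it only obtains $\ell_{i+1}=2$ and calls that a deglue point.

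With that reading your argument closes at once. Suppose no $\ell_j$ with $j\geq 3$ equals $2$. By (b) and connectedness there are then no descents after position $2$, and rises are by exactly $1$. A plateau is necessarily at a value $\geq 3$, and by (a) it is never followed by a rise. Hence $\underline{\ell}=(1,2,\ldots,h,h,\ldots,h)$, which is homogeneous.
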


\begin{proof} 
    Assume $\underline{\ell}$ is not homogeneous,
    then there exists 
    \begin{equation*}
        \ell_{i-1}=\ell_i<\ell_{i+1} \text{ or } \ell_{i-1}=\ell_i>\ell_{i+1}
    \end{equation*}
    for some $i>1$.
    By excluding the cases in Lemma \ref{lem_necessaryshape},
    this gives a deglue point.
\end{proof}

\begin{Prop}\label{prop_A_is_piecewise_homogenous_simple_bridges_in_C}
    Let $A=A_{\underline{\ell}}^{d}$ be a non-self-injective $d$-Nakayama algebra.
    Suppose $\modn A$ admits a partial  $nd\mathbb{Z}$-cluster tilting subcategory $\mathcal{
C}$. The following statements hold true.
    \begin{itemize}
        \item [(a)] 
        $\underline{\ell} = \underline{\ell_1}\Delta \cdots \Delta \underline{\ell_t}$ 
        or
        $\underline{\ell} = (\underline{\ell_1}\Delta \cdots \Delta \underline{\ell_t})^c$ such that $\underline{\ell_i}$ is acyclic homogeneous for all $1\leq i\leq t$.
        \item [(b)] The simple bridges  
        \begin{equation*}
             S_i = M(1 + \sum_{k=1}^i m_k, 2+ \sum_{k=1}^i m_k, \ldots, d+1+\sum_{k=1}^i m_k), m_i = |\underline{\ell_i}|-1,  1\leq i\leq t-1,
        \end{equation*}
        are all included in $\mathcal{C}$.
    \end{itemize}
\end{Prop}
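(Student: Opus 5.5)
We argue by induction on the number of deglue points, using Corollary \ref{cor_necessary_shape_of_Kupisch_series} to start and the gluing machinery of Section \ref{preliminary_gluing} to carry the induction. Since $A$ is not self-injective, $\underline{\ell}$ is not cyclic homogeneous. Corollary \ref{cor_necessary_shape_of_Kupisch_series} therefore leaves three shapes: acyclic homogeneous, acyclic with a deglue point, or cyclic with a self-deglue point. In the acyclic homogeneous case, (a) holds with $t=1$ and (b) is vacuous.

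\textbf{Acyclic case with a deglue point.} Say the deglue point is at position $m+1$, so $\ell_m\geq \ell_{m+1}=2<\ell_{m+2}$. We first show that the simple bridge $S=M(m,\ldots,m+d)$ lies in $\mathcal{C}$, using Lemma \ref{lem_moremodules}. At a deglue point the value $f$ stays constant across two consecutive indices, say $f(j)=f(j+1)=x_1$, where $x_1$ sits exactly at the bridge position. This comes from $\ell_{m+2}=\ell_{m+1}+1$, or from the dual condition on $g$ when $\ell_m>\ell_{m+1}$. Lemma \ref{lem_moremodules}(a) (or (b)) then places $M(z)$ with $z=(x_1,x_1+1,\ldots,x_d+1)$ in $\mathcal{C}$. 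We choose $x=(x_1,\ldots,x_{d+1})$ so that $x_d+1=x_1+d$; then $M(z)$ is simple and equals $S$. If necessary, we iterate the lemma, passing to $y=(x_1,\ldots,x_{d+1}-1)$, until the last coordinate collapses.

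Once $S\in\mathcal{C}$, Proposition \ref{prop_partial_d_ct_of_lambda_obtained_from_gluing}, with $n d$ in place of $d$, shows that $\mathcal{C}\cap\modn A_{\underline{\ell_1}}^d$ and $\mathcal{C}\cap\modn A_{\underline{\ell_2}}^d$ are partial $nd\mathbb{Z}$-cluster tilting. Each piece has strictly fewer deglue points, so induction shows each is an iterated gluing of acyclic homogeneous pieces, and the bridges internal to each piece lie in the corresponding intersection, hence in $\mathcal{C}$. Reassembling with Proposition \ref{prop_glue_acyclic_nak_and_form_of_simple_bridge} gives $\underline{\ell}=\underline{\ell_1}\Delta\cdots\Delta\underline{\ell_t}$. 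The bridges take the stated form $S_i$ after re-indexing coordinates, using $m_i=|\underline{\ell_i}|-1$ because consecutive pieces share the glued vertex.

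\textbf{Cyclic case.} We pass to the self-degluing $A^a$. By Lemma \ref{lem_partial_ndZ_ct_of_A_gives_that_of_A^a}, $\modn A^a$ admits the partial $nd\mathbb{Z}$-cluster tilting subcategory $\mathcal{C}^a$. The acyclic case applies to $A^a$, giving $\underline{\ell}^a=\underline{\ell_1}\Delta\cdots\Delta\underline{\ell_t}$, and hence $\underline{\ell}=(\underline{\ell}^a)^c$ of the required form. For (b), let $M^a(x)$ be one of the bridges in $\mathcal{C}^a$ other than $M^a(\lambda)$ and $M^a(\mu)$. By construction of $\mathcal{C}^a$, it comes from some $M(x)\in\mathcal{C}\cap\mathcal{M}$. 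The identification $M(x)\cong M(x')$ in Proposition \ref{prop_M_and_M^a} then shows that the corresponding bridge of $A$ lies in $\mathcal{C}$.

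\textbf{Main obstacle.} The hard step is extracting $S\in\mathcal{C}$ at a deglue point. Lemma \ref{lem_moremodules} produces $M(x_1,x_1+1,\ldots,x_d+1)$, and this is simple only when $x_d=x_1+d-1$. We must choose $x$ carefully among projectives sharing the same $f$-value, and check that the hypotheses $y,z\in os^{d+1}$ hold when $\ell_{m+1}=2$. If the direct choice fails, we would instead apply Proposition \ref{prop_more_modules_from_tau_d_Omega_-1} to the projective $M(m,\ldots,m+d-1,m+d+1)$. Its $\tau_d^{-1}$ followed by $\Omega^{d}$ should land on the simple module at the glued vertex.
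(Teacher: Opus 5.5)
Your proposal is correct and follows essentially the same route as the paper: reduce via Corollary \ref{cor_necessary_shape_of_Kupisch_series}, obtain the bridge $S\in\mathcal{C}$ from Lemma \ref{lem_moremodules}(a) using $\ell_{m+2}=\ell_{m+1}+1$, split with Proposition \ref{prop_partial_d_ct_of_lambda_obtained_from_gluing} and iterate the degluing, and handle the cyclic case through the self-degluing and Lemma \ref{lem_partial_ndZ_ct_of_A_gives_that_of_A^a}. The hedging in your last paragraph is unnecessary: $f(m+d+1)=f(m+d+2)=m$, so the choice $x=(m,m+1,\ldots,m+d-1,m+d+1)$ lies in $os_{\underline{\ell}}^{d+1}$, and Lemma \ref{lem_moremodules}(a) directly gives $\Omega^d\tau_d^{-1}M(x)\cong M(m,\ldots,m+d)=S\in\mathcal{C}$, with no iteration or appeal to $g$ needed.
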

\begin{proof}
    As $A$ is non-self-injective, 
    $\underline{\ell}$ is not cyclic homogeneous.
    The statements hold true trivially if $\underline{\ell}$ is acyclic homogeneous.
    So by Corollary \ref{cor_necessary_shape_of_Kupisch_series}, 
    we only need to consider the following cases.
    \begin{itemize}
        \item [(1)] $\underline{\ell}$ is acyclic with a deglue point.
        \item [(2)] $\underline{\ell}$ is cyclic with a self-deglue point.
    \end{itemize}
    In case (1), we may write 
    \begin{equation*}
         \underline{\ell} = \underline{\ell_1}\Delta_S \underline{\ell_2}
    \end{equation*}
    with 
    \begin{equation*}
        S=M(m, m+1, \ldots, m+d)
    \end{equation*}
     where $m=|\underline{\ell_1}|$.
     By the definition of degluing, 
     we have 
     \begin{equation*}
         \ell_m\geq \ell_{m+1} = 2,  \text{ and }
         \ell_{m+1}<\ell_{m+2}.
     \end{equation*}
     This implies $\ell_{m+2}=\ell_{m+1}+1$.
     By Lemma \ref{lem_moremodules} (a),
     $S\in \mathcal{C}$.
     So Proposition \ref{prop_partial_d_ct_of_lambda_obtained_from_gluing} applies and 
     \begin{equation*}
         \mathcal{C}_i = \mathcal{C}\cap \modn A_i \subseteq \modn A_i
     \end{equation*}
     is partial $nd\mathbb{Z}$-cluster tilting where $A_i = A_{\underline{\ell_i}}^{d}$ for $i=1,2$.

     Iterating the degluing procedure, we conclude that statement (a) and (b) hold true.

     In case (2), let $\underline{\ell}^a$ be the self-degluing of $\underline{\ell}$ and $A^a = A_{\underline{\ell}^a}^d$ the corresponding acyclic $d$-Nakayama algebra.
     Let $\mathcal{M}\subseteq \modn A$ and $\mathcal{M}^a\subseteq \modn A^a$ be the distinguished $d\mathbb{Z}$-cluster tilting subcategories.
     By Lemma \ref{lem_partial_ndZ_ct_of_A_gives_that_of_A^a}, 
     $\modn A^a$ admits a partial $nd\mathbb{Z}$-cluster tilting subcategory given by 
     \begin{equation*}
         \mathcal{C}^a = \add \{M^a(x)\mid M(x)\in \mathcal{C}\cap \mathcal{M} \text{ or } x \in  \{\lambda, \mu\}\}
     \end{equation*}
     where $\lambda = (1,2,\ldots, d+1)$ and $\mu = (m, m+1, \ldots, m+d)$ with $m = |\underline{\ell}|$.
     In particular $\underline{\ell}^a$ is of the form as in case (1). 
     Applying statements (a) and (b), we have  
     \begin{equation*}
         \underline{\ell}^a = \underline{\ell_1}^a \Delta_{S_1}\cdots\Delta_{S_{t-1}}\underline{\ell_t}^a
     \end{equation*}
     such that 
     $\underline{\ell_i}^a$ is acyclic homogeneous for each $1\leq i\leq t$ and 
     $S_i\in \mathcal{C}^a$ for all $1\leq i\leq t-1$.

     Therefore, $\underline{\ell} = (\underline{\ell}^a)^c$ and all $S_i \in \mathcal{C}$ for $1\leq i\leq t-1$.
\end{proof}

\begin{Def}
    Let $A=A_{\underline{\ell}}^{d}$.
    We call $A$ piecewise homogeneous if $\underline{\ell} = \underline{\ell_1}\Delta \cdots \Delta \underline{\ell_t}$ such that $\underline{\ell_i}$ is homogeneous for $1\leq i\leq t$.
    And we call $A$ cyclic piecewise homogeneous if $\underline{\ell} = (\underline{\ell_1}\Delta \cdots \Delta \underline{\ell_t})^c$ with $\underline{\ell_i}$ being homogeneous for $1\leq i\leq t$.
\end{Def}

\subsection{Non-selfinjective case} \label{sec_non-self-inj_case}

In this section, we give a classification of non-self-injective  $d$-Nakayama algebras which admit an  $nd\mathbb{Z}$-cluster tilting subcategory.
More precisely, such algebras are given by piecewise homogeneous or cyclic piecewise homogeneous ones of particular shape.
We start with the homogeneous acyclic case.
We will obtain the classification result for the piecewise homogeneous case by applying gluing techniques.
Finally the classification result for cyclic piecewise homogeneous case will be obtained by applying the orbit construction.

\subsubsection{Homogeneous acyclic case}\label{subsec_homo_acyclic_case}

Let $A = A^{d}_{\ell, m}$ be an acyclic homogeneous $d$-Nakayama algebra and $\mathcal{M}$ the distinguished $d\mathbb{Z}$-cluster tilting subcategory.
The map $f$ and $g$ simplify as follows.

\begin{Prop}\label{prop_f_g_acyclic_homo}
    We have 
    \begin{equation*}
    f(i) = \left\{\begin{array}{cc}
        1 & d+1\leq i\leq \ell + d \\
        i - \ell - d + 1 & \ell + d + 1 \leq i \leq m 
    \end{array}\right.
\end{equation*}
and 
\begin{equation*}
    g(i) = \left\{\begin{array}{cc}
       i + \ell + d - 1  & 1\leq i\leq m - \ell + 1  \\
        m+d  &  m - \ell + 2 \leq i \leq m.
    \end{array}\right.
\end{equation*}
\end{Prop}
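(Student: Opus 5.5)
The plan is to compute $f$ and $g$ directly from the definitions, using the explicit Kupisch series of $A^d_{\ell,m}$: $\ell_j = j$ for $1\leq j\leq \ell$, $\ell_j=\ell$ for $\ell+1\leq j\leq m$, and $\ell_j=0$ outside $[1,m]$. For $f$ I will use the formula already recorded for the acyclic case, $f(i)=\max\{i-\ell_{i-d}-d+1,\,1\}$, where $i=x_{d+1}$ ranges over $d+1\leq i\leq m+d$ (these are the possible last coordinates of elements of $os^{d+1}_{\underline{\ell}}$). I read the upper bound $m$ in the second case of the statement as $m+d$; I will check this against a small example such as $d=1$ before committing to it.

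\textbf{Computing $f$.} Substitute the two regimes of $\ell_{i-d}$.
\begin{itemize}
\item If $1\leq i-d\leq \ell$, then $\ell_{i-d}=i-d$, so $i-\ell_{i-d}-d+1=1$ and hence $f(i)=1$.
\item If $\ell+1\leq i-d\leq m$, then $\ell_{i-d}=\ell$, so $f(i)=\max\{i-\ell-d+1,1\}=i-\ell-d+1$, since $i-\ell-d+1\geq 2$.
\end{itemize}
This gives exactly the two cases $d+1\leq i\leq \ell+d$ and $\ell+d+1\leq i$.

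\textbf{Computing $g$.} Here there is no ready-made formula, so I unwind the definition. We have $(i,x_2,\dots,x_d,y)\in os^{d+1}_{\underline{\ell}}$ for some strictly increasing middle coordinates if and only if $y\geq i+d$, $y\leq m+d$, and the length condition $y-i+1\leq \ell_{y-d}+d$ holds. The last condition reads $\ell_{y-d}\geq y-i-d+1$, so $g(i)$ is the largest $y\leq m+d$ satisfying it. Writing $j=y-d$, we need $\ell_j\geq j-i+1$.
\begin{itemize}
\item For $j\leq \ell$, we have $\ell_j=j\geq j-i+1$ because $i\geq 1$, so the inequality always holds.
\item For $j>\ell$, the inequality becomes $\ell\geq j-i+1$, that is, $j\leq i+\ell-1$.
\end{itemize}
Hence $g(i)-d=\min\{\,i+\ell-1,\ m\,\}$. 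Note that $j\leq \ell$ never gives a larger bound, since $i+\ell-1\geq \ell$. The minimum equals $i+\ell-1$ exactly when $i\leq m-\ell+1$, and equals $m$ otherwise, which yields the two cases in the statement.

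\textbf{Main obstacle.} There is no real conceptual difficulty; the work is bookkeeping at the boundaries. I need to confirm that $\max\{i-\ell-d+1,1\}$ never activates the $1$ in the second regime, handle the implicit requirement $\ell\leq m$, and keep careful track of the index shift by $d$ coming from the convention $x_d-x_1+1\leq \ell_{x_d-d+1}+d-1$. The step I expect to need the most care is checking that the admissibility condition for $(d+1)$-tuples in $os^{d+1}$ uses $\ell_{y-d}$ with the indexing I assumed. I will verify this against the $d=2$ example quiver given earlier in the paper.
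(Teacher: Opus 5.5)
Your computation is correct, and it is the direct check from the definition of $os^{d+1}_{\underline{\ell}}$ (with the length condition $x_{d+1}-x_1+1\leq \ell_{x_{d+1}-d}+d$) that the paper leaves implicit, since it states this proposition without proof. You are also right that the upper bound in the second case for $f$ should be $m+d$ rather than $m$: the last coordinates of elements of $os^{d+1}_{\underline{\ell}}$ range over $d+1\leq i\leq m+d$, consistent with the sink $(m,\ldots,m+d-1)$ and with the injectives $M(y_1,\ldots,y_d,m+d)$ used later in the paper.
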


The global dimension of $A=A_{\ell, m}^d$ was determined in \cite[Theorem 5.28]{Ber20} which will be useful for us. We recall the result by the following Lemma.

\begin{Lem}\cite[Theorem 5.28]{Ber20}\label{lem_global_dim_acyclic_homo}
    Let $A=A_{\ell, m}^d$ be an acyclic homogeneous $d$-Nakayama algebra.
    Consider 
    \begin{equation*}
        q = \lfloor \frac{m-1}{\ell+d-1}\rfloor
    \end{equation*}
    and 
    \begin{equation*}
        r = \left\{
        \begin{array}{cc}
            1 & \text{ if } m-1-q(\ell+d-1) < \ell \\
            m-1-q(\ell+d-1)-\ell+2 & \text{ otherwise. }
        \end{array}\right.
    \end{equation*}
    The the global dimension of $A$ is given as follows.
    \begin{equation*}
        \gldim A = \left\{
        \begin{array}{cc}
           d(d+1)q  & \text{ if } (\ell+d-1)|(m-1)  \\
            d((d+1)q+r) & \text{ otherwise.} 
        \end{array}\right.
    \end{equation*}
\end{Lem}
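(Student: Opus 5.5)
The statement is a cited result (Berg's computation of $\gldim A_{\ell,m}^d$). A self-contained proof would compute projective dimensions of all simple $A$-modules and take the maximum. The natural route is through the distinguished $d\mathbb{Z}$-cluster tilting subcategory $\mathcal{M}$. Because $\Omega^d(\mathcal{M})\subseteq\mathcal{M}$ and every indecomposable in $\mathcal{M}$ has a length-$d$ projective resolution piece given explicitly by Proposition \ref{Prop_modcomb} (iv), the projective dimension of any $M(x)\in\mathcal{M}$ is $d$ times the number of times the map
\begin{equation*}
x=(x_1,\ldots,x_{d+1})\mapsto (f(x_{d+1}),x_1,\ldots,x_d)
\end{equation*}
can be applied before reaching a projective, i.e.\ a tuple with $x_1=f(x_{d+1})$ (Remark \ref{f_periodic}). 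The simple modules $M(i,i+1,\ldots,i+d)$ lie in $\mathcal{M}$ by Proposition \ref{Prop_modcomb} (i), and $\gldim A$ is attained on simples. So $\gldim A = d\cdot\max_i N(i)$, where $N(i)$ is the number of iterations needed starting from $(i,\ldots,i+d)$.

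The second step is to run this iteration using the explicit form of $f$ from Proposition \ref{prop_f_g_acyclic_homo}. Starting from $(i,i+1,\ldots,i+d)$, after $d+1$ iterations every coordinate has been replaced once by a value of $f$. Away from the left boundary, each application of $f$ shifts by $\ell+d-1$. So a block of $d+1$ steps moves the whole tuple left by roughly $\ell+d-1$, namely $(i,\ldots,i+d)\mapsto(i-(\ell+d-1),\ldots,i+d-(\ell+d-1))$. This gives a simple module again, as long as $f$ is in its linear range $i\geq \ell+d+1$. This accounts for the $d(d+1)q$ term, with $q=\lfloor (m-1)/(\ell+d-1)\rfloor$ counting full blocks, and the maximum is attained at the largest $i$, i.e.\ $i=m$.

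The third step handles the final partial block, where $f$ hits the boundary value $1$. Here one must count how many of the remaining coordinates are still strictly greater than what $f$ produces before the tuple becomes projective. If the residue $m-1-q(\ell+d-1)$ is $0$, the simple reached is $M(1,\ldots,d+1)$, which is projective, so no extra steps are needed. If the residue is below $\ell$, exactly one more step is needed, since the first coordinate then already satisfies $x_1=f(x_{d+1})=1$ after one shift. Otherwise the number of extra steps grows linearly with the residue, giving $r=m-1-q(\ell+d-1)-\ell+2$. Finally one checks that starting from $i=m$ indeed maximizes $N(i)$, using monotonicity of $f$ (Remark \ref{f_periodic} (iv)).

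The main obstacle is this boundary bookkeeping in the last block: tracking exactly when the iterated tuple satisfies $x_1=f(x_{d+1})$, and verifying that the count is precisely $r$ and never exceeds $d$. I would first work out small cases, such as $d=1$ and $d=2$ with small $\ell$, to pin down the off-by-one conventions, and then do an induction on the residue.
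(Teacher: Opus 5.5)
There is a genuine gap in your first step. You derive $\gldim A=d\cdot\max_i N(i)$ from two facts: $\gldim A$ is attained on simples, and the simples $M(i,\ldots,i+d)$ lie in $\mathcal{M}$. For $d\ge 2$, however, these are \emph{not} all the simple $A$-modules. The vertices of $A$ form the whole set $os^d_{\ell_\infty}$, while $M(i,\ldots,i+d)$ is only the simple at the vertex $(i,\ldots,i+d-1)$ (Proposition \ref{Prop_modcomb} (i)). For example, when $d=2$ the vertex $(1,3)$ lies in $os^2_{\ell_\infty}$. The simple $S_{(1,3)}$ is not of the form $M(x)$, so it is not in $\mathcal{M}$, and your iteration says nothing about its projective dimension. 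As written, your argument therefore proves only the lower bound $\gldim A\ge d\,N(m)$; the upper bound is missing. (The paper gives no argument of its own here; it imports the result from \cite{Ber20}.)

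To close the gap you need either a direct computation of $\projdim S_z$ for every $z\in os^d_{\ell_\infty}$, or the following two additional steps.

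First, reduce to $\mathcal{M}$. Every $X\in\modn A$ has an exact sequence $0\to M_{d-1}\to\cdots\to M_0\to X\to 0$ with $M_j\in\mathcal{M}$, so $\gldim A\le t+d-1$, where $t=\max_{M\in\mathcal{M}}\projdim M$. Moreover $\projdim M\in d\mathbb{Z}$ for $M\in\mathcal{M}$, because $\Ext^p_A(M,A)\neq 0$ at $p=\projdim M$ and $A\in\mathcal{M}$. Combined with $d\mid\gldim A$ \cite{IJ17}, this gives $\gldim A=t$.

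Second, show that $N(x)\le N(m,\ldots,m+d)$ for \emph{all} $x\in os^{d+1}_{\ell_\infty}$, not only for consecutive tuples. This does not follow from monotonicity of $f$, because the stopping criterion $x_1=f(x_{d+1})$ is not monotone for the componentwise order. For instance, take $d=2$, $\ell=3$, $m=6$. Then $(2,3,6)$ is projective but the smaller tuple $(2,3,5)$ is not, while $(1,2,3)$ is projective but the larger $(2,3,4)$ is not. A genuine comparison argument is therefore needed.

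Separately, in the case $m-1-q(\ell+d-1)\ge\ell$ you only assert the last-block count $r=m-1-q(\ell+d-1)-\ell+2$ ("grows linearly") rather than derive it.
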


Note that when $\ell=2$, the algebra $A$ is a classical Nakayama algebra.
A classification result for $nd\mathbb{Z}$-cluster tilting subcategories of $\modn A$ was obtained in \cite{HKV25} as we recall by the following proposition.

\begin{Prop}\label{prop_classification_homo_acyclic_l=2}
    Let $A = A_{2,m}^{d}$.
    The following statements are equivalent.
    \begin{itemize}
        \item [(a)] 
        $\modn A$ admits an $nd\mathbb{Z}$-cluster tilting subcategory $\mathcal{C}$.
        \item [(b)] $\modn A$ admits a partial  $nd\mathbb{Z}$-cluster tilting subcategory $\mathcal{C}$.
        \item [(c)] $n|(m-1)$.
    \end{itemize}
    If one of the above conditions hold true, then the subcategory in (a) and (b) is unique and 
     \begin{equation*}
        \mathcal{C} = \add(\{A\}\cup \{\tau_{nd}^{-k}(M(1, 2, \ldots, d+1))\}) = \add(\{A\}\cup \{M(1 + kn, \ldots, d+1+kn)\mid 1\leq k\leq \frac{m-1}{n}\}).
    \end{equation*}
\end{Prop}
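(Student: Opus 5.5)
The plan is to reduce everything to the combinatorics of simple modules, using the explicit description of $\mathcal{M}=\mathcal{M}_{\underline{\ell}}^d$ for $\underline{\ell}=(1,2,\ldots,2)$. First I list $\ind\mathcal{M}$. Since $\ell=2$, the points of $os^{d+1}_{\underline{\ell}}$ are the simple tuples $s_i=(i,i+1,\ldots,i+d)$, $1\leq i\leq m$, together with the tuples $p_i=(i,\ldots,i+d-1,i+d+1)$. By Proposition \ref{Prop_modcomb} (ii), (iii) the $M(p_i)$ are projective-injective. Writing $S_i=M(s_i)$, the module $S_1$ is the simple injective and $S_m$ the simple projective. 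Proposition \ref{prop_f_g_acyclic_homo} gives $f(i+d)=i-1$. Hence Remark \ref{f_periodic} (ii) yields $\Omega^dS_i\cong S_{i-1}$ and dually $\Omega^{-d}S_i\cong S_{i+1}$, up to the orientation convention of the indices. Proposition \ref{Prop_modcomb} (v) gives $\tau_dS_i=S_{i-1}$. Combining these with Proposition \ref{prop_tau_nd_and_tau_d} shows that $\tau_{nd}^{\mp1}$ shifts the index of a simple by $n$. Proposition \ref{Prop_modcomb} (vi) together with $\Ext^{kd}(X,Y)\cong\Ext^d(\Omega^{(k-1)d}X,Y)$ shows that $\Ext^{kd}_A(S_i,S_j)\neq0$ exactly when $|i-j|=k$, with the correct sign, and all other Ext groups inside $\mathcal{M}$ vanish.

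\textbf{(a)$\Rightarrow$(b).} This is Proposition \ref{prop_d_ct_implies_weakly_d_ct}.

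\textbf{(b)$\Rightarrow$(c).} Let $\mathcal{C}$ be partial $nd\mathbb{Z}$-cluster tilting. By Proposition \ref{prop_C_intersecting_M_gives_weakly_ndZ_ct}, $\mathcal{C}^{\mathcal{M}}=\mathcal{C}\cap\mathcal{M}$ is again partial $nd\mathbb{Z}$-cluster tilting, and it contains $A$, $DA$, $S_1$ and $S_m$. The non-projective objects of $\mathcal{C}^{\mathcal{M}}$ are simples $S_i$ with $i<m$, and the bijection $\tau_{nd}^{-1}:\mathcal{C}^{\mathcal{M}}_I\to\mathcal{C}^{\mathcal{M}}_P$ acts by $i\mapsto i+n$. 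Starting from $S_1\in\mathcal{C}_I$, the $\tau_{nd}^{-1}$-orbit consists of $S_{1+kn}$, and it must end at a simple that is projective, i.e. at $S_m$. If instead $1+kn>m$, the module $\tau_{nd}^{-1}S_{1+(k-1)n}$ would be zero or fail to lie in $\mathcal{C}_P$. Hence $n\mid(m-1)$. The Ext computation above shows that no further simple $S_j$ can be added, because $\Ext^{|j-1-kn|d}$ with some member of the chain would be nonzero in degree $<nd$. Therefore $\mathcal{C}^{\mathcal{M}}$ is exactly the category displayed in the statement.

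\textbf{(c)$\Rightarrow$(a).} Put $\mathcal{C}=\add(\{A\}\cup\{S_{1+kn}\})$. Rigidity in degrees $0<i<nd$ and the $nd\mathbb{Z}$ property follow from the Ext description and from $\Omega^{nd}S_{1+kn}=S_{1+(k-1)n}$ (or $0$). To show that $\mathcal{C}$ is $nd$-cluster tilting I will use Lemma \ref{lem_characterisation_of_d_ct_module}, in the form $\gldim\End_A(C)\leq nd+1$. The projective dimensions of the simple $\End_A(C)$-modules are computed with Lemma \ref{lem_glodim_endomorphism_algebra}. For a summand $S_{1+kn}$, the minimal right approximation by the other summands is a projective cover, and its kernel is $S_{2+kn}$. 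This simple admits an $\add C$-resolution obtained by splicing the minimal projective resolution of length $nd-1$ up to $S_{1+(k+1)n}$, giving $\resdim\leq nd-1$ and hence projective dimension $\leq nd+1$. The projective summands are handled by part (a) of that lemma.

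\textbf{Main obstacle.} The hardest step is the uniqueness claim in (b). For $d\geq 2$ the algebra $A$ has indecomposables outside $\mathcal{M}$, and I must exclude them from $\mathcal{C}$. I would first try the following argument. Suppose $X\in\ind\mathcal{C}\setminus\mathcal{M}$. Since $\mathcal{M}$ is $d$-cluster tilting, $\Ext^i_A(X,M')\neq0$ for some $M'\in\mathcal{M}$ and some $0<i<d$. I would then use the explicit shape of $\modn A$ to move $M'$ into $\add(A\oplus DA\oplus\bigoplus_k S_{1+kn})$, via the syzygy description and condition (d) of Definition \ref{def_weakly_d_ct}, which forces $\Omega^iX$ to be indecomposable. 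If that fails, the fallback is to transport the classification of \cite{HKV25}. This uses the fact that $\mathcal{M}$ behaves like the module category of the classical radical square zero Nakayama algebra with $m$ vertices, with $\Omega^d$ playing the role of $\Omega$.
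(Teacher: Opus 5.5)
Your route differs from the paper's. The paper's proof consists of two citations. Since $A_{2,m}^d$ is the radical-square-zero Nakayama algebra of a linearly oriented $\mathbb{A}_{(m-1)d+1}$, it is representation-directed, so Vaso's theorem \cite[Theorem 1]{Vas19} gives (a)$\Leftrightarrow$(b). Then \cite[Proposition 4.9]{HKV25} gives (a)$\Leftrightarrow$(c) together with the shape of $\mathcal{C}$. You instead argue inside $\mathcal{M}$ with the paper's own machinery: the forced $\tau_{nd}^{-1}$-chain, the $\Ext^{kd}$ computation from Proposition~\ref{Prop_modcomb}(vi), and the criterion $\gldim\End_A(C)\leq nd+1$ via Lemma~\ref{lem_glodim_endomorphism_algebra}. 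This is self-contained and parallels the $\ell\geq 3$ treatment. It works, but two points need fixing.

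\textbf{Orientation.} Your orientation is reversed in places.
\begin{itemize}
\item With the paper's conventions, $S_1=M(1,\ldots,d+1)$ is the simple \emph{projective} (since $f(d+1)=1$) and $S_m$ is the simple \emph{injective}. This is what your formula $\Omega^dS_i\cong S_{i-1}$ actually encodes, and it is what makes ``$S_1\in\mathcal{C}_I$'' true. The chain $S_1,S_{1+n},\ldots$ must terminate at an injective, namely $S_m$, which gives $n\mid(m-1)$.
\item For $d\geq 2$, the kernel of the projective cover of $S_{1+kn}$ is the simple at the vertex $(kn,kn+2,\ldots,kn+d)$. This lies outside $\mathcal{M}$ and is not $S_{2+kn}$.
\item Your count nevertheless survives. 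The $(nd-1)$-st syzygy of that kernel is $\Omega^{nd}S_{1+kn}=S_{1+(k-1)n}$. All intermediate syzygies are simples not in $\add C$, and their minimal right $\add C$-approximations are projective covers. So $\resdim_C$ of the kernel is $nd-1$, and the corresponding simple $\End_A(C)$-module has projective dimension $nd+1$.
\end{itemize}

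\textbf{The ``main obstacle'' dissolves.} You need neither an analysis of indecomposables outside $\mathcal{M}$ nor a transport of \cite{HKV25}. You have already shown two things:
\begin{itemize}
\item Any partial $nd\mathbb{Z}$-cluster tilting $\mathcal{C}$ contains $\mathcal{C}_0=\add(\{A\}\cup\{S_{1+kn}\})$, because $A,DA\in\mathcal{C}$ and the $\tau_{nd}^{-1}$-chain starting at $S_1$ is forced.
\item In (c)$\Rightarrow$(a), $\mathcal{C}_0$ is $nd$-cluster tilting.
\end{itemize}
Since $\mathcal{C}$ is $nd$-rigid, every $X\in\mathcal{C}$ satisfies $\Ext^i_A(X,\mathcal{C}_0)=0$ for $0<i<nd$. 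Hence $X\in\mathcal{C}_0$ by the defining equality of an $nd$-cluster tilting subcategory, and so $\mathcal{C}=\mathcal{C}_0$.

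This one line gives uniqueness in both (a) and (b), and it also gives (b)$\Rightarrow$(a) without Vaso's theorem. It makes your separate Ext argument excluding extra simples $S_j$ superfluous. It is the same kind of maximality argument the paper uses at the end of the necessity part of Theorem~\ref{thm_classification_nonhomo_acyclic}. If you did want to invoke \cite{HKV25}, apply it to $A$ itself, the radical-square-zero Nakayama algebra with $(m-1)d+1$ vertices, where $nd\mid(m-1)d$ if and only if $n\mid(m-1)$. There is no need to view $\mathcal{M}$ as a Nakayama algebra with $m$ vertices.
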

\begin{proof}
    By \cite[Theorem 1]{Vas19}, (a) and (b) are equivalent as $A$ is representation-directed.
    By \cite[Proposition 4.9]{HKV25}, (a) and (c) are equivalent and $\mathcal{C}$ is of the given form.
\end{proof}

Next we address the case $\ell\geq 3$.
For $A = A_{\ell, m}^d$, Proposition \ref{prop_homo_acyclic_equiv_conditions} shows that $\modn A$ admits an $nd\mathbb{Z}$-cluster tilting subcategory if and only if certain numerical conditions on $n, d, \ell$, and $m$ are satisfied. In particular, when these conditions hold, such an $nd\mathbb{Z}$-cluster tilting subcategory is unique and is given by $\add(A \oplus DA)$.
After the completion of this paper, the author became aware that the implication (c) $\Rightarrow$ (a) in Proposition \ref{prop_homo_acyclic_equiv_conditions} had already been proved in \cite[Proposition 6.14]{Sen23}. To keep this paper self-contained, we include full proofs here.

\begin{Prop}\label{prop_homo_acyclic_equiv_conditions}
    Let $A = A_{\ell, m}^d$ be an acyclic homogeneous $d$-Nakayama algebra with $\ell\geq 3$.
    The following conditions are equivalent.
    \begin{itemize}
        \item [(a)] $\modn A$ admits an $nd\mathbb{Z}$-cluster tilting subcategory $\mathcal{C}$.
        \item [(b)] $\modn A$ admits a partial $nd\mathbb{Z}$-cluster tilting subcategory $\mathcal{C}$.
        \item [(c)] $(d+1) | (n-2)$ and $m=\frac{n-2}{d+1}(\ell+d-1)+\ell+1$.
    \end{itemize}
    If one of the above conditions holds, then $\mathcal{C}=\add (A\oplus DA)$. 
\end{Prop}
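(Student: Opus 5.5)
The plan is to prove (a)$\Rightarrow$(b)$\Rightarrow$(c)$\Rightarrow$(a), reading off the uniqueness statement from the proof of (b)$\Rightarrow$(c).

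The implication (a)$\Rightarrow$(b) is Proposition \ref{prop_d_ct_implies_weakly_d_ct}. For (b)$\Rightarrow$(c), let $\mathcal{C}$ be partial $nd\mathbb{Z}$-cluster tilting. By Proposition \ref{prop_C_intersecting_M_gives_weakly_ndZ_ct} we may replace $\mathcal{C}$ by $\mathcal{C}\cap\mathcal{M}$, so every indecomposable of $\mathcal{C}$ has the form $M(x)$.

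The first step is to show that $\mathcal{C}\cap\mathcal{M}=\add(A\oplus DA)$. Assume some $M(x)\in\mathcal{C}$ is neither projective nor injective. Using Proposition \ref{Prop_modcomb} (iv) and Remark \ref{f_periodic} (ii), compute $\Omega^{kd}M(x)$ explicitly in coordinates via $f$ from Proposition \ref{prop_f_g_acyclic_homo}; iterated $d$-syzygies shift $x$ by prepending $f(x_{d+1})$ and dropping the last entry. Condition (e) keeps all of these in $\mathcal{C}$. Then look for some $M(w')\in\mathcal{C}$ with $w'\preccurlyeq\tau_d(\Omega^{kd}M(x))$ for $k<n-1$. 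This gives a nonzero $\Ext^{(k+1)d}$ with $0<(k+1)d<nd$, contradicting (e'). Natural candidates for $M(w')$ are projectives $M(1,\dots)$, which are plentiful because $f\equiv 1$ on $[d+1,\ell+d]$, and the modules produced by Lemma \ref{lem_moremodules}. This step I expect to be the main obstacle, since the coordinates have to be handled carefully in the boundary cases $x_1=1$ or $x_{d+1}=m+d$.

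Once $\mathcal{C}=\add(A\oplus DA)$ is known, condition (c) of Definition \ref{def_weakly_d_ct} forces $\tau_{nd}$ to map the non-projective injectives bijectively onto the non-injective projectives. By Proposition \ref{prop_tau_nd_and_tau_d}, $\tau_{nd}=\tau_d\Omega^{(n-1)d}$. So I will track a non-projective injective $M(x_1,\dots,x_d,m+d)$ with $x_1\ge m-\ell+2$ through $n-1$ applications of $\Omega^d$. Each application moves the tuple left, by roughly $\ell+d-1$ every $d+1$ steps, since a full cycle of $d+1$ syzygies replaces all coordinates. Then apply $\tau_d$ (subtract $1$) and demand that the result be projective, i.e. that its first coordinate equal $f$ of its last.

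Requiring that the intermediate syzygies be neither projective (so the count is exactly $(n-1)d$) nor zero gives divisibility $(d+1)\mid(n-2)$. Matching the lengths gives $m=\frac{n-2}{d+1}(\ell+d-1)+\ell+1$. The simplest witness is the injective simple-top end: start from $x=(m,\dots,m+d)$'s neighbour and check against Lemma \ref{lem_global_dim_acyclic_homo}, which with these parameters gives $q=\frac{n-2}{d+1}$ and $\gldim A=nd$, i.e. $d(d+1)q+d\cdot(\text{something})$. This serves as a consistency check.

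For (c)$\Rightarrow$(a), take $\mathcal{C}=\add(A\oplus DA)$. Under (c), the same coordinate computation shows that $\Omega^{kd}$ of each non-projective injective stays in $\mathcal{M}\setminus(\proj A\cup\inj A)$ for $0<k<n-1$. At $k=n-1$, $\tau_d$ lands on a projective, and $\Omega^{nd}$ of an injective is projective or zero, so $\Omega^{nd}\mathcal{C}\subseteq\mathcal{C}$. Vanishing of $\Ext^i(DA,A)$ for $0<i<nd$, $i\notin nd\mathbb{Z}$, follows from Proposition \ref{Prop_modcomb} (vi) applied to these syzygies, checked via $\preccurlyeq$. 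For maximality I will use Lemma \ref{lem_characterisation_of_d_ct_module} (b) with $nd$ in place of $d$, bounding $\gldim\End_A(A\oplus DA)\le nd+1$ by Lemma \ref{lem_glodim_endomorphism_algebra}. Projective summands have $\add(A\oplus DA)$-resolutions obtained from the minimal projective resolutions of their radicals, of length at most $\gldim A=nd$. Injective non-projective summands are handled by part (b) through $\ker f_i$. Alternatively, since $A$ is representation-directed, \cite[Theorem 1]{Vas19} gives that partial and genuine $nd$-cluster tilting coincide, which is the shortcut I would use first. Uniqueness then follows because the proof of (b)$\Rightarrow$(c) showed $\mathcal{C}\cap\mathcal{M}=\add(A\oplus DA)$, and a cluster tilting $\mathcal{C}$ containing this must equal it.
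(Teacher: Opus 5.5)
Your outline (a)$\Rightarrow$(b)$\Rightarrow$(c)$\Rightarrow$(a) matches the paper's, but the two steps that carry the content are not supplied, and one of your fallbacks does not exist.

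\textbf{(b)$\Rightarrow$(c).} You reduce to $\mathcal{C}\cap\mathcal{M}=\add(A\oplus DA)$ and then leave this as the ``main obstacle''. Starting from an arbitrary non-projective non-injective $M(x)$, there is no evident witness $M(w')$.

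The paper does not attack this directly. It first shows, by comparing tuples that differ in one coordinate, that every projective non-injective needs the same number $k$ of applications of $\tau_{nd}^{-1}$ to reach the set $\mathcal{I}$ of injective non-projectives. It then rules out $k>1$ using $x=(1,2,\ldots,d+1)$ and $y=(1,3,\ldots,d+2)$. If $\tau_{nd}^{-1}M(y)\notin\mathcal{I}$, then $\Omega^{-nd}M(y)\in\mathcal{C}_I$, so $\tau_{nd}\Omega^{-nd}M(y)\cong\tau_d\Omega^{-d}M(y)=M(2,\ldots,d+1,\ell+d-1)\in\mathcal{C}$. Since $x\preccurlyeq(1,\ldots,d,\ell+d-2)$, this module has nonzero $\Ext^d_A$ into $M(x)$; this step uses $\ell\ge 3$. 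Once $k=1$, the fact that $\tau_{nd}$ strictly lowers all coordinates leaves no room for other modules in $\mathcal{C}\cap\mathcal{M}$.

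Your reason for $(d+1)\mid(n-2)$ is also not the operative one. Write $n-1=a(d+1)+b$. The simple injective $M(m,\ldots,m+d)$ alone has a projective $\tau_{nd}$-image whenever $m=\ell+b+a(\ell+d-1)$, for every $1\le b\le d$. Divisibility only appears when two modules are compared. The last coordinates $g(x_b)+1+a(\ell+d-1)$ and $g(y_b)+1+a(\ell+d-1)$ of $\tau_{nd}^{-1}M(x)$ and $\tau_{nd}^{-1}M(y)$ must both equal $m+d$, and this forces $b=1$.

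\textbf{(c)$\Rightarrow$(a).} The shortcut via \cite[Theorem 1]{Vas19} is not available. For $d\ge 2$, $A^d_{\ell,m}$ is in general not representation-directed, and not even representation-finite. For example, when $d=2$ and $\ell\ge 6$, the vertices $(i,j)$ with $i\in\{1,2\}$ and $3\le j\le 7$ span a convex commutative $2\times 5$ grid. The paper invokes Vaso's result only for $\ell=2$.

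So $\gldim\End_A(A\oplus DA)\le nd+1$ really has to be proved, and ``handled by part (b) through $\ker f_i$'' skips exactly its substance. The paper's argument for an injective non-projective $Q=M(y)$ runs as follows.
\begin{itemize}
\item The part of the $d$-almost split sequence ending at $Q$ that lies in $\add C$ is the minimal approximation sequence of $Q$ by the other summands of $C$.
\item Its $d$-th syzygy $\Omega^d_C(Q)$ is the simple module at $(y_1-1,\ldots,y_d-1)$.
\item This simple receives no nonzero maps from the non-projective summands of $C$, so its $C$-resolution is its projective resolution.
\item That resolution lives over $A^d_{\ell,m-1}$, which has global dimension $(n-1)d$ by Lemma \ref{lem_global_dim_acyclic_homo}. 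Hence $\resdim_C Q\le nd$.
\end{itemize}

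Rigidity also cannot be checked by testing $\preccurlyeq$ against Proposition \ref{Prop_modcomb}(vi), because $x\preccurlyeq\tau_d(y')$ does occur. Take $d=1$, $\ell=3$, $m=7$, $x=(1,3)$, and $M(3,5)=\Omega^2M(6,8)$ with $\tau_d(3,5)=(2,4)$. Yet $\Ext^3_A(M(6,8),M(1,3))=0$. The vanishing comes from the map factoring through the injective $M(1,x_2,\ldots,x_d,\ell+d)$, so you must work with $\overline{\Hom}_A$.
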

\begin{proof}
    By Proposition \ref{prop_d_ct_implies_weakly_d_ct}, (a) implies (b).

    Now we prove (b) implies (c).
    As a consequence of Proposition \ref{prop_f_g_acyclic_homo}, 
    we get that  
    \begin{equation*}
        \mathcal{P} = \{M(x)\mid x= (1, x_2, \ldots, x_{d+1}) \text{ and } 1<x_2< \cdots < x_{d+1} < \ell + d\}
    \end{equation*}
    the set of isomorphism classes of indecomposable projective noninjective $A$-modules and 
    \begin{equation*}
        \mathcal{I} = \{M(y)\mid y = (y_1, \ldots, y_d, m+d) \text{ and } m-\ell +  1 < y_1 < y_2 < \cdots < y_d < m+d\}
    \end{equation*}
    the set of isomorphism classes of indecomposable injective nonprojective $A$-modules.
    Further, we denote by 
    \begin{equation*}
        \mathcal{F} = \{M(i, \ldots, j)\mid i + \ell + d -1 = j\}
    \end{equation*}
    the set of isomorphism classes of indecomposable projective-injective $A$-modules.
    
    Firstly we show that $\tau_{nd}^{-k}\mathcal{P}\subseteq \mathcal{I}$ for some positive integer $k$.

    Let $M(x), M(y)\in \mathcal{P}$ such that $x_i<y_i$ for some $1\leq i\leq d+1$ and $x_j=y_j$ for $j\neq i$.
    We claim that $\tau_{nd}^{-k}M(x)\in \mathcal{I}$ if and only if $\tau_{nd}^{-k}M(y)\in \mathcal{I}$.

    Let $k,k'$ be the minimal integers such that 
    \begin{equation*}
        M(x')=\tau_{nd}^{-k}M(x)\in \mathcal{I} \text{ and } M(y')=\tau_{nd}^{-k'}M(y)\in \mathcal{I}.
    \end{equation*}
    
    Assume $k>k'$. 
    Then $M(x'')=\tau_{nd}^{-k'}M(x)\notin \mathcal{I}$.
    So $x''_{d+1}<m+d=y'_{d+1}$ and $x''_i=y'_i$ for $1\leq i\leq d$.
    But then $g(x''_1)=g(y'_1)=m+d$ which implies $\Omega^{-d}M(x'')\in \mathcal{I}$.
    So $\tau_{nd}^{-k}M(x) = 0\notin \mathcal{I}$, a contradiction.
    Arguing similarly, we can prove that $k<k'$ is impossible as well.
    Thus $k=k'$ and our claim follows.

    Next we show $k=1$.
    Assume towards a contradiction that $k>1$. 
    Let 
    \begin{equation*}
        x=(1,2,\ldots,d+1) \text{ and } y=(1,3,\ldots,d+2).
    \end{equation*}
    We have $M(x), M(y)\in \mathcal{P}$. 

    Since $\tau_{nd}^{-1}M(y)\notin \mathcal{I}$, we have $\Omega^{-nd}M(y)\in \mathcal{C}_I$. 
    So 
    \begin{equation*}
        M(z) =\tau_{nd}\Omega^{-nd}M(y) \cong \tau_{d}\Omega^{-d}M(y)\cong M(2, \ldots, d+1, d+\ell-1)\in \mathcal{C}.
    \end{equation*}
    But then $\Ext_A^d(M(z), M(x))\neq 0$ since $x\preccurlyeq \tau_d(z)$, a contradiction.
    Therefore $k=1$.

    Now we write $n-1=a(d+1)+b$ with $0\leq b\leq d$. Observe that for a positive integer $t$, we have 
    \begin{align*}
        \Omega^{-d(d+1)t}M(x) & =  M(g^t(x_1), \ldots, g^t(x_{d+1}))\\
        & = M(x_1 + t(\ell + d - 1), \ldots,x_{d+1} + t(\ell + d - 1))\\
        & = \tau_d^{-t(\ell + d - 1)}M(x).
    \end{align*}
    Thus 
    \begin{align*}
        \tau_{nd}^{-1}M(x) & = \tau_d^{-1}\Omega^{-d(n-1)}M(x) \\
        & = \tau_d^{-1}\Omega^{-d(d+1)a - db}M(x)\\
        & = \tau_d^{-1-a(\ell+d-1)}\Omega^{-db}M(x)\\
        & = \tau_d^{-1-a(\ell+d-1)}M(x_{b+1}, \ldots, x_{d+1}, g(x_1), \ldots, g(x_b))\\
        & = M(x_{b+1}+1+a(\ell+d-1), \ldots, g(x_b)+1+a(\ell+d-1)) \in \mathcal{I}.
    \end{align*}
    Similarly,
    \begin{equation*}
        \tau_{nd}^{-1}M(y) = M(y_{b+1}+1+a(\ell+d-1), \ldots, g(y_b)+1+a(\ell+d-1)) \in \mathcal{I}.
    \end{equation*}
    Then we have 
    \begin{equation*}
        g(x_b)+1+a(\ell+d-1)) = g(y_b)+1+a(\ell+d-1)) = m + d
    \end{equation*}
    which implies $b = 1$ and thus $\ell+a(\ell+d-1)+1 = m$.
    Therefore, $(d+1) | (n-2)$ and $m = \frac{n-2}{d+1}(\ell+d-1)+\ell+1$.

    So (b) implies (c).

    It will follow from Lemma \ref{lem_gldim_auslander_alg} that (c) implies (a).
\end{proof}

\begin{Lem}\label{lem_gldim_auslander_alg}
    Let $A=A_{\ell, m}^d$ be an acyclic homogeneous $d$-Nakayama algebra.
    Assume $\ell\geq 3$ and
    \begin{equation*}
        n-2=k(d+1), m=k(\ell+d-1)+\ell+1
    \end{equation*}
    for some integer $k\geq 1$. 
    Then $\mathcal{C}=\add(A\oplus DA)\subseteq \modn A$ is an $nd\mathbb{Z}$-cluster tilting subcategory.
\end{Lem}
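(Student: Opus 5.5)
The plan is to verify directly that $M:=A\oplus DA$ is an $nd$-rigid generator-cogenerator with $\gldim \End_A(M)\leq nd+1$. By Lemma \ref{lem_characterisation_of_d_ct_module} (with $d$ replaced by $nd$) this makes $\add M$ an $nd$-cluster tilting subcategory. The $\mathbb{Z}$-part then comes for free from the global dimension. With $m-1=k(\ell+d-1)+\ell$, Lemma \ref{lem_global_dim_acyclic_homo} gives $q=k$ and $r=2$, since the remainder $\ell$ is not $<\ell$. Hence
\begin{equation*}
\gldim A=d((d+1)k+2)=nd .
\end{equation*}
Consequently $\Omega^{nd}$ of every module is projective, so $\Omega^{nd}(\add M)\subseteq \add A\subseteq \add M$, and Proposition \ref{prop_indec_equiv} (d) gives the $nd\mathbb{Z}$ property.

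For rigidity, only $\Ext^i_A(I,P)$ with $I\in\mathcal{I}$ and $P\in\mathcal{P}\cup\mathcal{F}$ can be nonzero; here $\mathcal{P},\mathcal{I},\mathcal{F}$ are as in the proof of Proposition \ref{prop_homo_acyclic_equiv_conditions}. Since $A,DA\in\mathcal{M}$ and $\mathcal{M}$ is $d\mathbb{Z}$-cluster tilting, only degrees $i=jd$ with $1\leq j\leq n-1$ matter. For these we use
\begin{equation*}
\Ext^{jd}_A(I,P)\cong \Ext^d_A(\Omega^{(j-1)d}I,P),
\end{equation*}
and Proposition \ref{Prop_modcomb} (vi) reduces the claim to $x\not\preccurlyeq\tau_d(z)$ whenever $P=M(x)$ and $M(z)=\Omega^{(j-1)d}I$. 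The syzygies are computed explicitly with Remark \ref{f_periodic} (ii) and Proposition \ref{prop_f_g_acyclic_homo}:
\begin{equation*}
\Omega^dM(y)=M(f(y_{d+1}),y_1,\ldots,y_d),
\end{equation*}
and $d+1$ consecutive steps shift all coordinates by $-(\ell+d-1)$. This is the mirror of the formula $\Omega^{-d(d+1)t}=\tau_d^{-t(\ell+d-1)}$ used before. Writing $n-1=k(d+1)+1$, one tracks $\Omega^{(j-1)d}I$ for $I=M(y_1,\ldots,y_d,m+d)$. The numerology $m=k(\ell+d-1)+\ell+1$ should guarantee two things. First, these modules stay non-projective with first coordinate $>1$ up to $j=n-1$, so that $\tau_d(z)$ has first coordinate $\geq 1$. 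Second, the condition $x\preccurlyeq \tau_d(z)$ with $x_1=1$ and $x_{d+1}<\ell+d$ (or $x\in\mathcal{F}$) fails because of the interlacing requirement. The projective-injective summands need no check, since Ext into injectives vanishes. The same computation, read as in the proof of Proposition \ref{prop_homo_acyclic_equiv_conditions} with $b=1$, shows that
\begin{equation*}
\tau_{nd}^{-1}M(x)=M(x_2+1+k(\ell+d-1),\ldots,x_{d+1}+1+k(\ell+d-1),\,g(x_1)+1+k(\ell+d-1)).
\end{equation*}
This gives a bijection $\mathcal{P}\to\mathcal{I}$, which is a useful consistency check and is what will be needed next.

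The main obstacle is the bound $\gldim \End_A(M)\leq nd+1$, which I would attack simple by simple with Lemma \ref{lem_glodim_endomorphism_algebra}. For a projective summand $M_i$, part (a) reduces the task to $\resdim_M(\rad M_i)\leq nd$. Here $\rad M_i$ has a projective resolution of length $<nd$ because $\gldim A=nd$, and a projective resolution is in particular an $\add M$-resolution, so the bound holds as long as the minimal right $\add M$-approximations agree with projective covers. This last point has to be checked: it holds when $\Hom(\mathcal{I},-)$ into the relevant syzygies factors through projectives, which again follows from the combinatorics of $\preccurlyeq$. For an injective non-projective summand $M_i=M(y)$, I would use part (b). The minimal right $\add M_{\hat i}$-approximation should be the last map of the $nd$-almost split sequence
\begin{equation*}
0\to\tau_{nd}M(y)\to P_{nd-1}\to\cdots\to P_0\to M(y)\to 0,
\end{equation*}
obtained by splicing the projective resolution of $M(y)$ with the bijection above. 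Its middle terms are projective except possibly the last one, which gives projective dimension $\leq nd+1$. Making this approximation statement precise, in particular showing that no other injective maps nonzero to $M(y)$ modulo projectives, is where I expect the real work to lie.
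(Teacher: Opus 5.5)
Your overall frame is fine: Lemma \ref{lem_characterisation_of_d_ct_module} with $nd$ in place of $d$, plus the $\mathbb{Z}$-part from $\gldim A=nd$. But both substantive steps rest on claims that are false.

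\textbf{Rigidity.} You want to show $x\not\preccurlyeq\tau_d(z)$ whenever $M(z)=\Omega^{(j-1)d}I$, and this fails at $j=n-1$. Take $d=2$, $\ell=3$, $k=1$, so $n=5$, $m=8$ and $A=A^2_{3,8}$. For $I=M(7,8,10)\in\mathcal{I}$ one computes $\Omega^6I=M(3,4,6)$ and $\tau_2(3,4,6)=(2,3,5)$. The projective non-injective $M(1,3,4)$ satisfies $(1,3,4)\preccurlyeq(2,3,5)$. Nevertheless
\[
\Ext^8_A(I,M(1,3,4))\cong\Ext^2_A(M(3,4,6),M(1,3,4))=0,
\]
because the second projective term $M(2,3,6)$ in the resolution of $M(3,4,6)$ has no nonzero map to $M(1,3,4)$. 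So the vanishing is not an interlacing phenomenon, and the criterion in Proposition \ref{Prop_modcomb} (vi) cannot be applied literally here. What works is the paper's argument. By Auslander--Reiten duality, $\Ext^d_A(M(z),M(x))\cong D\overline{\Hom}_A(M(x),\tau_dM(z))$. One checks that $z_{d+1}\geq \ell+d+1$ for all relevant $j$. Then every morphism $M(x)\to\tau_dM(z)$ factors through the projective-injective module $M(1,x_2,\ldots,x_d,\ell+d)$; in the example this is $M(1,3,5)$.

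\textbf{Global dimension at an injective non-projective summand $Q=M(y)$.} You single out as the remaining work the claim that no other injective maps nonzero to $M(y)$ modulo projectives. This fails in general. Hence the minimal right $\add(C/Q)$-approximation is not the projective cover, and the sink sequence is not a spliced projective resolution. In the same example, $M(7,8,10)\to M(7,9,10)$ is nonzero. Moreover $\Hom_A(M(7,8,10),P)=0$ for every projective $P$, since a projective $M(p)$ has $p_1=f(p_{d+1})\le f(m+d)=m-\ell+1=6$. So the approximation of $M(7,9,10)$ is $M(6,9,10)\oplus M(7,8,10)\to M(7,9,10)$.

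The paper handles this as follows. It keeps exactly the injective summands $E'_k$ of the $d$-almost split sequence of $Q$, namely the $M(z)$ with $z_i\in\{y_i,y_i-1\}$ and $z_{d+1}=m+d$. It proves that $0\to S_{y^\ast}\to E'_d\to\cdots\to E'_1\to Q\to 0$ is exact and is the minimal approximation sequence. It then bounds $\projdim S_{y^\ast}\le (n-1)d$ by viewing $S_{y^\ast}$ as a module over $A^d_{\ell,m-1}$, whose global dimension is $(n-1)d$.

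Your concern about the projective summands, by contrast, is harmless. The same observation, $\Hom_A(\mathcal{I},P)=0$ for all projectives $P$, shows that every submodule of a projective has its projective cover as minimal right $\add C$-approximation.

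\textbf{A possible shortcut.} You already have $\gldim A=nd$ and the bijection $\tau_{nd}^{-1}\colon\mathcal{P}\to\mathcal{I}$. Iyama's criterion for algebras of global dimension at most $nd$ says an $nd$-cluster tilting module exists iff every indecomposable projective $P$ has some $\tau_{nd}^{-l}P$ indecomposable injective. This would give the lemma directly, bypassing both steps above, once your formula for $\tau_{nd}^{-1}$ is verified under the present numerology.
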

\begin{proof}
    Firstly we prove that  $\Ext_A^j(\mathcal{C}, \mathcal{C}) = 0$ for all $0<j<nd$.
    since $\mathcal{C}\subseteq \mathcal{M}$, 
    it suffices to show 
    \begin{equation*}
        \Ext_A^{(i+1)d}(M(y), M(x)) =   0, \text{ for all } 0\leq i\leq n-2
    \end{equation*}
    where $x=(1,x_1,\ldots,x_{d+1})$ with $x_{d+1}\leq \ell+d-1$ and $y=(y_1, \ldots, y_d, m+d)$ with $y_1\geq m-\ell+2$.
    Note that 
    \begin{equation*}
        \Ext_A^{(i+1)d}(M(y), M(x)) \cong \Ext_A^d(\Omega^{id}M(y), M(x)) \cong D\overline{\Hom}_A(M(x), \tau_d\Omega^{id}M(y)).
    \end{equation*}
    Write $\Omega^{id}M(y) = M(y')$ where $y'=(y'_1, \ldots, y'_{d+1})$.
    We claim that $y'_{d+1}\geq \ell+d+1$.
    If not, 
    then
    \begin{equation*}
        \Omega^{(i+1)d}M(y) = M(1, y''_2,\ldots, y_{d+1}'')
    \end{equation*}
    which is projective.
    This contradicts with the fact that $\tau_{nd}M(y)$ is projective and non-injective as $i+1\leq n-1$.

    We have 
    \begin{equation*}
        \tau_d\Omega^{id}M(y) = M(y_1'-1, \ldots, y_{d+1}'-1)
    \end{equation*}
    and $y'_{d+1}-1\geq \ell+d$.
    If $x\preccurlyeq (y_1'-1, \ldots, y_{d+1}'-1)$ then
    \begin{equation*}
        x\preccurlyeq (1, x_2, \ldots, x_d, \ell+d) \text{ and } (1, x_2, \ldots, x_d, \ell+d)\preccurlyeq (y_1'-1, \ldots, y_{d+1}'-1).
    \end{equation*}
    Note that $M(1,x_2,\ldots,x_d,\ell+d)$ is injective.
    This implies that any nonzero morphism from $M(x)$ to $\tau_d\Omega^{id}M(y)$ would factor through an injective module.
    Thus 
    \begin{equation*}
        \Ext_A^{(i+1)d}(M(y), M(x)) = 0
    \end{equation*}
    for all $0\leq i\leq n-2$ with $M(x)\in \add DA$ and $M(y)\in \add A$.
    
    Let $C=P\oplus I$ be the basic additive generator of $\mathcal{C}$ such that an indecomposable module $Q\in \add I$ if and only if $Q\cong M(x_1,\ldots,x_d,m+d)$.
    Denote by $\Lambda = \End_A(C)$ the endomorphism algebra of $C$.
    We prove instead $\gldim \Lambda = nd+1$ by applying Lemma \ref{lem_glodim_endomorphism_algebra}.
    Then by Lemma \ref{lem_characterisation_of_d_ct_module} (b), we have that 
    $\mathcal{C} = \add C$ is $nd$-cluster tilting as $C$ is an $nd$-rigid generator-cogenerator.
    
    Firstly we show that $\resdim_C Q\leq nd$ for $Q$ an indecomposable direct summand of $I$.

    Write $Q=M(y)$ where $y=(y_1, \ldots, y_d, m+d)\in os_{m,\ell}^{d+1}$ such that $y_1\geq m-\ell+2$.
    Then $\tau_d Q \cong M(\tau_d(y))$.
    As $\dim_{\Bbbk}\Ext_A^d(Q, \tau_d Q) = 1$,
    arguing similarly as in \cite[Theorem 3.8]{OT12},
    we have the $d$-almost split sequence of $Q$ as follows. 
    \begin{equation*}
        \xymatrix@C=1em@R=1em{
        0\ar[r] & \tau_d Q\ar[r] & E_d\ar[r] & \cdots\ar[r] & E_1\ar[r] & Q\ar[r] & 0 & (\ast)
        }
    \end{equation*}
    where $E_k = \bigoplus _z M(z)$ such that $z_i\in \{y_i, y_i-1\}$ for all $i$ and $|\{i\mid z_i=y_i-1\}|=k$.

    We write $E_k = E'_k\oplus E''_k$ where $E'_k\in \add I$ and $E''_k\notin \add I$. Set
    \begin{equation*}
        \sigma_k = \{z\mid z_i\in \{y_i, y_i-1\}, z_{d+1}=m+d  \text{ and } |\{i\mid z_i=y_i-1\}|=k\}.
    \end{equation*}
    Then $E_k' = \bigoplus_{z\in \sigma_k} M(z)$.
    In particular, 
    \begin{equation*}
        E_d' = M(y_1-1, \ldots, y_d-1, m+d)
    \end{equation*}
    is indecomposable.
    Denote by $S = S_{y^\ast}$ the simple socle of $E_d'$ i.e.  $y^{\ast}=(y_1-1,\ldots,y_d-1)$.
    We claim that the following sequence is exact.
    \begin{equation*}
        \xymatrix@C=1em@R=1em{
        0\ar[r] & S\ar[r] & E'_d\ar[r] & E'_{d-1}\ar[r] & \cdots\ar[r] & E'_1\ar[r] & Q\ar[r] & 0. & (\ast\ast)
        }
    \end{equation*}
     Consider the $d$-Auslander algebra $\Lambda = A_{\ell}^d$ and denote by $\mathcal{M}_{\Lambda} \subseteq \modn \Lambda$ the $d\mathbb{Z}$-cluster tilting subcategory.
     Let
     \begin{equation*}
         \lambda = \{x\in os_{\ell,m}^d\mid x_1\leq m-\ell\} \text{ and } f = \sum_{x\in \lambda} e_x.
     \end{equation*}
     We may view $\Lambda \cong A/AfA$.

     Arguing by coordinates, 
     we may view $(\ast\ast)$ in $\modn \Lambda$ and prove it is exact.

     To see this we view $\Lambda = \End_{\Lambda'}(M')$ where $\Lambda' = A_{\ell}^{d-1}$ and $\mathcal{M'} = \add M' \subseteq \modn \Lambda'$ is the $(d-1)\mathbb{Z}$-cluster tilting subcategory.

     For each $1\leq k\leq d$ and each $x\in \sigma_k$,
     let $z_x = (x_1, \ldots, x_d)$. 
     Moreover let $z_Q = (y_1, \ldots, y_d)$.
     Notice that $M(z_x)\in \mathcal{M}'$.
     Let $F_k = \bigoplus_{x\in \sigma_k}M(z_x)$.
     In particular $F_d = M(y^{\ast})$.
     Again by arguing with coordinates, we conclude the following sequence gives the $(d-1)$-almost split sequence in $\mathcal{M}'$ that starts from $F_d$.
     \begin{equation*}
         \xymatrix@C=1em@R=1em{
         0\ar[r] & F_d\ar[r] & F_{d-1}\ar[r] & \cdots\ar[r] & F_1\ar[r] & M(z_Q)\ar[r] & 0. & (\dag)
        }
     \end{equation*}
     By applying $D\Hom_{\Lambda'}(-,M')$ to $(\dag)$ and attaching the kernel of the first map,
     we obtain an exact sequence which coincides with $(\ast\ast)$.
   
     In fact $(\ast\ast)$ is the minimal $\add C/Q$-approximation sequence of $Q$ and   $\Omega_C^d(Q) = S_{y^{\ast}}$. 
    
    Note that $\Hom_A(I, S_{y^{\ast}})=0$. 
    Indeed, $S_{(u_1,\ldots,u_d)}\in \add (\top I)$ implies $u_d = m+d-1$ but $y_d-1<m+d-1$. 
    Therefore, $\Omega_C^jS_{y^{\ast}} \cong \Omega_A^jS_{y^{\ast}}$ for $j\geq 0$.

    Next we calculate $\projdim_A S_{y^{\ast}}$. 

    We consider $A' = A_{\ell, m-1}^{d}$.
    Indeed, $A'\cong eAe\cong A/\langle 1-e\rangle$ where $e=\sum_{v}e_v$ and $v=(v_1, \ldots, v_d)\in os_{\ell, m}^d$ with $v_d\leq m+d-1$.
    Consider the adjoint pair $(G, H)$ defined as follows. 
    \begin{equation*}
        \xymatrix@R=6em{
        \modn A \ar@/_/[rr]_{H=\Hom_{A}(eA, -)} && \modn eAe \cong \modn A'\ar@/_/[ll]_{G = -\otimes_{eAe} eA}
        }
    \end{equation*}
    Since $eA \cong eAe\oplus eA(1-e) = eAe$ as $eAe$-modules, $G$ is exact. 
    Moreover, $G$ preserves projective modules since it admits a right adjoint $H$ which is exact. 
    We may view $G$ as an exact embedding and conclude that $\projdim GU = \projdim U$ for $U\in \modn A'$. 

    So we may view $S_{y^{\ast}}\in \modn A'$ and calculate $\projdim_{A'}S_{y^{\ast}}$. 

    By Lemma \ref{lem_global_dim_acyclic_homo}, we have $\gldim A'=(n-1)d$. So $\projdim_{A'}S_{y^{\ast}}\leq (n-1)d$. 
    This implies $\resdim_C S_{y^{\ast}}\leq (n-1)d$.
    Thus $\resdim_C Q\leq nd$.

    Now we consider $T=M(m,m+1,\ldots,m+d)\in \add I$. 
    In the $d$-almost split sequence $(\ast)$ for $T$, we have that $E_i = E_i'$ for all $i$ thus $\Omega_C^d(T) = \tau_dT = M(m-1, m, \ldots, m+d-1)$.
    Viewed as an $A'$-module, we have $\projdim_{A'}\Omega_C^d(T) = (n-1)d$.
    Thus $\resdim_C T = nd$.

    Lastly we need to calculate $\resdim_C \rad P$ for $P\in \add A$ incomposable. 
    We may view $\rad P$ as an $A$-module and conclude that $\resdim_C \rad P = \projdim_{A}\rad P \leq nd$ as $\gldim A = nd$ by Lemma \ref{lem_global_dim_acyclic_homo}. 

    Altogether we have $\gldim \Lambda = nd+1$ by Lemma \ref{lem_glodim_endomorphism_algebra}.
\end{proof}

\begin{Thm}\label{thm_classification_homo_acyclic}
    Let $A = A^{d}_{\ell, m}$ be a homogeneous acyclic $d$-Nakayama algebra. 
    There exists an $nd\mathbb{Z}$-cluster tilting subcategory $\mathcal{C}\subseteq \modn A$ if and only if one of the following conditions is satisfied.
    \begin{itemize}
        \item [(a)] $\ell = 2$ and $n\mid (m-1)$.
        \item [(b)] $\ell\geq 3$, $(d+1) | (n- 2)$ and $m = \frac{n-2}{d+1}(\ell +d -1) + \ell + 1$.
    \end{itemize}
    Moreover, we have in case (a) that 
    \begin{equation*}
        \mathcal{C} = \add(\{A\}\cup \{\tau_{nd}^{-k}(M(1, 2, \ldots, d+1))\}) = \add(\{A\}\cup \{M(1 + kn, \ldots, d+1+kn)\mid 1\leq k\leq \frac{m-1}{n}\})
    \end{equation*}
    and in case (b) that $\mathcal{C} = \add(A\oplus DA)$.
\end{Thm}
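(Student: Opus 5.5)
The theorem is essentially a packaging of the two preceding propositions, split according to whether $\ell=2$ or $\ell\geq 3$. Note that for an acyclic homogeneous Kupisch series we always have $\ell\geq 2$ as soon as $m\geq 2$. The degenerate case $m=1$ (a simple algebra) is either excluded by the standing conventions or handled trivially, so I will not discuss it further.

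\emph{Case $\ell=2$.} Here $\ell_i\leq 2$ for all $i$, so $\rad^2 A=0$ and $A=A^d_{2,m}$. By Proposition \ref{prop_classification_homo_acyclic_l=2}, $\modn A$ admits an $nd\mathbb{Z}$-cluster tilting subcategory if and only if $n\mid(m-1)$, which is condition (a). That proposition also gives uniqueness and the explicit description of $\mathcal{C}$. The two displayed descriptions of $\mathcal{C}$ agree: for $\ell=2$, iterating Proposition \ref{Prop_modcomb} (iv) and (v) on the simple modules $M(i,\ldots,i+d)$ shows that $\tau_{nd}^{-1}$ shifts every coordinate of a simple module by $n$.

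\emph{Case $\ell\geq 3$.} Here Proposition \ref{prop_homo_acyclic_equiv_conditions} applies directly. The implication (a)$\Rightarrow$(c) goes through the partial $nd\mathbb{Z}$-cluster tilting property (Proposition \ref{prop_d_ct_implies_weakly_d_ct}) together with the $\tau_{nd}^{-1}$-orbit analysis of the projective non-injective modules. The converse (c)$\Rightarrow$(a) is Lemma \ref{lem_gldim_auslander_alg} with $k=\frac{n-2}{d+1}$; note that $k\geq 0$, and the case $k=0$, i.e.\ $n=2$ and $m=\ell+1$, must be checked separately or covered by the same argument. Condition (c) of that proposition is exactly (b) of the theorem. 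The same proposition asserts $\mathcal{C}=\add(A\oplus DA)$. Uniqueness follows because any $nd\mathbb{Z}$-cluster tilting subcategory is partial, and the proof of (b)$\Rightarrow$(c) shows that $\tau_{nd}^{-1}$ maps the projective non-injectives directly into the injective non-projectives. Hence $\mathcal{C}\cap\mathcal{M}=\add(A\oplus DA)$ by Proposition \ref{prop_C_intersecting_M_gives_weakly_ndZ_ct}. Since $\add(A\oplus DA)$ is itself $nd$-cluster tilting, maximality forces equality.

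\emph{Main obstacle.} The only genuinely nontrivial points are the edge case $k=0$ in Lemma \ref{lem_gldim_auslander_alg}, where its hypothesis reads $k\geq 1$, and the uniqueness claim in case (b). For $k=0$ I would redo the global dimension count: Lemma \ref{lem_global_dim_acyclic_homo} gives $\gldim A=2d=nd$ when $m=\ell+1$, and the resolution-dimension argument of that lemma then goes through verbatim. For uniqueness I would argue as above, via the bijection $\tau_{nd}\colon\mathcal{C}_P\to\mathcal{C}_I$ restricted to $\mathcal{M}$.
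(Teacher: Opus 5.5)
Your proposal is correct and follows the paper's proof, which simply combines Proposition \ref{prop_classification_homo_acyclic_l=2} for $\ell=2$ with Proposition \ref{prop_homo_acyclic_equiv_conditions} for $\ell\geq 3$. Your flag on the case $n=2$ (i.e.\ $k=0$, which Lemma \ref{lem_gldim_auslander_alg} formally excludes by assuming $k\geq 1$) is well founded, and so is your fix: check that $\gldim A=2d$ and $\gldim A^d_{\ell,\ell}=d$, so that the same resolution-dimension argument goes through.
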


\begin{proof}
    The statement follows from Proposition \ref{prop_classification_homo_acyclic_l=2} and Proposition \ref{prop_homo_acyclic_equiv_conditions}.
\end{proof}

\begin{Rem}\label{rem_finer_deglue}
    Let $A=A_{2,m}^{d}$ such that $\mathcal{C}\subseteq \modn A$ is $nd\mathbb{Z}$-cluster tilting. 
    As shown in Theorem \ref{thm_classification_homo_acyclic}, 
    $\mathcal{C} = \add(\{A\} \cup \{M_k\mid 1\leq k\leq \frac{m-1}{n}\})$.
    We may deglue $A$ further such that these $M_k$'s are simple bridges.
    From now on, we consider this finer degluing of $A$ whenever $A$ admits an $nd\mathbb{Z}$-cluster tilting subcategory.
    Thus if a homogeneous piece $A$ of a degluing has an $nd\mathbb{Z}$-cluster tilting subcategory $\mathcal{C}\subseteq \modn A$, 
    we may assume $\mathcal{C} = \add\{A, DA\}$.
\end{Rem}

\begin{Rem}
    In case (b) in the above Theorem, 
    we get $2$-subhomogeneous $nd$-representation finite algebras, see \cite{Xin25} for more details.
\end{Rem}

\begin{Rem}
    When $d = 1$, the above Theorem recovers the corresponding result for classical Nakayama algebras, see \cite[Proposition 4.2]{HKV25}.
\end{Rem}

\subsubsection{Non-homogeneous acyclic case.}\label{subsec_non_homo_acyclic_case}

Let $A=A_{\underline{\ell}}^{d}$ be an acyclic $d$-Nakayama algebra. 
We give a classification for such an $A$ which admits an $nd\mathbb{Z}$-cluster tilting subcategory in its module category.

\begin{Thm}\label{thm_classification_nonhomo_acyclic}
   Let $A=A_{\underline{\ell}}^{d}$ be an acyclic $d$-Nakayama algebra.
   Then $\modn A$ has an $nd\mathbb{Z}$-cluster tilting subcategory $\mathcal{C}$ if and only if the following conditions hold.
    \begin{itemize}
        \item [(i)] $A = A_1\Delta A_2\Delta \cdots \Delta A_t$ with $A_i = A_{\ell_i, m_i+1}^{d}$ for $1\leq i\leq t$.
        \item [(ii)] For $1\leq i\leq t$,
        one of the following conditions holds.
        \begin{itemize}
            \item [(a)] $\ell_i=2$ and $n=m_i$.
            \item [(b)] $(d+1) | (n-2)$ and $m_i = \frac{n-2}{d+1}(\ell_i + d - 1) + \ell_i$.
        \end{itemize}
    \end{itemize}
    Moreover, in that case $\mathcal{C} = \add (\{A \oplus DA\} \cup \{S_i\mid 1\leq i\leq t-1\})$ where 
    \begin{equation*}
        S_i = M(1 + \sum_{k=1}^i m_i, 2+ \sum_{k=1}^i m_i, \ldots, d+1+\sum_{k=1}^i m_i).
    \end{equation*}
\end{Thm}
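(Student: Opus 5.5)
The plan is to reduce everything to the homogeneous pieces via the gluing bijection, Corollary \ref{cor_bijection_d_ct_gluing}, applied with $nd$ in place of $d$. Both directions go through the degluing into acyclic homogeneous pieces, combined with Theorem \ref{thm_classification_homo_acyclic}.

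\emph{Necessity.} Suppose $\mathcal{C}\subseteq\modn A$ is $nd\mathbb{Z}$-cluster tilting.
\begin{enumerate}[(1)]
\item By Proposition \ref{prop_d_ct_implies_weakly_d_ct}, $\mathcal{C}$ is partial $nd\mathbb{Z}$-cluster tilting. Proposition \ref{prop_A_is_piecewise_homogenous_simple_bridges_in_C} (acyclic case) then gives $A=A_1\Delta_{S_1}\cdots\Delta_{S_{t-1}}A_t$ with each $A_i$ acyclic homogeneous, so (i) holds. Moreover, every simple bridge $S_i$ lies in $\mathcal{C}$.
\item Since $S_1\in\mathcal{C}$, Proposition \ref{prop_d_ct_of_lambda_obtained_from_gluing} (with $nd$ in place of $d$) applies. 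Hence $\mathcal{C}\cap\modn A_1$ and $\mathcal{C}\cap\modn(A_2\Delta\cdots\Delta A_t)$ are $nd\mathbb{Z}$-cluster tilting, and $\mathcal{C}$ is their gluing.
\item Inducting on $t$, each $\mathcal{C}\cap\modn A_i$ is $nd\mathbb{Z}$-cluster tilting in $\modn A_i$.
\item Theorem \ref{thm_classification_homo_acyclic} applied to $A_i=A^d_{\ell_i,m_i+1}$ now yields (ii). In case (a) it gives $\ell_i=2$ and $n\mid m_i$. In case (b) it gives $(d+1)\mid(n-2)$ and $m_i+1=\frac{n-2}{d+1}(\ell_i+d-1)+\ell_i+1$.
\end{enumerate}

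The mismatch between $n\mid m_i$ and the claimed $n=m_i$ in case (a) is handled by Remark \ref{rem_finer_deglue}. We take the finer degluing in which the modules $M(1+kn,\ldots,d+1+kn)$ of a radical-square-zero piece become simple bridges. After this refinement, every $\ell_i=2$ piece has $m_i=n$ and its $nd\mathbb{Z}$-cluster tilting subcategory is $\add\{A_i,DA_i\}$. I would also check that these intermediate simple modules are genuine deglue points, so that $A^d_{2,m_i+1}$ splits into $m_i/n$ copies of $A^d_{2,n+1}$. This should be a direct check on Kupisch series, since the pieces are just $(1,2,\ldots,2)$.

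\emph{Sufficiency.} Assume (i) and (ii). By Theorem \ref{thm_classification_homo_acyclic}, each $\mathcal{C}_i=\add(A_i\oplus DA_i)$ is $nd\mathbb{Z}$-cluster tilting in $\modn A_i$. In case (a) with $m_i=n$, the theorem gives $\add(A_i\cup\{M(1+n,\ldots,d+1+n)\})$, and this last module is the simple injective, so the subcategory equals $\add(A_i\oplus DA_i)$. The simple bridge joining consecutive pieces is simple injective in $A_i$ and simple projective in $A_{i+1}$, so it lies in both $\mathcal{C}_i$ and $\mathcal{C}_{i+1}$. Iterating Proposition \ref{prop_glue_d_ct_2_d_ct} (for $nd$) then shows that $\mathcal{C}_1\Delta\cdots\Delta\mathcal{C}_t$ is $nd\mathbb{Z}$-cluster tilting in $\modn A$.

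To identify this subcategory, use Proposition \ref{prop_image_of_projects_lambda_A_B}. The glued projectives and injectives are exactly those of $A$, and the only extra modules are the bridges $S_i$. Hence the glued subcategory is $\add(\{A\oplus DA\}\cup\{S_i\})$, and the coordinates of $S_i$ come from Proposition \ref{prop_glue_acyclic_nak_and_form_of_simple_bridge}. Uniqueness follows from the necessity part, because any $\mathcal{C}$ is forced to be this gluing.

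The main obstacle is the bookkeeping in the radical-square-zero pieces and the indexing, namely $m_i$ versus $|\underline{\ell_i}|$ and the shift in the coordinates of $S_i$. The homological content is already contained in the gluing results.
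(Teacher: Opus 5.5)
Your proof is correct. It follows the paper's architecture: deglue via Proposition~\ref{prop_A_is_piecewise_homogenous_simple_bridges_in_C}, refine the $\ell_i=2$ pieces as in Remark~\ref{rem_finer_deglue}, classify the homogeneous pieces, and reglue with Proposition~\ref{prop_glue_d_ct_2_d_ct}. The restriction step, however, rests on a different key result.

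\emph{Your route.} You apply Proposition~\ref{prop_d_ct_of_lambda_obtained_from_gluing} with $nd$ in place of $d$. Since $\mathcal{C}$ is genuinely $nd\mathbb{Z}$-cluster tilting and contains the bridges, each $\mathcal{C}\cap\modn A_i$ is $nd\mathbb{Z}$-cluster tilting and $\mathcal{C}$ is already their gluing. Theorem~\ref{thm_classification_homo_acyclic} then gives (ii), the explicit form of $\mathcal{C}$, and uniqueness at once.

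\emph{The paper's route.} It sets $\mathcal{C}_i=\mathcal{C}\cap\mathcal{M}_i$, which by Proposition~\ref{prop_C_intersecting_M_gives_weakly_ndZ_ct} is only partial $nd\mathbb{Z}$-cluster tilting. It then uses the partial versions in Propositions~\ref{prop_classification_homo_acyclic_l=2} and~\ref{prop_homo_acyclic_equiv_conditions} to get $\mathcal{C}_i=\add(A_i\oplus DA_i)$. Finally it reglues to some $\mathcal{C}'\subseteq\mathcal{C}$ and concludes $\mathcal{C}=\mathcal{C}'$ by maximality.

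\emph{Comparison.} Your version is more direct for the theorem as stated. The paper's version has the advantage that everything before the final maximality comparison uses only that $\mathcal{C}$ is partial $nd\mathbb{Z}$-cluster tilting. That is what underlies Corollary~\ref{cor_acyclic_ndZ_ct_iff_weakly_ndZ_ct}, which is needed later in Proposition~\ref{prop_necessary_condition_of_cyclic_non_selfinj}, where self-degluing only produces a partial subcategory.

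One small precision about your refinement check. The interior splitting points of a $(1,2,\ldots,2)$ piece are not deglue points in the sense of Definition~\ref{def_deglue_acyclic_d_NakAlg}, since the condition $\ell_{m+1}<\ell_{m+2}$ fails there. What you actually need is only the Kupisch-series identity $A^d_{2,n+1}\Delta\cdots\Delta A^d_{2,n+1}=A^d_{2,kn+1}$ together with Proposition~\ref{prop_glue_acyclic_nak_and_form_of_simple_bridge}. That is exactly the direct check you describe.
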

\begin{proof}
    Firstly we show the necessity.
    
    By Proposition \ref{prop_A_is_piecewise_homogenous_simple_bridges_in_C},  $A = A_1\Delta A_2\Delta \cdots \Delta A_s$ where $A_i = A_{\ell_i, m_i+1}^{d}$  for $1\leq i\leq s$ and the simple bridges 
    \begin{equation*}
        S_i = M(1 + \sum_{k=1}^i m_k, 2+ \sum_{k=1}^i m_k, \ldots, d+1+\sum_{k=1}^i m_k), 1\leq i\leq s-1
    \end{equation*}
    are included in $\mathcal{C}$.
    
    Let $\mathcal{M}_i\subseteq \modn A_i$ be the distinguished $d\mathbb{Z}$-cluster tilting subcategory.
    Then 
    \begin{equation*}
        \mathcal{M}_i = \mathcal{M}\cap \modn A_i
    \end{equation*}
    by Proposition \ref{prop_glue_acyclic_nak_and_form_of_simple_bridge}.
    Hence by Proposition \ref{prop_C_intersecting_M_gives_weakly_ndZ_ct}, for each $1\leq i\leq s$, 
    \begin{equation*}
        \mathcal{C}_i = \mathcal{C}\cap \mathcal{M}_i\subseteq \modn A_i
    \end{equation*}
    is partial $nd\mathbb{Z}$-cluster tilting.

    For each $1\leq i\leq s$ such that $\ell_i=2$, 
    we may apply Proposition \ref{prop_classification_homo_acyclic_l=2} and consider a finer degluing as in Remark \ref{rem_finer_deglue} to get 
    \begin{equation*}
        A=A_1\Delta \cdots\Delta A_t
    \end{equation*}
    for some positive integer $t$.
    
    So (ii) holds true by 
    Proposition \ref{prop_classification_homo_acyclic_l=2} and 
    Proposition \ref{prop_homo_acyclic_equiv_conditions}.
    In particular, 
    \begin{equation*}
        \mathcal{C}_j = \add(A_j\oplus DA_j) \subseteq \modn A_j
    \end{equation*}
    is $nd\mathbb{Z}$-cluster tilting.

    By Proposition \ref{prop_glue_d_ct_2_d_ct}, 
    \begin{equation*}
        \mathcal{C}' = \mathcal{C}_1\Delta \mathcal{C}_2\Delta \cdots \Delta \mathcal{C}_t \subseteq \modn A
    \end{equation*}
    is $nd\mathbb{Z}$-cluster tilting.
    As $\mathcal{C}'\subseteq \mathcal{C}$ and $\mathcal{C}$ is $nd\mathbb{Z}$-cluster tilting, 
    we have
    \begin{equation*}
        \mathcal{C} = \mathcal{C}_1\Delta\cdots\Delta \mathcal{C}_t
    \end{equation*}
    and
    \begin{equation*}
        \mathcal{C} = \add(\{A\oplus DA\}\cup \{S_i\mid 1\leq i\leq t\}).
    \end{equation*}

    Now we prove the sufficiency.
    Given (i) and (ii), we have 
    $\mathcal{C}_i = \add(A_i\oplus DA_i)\subseteq \modn A_i$ is $nd\mathbb{Z}$-cluster tilting for $1\leq i\leq t$.
    By Proposition \ref{prop_glue_d_ct_2_d_ct},
    $\mathcal{C} = \mathcal{C}_1\Delta \cdots \Delta \mathcal{C}_t\subseteq \modn A$ is $nd\mathbb{Z}$-cluster tilting and 
    $\mathcal{C} = \add(\{A\oplus DA\}\cup \{S_i\mid 1\leq i\leq t\})$.
\end{proof}

For a positive integer $s$, recall that an algebra $\Lambda$ is called $s$-representation finite if $\gldim \Lambda=s$ and there exists an $s$-cluster tilting subcategory  $\mathcal{C} = \add C\subseteq \modn \Lambda$ with $C\in \modn \Lambda$.  
Notice that in this case, $\mathcal{C}$ is trivially $s\mathbb{Z}$-cluster tilting.
In \cite[Example 7.4]{Sen23}, the following question was posed: whether every $s$-representation finite $d$-Nakayama algebra is given by the acyclic homogeneous ones satisfying the numerical condition in Proposition \ref{prop_homo_acyclic_equiv_conditions} (c). We give an affirmative answer to this question as a consequence of Theorem \ref{thm_classification_nonhomo_acyclic}. 

\begin{Cor}
    A $d$-Nakayama algebra $A$ is $s$-representation finite if and only if $A = A_{\ell,m}^d$ and $s=nd$ for some integer $n>1$ such that 
    \begin{equation*}
        (d+1)|(n-2) \text{ and } m = \frac{n-2}{d+1}(\ell+d-1)+\ell+1.
    \end{equation*}
\end{Cor}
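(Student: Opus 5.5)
We would split the proof into the two implications. The approach is to reduce \emph{$s$-representation finite} to \emph{having an $nd\mathbb{Z}$-cluster tilting subcategory and $\gldim A = nd$}. We then run this through the classification just obtained (Theorem \ref{thm_classification_nonhomo_acyclic}, together with Theorem \ref{thm_classification_homo_acyclic}), and finally cut the list down by a global dimension count.

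\emph{Sufficiency.} Suppose $A=A_{\ell,m}^d$ with $(d+1)\mid(n-2)$ and $m=\frac{n-2}{d+1}(\ell+d-1)+\ell+1$. Then Lemma \ref{lem_gldim_auslander_alg} shows that $\add(A\oplus DA)$ is $nd\mathbb{Z}$-cluster tilting. Write $n-2=k(d+1)$. Then $m-1=(k+1)(\ell+d-1)-(d-1)$, so in Lemma \ref{lem_global_dim_acyclic_homo} we get $q=k$. This is because $m-1-k(\ell+d-1)=\ell$, which is $<\ell+d-1$ precisely when $d\ge 2$. We also get $r=2$. Hence $\gldim A=d((d+1)k+2)=nd$. (The case $d=1$ is handled the same way, since then $(\ell+d-1)\mid(m-1)$.) This is exactly the computation already used inside the proof of Lemma \ref{lem_gldim_auslander_alg}. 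So $A$ is $nd$-representation finite with additive generator $A\oplus DA$.

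\emph{Necessity.} Let $A$ be $s$-representation finite with $s$-cluster tilting $\mathcal{C}=\add C$. Since $\gldim A=s<\infty$, the subcategory $\mathcal{C}$ is automatically $s\mathbb{Z}$-cluster tilting. Moreover, $A$ is not self-injective: a non-semisimple self-injective algebra has infinite global dimension. Since the distinguished $\mathcal{M}$ is $d\mathbb{Z}$-cluster tilting, $d\mid\gldim A$, as recalled in the introduction. So $s=nd$, and we restrict to $n>1$, the case $n=1$ being the distinguished $\mathcal{M}$ itself.

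In the acyclic case, Theorem \ref{thm_classification_nonhomo_acyclic} gives $A=A_1\Delta\cdots\Delta A_t$ with pieces of type (a) or (b). We must show that $\gldim A=nd$ forces $t=1$ and the piece to be of type (b). For this we use Proposition \ref{prop_syzygy_A_B_syzygy_lambda_and_homoligical_embeddings} (b). Take an $A_t$-module $N$ of maximal projective dimension whose $A_t$-resolution reaches the simple projective $S_{t-1}$. Its $\Lambda$-resolution continues as $\Omega_{A_{t-1}}(S_{t-1})$, which is non-zero since $S_{t-1}$ is a non-projective simple of $A_{t-1}$. Hence $\projdim_A N>\gldim A_t$. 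Applying Lemma \ref{lem_global_dim_acyclic_homo} to each piece then gives $\gldim A>nd$ whenever $t\ge2$. The same count applied to a single piece with $\ell=2$ and $n=m-1$ (after the finer degluing of Remark \ref{rem_finer_deglue}) shows that $\gldim\neq nd$ unless the numerical condition (b) holds. In the cyclic non-self-injective case we would pass to the self-degluing. We would either show that its global dimension is infinite (by exhibiting a periodic syzygy through the self-deglue point via Proposition \ref{prop_M_and_M^a} (b)), or compare it with the acyclic $A^a$ and reduce to the previous count.

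The main obstacle is this global dimension bookkeeping: proving rigorously that gluing strictly increases $\gldim$ beyond $nd$, and handling the cyclic case. We would first try to exhibit one explicit module, namely the simple $M(m-1,\dots,m+d-1)$ of the last piece, whose minimal projective resolution visibly passes through every simple bridge. Its projective dimension is then at least the sum of the contributions of the pieces.
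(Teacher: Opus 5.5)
Your overall route is the paper's. Sufficiency comes from Lemma~\ref{lem_gldim_auslander_alg} together with the count in Lemma~\ref{lem_global_dim_acyclic_homo}; your computation $q=k$, $r=2$ and the $d=1$ variant are correct. Necessity comes from the classification theorems plus the fact that gluing pushes the global dimension above $nd$.

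\textbf{The step that fails.} You claim that a single piece with $\ell=2$ and $m=n+1$ has $\gldim\neq nd$ unless condition (b) holds. This is false. Lemma~\ref{lem_global_dim_acyclic_homo} with $\ell=2$ gives $\gldim A_{2,m}^d=d(m-1)$ for every $m$; split by whether $m-1$ is congruent to $0$, $1$ or at least $2$ modulo $d+1$. This is just the fact that $A_{2,m}^d$ is the radical square zero Nakayama algebra on a linearly oriented quiver with $(m-1)d+1$ vertices. Hence:
\begin{itemize}
\item $\gldim A_{2,n+1}^d=nd$;
\item by Theorem~\ref{thm_classification_homo_acyclic}(a), with $n\mid m-1=n$, the subcategory $\add(A\oplus DA)$ is $nd\mathbb{Z}$-cluster tilting;
\item so $A_{2,n+1}^d$ is $nd$-representation finite for \emph{every} $n\geq 2$.
\end{itemize}
For $\ell=2$ the formula in the statement reads $m=n+1$, so the only condition that can fail is $(d+1)\mid(n-2)$, and it does. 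For example, $A_{2,4}^d$ is $3d$-representation finite while $(d+1)\nmid 1$; for $d=1$ this is $\Bbbk A_4/\rad^2$, which is $3$-representation finite.

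No global-dimension count can rule these out. The corollary as written is only correct for $\ell\geq 3$, i.e.\ outside the radical square zero case, or it must be amended to include the family $\ell=2$, $m=n+1$. The paper's proof has the same slip: it says every piece satisfies ``the stated numerical condition'', silently merging type (a) into type (b).

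Two related points. Lemma~\ref{lem_gldim_auslander_alg} assumes $\ell\geq 3$, so for $\ell=2$ your sufficiency must go through Theorem~\ref{thm_classification_homo_acyclic}(a) instead. Also, ``restricting to $n>1$'' is not a proof step: $A_{\ell,\ell}^d$ has global dimension $d$ and is $d$-representation finite, so the statement tacitly needs $s>d$.

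\textbf{The step you flag as the main obstacle.} There is a clean argument that avoids hunting for a special simple module. Suppose $t\geq 2$ and write $A=A'\Delta_S A_t$, where $S$ is simple projective and non-injective in $\modn A_t$.
\begin{itemize}
\item Put $X=\tau_{nd}^{-1}S\in\add(A_t\oplus DA_t)$.
\item Since $\tau_{nd}X=\tau\Omega^{nd-1}X\neq 0$ and $\gldim A_t=nd$, we get $\projdim_{A_t}X=nd$.
\item The sequence $0\to\tau_{nd}X\to\nu P_{nd}\to\nu P_{nd-1}$ is a minimal injective copresentation. So $\nu P_{nd}$ is the injective envelope of $S$, hence $P_{nd}=S$, i.e.\ $\Omega^{nd}_{A_t}X=S$.
\item The intermediate syzygies are indecomposable and non-projective. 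So Proposition~\ref{prop_syzygy_A_B_syzygy_lambda_and_homoligical_embeddings}(b) gives $\Omega^{nd}_A X\cong S$.
\item Part (a) of the same proposition then gives $\projdim_A X=nd+\projdim_{A'}S\geq nd+1$.
\end{itemize}
This is exactly the inequality $\gldim A>nd$ that the paper asserts without proof. A similar argument at the self-deglue point should handle the cyclic case.
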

\begin{proof}
    Since $\modn A$ admits a $d\mathbb{Z}$-cluster tilting subcategory, $\gldim A < \infty$ implies $\gldim A\in d\mathbb{Z}$.
    Hence $s=nd$ for some integer $n>1$.

    The sufficiency part follows from Proposition \ref{prop_homo_acyclic_equiv_conditions}, and applying Lemma \ref{lem_global_dim_acyclic_homo} yields $\gldim A = nd$; see \cite[Proposition 6.14]{Sen23} for an alternative proof.

    Now we prove necessity. Assume that $\modn A$ admits an $nd\mathbb{Z}$-cluster tilting subcategory. Then, by Theorem \ref{thm_classification_nonhomo_acyclic} and Proposition \ref{prop_necessary_condition_of_cyclic_non_selfinj}, we have either $A = A_1\Delta \cdots \Delta A_t$ or $A = (A_1\Delta \cdots \Delta A_t)^c$, where each $A_i$ is an acyclic homogeneous $d$-Nakayama algebra satisfying the stated numerical condition for $1\leq i\leq t$. In either case,
    \begin{equation*}
        \gldim A > \max\{\gldim A_i \mid 1\leq i \leq t\} \geq nd.
    \end{equation*}
    It follows that $t = 1$ and $A = A_{\ell, m}^d$ with
    \begin{equation*}
        (d+1)|(n-2) \text{ and } m = \frac{n-2}{d+1}(\ell+d-1)+\ell+1.
    \end{equation*}
\end{proof}

\begin{Cor}\label{cor_acyclic_ndZ_ct_iff_weakly_ndZ_ct}
    Let $A$ be an acyclic $d$-Nakayama algebra.
    Then a subcategory  $\mathcal{C}\subseteq \modn A$ is $nd\mathbb{Z}$-cluster tilting if and only if it is partial  $nd\mathbb{Z}$-cluster tilting. 
\end{Cor}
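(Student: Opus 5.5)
The forward implication is Proposition \ref{prop_d_ct_implies_weakly_d_ct}, since every $nd\mathbb{Z}$-cluster tilting subcategory is in particular $nd$-cluster tilting and satisfies $\Omega^{\pm nd}\mathcal{C}\subseteq\mathcal{C}$ by Proposition \ref{prop_indec_equiv} (d). The real content is the converse: a partial $nd\mathbb{Z}$-cluster tilting subcategory $\mathcal{C}\subseteq\modn A$ is already $nd\mathbb{Z}$-cluster tilting. The plan is to repeat the necessity part of the proof of Theorem \ref{thm_classification_nonhomo_acyclic}, which only used the partial hypothesis, and then close with a maximality argument. We may assume $n>1$; for $n=1$ one also needs the distinguished subcategory $\mathcal{M}$, which falls outside the classification and would be handled by the representation-directed result of \cite{Vas19}.

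First, by Proposition \ref{prop_A_is_piecewise_homogenous_simple_bridges_in_C}, $A=A_1\Delta\cdots\Delta A_s$ with each $A_i$ acyclic homogeneous, and every simple bridge $S_i$ lies in $\mathcal{C}$. Next, Proposition \ref{prop_C_intersecting_M_gives_weakly_ndZ_ct} shows that $\mathcal{C}\cap\mathcal{M}$ is partial $nd\mathbb{Z}$-cluster tilting. The bridges are simple and lie in the glued distinguished subcategory $\mathcal{M}$ (Proposition \ref{prop_glue_acyclic_nak_and_form_of_simple_bridge}), so they belong to $\mathcal{C}\cap\mathcal{M}$. Hence Corollary \ref{cor_bijection_partial_d_ct_gluing} applies and gives partial $nd\mathbb{Z}$-cluster tilting subcategories $\mathcal{C}_i=\mathcal{C}\cap\mathcal{M}\cap\modn A_i$.

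By Proposition \ref{prop_classification_homo_acyclic_l=2} and Proposition \ref{prop_homo_acyclic_equiv_conditions}, each $\mathcal{C}_i$ is then genuinely $nd\mathbb{Z}$-cluster tilting, namely $\add(A_i\oplus DA_i)$ after the finer degluing of Remark \ref{rem_finer_deglue}. Proposition \ref{prop_glue_d_ct_2_d_ct} then shows that $\mathcal{C}'=\mathcal{C}_1\Delta\cdots\Delta\mathcal{C}_t$ is $nd\mathbb{Z}$-cluster tilting in $\modn A$, and clearly $\mathcal{C}'\subseteq\mathcal{C}$.

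The main obstacle is showing the reverse inclusion $\mathcal{C}\subseteq\mathcal{C}'$, because partial subcategories lack maximality. Here condition (b) of Definition \ref{def_weakly_d_ct} gives $\Ext^i_A(\mathcal{C},\mathcal{C})=0$ for $0<i<nd$. In particular, every $X\in\mathcal{C}$ satisfies $\Ext^i_A(X,\mathcal{C}')=0$ for $0<i<nd$. Since $\mathcal{C}'$ is $nd$-cluster tilting, this forces $X\in\mathcal{C}'$. Therefore $\mathcal{C}=\mathcal{C}'$, and $\mathcal{C}$ is $nd\mathbb{Z}$-cluster tilting.
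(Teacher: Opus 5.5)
Your proof is correct and is essentially the paper's argument. The corollary is read off from the necessity half of the proof of Theorem~\ref{thm_classification_nonhomo_acyclic}, which uses only the partial hypothesis to produce the glued $nd\mathbb{Z}$-cluster tilting subcategory $\mathcal{C}'\subseteq\mathcal{C}$; your final step (rigidity of $\mathcal{C}$ against maximality of $\mathcal{C}'$) is what the partial case needs in place of the theorem's appeal to maximality of $\mathcal{C}$.

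The one slip is your aside on $n=1$, which lies outside the paper's standing assumption $n>1$ and whose suggested justification fails: acyclic $d$-Nakayama algebras need not be representation-directed for $d\geq 2$ (for instance, the Auslander algebra of linearly oriented $\mathbb{A}_5$ is already representation-infinite). That case is nonetheless immediate, since condition (c) of Definition~\ref{def_weakly_d_ct} forces $\mathcal{C}\supseteq\add\{\tau_d^{-k}A\mid k\geq 0\}=\mathcal{M}$, and rigidity then gives $\mathcal{C}=\mathcal{M}$.
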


\begin{Rem}
    By definition, the set $\mathbb{P}$ of  partial $d\mathbb{Z}$-cluster tilting subcategories is closed under intersections.
    However, for $d\mathbb{Z}$-cluster tilting subcategories, 
    inclusion implies equality.
    Thus if partial $d\mathbb{Z}$-cluster tilting coincides with $d\mathbb{Z}$-cluster tilting,
    the only possibility is that either $\mathbb{P}$ is empty or a singleton. 
\end{Rem}

We end this section with an example.

\begin{Eg}\label{example_glued_2nakalg}
    The Gabriel quiver of the $2$-Nakayama algebra $A = A^{2}_{3,8}\Delta A^{2}_{2,6}$ is given by
        
    \begin{equation*}
        \begin{xy}
            0;<16pt,0cm>:<16pt,16pt>::
            (0,0) *+{\bullet} ="12",
            (0,1) *+{\bullet} ="13",
            (0,2) *+{\bullet} ="14",
            (2,0) *+{\bullet} ="23",
            (2,1) *+{\bullet} ="24",
            (2,2) *+{\bullet} ="25",
            (4,0) *+{\bullet} ="34",
            (4,1) *+{\bullet} ="35",
            (4,2) *+{\bullet} ="36",
            (6,0) *+{\bullet} ="45",
            (6,1) *+{\bullet} ="46",
            (6,2) *+{\bullet} ="47",
            (8,0) *+{\bullet} ="56",
            (8,1) *+{\bullet} ="57",
            (8,2) *+{\bullet} ="58",
            (10,0) *+{\bullet} ="67",
            (10,1) *+{\bullet} ="68",
            (10,2) *+{\bullet} ="69",
            (12,0) *+{\bullet} ="78",
            (12,1) *+{\bullet} ="79",
            (14,0) *+{\bullet} ="89",
            (14,1) *+{\bullet} ="8X",
            (16,0) *+{\bullet} ="9X",
            (16,1) *+{\bullet} ="9Y",
            (18,0) *+{\bullet} ="XY",
            (18,1) *+{\bullet} ="XZ",
            (20,0) *+{\bullet} ="YZ",
            (20,1) *+{\bullet} ="YU",
            (22,0) *+{\bullet} ="ZU",
            (22,1) *+{\bullet} ="ZV",
            (24,0) *+{\bullet} ="UV",
            "12", {\ar "13"},
            "13", {\ar "14"},
            "23", {\ar "24"},
            "24", {\ar "25"},
            "34", {\ar "35"},
            "35", {\ar "36"},
            "45", {\ar "46"},
            "46", {\ar "47"},
            "56", {\ar "57"},
            "57", {\ar "58"},
            "67", {\ar "68"},
            "68", {\ar "69"},
            "78", {\ar "79"},
            "89", {\ar "8X"},
            "9X", {\ar "9Y"},
            "XY", {\ar "XZ"},
            "YZ", {\ar "YU"},
            "ZU", {\ar "ZV"},
             "13", {\ar "23"},
             "14", {\ar "24"},
             "24", {\ar "34"},
             "25", {\ar "35"},
             "35", {\ar "45"},
             "36", {\ar "46"},
             "46", {\ar "56"},
             "47", {\ar "57"},
             "57", {\ar "67"},
             "58", {\ar "68"},
             "68", {\ar "78"},
             "69", {\ar "79"},
             "79", {\ar "89"},
             "8X", {\ar "9X"},
             "9Y", {\ar "XY"},
             "XZ", {\ar "YZ"},
             "YU", {\ar "ZU"},
             "ZV", {\ar "UV"},
        \end{xy}
    \end{equation*}

    The Auslander-Reiten quiver of a distinguished $2\mathbb{Z}$-cluster tilting subcategory $\mathcal{M} \subseteq\modn A$ is
    \begin{equation*}
{\tiny
\begin{xy}
0;<15pt,0cm>:<10pt,20pt>:: 
(0,0) *+{0} ="123",
(0,1) *+{0} ="124",
(0,2) *+{0} ="125",
(1,1) *+{0} ="134",
(1,2) *+{0} ="135",
(2,2) *+{0} ="145",
(2.5,0) *+{\bullet} ="234",
(2.5,1) *+{\bullet} ="235",
(2.5,2) *+{0} ="236",
(3.5,1) *+{\bullet} ="245",
(3.5,2) *+{0} ="246",
(4.5,2) *+{0} ="256",
(5,0) *+{\bullet} ="345",
(5,1) *+{\bullet} ="346",
(5,2) *+{0} ="347",
(6,1) *+{\bullet} ="356",
(6,2) *+{0} ="357",
(7,2) *+{0} ="367",
(7.5,0) *+{\bullet} ="456",
(7.5,1) *+{\bullet} ="457",
(7.5,2) *+{0} ="458",
(8.5,1) *+{\bullet} ="467",
(8.5,2) *+{0} ="468",
(9.5,2) *+{0} ="478",
(10,0) *+{\bullet} ="567",
(10,1) *+{\bullet} ="568",
(10,2) *+{0} ="569",
(11,1) *+{\bullet} ="578",
(11,2) *+{0} ="579",
(12,2) *+{0} ="589",
(12.5,0) *+{\bullet} ="678",
(12.5,1) *+{\bullet} ="679",
(12.5,2) *+{0} ="67X",
(13.5,1) *+{\bullet} ="689",
(13.5,2) *+{0} ="68X",
(14.5,2) *+{0} ="69X",
(15,0) *+{\bullet} ="789",
(15,1) *+{1} ="78X",
(16,1) *+{1} ="79X",
(17,0) *+{1} ="89X",
(17,1) *+{0} ="89Y",
(18,1) *+{0} ="8XY",
(19,0) *+{\bullet} ="9XY",
(19,1) *+{0} ="9XZ",
(20,1) *+{0} ="9YZ",
(21,0) *+{\bullet} ="XYZ",
(21,1) *+{0} ="XYU",
(22,1) *+{0} ="XZU",
(23,0) *+{\bullet} ="YZU",
(23,1) *+{0} ="YZV",
(24,1) *+{0} ="YUV",
(25,0) *+{\bullet} ="ZUV",
(25,1) *+{0} ="ZUW",
(26,1) *+{0} ="ZVW",
(27,0) *+{2} ="UVW",
"123", {\ar"124"},
"124", {\ar"125"},
"124", {\ar"134"},
"125", {\ar"135"},
"134", {\ar"135"},
"134", {\ar"234"},
"135", {\ar"145"},
"135", {\ar"235"},
"145", {\ar"245"},
"234", {\ar"235"},
"235", {\ar"236"},
"245", {\ar"246"},
"235", {\ar"245"},
"236", {\ar"246"},
"246", {\ar"256"},
"245", {\ar"345"},
"246", {\ar"346"},
"256", {\ar"356"},
"345", {\ar"346"},
"346", {\ar"347"},
"356", {\ar"357"},
"346", {\ar"356"},
"347", {\ar"357"},
"357", {\ar"367"},
"356", {\ar"456"},
"357", {\ar"457"},
"367", {\ar"467"},
"456", {\ar"457"},
"457", {\ar"458"},
"467", {\ar"468"},
"457", {\ar"467"},
"458", {\ar"468"},
"468", {\ar"478"},
"467", {\ar"567"},
"468", {\ar"568"},
"478", {\ar"578"},
"567", {\ar"568"},
"568", {\ar"569"},
"578", {\ar"579"},
"568", {\ar"578"},
"569", {\ar"579"},
"579", {\ar"589"},
"578", {\ar"678"},
"579", {\ar"679"},
"589", {\ar"689"},
"678", {\ar"679"},
"679", {\ar"67X"},
"689", {\ar"68X"},
"679", {\ar"689"},
"67X", {\ar"68X"},
"68X", {\ar"69X"},
"689", {\ar"789"},
"68X", {\ar"78X"},
"69X", {\ar"79X"},
"789", {\ar"78X"},
"78X", {\ar"79X"},
"79X", {\ar"89X"},
"89X", {\ar"89Y"},
"89Y", {\ar"8XY"},
"8XY", {\ar"9XY"},
"9XY", {\ar"9XZ"},
"9XZ", {\ar"9YZ"},
"9YZ", {\ar"XYZ"},
"XYZ", {\ar"XYU"},
"XYU", {\ar"XZU"},
"XZU", {\ar"YZU"},
"YZU", {\ar"YZV"},
"YZV", {\ar"YUV"},
"YUV", {\ar"ZUV"},
"ZUV", {\ar"ZUW"},
"ZUW", {\ar"ZVW"},
"ZVW", {\ar"UVW"},
\end{xy}
}
\end{equation*}

 There is a $10\mathbb{Z}$-cluster tilting subcategory $\mathcal{C} \subseteq\mathcal{M}$. 
 Here the label $i$ refers to $\add \tau_{10}^{-i}A \in \mathcal{C}$.
\end{Eg}

\subsubsection{Cyclic case}\label{sec_cyclic_nonself-inj_case}

For a non-self-injective cyclic $d$-Nakayama algebra $A=A_{\underline{\ell}}^d$, 
we firstly deduce a necessary condition for $\modn A$ to admit an $nd\mathbb{Z}$-cluster tilting subcategory from the corresponding result on its self-degluing.
Further, we show that the necessary condition is indeed sufficient by constructing a gluing system as discussed in \cite{Vas20} and applying an orbit construction as in Section \ref{preliminary_orbit_construction}.

\begin{Prop}\label{prop_necessary_condition_of_cyclic_non_selfinj}
    Let $A=A_{\underline{\ell}}^{d}$ be a cyclic non-self-injective $d$-Nakayama algebra. 
    Assume that $\modn A$ admits a partial $nd\mathbb{Z}$-cluster tilting subcategory.
    Then the following conditions hold true.
    \begin{itemize}
        \item [(i)] $A= (A_1\Delta A_2\Delta \cdots \Delta A_t)^c$ with $A_i=A_{\ell_i,m_i}^{d}$ for $1\leq i\leq t$.
        \item [(ii)] For $1\leq i\leq t$, one of the following conditions holds true.
        \begin{itemize}
            \item [(a)] $\ell_i=2$ and $n=m_i$.
            \item [(b)] $(d+1)|(n-2)$ and $m_i=\frac{n-2}{d+1}(\ell_i+d-1)+\ell_i$.
        \end{itemize}
    \end{itemize}
\end{Prop}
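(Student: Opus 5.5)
The plan is to reduce to the acyclic classification through self-degluing. Let $\mathcal{C}\subseteq \modn A$ be a partial $nd\mathbb{Z}$-cluster tilting subcategory. Since $A$ is cyclic and non-self-injective, $\underline{\ell}$ is not cyclic homogeneous. By Corollary \ref{cor_necessary_shape_of_Kupisch_series}, $\underline{\ell}$ therefore has a self-deglue point, so after a rotation we may write $\underline{\ell}=(\ell_m,2,\ell_3,\ldots,\ell_{m-1})$ with $\ell_m\neq 2$. Let $\underline{\ell}^a$ be the self-degluing and $A^a=A_{\underline{\ell}^a}^d$ the associated acyclic $d$-Nakayama algebra. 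Lemma \ref{lem_partial_ndZ_ct_of_A_gives_that_of_A^a} then shows that
\begin{equation*}
\mathcal{C}^a=\add\{M^a(x)\mid M(x)\in\mathcal{C}\cap\mathcal{M} \text{ or } x\in\{\lambda,\mu\}\}
\end{equation*}
is a partial $nd\mathbb{Z}$-cluster tilting subcategory of $\modn A^a$.

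Next I apply the acyclic results to $A^a$. By Corollary \ref{cor_acyclic_ndZ_ct_iff_weakly_ndZ_ct}, $\mathcal{C}^a$ is in fact $nd\mathbb{Z}$-cluster tilting. Theorem \ref{thm_classification_nonhomo_acyclic} then gives the decomposition
\begin{equation*}
A^a=A_1\Delta\cdots\Delta A_t, \qquad A_i=A^d_{\ell_i,m_i+1},
\end{equation*}
using the finer degluing of Remark \ref{rem_finer_deglue} for the pieces with $\ell_i=2$. For each $i$, either $\ell_i=2$ and $n=m_i$, or $(d+1)\mid(n-2)$ and $m_i=\frac{n-2}{d+1}(\ell_i+d-1)+\ell_i$. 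Since $\underline{\ell}=(\underline{\ell}^a)^c$, Definition \ref{def_self-glue_and_self-deglue} gives $A=(A_1\Delta\cdots\Delta A_t)^c$, which is statement (i). Statement (ii) is then the numerical conditions just obtained.

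The main obstacle is bookkeeping of indices rather than homological content. The acyclic theorem is stated with $A_i=A^d_{\ell_i,m_i+1}$, where $m_i$ counts the new vertices each piece contributes, whereas the statement here writes $A_i=A^d_{\ell_i,m_i}$. I will need to check that the convention for the pieces in the cyclic self-glued decomposition matches. The relevant point is that the first piece of $\underline{\ell}^a$ starts with $1$ and the last piece's final vertex is identified with the first under the self-gluing. It follows that the counts $m_i$ in (ii) are exactly the numbers of vertices each piece contributes to the cycle.

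A second subtlety is the piece adjacent to the self-deglue point when its $\ell_i=2$. The finer degluing of Remark \ref{rem_finer_deglue} must be applied there as well, so that every piece satisfies (a) or (b). This is justified because the bridges $M^a(\lambda)$ and $M^a(\mu)$ belong to $\mathcal{C}^a$ by construction, so Proposition \ref{prop_A_is_piecewise_homogenous_simple_bridges_in_C} applies to $A^a$ without modification.
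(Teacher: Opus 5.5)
Your proposal is correct and follows the paper's proof step for step. You use Corollary \ref{cor_necessary_shape_of_Kupisch_series} to obtain a self-deglue point, Lemma \ref{lem_partial_ndZ_ct_of_A_gives_that_of_A^a} to pass the partial $nd\mathbb{Z}$-cluster tilting subcategory to $A^a$, Corollary \ref{cor_acyclic_ndZ_ct_iff_weakly_ndZ_ct} to upgrade it to $nd\mathbb{Z}$-cluster tilting, and then Theorem \ref{thm_classification_nonhomo_acyclic}. The index mismatch you flag is a genuine off-by-one in the statement, which the paper's one-line proof passes over; your reading of $m_i$ as the number of vertices each piece contributes to the cycle, i.e.\ $A_i=A^d_{\ell_i,m_i+1}$, is the consistent one.
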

\begin{proof}
    By Corollary \ref{cor_necessary_shape_of_Kupisch_series}, 
    there is a self-deglue point of $A$.
    Let $A^a = A_{\underline{\ell}^a}^d$ be the self-degluing of $A$.
    Then by Lemma \ref{lem_partial_ndZ_ct_of_A_gives_that_of_A^a},
    $\modn A^a$ admits a partial $nd\mathbb{Z}$-cluster tilting subcategory $\mathcal{C}$.
    By Corollary \ref{cor_acyclic_ndZ_ct_iff_weakly_ndZ_ct},
    $\mathcal{C}\subseteq \modn A^a$ is indeed $nd\mathbb{Z}$-cluster tilting. 
    Hence the statements follow from Theorem \ref{thm_classification_nonhomo_acyclic}.
\end{proof}

Next we show, using gluing systems \cite{Vas20} and the orbit construction recalled in Section \ref{preliminary_orbit_construction}, that the necessary condition given in Proposition  \ref{prop_necessary_condition_of_cyclic_non_selfinj} is in fact sufficient for $nd\mathbb{Z}$-cluster tilting.
This will allow us to obtain a classification of $nd\mathbb{Z}$-cluster tilting subcategories of the cyclic non-self-injective $d$-Nakayama algebra $A$.
For the rest of the section,
we assume $A$ satisfies (i) and (ii) in Proposition \ref{prop_necessary_condition_of_cyclic_non_selfinj} and denote by $A^a$ the self-degluing of $A$.
By Theorem \ref{thm_classification_nonhomo_acyclic},
$\modn A^a$ admits a unique $nd\mathbb{Z}$-cluster tilting subcategory $\mathcal{C}^a$.

We begin with the definition of infinite sink-source self-gluing of $A^a$ following \cite[Section 5.2]{Vas20}.
Note that $A^a = \Bbbk Q/\mathcal{R}$ where $Q$ has a unique sink and a unique source as shown below.
\begin{figure}[h]
    \centering
    \begin{tikzpicture}
        \draw (0,0) ellipse (0.5cm and 0.7cm);
        \draw (-1,0) node [scale=0.2, circle, draw,fill=black]{};
        \draw (-1.3,0) node {$a$};
        \draw[->] (-0.9,0.05)--(-0.5, 0.4);
        \draw (1,0) node [scale=0.2, circle, draw,fill=black]{};
        \draw (1.3,0) node {$b$};
        \draw[->] (0.5, 0.4)--(0.9,0.05);
        \node[] at (0,0) {$Q'$};
    \end{tikzpicture}
    \caption{quiver $Q$}
    \label{fig:placeholder}
\end{figure}
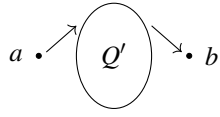

For each integer $z\in \mathbb{Z}$, let $Q[z]$ be a copy of $Q$ with vertices and arrows labelled as follows.
\begin{align*}
    (Q[z])_0 & = \{i[z]\mid i\in Q_0\}\\
    (Q[z])_1 & = \{\alpha[z]:i[z]\rightarrow j[z]\mid \alpha\in Q_1 \text{ with } \alpha: i\rightarrow j\}.
\end{align*}
Denote by $\mathcal{R}[z]$ the ideal of $kQ[z]$ corresponding to $\mathcal{R}$ of $kQ$.
Let 
\begin{equation*}
    \Lambda_z = \Bbbk Q[z]/\mathcal{R}[z]
\end{equation*}
and 
\begin{equation*}
    S_z = S_a[z], \text{ and } T_z = S_b[z]
\end{equation*}
the simple $\Lambda_z$ modules at vertices $a[z]$ and $b[z]$ correspondingly.
Denote by $\mathcal{C}_z\subseteq \modn \Lambda_z$ the $nd\mathbb{Z}$-cluster tilting subcategory corresponding to $\mathcal{C}^{a}$ of $\modn A^{a}$.

Let $G=\overrightarrow{A_{\infty}^{\infty}}$ be the following graph.
\begin{equation*}
    \xymatrix@=1em@R=1em{\cdots\ar[r] & -2\ar[r]^{\alpha_{-2}} & -1\ar[r]^{\alpha_{-1}} & 0\ar[r]^{\alpha_{0}} & 1\ar[r]^{\alpha_{1}} & 2\ar[r]^{\alpha_{-1}} & \cdots}.
\end{equation*}
Following the notations in \cite[Definition 4.8]{Vas20}, we have that $(\Lambda_z, S_z, T_z)_{z\in \mathbb{Z}}$ forms a gluing system on $G$.

Denote by $I=\{s, s+1, \ldots, t\}\subseteq \mathbb{Z}$ the finite interval subset of $\mathbb{Z}$. 
Let 
\begin{equation*}
    \Lambda_I = \Lambda_s\Delta\cdots \Delta \Lambda_t = \Bbbk Q_I/\mathcal{R}_I
\end{equation*}
be the sink-source gluing along $I$.
For $I\subseteq J$ we have a ring  epimorphism 
\begin{equation*}
    \Lambda_J\rightarrow \Lambda_J/\langle 1-e_I\rangle = \Lambda_I
\end{equation*}
where $e_I=\sum_{i\in (Q_I)_0}e_i$ which induces a fully faithful embedding 
\begin{equation*}
    \iota_{IJ}: \modn \Lambda_I\rightarrow \modn \Lambda_J.
\end{equation*}
Similarly as discussed in Section \ref{preliminary_gluing}, $\iota_{IJ}$ admits a left adjoint $L_{IJ}$ and a right adjoint $R_{IJ}$.
Moreover, we denote by 
\begin{equation*}
    \theta_{IJK}: \iota_{IJ}\circ \iota_{JK}\Rightarrow \iota_{IK}
\end{equation*}
the natural isomorphism for $I,J,K\subseteq \mathbb{Z}$.

Let $\Lambda=\Bbbk \dot{Q}/\dot{\mathcal{R}}$ where $\dot{Q}$ is given in Figure \ref{fig:quiver Q dot} and $\dot{\mathcal{R}}$ is generated by $\mathcal{R}[z]$ and the paths from $Q'[z]$ to $Q'[z+1]$ for $z\in \mathbb{Z}$. 

\begin{figure}[h]
    \centering
    \begin{tikzpicture}
        \draw (0,0) ellipse (0.5cm and 0.7cm);
        \draw (-1,0) node [scale=0.2, circle, draw,fill=black]{};
        \draw (-1.3,0) node {$\cdots$};
        \draw[->] (-0.9,0.05)--(-0.5, 0.4);
        \draw (1,0) node [scale=0.2, circle, draw,fill=black]{};
        \draw[->] (0.5, 0.4)--(0.9,0.05);
        \node[] at (0,0) {$Q'[-1]$};
        \draw (2,0) ellipse (0.5cm and 0.7cm);
        \node[] at (2,0) {$Q'[0]$};
        \draw (1,0) node [scale=0.2, circle, draw,fill=black]{};
        \draw[->] (1.1,0.05)--(1.5, 0.4);
        \draw (3,0) node [scale=0.2, circle, draw,fill=black]{};
        \draw[->] (2.5, 0.4)--(2.9,0.05);
        \draw[->] (3.1,0.05)--(3.5, 0.4);
        \draw (4,0) ellipse (0.5cm and 0.7cm);
        \node[] at (4,0) {$Q'[1]$};
        \draw (5,0) node [scale=0.2, circle, draw,fill=black]{};
        \draw[->] (4.5, 0.4)--(4.9,0.05);
        \node[] at (5.3,0) {$\cdots$};
    \end{tikzpicture}
    \caption{quiver $\dot{Q}$}
    \label{fig:quiver Q dot}
\end{figure}

Following the steps in \cite[Section 4.1.2]{Vas20}, $(\proj\Lambda, \Phi_I, \Theta_{IJ})$ is a firm source of the \textbf{Cat}-inverse system $(\proj \Lambda_I, \iota_{IJ}, \theta_{IJK})$, see \cite[Proposition 4.11, Corollary 4.15]{Vas20},
where $\Phi_I: \proj \Lambda\rightarrow \proj \Lambda_I$ corresponds to the canonical projection $ \Lambda\rightarrow \Lambda_I$ and 
$\Theta_{IJ}$ is the canonical natural isomorphism from $ \iota_{IJ}\circ \Phi_J$ to $\Phi_I$.

Consequently $(\modn \Lambda, \Phi_{I\ast}, \Theta_{IJ\ast})$ is an admissible target \cite[Definition 3.6]{Vas20} of $(\modn \Lambda_I, \iota_{IJ\ast}, \Theta_{IJK\ast})$ by \cite[Proposition 4.19]{Vas20} where $\Phi_{I\ast}: \modn \Lambda_I\rightarrow \modn \Lambda$ is the induced embedding.
Notice that representation directedness is assumed in \cite[Section 4]{Vas20} however the proofs still work without this assumption.

Recall that $\mathcal{C}^a\subseteq \modn A^a$ is the $nd\mathbb{Z}$-cluster tilting subcategory obtained from Theorem \ref{thm_classification_nonhomo_acyclic}.
Applying Proposition \ref{prop_glue_d_ct_2_d_ct}, 
we have 
\begin{equation*}
    \mathcal{C}_I = \mathcal{C}^a_s \Delta \cdots \Delta \mathcal{C}^a_t \subseteq \modn \Lambda_I
\end{equation*}
is an $nd\mathbb{Z}$-cluster tilting subcategory.
Since each $\mathcal{C}_J\subseteq \modn \Lambda_J$ is $nd$-cluster tilting and $\iota_{IJ\ast}(\modn \Lambda_I) \cap \mathcal{C}_J = \iota_{IJ\ast}(\mathcal{C}_I)$ for all $I\subseteq J$,
we get that 
$(\mathcal{C}_I)_I$ is an asympotically weakly $nd$-cluster tilting system (see \cite[Definition 3.18]{Vas20}).
As a consequence, we have the following proposition.

\begin{Prop}
    Let 
    \begin{equation*}
        \mathcal{C} = \add \{\Phi_{I\ast}(\mathcal{C}_I)\mid I\subseteq \mathbb{Z}\} \subseteq \modn \Lambda
    \end{equation*}
    is an $nd\mathbb{Z}$-cluster tilting subcategory.
\end{Prop}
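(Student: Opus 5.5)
We will apply Vaso's gluing machinery for asymptotically weakly cluster tilting systems, and then upgrade the conclusion from $nd$-cluster tilting to $nd\mathbb{Z}$-cluster tilting by a separate syzygy argument. The plan has three steps.

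\emph{Step 1: locally bounded $nd$-cluster tilting.} We already have that $(\modn \Lambda, \Phi_{I\ast}, \Theta_{IJ\ast})$ is an admissible target of the inverse system $(\modn \Lambda_I, \iota_{IJ\ast}, \theta_{IJK\ast})$. We have also shown that $(\mathcal{C}_I)_I$ is an asymptotically weakly $nd$-cluster tilting system. The main theorem on gluing systems in \cite{Vas20} (the statement passing from such a system to its admissible target) then gives that
\begin{equation*}
\mathcal{C}=\add\{\Phi_{I\ast}(\mathcal{C}_I)\}
\end{equation*}
is an $nd$-cluster tilting subcategory of $\modn\Lambda$. Before invoking it, we check that the hypotheses used in \cite{Vas20} do not require representation-directedness. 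The points to verify are the following.
\begin{itemize}
\item Every finitely generated $\Lambda$-module has finite support, so it lies in some $\Phi_{I\ast}(\modn\Lambda_I)$.
\item The $\Ext$-groups computed in $\Lambda_I$ agree with those computed in $\Lambda$. This follows from Proposition \ref{prop_syzygy_A_B_syzygy_lambda_and_homoligical_embeddings}(c), applied iteratively once $I$ is enlarged on both sides.
\end{itemize}

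\emph{Step 2: the $\mathbb{Z}$-condition.} By Proposition \ref{prop_indec_equiv}(d), it suffices to show $\Omega^{nd}_\Lambda(\mathcal{C})\subseteq\mathcal{C}$. Let $X\in\ind\mathcal{C}$ be non-projective. Then $X\in\Phi_{I\ast}(\mathcal{C}_I)$ for some finite interval $I=\{s,\dots,t\}$. Enlarge $I$ to $J=\{s-1,\dots,t\}$; by the same reasoning, taking $J=\{s-N,\dots,t\}$ with $N$ large is also harmless. The key point is to compare syzygies in $\Lambda$ and in $\Lambda_J$. Since $\Lambda_J$ embeds through its sink end via a sink-source gluing structure, we compare as follows.
\begin{itemize}
\item Writing $\Lambda$ locally as $\Lambda_{(-\infty,t]}\Delta\Lambda_{[t+1,\infty)}$, Proposition \ref{prop_syzygy_A_B_syzygy_lambda_and_homoligical_embeddings}(a) shows that syzygies of modules supported on the left part are computed inside the left part.
\item On the left part, the projective covers of modules supported in $J$ have support in $J$ together with finitely many earlier copies. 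Hence $\Omega^{nd}_\Lambda X\cong\Phi_{J\ast}\Omega^{nd}_{\Lambda_J}X$ for $J$ sufficiently large to the left.
\end{itemize}
Since $\mathcal{C}_J$ is $nd\mathbb{Z}$-cluster tilting in $\modn\Lambda_J$ by Proposition \ref{prop_glue_d_ct_2_d_ct}, we get $\Omega^{nd}_{\Lambda_J}X\in\mathcal{C}_J$. Therefore $\Omega^{nd}_\Lambda X\in\mathcal{C}$.

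\emph{Main obstacle.} The delicate point is the syzygy comparison in Step 2. Proposition \ref{prop_syzygy_A_B_syzygy_lambda_and_homoligical_embeddings}(b) shows that syzygies of modules on the source side of a gluing can leak across the simple bridge $S$: summands $\Omega(S_a)$ appear in the other algebra. Our approach is therefore to always place $X$ in the right-hand factor of a gluing and to take the left-hand factor large. The leaked terms $\Omega_A(S_a)^{t_i}$ are then computed in a finite piece $\Lambda_J$. Because each syzygy step moves support at most one copy to the left, $nd$ steps need only $J$ extending $nd$ copies to the left of $I$, and this bound makes the choice of $N$ explicit.

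Step 3 is the remaining routine check. Functorial finiteness of $\mathcal{C}$ in the locally bounded sense, and the fact that $\Lambda,D\Lambda\in\mathcal{C}$ (projectives and injectives of $\Lambda$ are images of those of the $\Lambda_I$ by Proposition \ref{prop_image_of_projects_lambda_A_B}), both follow since each $\Phi_{I\ast}(\mathcal{C}_I)$ has an additive generator.
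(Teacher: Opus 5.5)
Your proposal is correct and follows essentially the same route as the paper. Vaso's gluing theorem for asymptotically weakly cluster tilting systems gives that $\mathcal{C}$ is $nd$-cluster tilting, and $\Omega^{nd}_{\Lambda}\mathcal{C}\subseteq\mathcal{C}$ follows by computing $\Omega^{nd}_{\Lambda}X\cong\Omega^{nd}_{\Lambda_J}X$ in a sufficiently large finite interval $J$, where $\mathcal{C}_J$ is $nd\mathbb{Z}$-cluster tilting — and your observation that each syzygy step moves the support at most one copy to the left (so extending $J$ by $nd$ copies leftward suffices) supplies the justification for this comparison that the paper leaves implicit.
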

\begin{proof}
    Note that $\mathcal{C}_I$ is functorially finite as it admits an additive generator by Theorem \ref{thm_classification_nonhomo_acyclic}.
    Applying \cite[Theorem 3.19, Corollary 3.21]{Vas20},
    we get that $\mathcal{C}\subseteq \modn \Lambda$ is $nd$-cluster tilting.

    Now we show $\Omega_{\Lambda}^{nd}\mathcal{C}\subseteq \mathcal{C}$ to conclude that $\mathcal{C}$ is $nd\mathbb{Z}$-cluster tilting.

    Let $M\in \mathcal{C}$.
    We may view $M\in \mathcal{C}_I$ for some sufficiently large interval $I\subseteq \mathbb{Z}$ such that $\Omega_{\Lambda}^{nd}M\cong \Omega_{\Lambda_I}^{nd}M$.
    Since $\mathcal{C}_I\subseteq \modn \Lambda_I$ is $nd\mathbb{Z}$-cluster tilting, 
    $\Omega_{\Lambda_I}^{nd}M\in \mathcal{C}_I$.
    Hence $\Omega_{\Lambda}^{nd}M\in \mathcal{C}$.
\end{proof}

We view $\Lambda = \Bbbk\dot{Q}/\dot{\mathcal{R}}$ as a path category. 
For $k\in \mathbb{Z}$, the automorphism 
\begin{align*}
    [k]: \Bbbk \dot{Q}/\dot{\mathcal{R}} & \rightarrow \Bbbk \dot{Q}/\dot{\mathcal{R}}\\
    p & \mapsto p[k]
\end{align*}
induces a $\mathbb{Z}$-action on $\Lambda$. 

Let $F: \Lambda \rightarrow \Lambda /\mathbb{Z}$ and let $F_{\ast}: \modn \Lambda \rightarrow \modn (\Lambda/\mathbb{Z})$ be the push-down functor. 

\begin{Prop}
    We have that $F_{\ast}(\mathcal{C})\subseteq \modn (\Lambda/\mathbb{Z})$ is an $nd\mathbb{Z}$-cluster tilting subcategory.
\end{Prop}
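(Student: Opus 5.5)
The plan is to apply the orbit construction, Theorem \ref{thm_DI_orbit_construction}, with $\mathcal{A}=\Lambda$, $G=\mathbb{Z}$ acting by the shifts $[k]$, and $\mathcal{U}=\mathcal{C}$. This gives the $nd$-cluster tilting property of $F_\ast(\mathcal{C})$. Then I would upgrade it to $nd\mathbb{Z}$-cluster tilting by checking $\Omega^{nd}F_\ast(\mathcal{C})\subseteq F_\ast(\mathcal{C})$ and invoking Proposition \ref{prop_indec_equiv} (d).

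Concretely, the steps are as follows.

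\emph{Step 1: the hypotheses of Theorem \ref{thm_DI_orbit_construction}.}
\begin{itemize}
\item $\Lambda=\Bbbk\dot{Q}/\dot{\mathcal{R}}$ is locally bounded and Krull--Schmidt. Each vertex lies in at most two consecutive copies $Q[z]$, $Q[z+1]$, and paths from $Q'[z]$ to $Q'[z+1]$ are killed, so $e_x\Lambda$ and $\Lambda e_x$ are finite dimensional.
\item The $\mathbb{Z}$-action by $[k]$ is admissible. For $k\neq 0$, the functor $[k]$ moves every indecomposable $\Lambda$-module to a module with disjoint, translated support, so $X[k]\ncong X$. This holds since every indecomposable finitely generated module has finite support.
\item $\mathcal{C}$ is $\mathbb{Z}$-equivariant. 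Since $\mathcal{C}_I[k]=\mathcal{C}_{I+k}$ by construction of the gluing system, we get $\mathcal{C}[k]=\mathcal{C}$.
\item $\mathcal{C}$ is locally bounded. For each vertex $x$, only finitely many indecomposables in $\mathcal{C}$ have $x$ in their support, because each lies in some $\mathcal{C}_{\{z,z+1\}}$ and each $\mathcal{C}_z$ has finitely many indecomposables.
\end{itemize}
Theorem \ref{thm_DI_orbit_construction} then says that $F_\ast(\mathcal{C})$ is an $nd$-cluster tilting subcategory of $\modn(\Lambda/\mathbb{Z})$.

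\emph{Step 2: the $nd\mathbb{Z}$ property.} The push-down $F_\ast$ is exact and sends projectives $e_x\Lambda$ to projectives $e_{Fx}(\Lambda/\mathbb{Z})$. Hence it sends a (minimal) projective resolution of $M\in\modn\Lambda$ to a projective resolution of $F_\ast M$, and
\begin{equation*}
\Omega^{nd}_{\Lambda/\mathbb{Z}}F_\ast M\cong F_\ast\Omega^{nd}_\Lambda M
\end{equation*}
up to projective summands. Minimality is preserved by the usual covering argument: $F_\ast$ preserves radicals, since it is a Galois covering with admissible free group action. The previous Proposition gives $\Omega^{nd}_\Lambda\mathcal{C}\subseteq\mathcal{C}$, so $\Omega^{nd}F_\ast(\mathcal{C})\subseteq F_\ast(\mathcal{C})$. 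Proposition \ref{prop_indec_equiv} (d) then gives the $nd\mathbb{Z}$-cluster tilting property.

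\emph{Expected main obstacle.} The hardest step should be the local boundedness and equivariance checks in Step 1, together with making sure that $\Lambda/\mathbb{Z}$ is genuinely identified with $A$. The latter is needed if one later wants the conclusion for $\modn A$, and it is plausibly done by matching $\dot{Q}/\mathbb{Z}$ with the quiver of $A$ obtained by identifying the sink $b$ with the source $a$.

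For local boundedness, I would argue directly. Every indecomposable of $\mathcal{C}$ is the image under $\Phi_{I\ast}$ of an indecomposable of some $\mathcal{C}^a_z$. This follows from Proposition \ref{prop_ind_lambda} applied iteratively to $\Lambda_I$: indecomposables of a sink-source gluing come from the pieces. Such an indecomposable has support inside $Q[z]$, and each copy contributes the finitely many indecomposables of $\mathcal{C}^a$.
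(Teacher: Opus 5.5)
Your proposal is correct and follows the same route as the paper. The paper uses $\mathbb{Z}$-equivariance of $\mathcal{C}$ together with Theorem \ref{thm_DI_orbit_construction} to get $nd$-cluster tilting. It then upgrades to $nd\mathbb{Z}$-cluster tilting via $\Omega^{nd}_{\Lambda/\mathbb{Z}}F_\ast M\cong F_\ast\Omega^{nd}_\Lambda M$, which holds because $F_\ast$ is exact and preserves projectives.

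Your extra checks of local boundedness and admissibility fill in what the paper leaves implicit, with two small corrections. Admissibility concerns the action on the vertices of $\dot{Q}$, where $x[k]\neq x$ is immediate. Also, the supports of $X$ and $X[\pm 1]$ can share a glued vertex, so ``disjoint'' should read ``translated''.
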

\begin{proof}
    Viewing objects in $\mathcal{C}$ as representations of the quiver $\dot{Q}$ bounded by $\dot{R}$,
    it is straightforward to check that $\mathcal{C}$ is $\mathbb{Z}$-equivariant.
    The proof is parallel to \cite[Lemma 5.9]{Vas20}.
    
    Applying Theorem \ref{thm_DI_orbit_construction}, we have $F_{\ast}(\mathcal{C})\subseteq \modn (\Lambda/\mathbb{Z})$ is $nd$-cluster tilting.
    Since $F_{\ast}$ admits an exact right adjoint, 
    $F_{\ast}$ preserves projective objects. 
    We have the following commutative diagram.
    \begin{equation*}
        \xymatrix{
        \modn \Lambda \ar[r]^{\Omega_{\Lambda}^{nd}}\ar[d]^{F_{\ast}} & \modn \Lambda\ar[d]^{F_{\ast}} \\
        \modn (\Lambda/\mathbb{Z})
        \ar[r]^{\Omega_{\Lambda/\mathbb{Z}}^{nd}} & \modn (\Lambda/\mathbb{Z})}
    \end{equation*}
    Since $\Omega_{\Lambda}^{nd}\mathcal{C} \subseteq \mathcal{C}$,
    we have 
    \begin{equation*}
        \Omega_{\Lambda/\mathbb{Z}}^{nd}F_{\ast}\mathcal{C} \cong  F_{\ast}\Omega_{\Lambda}^{nd} \mathcal{C} \subseteq F_{\ast}\mathcal{C}
    \end{equation*}
    which implies 
    $F_{\ast}\mathcal{C}\subseteq \modn (\Lambda/\mathbb{Z})$ is $nd\mathbb{Z}$-cluster tilting.
\end{proof}

Following a similar argument as in \cite[Section 5.2]{Vas20}, $\modn (\Lambda/\mathbb{Z})$ can be realized as the module category of a finite dimensional algebra which is $A$ in our case. 

\begin{Cor}\label{cor_existence_ndZ_ct_for_non_selfinj_cyclic_case}
    There is an equivalence $\modn A\simeq \modn (\Lambda/\mathbb{Z})$. 
    In particular, $\modn A$ admits an $nd\mathbb{Z}$-cluster tilting subcategory given by $ F_{\ast}(\mathcal{C})$.
\end{Cor}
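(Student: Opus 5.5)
The plan is to identify the orbit category $\Lambda/\mathbb{Z}$ with the path category of the quiver with relations defining $A = A_{\underline{\ell}}^d$, i.e.\ with $(A_1\Delta\cdots\Delta A_t)^c$. Once this is done, the second claim follows from the preceding Proposition: $F_\ast$ is exact and preserves projectives, and the $nd\mathbb{Z}$-cluster tilting property is stable under equivalences of abelian categories. So $F_\ast(\mathcal{C})$ transported along the equivalence is $nd\mathbb{Z}$-cluster tilting in $\modn A$.

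For the identification, I will first describe $\Lambda/\mathbb{Z}$ concretely. The shift $[1]$ acts freely on the vertices of $\dot{Q}$, because vertices $i[z]$ and $i[z+1]$ are distinct. Hence the action is admissible and the orbit category is the path category of the quotient quiver $\dot{Q}/\mathbb{Z}$. This quotient is obtained from one copy $Q[0]$ by identifying the sink $b[0]$ with the source $a[1]\sim a[0]$. The relations descend to the image of $\mathcal{R}[0]$, together with all paths going from $Q'$ through the identified vertex back into $Q'$.

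Next I will compare this with the combinatorial description of $A_{\underline{\ell}}^d$ as $\mathcal{A}^d_{\ell_\infty}/G$, where $\ell_\infty$ is the periodic series and $G=\langle\sigma\rangle$. I use the self-degluing of Definition~\ref{def_self-glue_and_self-deglue}: the vertex set $os^d_{\underline{\ell}^a}$ maps onto $os^d_{\underline{\ell}}/G$ by identifying $(1,\ldots,d)$ with $(m,\ldots,m+d-1)$, which is exactly identifying $a$ with $b$. The commutativity relations of $A^a$ map to those of $A$. The zero relations of $A$ through the self-deglue point correspond exactly to the relations killing paths from $Q'[z]$ to $Q'[z+1]$, since $\ell_{m}=2$ there, just as in Proposition~\ref{prop_glue_acyclic_nak_and_form_of_simple_bridge}. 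Because $A$ has finitely many vertices, $\Lambda/\mathbb{Z}$ has finitely many objects and is locally bounded. It is therefore given by a finite-dimensional basic algebra, namely $\bigoplus_{x,y}\Hom_{\Lambda/\mathbb{Z}}(x,y)$, and $\modn(\Lambda/\mathbb{Z})\simeq\modn A$.

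The main obstacle is checking that the relations match. In $\Lambda/\mathbb{Z}$ a morphism is a sum over $g\in\mathbb{Z}$ of morphisms in $\Lambda$. I must show that the only surviving paths are those that either stay within one copy, or cross the glued vertex only at an endpoint. Conversely, I must show that $A$ has no nonzero path passing through the self-deglue point in its interior. This follows from $\ell_{m}=2$ at the deglue point together with the shape of $os^d$; the argument is parallel to \cite[Section 5.2]{Vas20}.
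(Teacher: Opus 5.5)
Your proposal is correct and takes essentially the same route as the paper. The paper also obtains the equivalence by identifying the orbit category $\Lambda/\mathbb{Z}$ with the bound quiver category of $A=(A^a)^c$, deferring the details to \cite[Section 5.2]{Vas20}, and then transports the $nd\mathbb{Z}$-cluster tilting subcategory $F_\ast(\mathcal{C})$ of the preceding Proposition along that equivalence; you simply spell out the quiver-with-relations comparison that the paper leaves to the citation.

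There are two harmless slips. First, the Kupisch entry equal to $2$ is the one immediately after the self-deglue point, i.e.\ the second entry of $\underline{\ell}=(\ell_m,2,\ell_3,\ldots,\ell_{m-1})$; in the paper's notation $\ell_m\neq 2$. Second, $\Lambda/\mathbb{Z}$ has finitely many isomorphism classes of objects, not finitely many objects.
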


Now we are ready to classify $nd\mathbb{Z}$-cluster tilting subcategories for cyclic non-self-injective $d$-Nakayama algebras.

\begin{Thm}\label{thm_classification_non_selfinj_cyclic_case}
    Let $A=A_{\underline{\ell}}^{d}$ be a cyclic non-self-injective $d$-Nakayama algebra.
    Then $\modn A$ admits an $nd\mathbb{Z}$-cluster tilting subcategory $\mathcal{C}$ if and only if 
    the following conditions hold true.
    \begin{itemize}
        \item [(i)] $A= (A_1\Delta A_2\Delta \cdots \Delta A_t)^c$ with $A_i=A_{\ell_i,m_i}^{d}$ for $1\leq i\leq t$.
        \item [(ii)] For $1\leq i\leq t$, one of the following conditions holds true.
        \begin{itemize}
            \item [(a)] $\ell_i=2$ and $n=m_i$.
            \item [(b)] $(d+1)|(n-2)$ and $m_i=\frac{n-2}{d+1}(\ell_i+d-1)+\ell_i$.
        \end{itemize}
    \end{itemize}
    Moreover, in that case  
    $\mathcal{C} = \add (\{A \oplus DA\} \cup \{S_i\mid 1\leq i\leq t\})$ where 
    \begin{equation*}
        S_i = M(1 + \sum_{k=1}^i m_i, 2+ \sum_{k=1}^i m_i, \ldots, d+1+\sum_{k=1}^i m_i).
    \end{equation*}
\end{Thm}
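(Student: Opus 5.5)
The plan is to prove the two directions separately, reusing the results just established for the cyclic case.

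\emph{Necessity.} Suppose $\mathcal{C}\subseteq\modn A$ is $nd\mathbb{Z}$-cluster tilting. By Proposition \ref{prop_d_ct_implies_weakly_d_ct}, $\mathcal{C}$ is partial $nd\mathbb{Z}$-cluster tilting. Proposition \ref{prop_necessary_condition_of_cyclic_non_selfinj} then gives (i) and (ii) directly.

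\emph{Sufficiency.} Assume (i) and (ii). Corollary \ref{cor_existence_ndZ_ct_for_non_selfinj_cyclic_case} gives the $nd\mathbb{Z}$-cluster tilting subcategory $F_\ast(\mathcal{C})\subseteq\modn A$, where $\mathcal{C}=\add\{\Phi_{I\ast}(\mathcal{C}_I)\}$ is the infinite glued category built from copies of $\mathcal{C}^a=\add(\{A^a\oplus DA^a\}\cup\{S_i^a\})$.

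\emph{Identifying the subcategory.} To obtain the stated description, I would first compute $F_\ast$ on indecomposables in $\mathcal{C}$. By Proposition \ref{prop_image_of_projects_lambda_A_B}, these are the projectives and injectives of the pieces $\Lambda_z$ (which are projective or injective in $\Lambda$, apart from the simple bridges) together with the simple bridges $S_i[z]$ and the gluing simples $S_z\cong T_{z-1}$. Since the push-down sends indecomposable projectives and injectives of $\Lambda$ to those of $\Lambda/\mathbb{Z}\simeq A$, $F_\ast$ sends these objects to $A\oplus DA$. It sends the bridges to the simples $S_i=M(1+\sum_{k\le i}m_k,\ldots)$, and the self-gluing simple to the simple at the self-deglue point, which is the $S_t$ term (indices taken cyclically). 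Hence $F_\ast(\mathcal{C})$ equals the stated $\add(\{A\oplus DA\}\cup\{S_i\})$.

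\emph{Uniqueness.} Let $\mathcal{C}'$ be any $nd\mathbb{Z}$-cluster tilting subcategory. By Proposition \ref{prop_A_is_piecewise_homogenous_simple_bridges_in_C}(b) and the self-degluing argument of its case (2), every simple bridge $S_i$, including the self-gluing one, lies in $\mathcal{C}'$. Since $A, DA\in\mathcal{C}'$ as well, we get $F_\ast(\mathcal{C})\subseteq\mathcal{C}'$. Both are $nd$-cluster tilting, and by the maximality in the definition an inclusion of $nd$-cluster tilting subcategories forces equality. So $\mathcal{C}'=F_\ast(\mathcal{C})$.

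The main obstacle is the bookkeeping in the identification step. One must check that $F_\ast$ of a glued projective of $\Lambda_z$ at the sink, namely the simple $T_z$, matches the correct simple of $A$. One must also check that, under the self-degluing correspondence $\lambda\leftrightarrow\mu$ of Proposition \ref{prop_M_and_M^a}, the coordinates of the $S_i$ agree with the formula in the statement.
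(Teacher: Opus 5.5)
Your necessity and existence steps are exactly the paper's proof, which consists only of citing Proposition \ref{prop_necessary_condition_of_cyclic_non_selfinj} and Corollary \ref{cor_existence_ndZ_ct_for_non_selfinj_cyclic_case}. The paper does not spell out either the identification of $F_\ast(\mathcal{C})$ or uniqueness, so those steps are yours, and the uniqueness step has a gap at the self-gluing bridge $S_t$.

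Proposition \ref{prop_A_is_piecewise_homogenous_simple_bridges_in_C}(b) only produces $S_1,\ldots,S_{t-1}$. In its case (2), the modules $M^a(\lambda)$ and $M^a(\mu)$ are put into $\mathcal{C}'^a$ by definition, as the simple projective and simple injective of $A^a$. So the self-degluing argument says nothing about whether $S_t\cong M(\lambda)\cong M(\mu)$ lies in $\mathcal{C}'$. Passing to $\mathcal{C}'^a$ does give you every bridge internal to $A^a$, including the finer ones inside $\ell_i=2$ runs. The route is Lemma \ref{lem_partial_ndZ_ct_of_A_gives_that_of_A^a}, then Corollary \ref{cor_acyclic_ndZ_ct_iff_weakly_ndZ_ct}, then uniqueness in Theorem \ref{thm_classification_nonhomo_acyclic}. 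But it never reaches $S_t$. For $t=1$ your argument yields only $\add(A\oplus DA)\subseteq\mathcal{C}'$, which is not enough for the maximality step.

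The fix is a local argument dual to Lemma \ref{lem_moremodules}(a). Write $m=|\underline{\ell}^a|$ and $\mu=(m,\ldots,m+d)$ as in Proposition \ref{prop_M_and_M^a}. Because the self-deglue representative has $\ell_m\neq 2$, the last homogeneous piece has $\ell_t\geq 3$. Self-gluing only changes the injective at the sink vertex, so for arguments $\leq m-1$ the function $g$ of $A$ agrees with that of $A^a$; hence $g(m-2)=g(m-1)=m+d$. Therefore, for $x=(m-1,m+1,\ldots,m+d)$, the module $M(x)$ is injective and non-projective in $\modn A$, so $M(x)\in\mathcal{C}'_P\cap\mathcal{M}_P$. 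Proposition \ref{prop_more_modules_from_tau_d_Omega_-1}(a) then gives
\[
\Omega^{-d}\tau_d M(x)\cong\Omega^{-d}M(m-2,m,\ldots,m+d-1)\cong M(m,m+1,\ldots,m+d)=M(\mu)\cong S_t\in\mathcal{C}'.
\]
With this added, $F_\ast(\mathcal{C})\subseteq\mathcal{C}'$, and maximality of $nd$-cluster tilting subcategories finishes the argument as you intended.

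A minor point: Proposition \ref{prop_d_ct_implies_weakly_d_ct} only gives partial $nd$-cluster tilting. The $\Omega^{\pm nd}$-closure needed for the partial $nd\mathbb{Z}$ property comes from Proposition \ref{prop_indec_equiv}(d).
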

\begin{proof}
    Condition (i) and (ii) follow by 
    Proposition \ref{prop_necessary_condition_of_cyclic_non_selfinj}.
    Assuming (i) and (ii), $\mathcal{C}$ is 
   of the given form by Corollary \ref{cor_existence_ndZ_ct_for_non_selfinj_cyclic_case}.
    
\end{proof}

\subsection{Self-injective case}

Recall that a $d$-Nakayama algebra $A_{\underline{\ell}}^{d}$ of type $\Tilde{\mathbb{A}}_{m-1}$ is self-injective if and only if  $\underline{\ell} = (\ell_1, \ldots, \ell_m)$ with $\ell_i=\ell$ for some positive integer $\ell\geq 2$.
In this case, we use the notation $A = \Tilde{A}_{\ell, m-1}^{d}$ following \cite[Definition 4.9]{JK19}.
We denote by $\mathcal{M}\subseteq \modn A$ the distinguished $d\mathbb{Z}$-cluster tilting subcategory of $A$.

In this section, we firstly give a classification of $nd\mathbb{Z}$-cluster tilting subcategories of $A$  when $\ell=2$.
In this case, $A$ is a classical Nakayama algebra so the result in \cite{HKV25} applies.
If $\ell\geq 3$,
we obtain necessary conditions for $A$ to admit an $nd\mathbb{Z}$-cluster tilting subcategory.
However, to get a classification result in this case seems rather difficult. 
Instead, we give an explicit example of an $(\ell-2)d\mathbb{Z}$-cluster tilting subcategory for each $A$ with $\gcd(\ell-2, d)=1$.

\begin{Prop}\label{prop_classification_selfinj_ell_2}
    Let $A = \widetilde{A}_{2, m-1}^d$ be a self-injective $d$-Nakayama algebra. 
    Then $\modn A$ admits an $nd\mathbb{Z}$-cluster tilting subcategory $\mathcal{C}$ if and only if $n|(m-1)$.
    In this case, 
    \begin{align*}
        \mathcal{C} & = \add(\{A\}\cup \{\tau_{nd}^kM(i, \ldots, i+d)\mid k\in\mathbb{Z}\})\\
        & = \add(\{A\}\cup \{M(i+knd, \ldots, i+(kn+1)d)\mid k\in \mathbb{Z}\}) \text{ for some } i\in \mathbb{Z}.
    \end{align*}
\end{Prop}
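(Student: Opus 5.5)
The plan is to reduce the statement to the classification of $d'\mathbb{Z}$-cluster tilting subcategories for self-injective classical Nakayama algebras in \cite{HKV25}, and then translate the answer back into the coordinates $M(x)$, $x\in os_{\underline{\ell}}^{d+1}$.

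First I will identify $A=\widetilde{A}_{2,m-1}^d$ explicitly. Since $\ell_i=2$ for all $i$, a vertex $x\in os^d_{\underline{\ell}}$ satisfies $x_d-x_1\le d$, so every indecomposable projective has Loewy length $2$. Hence $A$ is a radical square zero self-injective Nakayama algebra whose quiver is a single oriented cycle, modulo the action of $\sigma$. In particular:
\begin{itemize}
\item the indecomposable non-projective modules are exactly the simples $M(i,\ldots,i+d)$ (Proposition \ref{Prop_modcomb}(i)), up to $\sigma$;
\item Remark \ref{f_periodic} gives $f(j)=j-d-1$.
\end{itemize}
Using Remark \ref{f_periodic}(ii) and Proposition \ref{Prop_modcomb}(iv),(v), I will compute $\Omega^d$ and $\tau_d$ on simples and check that both act as a shift of the index $i$ by a fixed amount. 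It then follows from Proposition \ref{prop_tau_nd_and_tau_d} that $\tau_{nd}=\tau_d\Omega^{(n-1)d}$ is also an explicit index shift. I will read off the number of vertices of the underlying cyclic quiver from the $\sigma$-orbits of these simples; this gives the quantity $m-1$ appearing in the statement.

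Next, write $N$ for the number of simples, so that $A$ is the radical square zero self-injective Nakayama algebra with $N$ simples and $\Omega S_j=S_{j+1}$ in classical indexing. Then $\Ext^k(S_a,S_b)\neq 0$ if and only if $S_b\cong\Omega^kS_a$ up to the shift. From this I derive the rigidity and maximality conditions directly, which should match \cite{HKV25}:
\begin{itemize}
\item An additive subcategory $\add(A\oplus\bigoplus S_j)$ is $nd$-rigid exactly when no two chosen simples differ by a shift of $k$ steps with $0<k<nd$, read modulo $N$.
\item Maximality together with $\Omega^{nd}$-closure (Proposition \ref{prop_indec_equiv}(d)) forces the chosen simples to form exactly one coset of the shift by $nd$.
\item This coset is well defined and maximal precisely under the divisibility condition. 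I will show that condition is $n\mid(m-1)$ once the indices are converted.
\end{itemize}
For the converse direction, given $n\mid(m-1)$, I will verify that $\mathcal{C}=\add(\{A\}\cup\{\tau_{nd}^kM(i,\ldots,i+d)\})$ satisfies the two Ext-orthogonality equalities of an $nd$-cluster tilting subcategory and is $\Omega^{nd}$-stable. This is a direct check on simples, or alternatively a citation of the corresponding proposition of \cite{HKV25}. Uniqueness up to the choice of the starting simple $i$ comes from the same coset argument.

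I expect the main obstacle to be the bookkeeping between the two indexings. Specifically, I need to check carefully how the $d$ syzygy steps in $\modn A$ relate to the single-step classical syzygy on simples, how $\sigma=(m,\ldots,m)$ determines the number of simples, and that the resulting condition and orbit description really come out as $n\mid(m-1)$ and $M(i+knd,\ldots,i+(kn+1)d)$ as stated. I will first test this on small cases, for example $d=2$ with small $m$, before writing the general translation.
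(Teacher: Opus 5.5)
Your route is the same as the paper's: its proof only observes that $\rad^2 A=0$ and cites \cite[Proposition 4.9]{HKV25}. The gap is in the translation, specifically in the two claims meant to produce $m-1$.

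First, for $d\geq 2$ the modules $M(i,\ldots,i+d)$ are not all the non-projective indecomposables. With $\ell_i=2$, a vertex of $os^d_{\underline{\ell}}$ is a $d$-subset of $\{i,\ldots,i+d\}$ containing $i$. So there are $d$ vertices for each $i$, and $md$ vertices modulo $\sigma$. Thus $A$ is the radical square zero Nakayama algebra of an oriented $md$-cycle. Only the $m$ simples at the consecutive vertices $(i,\ldots,i+d-1)$ are of the form $M(x)$: Proposition \ref{Prop_modcomb}(i) says which $M(x)$ are simple, not which simples are some $M(x)$. Second, the $\sigma$-orbits of the modules $M(i,\ldots,i+d)$ number $m$, not $m-1$. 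With $N=md$, your coset argument (otherwise fine) gives existence if and only if $nd\mid md$, i.e.\ $n\mid m$.

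So the step ``show the condition is $n\mid(m-1)$'' cannot be completed, because the printed statement is off. For $d=1$, $m=n=2$, the subcategory $\add(A\oplus S_1)$ is $2\mathbb{Z}$-cluster tilting although $2\nmid 1$; for $m=3$, $n=2$ there is none. This agrees with Lemma \ref{lem_cyclic_restriction_n} and the first half of Corollary \ref{cor_necessary_condition_selfinj}. Their proofs never use $\ell\geq 3$ and give $n\mid m$ from $\tau_d^{-m}\cong\mathrm{id}$ on $\underline{\modn}A$. The $m-1$ seems carried over from the acyclic algebra $A^d_{2,m}$, whose quiver is linear with $1+(m-1)d$ vertices.

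Likewise, $f(j)=j-d-1$ gives $\Omega^dM(i,\ldots,i+d)\cong\tau_dM(i,\ldots,i+d)\cong M(i-1,\ldots,i+d-1)$. Hence $\tau_{nd}^kM(i,\ldots,i+d)\cong M(i-kn,\ldots,i-kn+d)$. The shift $knd$ in the second description is the shift in the classical indexing of the $md$-cycle, not in the coordinates $x$.

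Finally, uniqueness up to $i$ fails for $d\geq 2$. Of the $nd$ cosets of $nd\mathbb{Z}$ in $\mathbb{Z}/md\mathbb{Z}$, only $n$ consist of modules $M(i,\ldots,i+d)$. The others give $nd\mathbb{Z}$-cluster tilting subcategories outside $\mathcal{M}$; for $d=m=n=2$, take $A$ together with the simple at the vertex $(1,3)$.

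What your method actually proves is: existence holds iff $n\mid m$, and then $\mathcal{C}=\add(\{A\}\cup\{\tau_{nd}^kS\mid k\in\mathbb{Z}\})$ for a simple $S$. This $S$ can be chosen of the form $M(i,\ldots,i+d)$ exactly when $\mathcal{C}\subseteq\mathcal{M}$.
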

\begin{proof}
    Note that $A$ is a classical Nakayama algebra with $\rad^2 A=0$.
    The statement follows from  \cite[Proposition 4.9]{HKV25}.
\end{proof}

From now on, we assume $\ell\geq 3$. 
First we recall the orbit construction that underlies $A$. 
Denote by $A'= A_{\ell-1}^{d}$ the $(d-1)$-Auslander algebra of type $\mathbb{A}_{\ell-1}$.
Note that $A'$ is the acyclic $d$-Nakayama algebra defined by the Kupisch series $(1,2,\ldots, \ell-1)$.
Denote by 
\begin{equation*}
    \mathcal{M}' = \add\{\tau_d^{-i}A'\mid i\geq 0\} \subseteq \modn A'
\end{equation*}
the distinguished, indeed unique $d\mathbb{Z}$-cluster tilting subcategory of $\modn A'$.
We refer to \cite{Iya11}, \cite[Section 3]{OT12} and \cite[Section 2.1]{JK19} for more details.

Let
\begin{equation*}
    \nu = D\circ \mathbb{R}\Hom_{A'}(-,A') \cong -\otimes^{\mathbb{L}}_{A'} DA' : \mathcal{D}^b(A') \rightarrow \mathcal{D}^b(A') 
\end{equation*}
be the Nakayama functor of $\mathcal{D}^b(A')$. 
Denote the $d$-Nakayama functor by 
\begin{equation*}
    \nu_d = \nu\circ [-d]: \mathcal{D}^b(A')\rightarrow \mathcal{D}^b(A').
\end{equation*}
Then 
\begin{equation*}
    \mathfrak{M} = \add \{M[di]\mid M\in \mathcal{M}',  i\in \mathbb{Z}\} = \add \{\nu_d^i(A)\mid i\in \mathbb{Z}\} \subseteq \mathcal{D}^b(A')
\end{equation*}
is a $d\mathbb{Z}$-cluster tilting subcategory induced by $\mathcal{M}'$, see \cite{Iya11}.

Moreover, we denote by $\widehat{A'}$ the repetitive algebra (see \cite[II.2]{Hap88}) of $A'$. By \cite[Theorem II.4.9]{Hap88}, there is a triangle equivalence 
\begin{equation*}
    \mathcal{D}^b(A') \cong \underline{\modn } \widehat{A'}.
\end{equation*}

It was shown in \cite[Proposition 3.20]{JK19} that
\begin{equation*}
    \widehat{A_{\ell-1}^{d}} \cong A_{\mathbb{Z}\ell}^{d}
\end{equation*}
where $A_{\mathbb{Z}\ell}^{d}$ is the $d$-Nakayama algebra of type $\mathbb{A}_{\infty}^{\infty}$ defined by the Kupisch series $\mathbb{Z}\ell = (\ldots, \ell, \ell, \ldots)$. 
We denote by 
\begin{equation*}
    \mathcal{M}_{\mathbb{Z}\ell}^d\subseteq \modn A_{\mathbb{Z}\ell}^d
\end{equation*}
the distinguished $d\mathbb{Z}$-cluster tilting subcategory which corresponds to $\mathfrak{M}$.
We may view $A_{\mathbb{Z}\ell}^d$ as a category. Note that 
\begin{equation*}
    \sigma: A_{\mathbb{Z}\ell}^d\rightarrow A_{\mathbb{Z}\ell}^d, (x_1, \ldots, x_d)\mapsto (x_1+m, \ldots, x_d+m)
\end{equation*}
defines an $H$-action on $A_{\mathbb{Z}\ell}^d$ where $H=\langle \sigma\rangle$ is the group generated by $\sigma$.
Denote by 
\begin{equation*}
    F_{\ast}: \modn A_{\mathbb{Z}\ell}^d\rightarrow  \modn A_{\mathbb{Z}\ell}^d/H
\end{equation*}
the push-down functor.
Moreover $H(\mathcal{M}_{\mathbb{Z}\ell}^d)\subseteq \mathcal{M}_{\mathbb{Z}\ell}^d$ as $\sigma M(x) \cong \tau_d^m M(x)$.

\begin{Def}(\cite[Definition 4.9]{JK19}) We have 
    $A = A_{\mathbb{Z}\ell}^{d}/H$
    and 
    $\mathcal{M} = \mathcal{M}_{\mathbb{Z}\ell}^d/H$.
\end{Def}

In general, assume that $\mathfrak{U}\subseteq \mathcal{D}^b(A')$ is a $q$-cluster tilting (respectively $q\mathbb{Z}$-cluster tilting) subcategory which satisfies  $H(\mathfrak{U})\subseteq \mathfrak{U}$,
then the above construction gives rise to a $q$-cluster tilting (respectively $q\mathbb{Z}$-cluster tilting) subcategory $\underline{\mathcal{U}} = \mathfrak{U}/H \subseteq \underline{\modn} A$ as illustrated in the following diagram \cite{DI20}
\begin{equation}\label{diagram}
    \xymatrix@R=.4em@C=1em{
    \mathcal{D}^b(A')\ar[r]^{\sim} & \underline{\modn } \widehat{A'}\ar[r]^{\sim} &  \underline{\modn} A_{\mathbb{Z}\ell}^{d}\ar[r]^{F_{\ast}} & \underline{\modn} A\\
    \cup &&& \cup\\
    \mathfrak{U}\ar@{-->}[rrr] &&& \underline{\mathcal{U}}
    }
\end{equation}

Recall from \cite[Chapter IV, Proposition 3.7]{ARS95} that  
for a self-injective algebra $A$,
we have 
\begin{equation*}
    \tau \cong \mathcal{N} \circ \Omega^2 \cong \Omega^2 \circ \mathcal{N},
\end{equation*}
on $\underline{\modn} A$ 
where $\mathcal{N} = D\Hom_A(-, A)$ denotes the Nakayama automorphism on $\modn A$.
Therefore 
\begin{equation*}
   \tau\circ \Omega \cong \Omega \circ \tau 
\end{equation*}
on $\underline{\modn } A$.

\begin{Lem}\label{lem_cyclic_restriction_n}
    Let $\mathcal{C}\subseteq \modn A$ be an $nd\mathbb{Z}$-cluster tilting subcategory.
    Let $k\in\mathbb{Z}$ be an integer, and assume $M\in \mathcal{C}_P$.
    Then $\tau_d^{-k}M\in \mathcal{C}_P$ if and only if $n\mid k$.
\end{Lem}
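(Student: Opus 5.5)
The plan is to work entirely in the stable category $\underline{\modn} A$, which is triangulated with suspension $\Omega^{-1}$ because $A$ is self-injective. On this category $\Omega^{\pm1}$, $\tau^{\pm1}$ and hence $\tau_d^{\pm1}$ are autoequivalences, so they send indecomposable non-projective modules to indecomposable non-projective modules. Since $A$ is self-injective we also have $\mathcal{C}_P=\mathcal{C}_I$. The first step is to rewrite $\tau_d^{-k}$ in terms of two operators that we know preserve $\mathcal{C}_P$. Using $\tau\circ\Omega\cong\Omega\circ\tau$ (recalled just before the lemma from \cite{ARS95}), we have
\begin{equation*}
    \tau_d^{-1}=\tau^{-1}\Omega^{-(d-1)}\cong(\tau^{-1}\Omega)\circ\Omega^{-d}.
\end{equation*}
Iterating, for every $k\in\mathbb{Z}$,
\begin{equation*}
    \tau_d^{-k}\cong(\tau^{-1}\Omega)^{k}\circ\Omega^{-kd}.
\end{equation*}

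The second step is to show that both $\Omega^{\pm nd}$ and $(\tau^{-1}\Omega)^{\pm1}$ preserve $\mathcal{C}_P$. For $\Omega^{\pm nd}$ this follows from Proposition \ref{prop_indec_equiv} (d), since $\mathcal{C}$ is $nd\mathbb{Z}$-cluster tilting, together with the remark that stable autoequivalences preserve indecomposable non-projectives. For the other operator, Proposition \ref{prop_indec_equiv} (b), with $\mathcal{C}_P=\mathcal{C}_I$, shows that $\tau_{nd}^{\pm1}$ permute $\mathcal{C}_P$. By Proposition \ref{prop_tau_nd_and_tau_d} and the commutation of $\tau$ with $\Omega$,
\begin{equation*}
    \tau_{nd}^{-1}\Omega^{nd}=\tau^{-1}\Omega^{-(nd-1)}\Omega^{nd}\cong\tau^{-1}\Omega.
\end{equation*}
So $\tau^{-1}\Omega$ and its inverse $\Omega^{-1}\tau\cong\tau_{nd}\Omega^{-nd}$ preserve $\mathcal{C}_P$ as composites of maps preserving $\mathcal{C}_P$.

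For the ``if'' direction, let $n\mid k$. Then $\tau_d^{-k}\cong(\tau^{-1}\Omega)^k\Omega^{-(k/n)nd}$ is a composite of the operators from the second step, so it maps $M\in\mathcal{C}_P$ into $\mathcal{C}_P$.

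For the ``only if'' direction, assume $\tau_d^{-k}M\in\mathcal{C}_P$ with $n\nmid k$. Write $k=qn+r$ with $0<r<n$. Applying $\tau_d^{qn}$, which preserves $\mathcal{C}_P$ by the ``if'' direction, we get $N:=\tau_d^{-r}M\in\mathcal{C}_P$. Then
\begin{equation*}
    \Omega^{-rd}M\cong(\tau^{-1}\Omega)^{-r}N\in\mathcal{C}.
\end{equation*}
For a self-injective algebra, $\Ext_A^i(X,Y)\cong\underline{\Hom}_A(\Omega^iX,Y)$ for $i\geq1$, so
\begin{equation*}
    \Ext_A^{rd}(\Omega^{-rd}M,M)\cong\underline{\Hom}_A(M,M)\neq0,
\end{equation*}
because $M$ is non-projective. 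Since $0<rd<nd$, this contradicts $\Ext^i_A(\mathcal{C},\mathcal{C})=0$ for $0<i<nd$.

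The step I expect to need the most care is the second one, namely the bookkeeping that $\tau^{-1}\Omega$ really preserves $\mathcal{C}_P$. This requires that the isomorphism $\tau\Omega\cong\Omega\tau$ holds on objects up to stable isomorphism, and that passing between modules and stable objects causes no loss. Both should follow from the fact that every object involved is indecomposable non-projective, so its stable isomorphism class determines the module.
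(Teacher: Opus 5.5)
Your proposal is correct and follows essentially the same route as the paper. Both write $\tau_d^{-k}\cong\Omega^{-kd}(\tau^{-1}\Omega)^k$ with $\tau^{-1}\Omega\cong\Omega^{nd}\tau_{nd}^{-1}$ preserving $\mathcal{C}$, get the case $n\mid k$ from closure under $\Omega^{\pm nd}$, and rule out $0<r<n$ via $\Ext_A^{rd}\cong\underline{\Hom}_A(X,X)\neq 0$ for some $X\in\mathcal{C}$; the only differences are that you pair $\Omega^{-rd}M$ with $M$ (rather than $\tau_d^{-k}M$ with $M'=(\tau^{-1}\Omega)^kM$) and that you make explicit the reduction of arbitrary $k$ to $0<r<n$, which the paper only indicates by ``induction''.
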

\begin{proof}
    First note that $\mathcal{C}$ is closed under $\Omega^d\circ \tau_d^{-1} \cong \Omega^{nd}\circ \tau_{nd}^{-1}$ and 
    \begin{equation*}
        \tau_d^{-k}M \cong \Omega^{-dk}(\Omega^d\tau_d^{-1})^kM
    \end{equation*}
    as $\tau$ and $\Omega$ commute.
    So $M' = (\Omega^d\tau_d^{-1})^kM\in \mathcal{C}$.

    For $k = n$ we get $\tau_d^{-k}M \cong \Omega^{-nd}M'\in \mathcal{C}_P$ by definition.

    For $1\leq k\leq n-1$, we get 
    \begin{equation*}
        \Ext_A^{dk}(\tau_d^{-k}M, M')\cong \underline{\Hom}_A(\Omega^{dk}\tau_d^{-k}M, M') \cong \underline{\Hom}_A(M', M') \neq 0
    \end{equation*}
    which implies $\tau_d^{-k}M\notin \mathcal{C}_P$.

    For $k\geq 0$ the claim follows by induction. 
    The case $k\leq 0$ is similar.
    
\end{proof}

Recall that the Serre functor on $\underline{\modn} A$ is given by $\mathbb{S} = \tau_d\Omega^{-d}$.
Moreover, $\underline{\modn} A$ is $\frac{(\ell-2)d}{\ell+d-1}$-Calabi-Yau as $\mathcal{D}^b(A')$ is $\frac{(\ell-2)d}{\ell+d-1}$-Calabi-Yau (see \cite[Remark 2.29]{DJW19}, \cite[Theorem 6.2]{Gra23}).
Thus we have  $\mathbb{S}^{\ell+d-1}\cong \Omega^{-(\ell-2)d}$ which implies 
\begin{equation*}
    \tau_d^{\ell+d-1} \cong \Omega^{-d(d+1)}.
\end{equation*}
on $\underline{\modn} A$.
We obtain the following necessary condition for $\modn A$ to admit an $nd\mathbb{Z}$-cluster tilting subcategory.

\begin{Cor}\label{cor_necessary_condition_selfinj}
    If $\modn A$ admits an $nd\mathbb{Z}$-cluster tilting subcategory, then $n\mid m$ and $n\mid (\ell-2)$.
\end{Cor}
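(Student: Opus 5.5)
We derive both divisibility conditions from Lemma \ref{lem_cyclic_restriction_n}, applied to two isomorphisms of functors on $\underline{\modn}A$ that relate powers of $\tau_d$ to syzygies. Let $\mathcal{C}\subseteq\modn A$ be $nd\mathbb{Z}$-cluster tilting. Since $A$ is self-injective and not semisimple (we assume $\ell\geq 3$), $\mathcal{C}$ contains an indecomposable non-projective module $M$, so $M\in\mathcal{C}_P$. Lemma \ref{lem_cyclic_restriction_n} says that for every integer $k$,
\begin{equation*}
\tau_d^{-k}M\in\mathcal{C}_P \iff n\mid k.
\end{equation*}
It therefore suffices to find an integer $k$ with $\tau_d^{-k}M\in\mathcal{C}_P$ and to arrange $k=m$ or $k=\ell-2$. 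For this it is enough to show that $\tau_d^{-k}M$ is isomorphic in $\underline{\modn}A$ to a module that is already known to lie in $\mathcal{C}$ and is non-projective.

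For $n\mid m$, use the group action defining $A=A_{\mathbb{Z}\ell}^d/H$. We recorded above that $\sigma M(x)\cong\tau_d^mM(x)$ on $\mathcal{M}_{\mathbb{Z}\ell}^d$. After push-down, $F_\ast\sigma X\cong F_\ast X$, because $F_\ast$ is invariant under the group action. Hence $\tau_d^m$ acts as the identity on indecomposable objects of $\mathcal{M}$, at least on non-projective ones. Since $\mathcal{C}$ is $nd$-cluster tilting, it is partial $nd\mathbb{Z}$-cluster tilting, and Proposition \ref{prop_C_intersecting_M_gives_weakly_ndZ_ct} applies. So we may choose $M\in\mathcal{C}_P\cap\mathcal{M}$; note $\mathcal{C}\cap\mathcal{M}$ contains non-projectives, since otherwise the $\tau_{nd}$-bijection would force $\mathcal{C}_P$ to be empty. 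Then $\tau_d^{-m}M\cong M\in\mathcal{C}_P$, and Lemma \ref{lem_cyclic_restriction_n} gives $n\mid m$. If one wants to avoid restricting to $\mathcal{M}$, the alternative is to check directly that $\tau_d^m\cong\mathrm{id}$ on all of $\underline{\modn}A$, coming from the Nakayama-type automorphism $\sigma$.

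For $n\mid(\ell-2)$, use the fractional Calabi--Yau relation derived just above,
\begin{equation*}
\mathbb{S}^{\ell+d-1}=(\tau_d\Omega^{-d})^{\ell+d-1}\cong\Omega^{-(\ell-2)d}.
\end{equation*}
Since $\tau$ and $\Omega$ commute on $\underline{\modn}A$, this rearranges to $\tau_d^{\ell+d-1}\cong\Omega^{(\ell+d-1)d-(\ell-2)d}=\Omega^{d(d+1)}$, which is consistent with the displayed identity $\tau_d^{\ell+d-1}\cong\Omega^{-d(d+1)}$ up to the sign convention. Take $k=\ell+d-1$. Then $\tau_d^{-k}M\cong\Omega^{\pm d(d+1)}M$, and we want this to lie in $\mathcal{C}$. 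That requires $nd\mid d(d+1)$, which is not automatic, so we instead combine the two relations differently.

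Write $\mathbb{S}=\tau_d\Omega^{-d}$ and note that $\mathcal{C}$ is stable under $\Omega^{\pm nd}$ and under $\tau_d\Omega^{-d}\cong\tau_{nd}\Omega^{-nd}$. The latter holds because $\tau_{nd}=\tau_d\Omega^{(n-1)d}$ (Proposition \ref{prop_tau_nd_and_tau_d}), and because $\tau_{nd}$ preserves $\mathcal{C}$ stably for $nd$-cluster tilting in a self-injective algebra. So $\mathbb{S}$ preserves $\mathcal{C}$. Then $\Omega^{-(\ell-2)d}M\cong\mathbb{S}^{\ell+d-1}M\in\mathcal{C}$, and it is non-projective since $\Omega$ is an equivalence on $\underline{\modn}A$. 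Now use the $\Omega$-analogue of Lemma \ref{lem_cyclic_restriction_n}: for $0<j<n$, $\Omega^{-jd}M\notin\mathcal{C}$ because $\Ext^{jd}_A(\Omega^{-jd}M,M)\cong\underline{\Hom}_A(M,M)\neq0$, while $\Omega^{-nd}$ preserves $\mathcal{C}$. Reducing $\ell-2$ modulo $n$ by induction then gives $n\mid(\ell-2)$.

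The main obstacle is justifying that $\mathbb{S}$, equivalently $\tau_d\Omega^{-d}$, preserves $\mathcal{C}$ up to projective summands. I would derive this from the $nd$-cluster tilting bijection $\tau_{nd}\colon\mathcal{C}_P\to\mathcal{C}_I$ together with $\Omega^{\pm nd}$-stability, as in the first line of the proof of Lemma \ref{lem_cyclic_restriction_n}, where closure under $\Omega^d\tau_d^{-1}$ was used.
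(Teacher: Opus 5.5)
\paragraph{The gap: the argument for $n\mid m$.}
Your main route chooses $M\in\mathcal{C}_P\cap\mathcal{M}$. You justify this by saying that otherwise the $\tau_{nd}$-bijection would force $\mathcal{C}_P=\emptyset$, but that does not follow. Proposition \ref{prop_C_intersecting_M_gives_weakly_ndZ_ct} only says that $\mathcal{C}\cap\mathcal{M}$ is partial $nd\mathbb{Z}$-cluster tilting. For self-injective $A$, $\add A$ is already such a subcategory and has no non-projectives, so nothing transfers back to $\mathcal{C}_P$.

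For $d\geq 2$ the claim is in fact false. Let $\mathcal{C}=\add C$ be the $6\mathbb{Z}$-cluster tilting subcategory of Example \ref{ex_selfinj}. Then $\add(A\oplus\Omega C)$ is again $6\mathbb{Z}$-cluster tilting, because $\Omega$ is an autoequivalence of $\underline{\modn}A$ and $\Ext^i_A(\Omega X,\Omega Y)\cong\Ext^i_A(X,Y)$ for $i\geq1$. However, for a non-projective indecomposable $X\in\mathcal{M}$, the module $\Omega X$ is never in $\mathcal{M}$: otherwise $\Ext^1_A(X,\Omega X)\cong\underline{\Hom}_A(\Omega X,\Omega X)\neq0$ would contradict $d$-rigidity of $\mathcal{M}$. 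So this subcategory meets $\mathcal{M}$ only in $\add A$, and your argument says nothing about it.

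What the proof needs is exactly the statement you list as an optional alternative and do not establish: $\tau_d^{-m}X\cong X$ for every non-projective indecomposable $X$, not only for $X\in\mathcal{M}$. This is what the paper's proof uses. It asserts $\tau_d^{-m}\cong\mathrm{id}$ on objects of $\underline{\modn}A$, then applies Lemma \ref{lem_cyclic_restriction_n} with $k=m$ to an arbitrary $M\in\mathcal{C}_P$.

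\paragraph{The argument for $n\mid(\ell-2)$ is correct.}
Stability of $\mathcal{C}$, modulo projectives, under $\mathbb{S}=\tau_d\Omega^{-d}\cong\tau_{nd}\Omega^{-nd}$ follows from Proposition \ref{prop_indec_equiv} (b) and (d), as you indicate. Next, $\mathbb{S}^{\ell+d-1}\cong\Omega^{-(\ell-2)d}$ gives $\Omega^{-(\ell-2)d}M\in\mathcal{C}_P$. Write $\ell-2=qn+r$ with $0\leq r<n$ and apply $\Omega^{qnd}$ to get $\Omega^{-rd}M\in\mathcal{C}_P$. Then $\Ext^{rd}_A(\Omega^{-rd}M,M)\cong\underline{\Hom}_A(M,M)\neq0$ forces $r=0$.

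This is a slight variant of the paper's argument. The paper applies $(\Omega^{-d}\tau_d)^{d+1}$, rewrites via $\Omega^{-d(d+1)}\cong\tau_d^{-(\ell+d-1)}$ to get $\tau_d^{-(\ell-2)}M\in\mathcal{C}_P$, and then invokes Lemma \ref{lem_cyclic_restriction_n}. Your version avoids that intermediate identity. Its display before the corollary should read $\tau_d^{\ell+d-1}\cong\Omega^{d(d+1)}$, as you computed, which is the form the paper's proof actually uses.
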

\begin{proof}
    Note that $\tau_d^{-m}X \cong X$ for all $X\in \underline{\modn} A$.
    In fact we have $\tau_d^{-m}M(x)=M(x')\cong M(x)$ as $(x_i \equiv x'_i) \mod m$ for $1\leq i\leq d+1$.
    Since $\tau_d^{-m}$ is a triangle functor, the claim follows.
    
    Let $\mathcal{C}$ be an $nd\mathbb{Z}$-cluster tilting subcategory of $\modn A$.
    Since $\mathcal{C}_P\neq \emptyset$, there is $M\in \mathcal{C}_P$.
    Then $\tau_d^{-m}M = M\in \mathcal{C}_P$ together with Lemma \ref{lem_cyclic_restriction_n} implies $n\mid m$.

    As we have seen before that $\mathcal{C}$ is closed under $\Omega^{-d}\circ \tau_d$, 
    we have $M' = (\Omega^{-d}\tau_d)^{d+1}M \in \mathcal{C}_P$.
    Moreover, 
    \begin{equation*}
        M' = \tau_d^{d+1}\Omega^{-d(d+1)}M = \tau_d^{d+1}\tau_d^{-(\ell+d-1)}M = \tau_d^{2-\ell}M.
    \end{equation*}
    Again by Lemma \ref{lem_cyclic_restriction_n}, we have $n\mid (\ell - 2)$.
\end{proof}

We will not give a full classification of $nd\mathbb{Z}$-cluster tilting subcategories of $\modn A$.
Instead we focus on a special case appearing in Corollary \ref{cor_necessary_condition_selfinj}.
Specifically we assume additionally that $n = \ell-2$ and $m=kn=k(\ell-2)$ for some positive integer $k$.
We give an example of an $(\ell-2)d\mathbb{Z}$-cluster tilting subcategory of $A$ in the case when $\gcd(\ell-2, d)=1$.

We recall the following proposition which shows that there exists an $(\ell-2)d\mathbb{Z}$-cluster tilting subcategory of $\mathcal{D}^b(A')$ in the case when $\gcd(\ell-2, d)=1$.

\begin{Prop}\cite[Theorem 1.3]{Xin25}
    Assume $\gcd(\ell-2, d) = 1$. 
    There is a tilting complex $T \in \mathcal{D}^b(A')$  such that 
    \begin{equation*}
        \mathfrak{U} = \add \{\nu_{(\ell-2)d}^i(T)\mid i\in \mathbb{Z}\} \subseteq \mathcal{D}^b(A')
    \end{equation*}
    is an $(\ell-2)d\mathbb{Z}$-cluster tilting subcategory.
\end{Prop}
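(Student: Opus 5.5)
We need to plan a proof that, for $\gcd(\ell-2,d)=1$, there is a tilting complex $T\in\mathcal{D}^b(A')$ with $\mathfrak{U}=\add\{\nu_{(\ell-2)d}^i(T)\mid i\in\mathbb{Z}\}$ an $(\ell-2)d\mathbb{Z}$-cluster tilting subcategory. The plan is to reduce this, via Iyama's correspondence, to finding a tilting complex $T$ whose endomorphism algebra $B=\End(T)$ is $(\ell-2)d$-representation finite. Iyama's theorem \cite{Iya11} gives the rest: for such $B$, the subcategory $\add\{\nu_{s}^i(B)\mid i\in\mathbb{Z}\}$ with $s=\gldim B$ is $s\mathbb{Z}$-cluster tilting in $\mathcal{D}^b(B)$. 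Transporting along the derived equivalence $\mathcal{D}^b(B)\simeq\mathcal{D}^b(A')$ yields the claim, since the Nakayama functor commutes with triangle equivalences.

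The natural candidate for $B$ is a $2$-subhomogeneous algebra, as in the acyclic homogeneous case (b) of Theorem \ref{thm_classification_homo_acyclic}, where the cluster tilting subcategory is $\add(A\oplus DA)$. We would choose integers $\ell',m'$ with $B=A^{d}_{\ell',m'}$ satisfying Proposition \ref{prop_homo_acyclic_equiv_conditions}(c) for $n=\ell-2$. Then Lemma \ref{lem_gldim_auslander_alg} together with Lemma \ref{lem_global_dim_acyclic_homo} gives $\gldim B=(\ell-2)d$ and an $(\ell-2)d$-cluster tilting module. If no such homogeneous $B$ fits, we would instead take $B$ to be a suitable $2$-subhomogeneous algebra from \cite{Xin25}. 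Two numerical checks are needed:
\begin{itemize}
\item[(i)] $B$ is derived equivalent to $A'=A^d_{\ell-1}$, so the numbers of simples must agree, namely $\binom{\ell+d-2}{d}$;
\item[(ii)] the Calabi--Yau type must match, since $\mathcal{D}^b(A')$ is $\frac{(\ell-2)d}{\ell+d-1}$-fractionally CY while an $s$-representation finite algebra satisfies $\nu_s$-periodicity.
\end{itemize}
Condition (ii) forces $\nu_{(\ell-2)d}^{\ell+d-1}\cong[-(\ell-2)d(\ell+d-1)+(\ell-2)d(\ell+d-1)]$ up to the CY relation. The hypothesis $\gcd(\ell-2,d)=1$ should enter here, ensuring that the $\nu_{(\ell-2)d}$-orbits fill out the right number of objects.

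The construction of $T$ would go as follows. Start from the $d\mathbb{Z}$-cluster tilting subcategory $\mathfrak{M}=\add\{\nu_d^i A'\}$. Within it, pick indecomposables $M(x)[dj]$ arranged along a ``slice'' indexed by coordinates so that their pairwise morphisms in $\mathcal{D}^b$ occur only in degree $0$. Using Proposition \ref{Prop_modcomb}(vi) and $\Hom$ computations via $\preccurlyeq$, we verify $\Hom(T,T[i])=0$ for $i\neq0$. Generation of $\mathcal{D}^b(A')$ would follow by showing that each projective $M(x)$ lies in $\operatorname{thick}(T)$, through the $d$-exact sequences in $\mathcal{M}'$ (the $d$-almost split sequences).

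We expect the main obstacle to be identifying $\End(T)$ and proving it is $(\ell-2)d$-representation finite. The approach would be to compute $\End(T)$ combinatorially through the ordered sequences and arrow morphisms $f_{yx}$, recognize it as a $2$-subhomogeneous $d$-Nakayama-type algebra, and then invoke Lemma \ref{lem_gldim_auslander_alg}. Since this is cited from \cite{Xin25}, in the present paper it suffices to quote that reference.
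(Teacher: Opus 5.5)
The paper does not prove this statement; it only quotes \cite[Theorem 1.3]{Xin25}. Your closing sentence therefore matches what the paper does. Your outer reduction is also a sound framework: find a tilting complex $T$ with $B=\End(T)$ being $(\ell-2)d$-representation finite, then transport Iyama's $\add\{\nu_s^iB\mid i\in\mathbb{Z}\}$ along the derived equivalence, which commutes with Serre functors.

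As a self-contained argument, however, the sketch has a genuine gap. The one concrete candidate you name cannot work in general, and the fallback is just the citation again.
\begin{itemize}
\item Taking $B=A^d_{\ell',m'}$ from Proposition~\ref{prop_homo_acyclic_equiv_conditions}(c) with $n=\ell-2$ forces $(d+1)\mid(\ell-4)$. This has nothing to do with $\gcd(\ell-2,d)=1$: it fails for $d=2,\ell=5$, and for $d=1$ with $\ell$ odd.
\item Even when the divisibility holds, your own check (i) typically fails. For $d=1$ one needs $m'=\frac{\ell-2}{2}\ell'+1=\ell-1$, which forces $\ell'=2$. That case is excluded from Lemma~\ref{lem_gldim_auslander_alg}, which assumes $\ell\ge 3$. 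For $d=1$, an algebra that does work is the radical-square-zero algebra $A^1_{2,\ell-1}$ of Theorem~\ref{thm_classification_homo_acyclic}(a).
\end{itemize}
So Lemmas~\ref{lem_gldim_auslander_alg} and~\ref{lem_global_dim_acyclic_homo} cannot be the engine. The step you call the main obstacle, constructing $T$ and proving that $\End(T)$ is $(\ell-2)d$-representation finite, is the whole content of the theorem, and it is left undone.

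Two further points are off.

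First, the Calabi--Yau bookkeeping is wrong. From $\nu^{\ell+d-1}\cong[(\ell-2)d]$ one gets $\nu_{(\ell-2)d}^{\ell+d-1}\cong[-(\ell-2)d(\ell+d-2)]$, not the identity. Indeed $\nu_{(\ell-2)d}$ cannot have finite order: otherwise $\mathfrak{U}$ would have only finitely many indecomposables while being closed under $[(\ell-2)d]$.

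Second, you never locate where $\gcd(\ell-2,d)=1$ is used. The remark following the next proposition records the actual shape of the tilting complex: $T=\bigoplus_{i=1}^{\ell+d-2}\nu^iP$ with $P$ a certain basic projective $A'$-module \cite[Definition 4.18]{Xin25}. Numerically, the $\binom{\ell+d-2}{d}$ indecomposable summands needed (one per simple of $A'$) are spread over $\ell+d-2$ Nakayama translates of $P$. The identity $d\binom{N}{d}=N\binom{N-1}{d-1}$ shows that $\gcd(\ell+d-2,d)=\gcd(\ell-2,d)=1$ guarantees $(\ell+d-2)\mid\binom{\ell+d-2}{d}$. A ``slice'' of objects $M(x)[dj]$, chosen only so that morphisms sit in degree $0$, neither produces this $\nu$-orbit structure nor gives any way for the hypothesis to enter.
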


Next we show how $\mathfrak{U}$ gives an $(\ell-2)d\mathbb{Z}$-cluster tilting subcategory of $\modn A$. 

\begin{Prop}
    Assume $\gcd(\ell-2, d)=1$. We have that $\underline{\mathcal{U}} = F_{\ast}(\mathfrak{U})$ is an $(\ell-2)d\mathbb{Z}$-cluster tilting subcategory of $\underline{\modn} A$.
    Consequently, its preimage under $\modn A\rightarrow \underline{\modn} A$ 
    \begin{equation*}
        \mathcal{U} \subseteq\modn A
    \end{equation*}
    is an $(\ell-2)d\mathbb{Z}$-cluster tilting subcategory.
\end{Prop}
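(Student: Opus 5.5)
The plan has three stages. First show that $\mathfrak{U}$ is $H$-equivariant. Then apply the orbit construction of \cite{DI20} along diagram (\ref{diagram}) to obtain $\underline{\mathcal{U}}$. Finally lift from the stable category to $\modn A$ using the correspondence for self-injective algebras.

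\textbf{$H$-equivariance.} Under the equivalences $\mathcal{D}^b(A')\simeq \underline{\modn}\widehat{A'}\simeq \underline{\modn} A_{\mathbb{Z}\ell}^d$, the generator $\sigma$ of $H$ acts on $\mathfrak{M}$ as $\tau_d^{m}$, since $\sigma M(x)\cong \tau_d^m M(x)$. In $\mathcal{D}^b(A')$ we have $\tau_d=\nu_d$, so $\sigma$ corresponds to $\nu_d^{m}$ up to natural isomorphism on $\mathfrak{M}$.

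The first step is to upgrade this to an isomorphism of triangle functors on all of $\mathcal{D}^b(A')$. The shift $\sigma$ on $A_{\mathbb{Z}\ell}^d$ induces a triangle autoequivalence of $\underline{\modn}A_{\mathbb{Z}\ell}^d$. This autoequivalence agrees with $\nu_d^m$ on the generating subcategory $\mathfrak{M}$, which contains $A'$. A standard argument shows that a standard autoequivalence fixing the tilting object $A'$ up to the correct twist is determined by its action on $A'$. So I would argue that $\sigma\cong \nu_d^{m}$ as functors.

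Write $m=k(\ell-2)$, using $n=\ell-2$. Then
\begin{equation*}
\nu_d^{m}=(\nu_d^{\ell-2})^k \cong \bigl(\nu\circ[-(\ell-2)d]\bigr)^k\circ \nu^{(\ell-3)k}.
\end{equation*}
Instead of this rewriting, it is cleaner to use the fractional Calabi--Yau property $\nu^{\ell+d-1}\cong[(\ell-2)d]$, recalled before Corollary \ref{cor_necessary_condition_selfinj}. The aim is to express $\nu_d^{\ell-2}$ as a power of $\nu_{(\ell-2)d}=\nu\circ[-(\ell-2)d]$. Since $\gcd(\ell-2,d)=1$, I expect an identity of the form $\nu_d^{\ell-2}\cong \nu_{(\ell-2)d}^{\,c}$ for a suitable integer $c$, obtained by combining $\nu^{\ell+d-1}\cong[(\ell-2)d]$ with Bezout.

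This Bezout computation is the main obstacle. If an exact power identity fails, the fallback is to check directly that $\nu_d^{\ell-2}$ maps $T$ into $\mathfrak{U}$. Because $\mathfrak{U}$ is $(\ell-2)d\mathbb{Z}$-cluster tilting, it is closed under $\nu_{(\ell-2)d}^{\pm1}$ and under $[\pm(\ell-2)d]$, and hence under $\nu^{\pm 1}$. Then $\nu_d^{\ell-2}=\nu^{\ell-2}[-(\ell-2)d]$ maps $\mathfrak{U}$ into itself. This closure argument in fact already gives $H(\mathfrak{U})\subseteq\mathfrak{U}$ directly, since $\sigma\cong \nu_d^{m}=(\nu_d^{\ell-2})^k$. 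I would use it as the primary route.

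\textbf{Orbit construction.} Having $H$-equivariance, apply \cite{DI20}, i.e.\ the triangulated analogue of Theorem \ref{thm_DI_orbit_construction}, to the admissible $H$-action. Admissibility holds because $\sigma$ moves every vertex by $m$. This gives that $F_\ast(\mathfrak{U})=\underline{\mathcal{U}}$ is $(\ell-2)d$-cluster tilting in $\underline{\modn}A$. The push-down $F_\ast$ is a triangle functor commuting with $\Omega$, so closure of $\mathfrak{U}$ under $[(\ell-2)d]$ passes to closure of $\underline{\mathcal{U}}$ under $\Omega^{-(\ell-2)d}$. Thus $\underline{\mathcal{U}}$ is $(\ell-2)d\mathbb{Z}$-cluster tilting.

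\textbf{Lifting to $\modn A$.} Let $\mathcal{U}$ be the full preimage of $\underline{\mathcal{U}}$, which contains $\add A = \add DA$. For a self-injective algebra, $\Ext^i_A(X,Y)\cong\underline{\Hom}_A(X,\Omega^{-i}Y)$ for $i>0$. Hence the vanishing conditions defining $d$-cluster tilting subcategories in $\modn A$ and in $\underline{\modn}A$ coincide. Functorial finiteness lifts because $\add A$ is functorially finite. Finally, $\Omega^{(\ell-2)d}\mathcal{U}\subseteq\mathcal{U}$, so Proposition \ref{prop_indec_equiv}(d) gives that $\mathcal{U}$ is $(\ell-2)d\mathbb{Z}$-cluster tilting.
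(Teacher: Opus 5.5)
Your primary route is the paper's argument. Closure of $\mathfrak{U}$ under $\nu^{\pm1}$ and under $[\pm(\ell-2)d]$ gives $\nu_d^{\ell-2}(\mathfrak{U})=\mathfrak{U}$, hence $H$-equivariance because $(\ell-2)\mid m$. You obtain closure under $\nu^{\pm1}$ from $\nu=\nu_{(\ell-2)d}\circ[(\ell-2)d]$ and the definition of $\mathfrak{U}$, where the paper cites \cite{IO13}. After that, the orbit construction of \cite{DI20} and the passage from $\underline{\modn}A$ to $\modn A$ finish the proof exactly as in the paper.

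Two detours should be dropped. The Bezout identity would in fact fail for $\ell\geq 4$: by the Calabi--Yau relation, $\nu_d^{\ell-2}\cong\nu^{-(d+1)}$ while $\nu_{(\ell-2)d}\cong\nu^{-(\ell+d-2)}$. The functor-level upgrade $\sigma\cong\nu_d^{m}$ via a ``standard argument'' is unnecessary, since $H$-equivariance is an object-level condition. It suffices that $\sigma X\cong\nu_d^{m}X$ for $X\in\mathfrak{U}$. This follows from the commutative square of \cite{JK19} intertwining $\nu_d$ and $\tau_d$, together with $\sigma M(x)\cong\tau_d^{m}M(x)$ and $\mathfrak{U}\subseteq\mathfrak{M}$.
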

\begin{proof}
    We have the following commutative diagram, see \cite[Theorem 3.22]{JK19}.
\begin{equation*}
    \xymatrix{
    \mathcal{D}^b(A')\ar[r]^{\sim}\ar[d]^{\nu_d} & \underline{\modn} A_{\mathbb{Z}\ell}^{d}\ar[d]^{\tau_d} \\
    \mathcal{D}^b(A')\ar[r]^{\sim} & \underline{\modn} A_{\mathbb{Z}\ell}^{d}
    }
\end{equation*}
    To apply Theorem \ref{thm_DI_orbit_construction},  it suffices to show that $\nu_d^{m}(\mathfrak{U})\subseteq\mathfrak{U}$. 
    We claim that $\nu_d^{\ell-2}(\mathfrak{U})\subseteq\mathfrak{U}$. Then the statement follows since $(\ell-2)|m$.

    We have 
    \begin{equation*}
        \nu_d^{\ell-2} = (\nu\circ [-d])^{\ell-2} \cong  \nu^{\ell-2}\circ [-(\ell-2)d].
    \end{equation*}
    Since $\mathfrak{U}$ is $(\ell-2)d\mathbb{Z}$-cluster tilting, by \cite[Proposition 3.6]{IO13}, $\nu(\mathfrak{U})=\mathfrak{U}$.
    For the same reason, $\mathfrak{U}[(\ell-2)d]\subseteq \mathfrak{U}$.
    Thus $\nu_d^{\ell-2}(\mathfrak{U})= \mathfrak{U}$. 
    Therefore, $\underline{\mathcal{U}} = F_{\ast}(\mathfrak{U})$ is an $(\ell-2)d\mathbb{Z}$-cluster tilting subcategory of $\underline{\modn} A$.
    Since there is a one-to-one correspondence between the set of $(\ell-2)d\mathbb{Z}$-cluster tilting subcategories of $\underline{\modn}A$ and that of $\modn A$, we obtain the $(\ell-2)d\mathbb{Z}$-cluster tilting subcategory $\mathcal{U}$ of $\modn A$.
\end{proof}

\begin{Rem}
    The $(\ell-2)d\mathbb{Z}$-cluster tilting subcategory $\mathcal{U}\subseteq \modn A$ constructed above is included in $\mathcal{M}$. 
    Indeed, the tilting complex $T\in \mathcal{D}^b(A')$ is of the form 
    \begin{equation*}
        T = \bigoplus_{i=1}^{\ell+d-2}\nu^iP
    \end{equation*}
    where $P$ is a certain basic projective $A'$-module \cite[Definition 4.18]{Xin25}.
    As $\mathfrak{M}\subseteq\mathcal{D}^b(A')$ is $d\mathbb{Z}$-cluster tilting, 
    by \cite[Proposition 3.6]{IO13}, 
    $\nu(\mathfrak{M})=\mathfrak{M}$ which implies $\mathfrak{U}\subseteq \mathfrak{M}$.
    Applying the push-down functor, 
    we have $\mathcal{U}\subseteq \mathcal{M}$.
\end{Rem}

We illustrate the above construction in an example.

\begin{Eg}\label{ex_selfinj}
    Let $A = \widetilde{A}_{5,2}^2$. 
    The Gabriel quiver of $A$ is given as follows. 
    Here the leftmost line and rightmost line are identified.
    \begin{equation*}
        \begin{xy}
            0;<16pt,0cm>:<16pt,16pt>::
            (0,0) *+{\bullet} ="12",
            (0,1) *+{\bullet} ="13",
            (0,2) *+{\bullet} ="14",
            (0,3) *+{\bullet} ="15",
            (0,4) *+{\bullet} ="16",
            (2,0) *+{\bullet} ="23",
            (2,1) *+{\bullet} ="24",
            (2,2) *+{\bullet} ="25",
            (2,3) *+{\bullet} ="26",
            (2,4) *+{\bullet} ="27",
            (4,0) *+{\bullet} ="34",
            (4,1) *+{\bullet} ="35",
            (4,2) *+{\bullet} ="36",
            (4,3) *+{\bullet} ="37",
            (4,4) *+{\bullet} ="38",
            (6,0) *+{\bullet} ="45",
            (6,1) *+{\bullet} ="46",
            (6,2) *+{\bullet} ="47",
            (6,3) *+{\bullet} ="48",
            (6,4) *+{\bullet} ="49",
            "12", {\ar "13"},
            "13", {\ar "14"},
            "14", {\ar "15"},
            "15", {\ar "16"},
            "23", {\ar "24"},
            "24", {\ar "25"},
            "25", {\ar "26"},
            "26", {\ar "27"},
            "34", {\ar "35"},
            "35", {\ar "36"},
            "36", {\ar "37"},
            "37", {\ar "38"},
            "45", {\ar "46"},
            "46", {\ar "47"},
            "47", {\ar "48"},
            "48", {\ar "49"},
             "13", {\ar "23"},
             "14", {\ar "24"},
             "15", {\ar "25"},
             "16", {\ar "26"},
             "24", {\ar "34"},
             "25", {\ar "35"},
             "26", {\ar "36"},
             "27", {\ar "37"},
             "35", {\ar "45"},
             "36", {\ar "46"},
             "37", {\ar "47"},
             "38", {\ar "48"},
        \end{xy}
    \end{equation*}
    The Auslander-Reiten quiver of the distinguished $2\mathbb{Z}$-cluster tilting subcategory $\mathcal{M}\subseteq \modn A$ is given below. 
    Here the leftmost slice of triangle and the rightmost slice of triangle are identified.
    Note that the modules denoted by purple color are projective-injective modules. 
    There is a $6\mathbb{Z}$-cluster tilting subcategory $\mathcal{C} = \add C\subseteq \modn A$ where 
    \begin{equation*}
        C = A\oplus \bigoplus_{i=0}^2 \tau_6^{-i}(M(123)\oplus M(124)).
    \end{equation*}
    In the following quiver, for $0\leq i\leq 2$, red $i$ refers to $\tau_6^{-i}M(123)$ and blue $i$ refers to $\tau_6^{-i}M(124)$.
    \begin{equation*}
        \begin{xy}
            0;<20pt,0cm>:<10pt,20pt>:: 
            (0,0) *+{\textcolor{red}{0}} ="123",
            (0,1) *+{\textcolor{blue}{0}} ="124",
            (0,2) *+{\bullet} ="125",
            (0,3) *+{\textcolor{red}{2}} ="126",
            (0,4) *+{\textcolor{purple}{\bullet}} ="127",
            (1,1) *+{\bullet} ="134",
            (1,2) *+{\bullet} ="135",
            (1,3) *+{\textcolor{blue}{2}} ="136",
            (1,4) *+{\textcolor{purple}{\bullet}} ="137",
            (2,2) *+{\bullet} ="145",
            (2,3) *+{\bullet} ="146",
            (2,4) *+{\textcolor{purple}{\bullet}} ="147",
            (3,3) *+{\textcolor{red}{1}} ="156",
            (3,4) *+{\textcolor{purple}{\bullet}} ="157",
            (4,4) *+{\textcolor{purple}{\bullet}} ="167",
            (5,0) *+{\bullet} ="234",
            (5,1) *+{\bullet} ="235",
            (5,2) *+{\bullet} ="236",
            (5,3) *+{\bullet} ="237",
            (5,4) *+{\textcolor{purple}{\bullet}} ="238",
            (6,1) *+{\bullet} ="245",
            (6,2) *+{\bullet} ="246",
            (6,3) *+{\bullet} ="247",
            (6,4) *+{\textcolor{purple}{\bullet}} ="248",
            (7,2) *+{\textcolor{blue}{1}} ="256",
            (7,3) *+{\bullet} ="257",
            (7,4) *+{\textcolor{purple}{\bullet}} ="258",
            (8,3) *+{\bullet} ="267",
            (8,4) *+{\textcolor{purple}{\bullet}} ="268",
            (9,4) *+{\textcolor{purple}{\bullet}} ="278",
            (10,0) *+{\bullet} ="345",
            (10,1) *+{\bullet} ="346",
            (10,2) *+{\bullet} ="347",
            (10,3) *+{\bullet} ="348",
            (10,4) *+{\textcolor{purple}{\bullet}} ="349",
            (11,1) *+{\bullet} ="356",
            (11,2) *+{\bullet} ="357",
            (11,3) *+{\bullet} ="358",
            (11,4) *+{\textcolor{purple}{\bullet}} ="359",
            (12,2) *+{\bullet} ="367",
            (12,3) *+{\bullet} ="368",
            (12,4) *+{\textcolor{purple}{\bullet}} ="369",
            (13,3) *+{\bullet} ="378",
            (13,4) *+{\textcolor{purple}{\bullet}} ="379",
            (14,4) *+{\textcolor{purple}{\bullet}} ="389",
            (15,0) *+{\textcolor{red}{0}} ="456",
            (15,1) *+{\textcolor{blue}{0}} ="457",
            (15,2) *+{\bullet} ="458",
            (15,3) *+{\textcolor{red}{2}} ="459",
            (15,4) *+{\textcolor{purple}{\bullet}} ="45X",
            (16,1) *+{\bullet} ="467",
            (16,2) *+{\bullet} ="468",
            (16,3) *+{\textcolor{blue}{2}} ="469",
            (16,4) *+{\textcolor{purple}{\bullet}} ="46X",
            (17,2) *+{\bullet} ="478",
            (17,3) *+{\bullet} ="479",
            (17,4) *+{\textcolor{purple}{\bullet}} ="47X",
            (18,3) *+{\textcolor{red}{1}} ="489",
            (18,4) *+{\textcolor{purple}{\bullet}} ="48X",
            (19,4) *+{\textcolor{purple}{\bullet}} ="49X",
            "123", {\ar"124"},
            "124", {\ar"125"},
            "125", {\ar"126"},
            "126", {\ar"127"},
            "134", {\ar"135"},
            "135", {\ar"136"},
            "136", {\ar"137"},
            "145", {\ar"146"},
            "146", {\ar"147"},
            "156", {\ar"157"},
            "124", {\ar"134"},
            "125", {\ar"135"},
            "135", {\ar"145"},
            "126", {\ar"136"},
            "136", {\ar"146"},
            "146", {\ar"156"},
            "127", {\ar"137"},
            "137", {\ar"147"},
            "147", {\ar"157"},
            "157", {\ar"167"},
            "134", {\ar"234"},
            "135", {\ar"235"},
            "136", {\ar"236"},
            "145", {\ar"245"},
            "146", {\ar"246"},
            "156", {\ar"256"},
            "137", {\ar"237"},
            "147", {\ar"247"},
            "157", {\ar"257"},
            "167", {\ar"267"},
            "234", {\ar"235"},
            "235", {\ar"236"},
            "236", {\ar"237"},
            "237", {\ar"238"},
            "245", {\ar"246"},
            "246", {\ar"247"},
            "247", {\ar"248"},
            "256", {\ar"257"},
            "257", {\ar"258"},
            "267", {\ar"268"},
            "235", {\ar"245"},
            "236", {\ar"246"},
            "246", {\ar"256"},
            "237", {\ar"247"},
            "247", {\ar"257"},
            "257", {\ar"267"},
            "238", {\ar"248"},
            "248", {\ar"258"},
            "258", {\ar"268"},
            "268", {\ar"278"},
            "245", {\ar"345"},
            "246", {\ar"346"},
            "256", {\ar"356"},
            "245", {\ar"345"},
            "246", {\ar"346"},
            "247", {\ar"347"},
            "248", {\ar"348"},
            "256", {\ar"356"},
            "257", {\ar"357"},
            "258", {\ar"358"},
            "267", {\ar"367"},
            "268", {\ar"368"},
            "278", {\ar"378"},
            "345", {\ar"346"},
            "346", {\ar"347"},
            "347", {\ar"348"},
            "348", {\ar"349"},
            "356", {\ar"357"},
            "357", {\ar"358"},
            "358", {\ar"359"},
            "367", {\ar"368"},
            "368", {\ar"369"},
            "378", {\ar"379"},
            "346", {\ar"356"},
            "347", {\ar"357"},
            "357", {\ar"367"},
            "348", {\ar"358"},
            "358", {\ar"368"},
            "368", {\ar"378"},
            "349", {\ar"359"},
            "359", {\ar"369"},
            "369", {\ar"379"},
            "379", {\ar"389"},
            "356", {\ar"456"},
            "357", {\ar"457"},
            "358", {\ar"458"},
            "359", {\ar"459"},
            "367", {\ar"467"},
            "368", {\ar"468"},
            "369", {\ar"469"},
            "378", {\ar"478"},
            "379", {\ar"479"},
            "389", {\ar"489"},
            "456", {\ar"457"},
            "457", {\ar"458"},
            "458", {\ar"459"},
            "459", {\ar"45X"},
            "467", {\ar"468"},
            "468", {\ar"469"},
            "469", {\ar"46X"},
            "478", {\ar"479"},
            "479", {\ar"47X"},
            "489", {\ar"48X"},
            "457", {\ar"467"},
            "458", {\ar"468"},
            "468", {\ar"478"},
            "459", {\ar"469"},
            "469", {\ar"479"},
            "479", {\ar"489"},
            "45X", {\ar"46X"},
            "46X", {\ar"47X"},
            "47X", {\ar"48X"},
            "48X", {\ar"49X"},
        \end{xy}
    \end{equation*}
\end{Eg}

\section*{Acknowledgments}
The author would like to thank Hongrui Wei for pointing out that the previous definition of partial $d\mathbb{Z}$-cluster tilting subcategories did not, in fact, fit the purpose of this article, as explained in Remark \ref{rem_partial_dZ_ct_def_implies_dZ_rigidity}. The author would also like to thank Emre Sen for drawing attention to the relevance of \cite[Section 6.3]{Sen23} to the results in Section \ref{subsec_homo_acyclic_case}.

\bibliographystyle{amsplain}

\providecommand{\bysame}{\leavevmode\hbox to3em{\hrulefill}\thinspace}
\renewcommand{\MRhref}[2]{%
  \href{http://www.ams.org/mathscinet-getitem?mr=#1}{#2}
}
\renewcommand\MR[1]{\relax\ifhmode\unskip\space\fi MR~\MRhref{#1}{#1}}


\end{document}